\definecolor{darkgreen}{rgb}{0,0.45,0}
\pgfplotsset{compat=newest}
\tikzset{
 ->-/.style = {
  decoration = {markings, mark=at position #1 with {\arrow[scale=1.3]{>}}},
  postaction = {decorate}
 }
}
\numberwithin{equation}{section}
\theoremstyle{plain}
\newtheorem{thm}[equation]{Theorem}
\newtheorem*{thm*}{Theorem} %
\newtheorem{prop}[equation]{Proposition}
\newtheorem{lem}[equation]{Lemma}
\newtheorem{cor}[equation]{Corollary}
\newtheorem*{cor*}{Corollary} %
\newenvironment{customthm}[1]
  {\innercustomthm}
  {\endinnercustomthm}
\theoremstyle{definition}
\newtheorem{defn}[equation]{Definition}
\newtheorem{ex}[equation]{Example}
\newtheorem{nota}[equation]{Notation}
\theoremstyle{remark}
\newtheorem{rem}[equation]{Remark}
\newcommand{\Eqref}[1]{\textup{\tagform@{\ref*{#1}}}}
\newcommand{\anti}{\ar@{}[dr]|{\;\;\;\boxed{\textup{\scriptsize -1}}}} %
\newcommand{\pb}{\ar@{}[dr]|{\mbox{\huge $\lrcorner$}}} %
\newcommand{\po}{\ar@{}[dr]|{\mbox{\huge $\ulcorner$}}} %
\newcommand{\Def}[1]{\textbf{\boldmath{#1}}} %
\newcommand{\dfn}{:=}
\newcommand{\C}{\mathbb{C}}
\newcommand{\N}{\mathbb{N}}
\newcommand{\kk}{\Bbbk} %
\newcommand{\PPP}{\mathbb{P}}
\newcommand{\R}{\mathbb{R}}
\newcommand{\Z}{\mathbb{Z}}
\newcommand{\DD}[1]{\mathcal{L}(#1)} %
\newcommand{\PP}[1]{\mathcal{P}(#1)} %
\newcommand{\cat}[1]{\mathbf{\mathcal{#1}}} %
\newcommand{\bmat}[1]{\left[\!\!\!\begin{array}{#1}} %
\newcommand{\emat}{\end{array}\!\!\right]} %
\newcommand{\smat}[1]{\left[\begin{smallmatrix*}[r]#1\end{smallmatrix*}\right]} %
\newcommand{\al}{\alpha}
\newcommand{\be}{\beta}
\newcommand{\de}{\delta}
\newcommand{\De}{\Delta}
\newcommand{\del}{\partial}
\newcommand{\ep}{\epsilon}
\newcommand{\ga}{\gamma}
\newcommand{\io}{\iota}
\newcommand{\la}{\lambda}
\newcommand{\phy}{\varphi}
\newcommand{\si}{\sigma}
\newcommand{\Te}{\Theta}
\newcommand{\inj}{\hookrightarrow}
\newcommand{\ral}{\xrightarrow} %
\newcommand{\Ra}{\Rightarrow} %
\newcommand{\surj}{\twoheadrightarrow}
\newcommand{\op}{\oplus}
\newcommand{\ot}{\otimes}
\newcommand{\x}{\times}
\newcommand{\lan}{\left\langle}
\newcommand{\ol}{\overline}
\newcommand{\pvec}[1]{\vec{#1}\mkern2mu\vphantom{#1}} %
\newcommand{\ran}{\right\rangle}
\newcommand{\tild}{\widetilde}
\newcommand{\Mod}[1]{#1\text{-}\mathrm{Mod}}
\newcommand{\fgMod}[1]{#1\text{-}\mathrm{mod}} %
\newcommand{\fdMod}[1]{#1\text{-}\mathrm{fmod}} %
\newcommand{\Topp}{\mathrm{Top}}
\newcommand{\tors}[1]{#1\text{-}\mathrm{tors}}
\newcommand{\vect}[1]{\mathrm{vect}_{#1}}
\newcommand{\Vect}[1]{\mathrm{Vect}_{#1}}
\newcommand{\abs}[1]{\lvert #1 \rvert}
\DeclarePairedDelimiter{\ceil}{\lceil}{\rceil} %
\DeclareMathOperator{\Ass}{Ass}
\DeclareMathOperator{\coker}{coker}
\DeclareMathOperator*{\colim}{colim}
\DeclareMathOperator{\Coh}{Coh}
\DeclareMathOperator{\Fun}{Fun} %
\DeclareMathOperator{\Hom}{Hom}
\DeclareMathOperator{\im}{im}
\DeclareMathOperator{\Kdim}{Kdim}
\DeclareMathOperator{\Obj}{Obj}
\DeclareMathOperator{\rank}{rank}
\DeclareMathOperator{\Rep}{Rep}
\DeclareMathOperator{\rep}{rep}
\DeclareMathOperator{\rk}{rk} %
\DeclareMathOperator{\irk}{irk} %
\DeclareMathOperator{\sirk}{sirk} %
\DeclareMathOperator{\sk}{sk}
\DeclareMathOperator{\Spec}{Spec}
\newcommand{\SR}[1]{\kk[#1]} %
\DeclareMathOperator{\Supp}{Supp}
\newcommand{\cst}{\mathrm{cst}}
\newcommand{\id}{\mathrm{id}}
\newcommand{\inc}{\mathrm{inc}}
\def\noteson{\gdef\note##1{\noindent{\color{blue}[##1]}}}
\newcommand{\VertexSize}{0.05}
\begin{document}

\title{Multiparameter persistence modules in the large scale} 
\date{\today}

\author{Martin Frankland} 
\address{University of Regina\\
3737 Wascana Parkway\\
Regina, Saskatchewan, S4S 0A2\\
Canada}
\email{Martin.Frankland@uregina.ca}

\author{Donald Stanley}
\address{University of Regina\\
3737 Wascana Parkway\\
Regina, Saskatchewan, S4S 0A2\\
Canada}
\email{Donald.Stanley@uregina.ca}

\begin{abstract}
A persistence module with $m$ discrete parameters is a diagram of vector spaces indexed by the poset $\N^m$. If we are only interested in the large scale behavior of such a diagram, then we can consider two diagrams equivalent if they agree outside of a ``negligeable'' region. In the $2$-dimensional case, we classify the indecomposable diagrams up to finitely supported diagrams. In higher dimension, we partially classify the indecomposable diagrams up to suitably finite diagrams, and show that the full classification problem is wild.
\end{abstract}

\keywords{persistence module, persistent homology, multiparameter, indecomposable, abelian category, Krull--Schmidt} %

\subjclass[2020]{Primary 18E10; Secondary 55N31, 18E35}

\maketitle

\tableofcontents

\section{Introduction}

In topological data analysis, it is common to study a space endowed with a filtration, which corresponds to a functor $\N \to \Topp$ in the case of a discrete parameter, or $\R_{\geq 0} \to \Topp$ in the case of a continuous real parameter. %
A multiparameter filtration using $m \geq 1$ discrete parameters corresponds to a functor $\N^m \to \Topp$ from the poset $\N^m$. Taking the $i\textsuperscript{th}$ homology group with coefficients in a field $\kk$ then yields a functor $\N^m \to \Vect{\kk}$ to $\kk$-vector spaces, called %
an \emph{$m$-parameter persistence module}. These correspond to multigraded modules over the $\N^m$-graded ring $R = \kk[t_1, \ldots, t_m]$ with grading $\abs{t_i} = \vec{e}_i = (0, \ldots, 1, \ldots, 0)$. 
For background on multiparameter persistence modules and their applications to topological data analysis, see \cite{CarlssonZ09}, \cite{Oudot15}, \cite{Lesnick15}, as well as the comprehensive survey \cite{BotnanL23}. 
This paper concerns the representation theory of finitely generated $R$-modules. 

Each finitely generated $R$-module decomposes (uniquely) into a direct sum of indecomposable modules; 
see \cite{BotnanC20} for more general decomposition theorems. 
For one-parameter persistence modules (the case $m=1$), by the classification of finitely generated modules over a principal ideal domain, each module decomposes into a direct sum of interval modules, which are the indecomposables. The intervals appearing in the decomposition can then be summarized into a barcode. 
For multiparameter persistence modules (the case $m \geq 2$), such a classification of indecomposables is unavailable, since the graded ring $\kk[t_1,t_2]$ has wild representation type; see for instance \cite{BotnanL23}*{\S 8}. 
One approach around that problem is to extract from a module certain invariants, notably the rank invariant (introduced by Carlsson and Zomorodian \cite{CarlssonZ09}) and its various refinements. The goal is to find invariants that are both computable and significant in applications. That approach is developed for instance in \cite{Vipond20}, \cite{GafvertC21}, \cite{ChacholskiCS24}, \cite{BotnanOO22}, \cite{OudotS24}, \cite{KimM24}, \cite{BlanchetteBH22}, and \cite{ClauseDMW23}.
Another approach is to focus on certain families of modules admitting a nice decomposition, such as rectangle-decomposable modules. 
One then tries to characterize the modules that are of the special form, and to approximate an arbitrary module by such a module. %
See for instance \cite{BotnanLO22}, %
\cite{BotnanLO23}, %
and \cite{AsashibaBENY22}.

In this paper, we take a different approach. We localize the category of $R$-modules until the resulting category admits a classification of indecomposables, or at least a partial classification.

\subsection*{Organization and results}

Given a simplicial complex $K$ on $m$ vertices, we define an abelian category $\DD{K}$ of \emph{$K$-localized persistence modules} (Definition~\ref{def:LocalModule}). Those encode the data of various localizations of an $R$-module, where $K$ parametrizes which variables among $t_1, \ldots, t_m$ have been inverted. We exhibit $\DD{K}$ as a Serre quotient of $R$-modules (Proposition~\ref{pr:SerreQuotient}).

Sections~\ref{sec:TorsionTh}--\ref{sec:Torsionfree} then focus on the simplicial complex $K_m \coloneqq \sk_{m-3} \De^{m-1}$, the $(m-3)$-skeleton of the standard simplex $\De^{m-1}$. One reason for that choice is that for any smaller simplicial complex $K' \subset K_m$, the category $\DD{K'}$ is guaranteed to have wild representation type (Proposition~\ref{pr:DecompoTorsion}). We construct a torsion pair on $\DD{K_m}$ %
and show that the torsion pair splits, so that every object is a direct sum of its torsion part and its torsion-free part (Proposition~\ref{pr:allareTT}). %
We further decompose the torsion objects into direct sums of indecomposable objects of a specific form (Proposition~\ref{pr:DecompoTorsion}). In the case $m=2$, we decompose the torsion-free objects into direct sums of indecomposable objects of a specific form (Proposition~\ref{pr:M2Decompo}). For $m \geq 3$, we show that the analogous decomposition of torsion-free objects does \emph{not} hold (Proposition~\ref{pr:Mgeq3}). The results for $\DD{K_2}$ can be summarized as follows.

\begin{customthm}{A}\label{thm:ThmA}
In the category of %
finitely generated $\kk[s,t]$-modules 
modulo the subcategory of finite modules, 
every object decomposes in a unique way as a direct sum of:
\begin{itemize}
\item ``vertical strips'' $\displaystyle [a,b)_1 \dfn s^{a} \kk[s,t] / s^{b} \kk[s,t]$
\item ``horizontal strips'' $\displaystyle [a,b)_2 \dfn t^{a} \kk[s,t] / t^{b} \kk[s,t]$
\item ``quadrants'' $\displaystyle [(a_1,a_2),\infty) \dfn s^{a_1} t^{a_2} \kk[s,t]$.
\end{itemize}
See Figure~\ref{fig:Strips}.
\end{customthm}

In particular, $\DD{K_2}$ does not have wild representation type. Combining this fact with Proposition~\ref{pr:WildRetract} and Theorem~\ref{thm:WildRep}, we obtain:

\begin{customthm}{B}\label{thm:ThmB}
The category $\DD{K}$ has wild representation type if and only if the simplicial complex~$K$ has more than $3$ missing faces. In other words: a missing face of codimension $2$ or at least $3$ missing faces of codimension $1$.
\end{customthm}

In Section~\ref{sec:Rank}, we show that the rank invariant of an $R$-module $M$ determines the torsion part of its $K_m$-localization $L_{K_m}(M)$ in $\DD{K_m}$, but not the torsion-free part. %
In the case $m=2$, we find a refinement of the rank invariant that does determine the torsion-free part (Theorem~\ref{thm:RefinedRank}). 
In Section~\ref{sec:Classif}, we turn our attention to the Serre subcategory of $R$-modules being quotiented out in the construction of $\DD{K}$. We classify all %
tensor ideals 
of $\fgMod{R}$ and show that they are in bijection with simplicial complexes $K$ on $m$ vertices (Theorem~\ref{thm:Classif}). 
The %
tensor ideal 
corresponding to $K$ is generated by the Stanley--Reisner ring $\SR{K}$. In Section~\ref{sec:Simple}, we revisit $\DD{K_m}$ and show that it is obtained from $\fgMod{R}$ by iteratively quotienting out the simple objects $m-1$ times (Corollary~\ref{cor:ModOutSimples}).

\begin{rem}
The categories we consider arise in algebraic geometry. The category of coherent sheaves on the projective line $\PPP^1$ over the base field $\kk$ is described by an equivalence
\[
\Coh(\PPP^1) \cong \Z\text{-graded } \fgMod{\kk[s,t]} / \{ \text{finite modules} \},
\]
where $\kk[s,t]$ is given an $\N$-grading with $\abs{s} = \abs{t} = 1$, and the right-hand side denotes the Serre quotient by the subcategory of graded modules $M$ with graded pieces $M(d) \neq 0$ in finitely many degrees $d \in \Z$. See \cite{Hartshorne77}*{Exercise~II.5.9} and \cite{stacks-project}*{Tag~0BXD}, as well as \cite{Cox95}*{\S 3} and \cite{CoxLS11}*{\S 5.3}. 
The $\Z^2$-graded variant of 
our category $\DD{K_2}$ is the bigraded analogue of the right-hand side:
\[
\DD{K_2}_{\Z^2} \cong \Z^2\text{-graded } \fgMod{\kk[s,t]} / \{ \text{finite modules} \}.
\]
It was kindly pointed out to us by Colin Ingalls that the latter category is equivalent to \emph{torus-equivariant} coherent sheaves on $\PPP^1$; see for instance \cite{Perling04}*{\S 5} and \cite{Perling13}. 
Hence Theorem~\ref{thm:ThmA} provides a decomposition result for such sheaves. It would be interesting to pursue the connections between persistence modules and toric geometry.
\end{rem}

\subsection*{Related work}

This paper is close in spirit to \cite{HarringtonOST19} and uses some similar tools. In that paper, the authors extract computable %
invariants from $R$-modules that are related to the localizations we consider, whereas our focus is the decomposition of objects in the localized categories.

The paper \cite{BauerBOS20} is also closely related. The authors use torsion pairs and consider decompositions of certain families of $2$-parameter persistence modules up to certain subcategories.

\subsection*{Funding}

This work was supported by the Natural Sciences and Engineering Research Council of Canada [RGPIN-2019-06082 to M.F., RGPIN-05466-2020 to D.S.].

\subsection*{Acknowledgments}

We thank Colin Ingalls, Thomas Br\"ustle, and Markus Perling for helpful discussions. 
We are very grateful to Steffen Oppermann for providing the argument in the proof of Theorem~\ref{thm:WildRep}. 
We also thank anonymous referees for their comments. 
Frankland acknowledges the support of the Max-Planck-Institut für Mathematik, report number MPIM-Bonn-2026. 

\section{Serre quotients}

In this section, we collect a few facts about Serre quotients that are found in \cite{stacks-project}*{Tag~02MN}, \cite{GelfandM03}*{\S II.5 Exercise~9}, \cite{Popescu73}*{\S 4.3, 4.4}, or \cite{Gabriel62}*{\S III}.

\begin{defn}
Let $\cat{A}$ be an abelian category. A \Def{Serre subcategory} of $\cat{A}$ is a full subcategory $\cat{S} \subseteq \cat{A}$ that contains $0$ %
and such that for every short exact sequence
\[
\xymatrix{
0 \ar[r] & A \ar[r] & B \ar[r] & C \ar[r] & 0 \\
}
\]
in $\cat{A}$, the object $B$ lies in $\cat{S}$ if and only if $A$ and $C$ lie in $\cat{S}$. In other words, $\cat{S}$ is closed under forming subobjects, quotients, and extensions.
\end{defn}

\begin{lem}
Let $F \colon \cat{A} \to \cat{B}$ be an exact functor between abelian categories. Then the subcategory
\[
\ker(F) \dfn \{ A \in \cat{A} \mid F(A) \cong 0 \}
\]
is a Serre subcategory of $\cat{A}$, called the \Def{kernel} of $F$.
\end{lem}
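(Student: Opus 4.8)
The plan is to verify the defining properties of a Serre subcategory directly for $\ker(F)$, using exactness of $F$ at each step. First I would note that $\ker(F)$ is a full subcategory by construction, and it contains $0$ since $F(0) \cong 0$ for any additive functor. It therefore remains to check the two-out-of-three condition: given a short exact sequence $0 \to A \to B \to C \to 0$ in $\cat{A}$, I must show that $B \in \ker(F)$ if and only if both $A \in \ker(F)$ and $C \in \ker(F)$.

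The key observation is that, since $F$ is exact, applying $F$ yields a short exact sequence $0 \to F(A) \to F(B) \to F(C) \to 0$ in $\cat{B}$. From here the argument is a small diagram chase in $\cat{B}$. If $F(A) \cong 0$ and $F(C) \cong 0$, then the middle term $F(B)$ is squeezed between two zero objects, so $F(B) \cong 0$; more precisely, $F(B) \to F(C)$ is the zero map with zero target, so $F(A) \to F(B)$ is epi, and being also mono it is an isomorphism, whence $F(B) \cong F(A) \cong 0$. Conversely, suppose $F(B) \cong 0$. Then the mono $F(A) \to F(B)$ has target $0$, forcing $F(A) \cong 0$; and the epi $F(B) \to F(C)$ has source $0$, forcing $F(C) \cong 0$. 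This establishes closure under subobjects, quotients, and extensions simultaneously.

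There is essentially no serious obstacle here: the only mild subtlety is being careful that "$F(A) \cong 0$" (isomorphic to a zero object) behaves as expected under the exact sequence, rather than literal equality, but this is immediate since any object isomorphic to $0$ is itself a zero object, and monos into (resp.\ epis out of) a zero object have zero domain (resp.\ codomain). One should also recall that an exact functor between abelian categories is in particular additive, which is what guarantees $F(0) \cong 0$ and that $F$ sends the zero morphism to the zero morphism. With these routine points in place, the verification of the Serre conditions is complete.
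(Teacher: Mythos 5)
Your argument is correct: applying the exact functor $F$ to a short exact sequence and chasing the resulting short exact sequence in $\cat{B}$ is exactly the standard verification, and your handling of ``isomorphic to zero'' versus ``equal to zero'' is fine. The paper itself gives no proof of this lemma (it is quoted from the cited references on Serre quotients), and your proof coincides with the standard one found there, so there is nothing to add.
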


\begin{lem}
Let $\cat{S} \subseteq \cat{A}$ be a Serre subcategory. Then there exists an abelian category $\cat{A}/\cat{S}$ and an exact functor $q \colon \cat{A} \to \cat{A}/\cat{S}$ satisfying the following universal property: For every exact functor $F \colon \cat{A} \to \cat{B}$ satisfying $\cat{S} \subseteq \ker(F)$, there exists a unique exact functor $\ol{F} \colon \cat{A}/\cat{S} \to \cat{B}$ satisfying $F = \ol{F} \circ q$, as illustrated in the diagram
\begin{equation}\label{eq:SerreQuotient}
\xymatrix{
\cat{A} \ar[d]_{q} \ar[r]^-{F} & \cat{B}. \\
\cat{A}/\cat{S} \ar@{-->}[ur]_{\ol{F}} & \\
}
\end{equation}
The category $\cat{A}/\cat{S}$ is called the \Def{Serre quotient} of $\cat{A}$ by %
$\cat{S}$.
\end{lem}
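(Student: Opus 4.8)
The plan is to realize $\cat{A}/\cat{S}$ as a localization of $\cat{A}$ at a suitable class of morphisms. Call a morphism $f \colon A \to B$ in $\cat{A}$ an \emph{$\cat{S}$-isomorphism} if both $\ker(f)$ and $\coker(f)$ lie in $\cat{S}$, and let $\Sigma$ denote the class of all $\cat{S}$-isomorphisms. The first step is to show that $\Sigma$ is a multiplicative system admitting a calculus of both left and right fractions. Stability under composition is where the Serre axioms enter: for a composite $gf$, the kernel (resp.\ cokernel) sits in a short exact sequence built from the kernels (resp.\ cokernels) of $f$ and $g$, so one invokes closure of $\cat{S}$ under subobjects, quotients, and extensions. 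The Ore conditions — given $s \in \Sigma$ and an arbitrary morphism $g$ with common source or target, completing to a commuting square whose opposite pair again lies in $\Sigma$ — are verified by forming the pushout or pullback of $s$ along $g$ and applying the snake lemma to see that the resulting morphism still has kernel and cokernel in $\cat{S}$; the cancellation condition is handled the same way.

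Having established the calculus of fractions, I would define $\cat{A}/\cat{S} \dfn \cat{A}[\Sigma^{-1}]$, with the same objects as $\cat{A}$ and with $\Hom_{\cat{A}/\cat{S}}(A,B)$ the filtered colimit of $\Hom_{\cat{A}}(A', B/B')$ over subobjects $A' \subseteq A$ with $A/A' \in \cat{S}$ and $B' \subseteq B$ with $B' \in \cat{S}$, and let $q \colon \cat{A} \to \cat{A}/\cat{S}$ be the canonical functor, which is the identity on objects. Then I would equip $\cat{A}/\cat{S}$ with an abelian structure: it is additive because $\Sigma$ is compatible with finite biproducts, and kernels and cokernels are computed by pushing those of $\cat{A}$ through $q$, using that every morphism of $\cat{A}/\cat{S}$ is, up to isomorphism of arrows, of the form $q(f)$ for an honest $f$ in $\cat{A}$; the coimage–image axiom is checked by reducing to such a representative. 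Exactness of $q$ is then immediate, since $q$ carries the kernel and cokernel of $f$ to the kernel and cokernel of $q(f)$; in particular $0 \to A$ is an $\cat{S}$-isomorphism for $A \in \cat{S}$, so $q(A) \cong 0$ and $\cat{S} \subseteq \ker(q)$.

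For the universal property, suppose $F \colon \cat{A} \to \cat{B}$ is exact with $\cat{S} \subseteq \ker(F)$. If $s \in \Sigma$, then $F$ sends $\ker(s)$ and $\coker(s)$ to zero objects, and exactness of $F$ forces $F(s)$ to be an isomorphism, so $F$ inverts $\Sigma$; by the universal property of $\cat{A}[\Sigma^{-1}]$ there is a unique additive functor $\ol{F}$ with $F = \ol{F} \circ q$. Exactness of $\ol{F}$ follows because $q$ is exact and essentially surjective and every short exact sequence in $\cat{A}/\cat{S}$ is, up to isomorphism, the image under $q$ of a short exact sequence in $\cat{A}$, on which $\ol{F} \circ q = F$ is exact. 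I expect the main obstacle to be the bookkeeping for the calculus of fractions and the abelianness of $\cat{A}/\cat{S}$: the closure of $\Sigma$ under composition and the Ore conditions are precisely where the full strength of the Serre axioms (closure under extensions, not merely under subobjects and quotients) is used, and verifying the abelian axioms requires some care in reducing every computation back to a representative morphism in $\cat{A}$.
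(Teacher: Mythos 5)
The paper does not prove this lemma at all: it is recalled as background, with the proof deferred to the cited references (Gabriel's \emph{Des cat\'egories ab\'eliennes} \S III, the Stacks Project Tag~02MN, Popescu, Gelfand--Manin), and your proposal is exactly the construction found there --- localize $\cat{A}$ at the class $\Sigma$ of morphisms whose kernel and cokernel lie in $\cat{S}$, identify $\Hom_{\cat{A}/\cat{S}}(A,B)$ with Gabriel's filtered colimit over subobjects $A'\subseteq A$ with $A/A'\in\cat{S}$ and $B'\subseteq B$ with $B'\in\cat{S}$, and obtain $\ol{F}$ from the universal property of the localization since an exact $F$ killing $\cat{S}$ inverts $\Sigma$. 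Your outline is correct, and the steps you flag as the real work (closure of $\Sigma$ under composition and the Ore conditions via pullback/pushout and the snake lemma, abelianness of the quotient, exactness of $q$, and the fact that every morphism and every short exact sequence of $\cat{A}/\cat{S}$ is, up to isomorphism, the image of one in $\cat{A}$) are precisely the verifications carried out in those references.
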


\begin{lem}[\cite{Popescu73}*{Lemma~4.3.7}]\label{lem:SEquiv}
Let $\cat{S} \subseteq \cat{A}$ be a Serre subcategory. A map $f \colon X \to Y$ in $\cat{A}$ becomes an isomorphism $q(f) \colon q(X) \ral{\cong} q(Y)$ in the Serre quotient $\cat{A}/\cat{S}$ if and only if it satisfies $\ker(f) \in \cat{S}$ and $\coker(f) \in \cat{S}$. Call such a map an \Def{$\cat{S}$-equivalence}.
\end{lem}

\begin{lem}\label{lem:SerreFaithful}
In the setup of diagram~\eqref{eq:SerreQuotient}, the induced functor $\ol{F} \colon \cat{A}/\cat{S} \to \cat{B}$ is faithful if and only if $\cat{S} = \ker(F)$ holds.
\end{lem}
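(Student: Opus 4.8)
The plan is to route the equivalence through an intermediate condition: that $\overline{F}$ \emph{reflects zero objects}, meaning $\overline{F}(Z) \cong 0$ implies $Z \cong 0$. First I would record the structural facts about the Serre quotient available from the references cited at the start of this section: the quotient functor $q \colon \cat{A} \to \cat{A}/\cat{S}$ is exact and essentially surjective, and an object $A \in \cat{A}$ satisfies $q(A) \cong 0$ if and only if $A \in \cat{S}$. Recall also that $\overline{F}$ is exact, being the functor produced by the universal property in the previous lemma.

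Next I would prove that $\overline{F}$ is faithful if and only if it reflects zero objects. The forward direction is formal: if $\overline{F}(Z) \cong 0$, then the map $\Hom(Z,Z) \to \Hom(\overline{F}Z, \overline{F}Z) = 0$ is injective, so $\id_Z = 0$ and hence $Z \cong 0$. For the converse, suppose $\overline{F}$ reflects zero objects and let $\phi \colon X \to Y$ be a morphism in $\cat{A}/\cat{S}$ with $\overline{F}(\phi) = 0$. Since $\overline{F}$ is exact, $\overline{F}(\im \phi) \cong \im(\overline{F}\phi) = 0$, so $\im \phi \cong 0$, which forces $\phi = 0$; thus $\overline{F}$ is faithful.

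Finally I would prove that $\overline{F}$ reflects zero objects if and only if $\cat{S} = \ker(F)$. If $\cat{S} = \ker(F)$, then since every object of $\cat{A}/\cat{S}$ is of the form $q(A)$ (essential surjectivity) and $\overline{F}(q(A)) = F(A)$, the vanishing $\overline{F}(q(A)) \cong 0$ gives $A \in \ker(F) = \cat{S}$, hence $q(A) \cong 0$. Conversely, assume $\overline{F}$ reflects zero objects; since the hypothesis gives $\cat{S} \subseteq \ker(F)$, it suffices to show $\ker(F) \subseteq \cat{S}$: for $A \in \ker(F)$ we have $\overline{F}(q(A)) = F(A) \cong 0$, so $q(A) \cong 0$, i.e.\ $A \in \cat{S}$. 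Combining the two equivalences proves the lemma. There is no genuine obstacle here beyond correctly invoking the basic properties of the Serre quotient — especially the characterization of which objects become zero in $\cat{A}/\cat{S}$; once those are in hand the argument is a short diagram chase.
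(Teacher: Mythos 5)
Your proof is correct. Note that the paper itself does not prove this lemma --- it is recalled from the cited references (Gabriel, Popescu, Gelfand--Manin, Stacks Project) --- so there is no in-paper argument to compare against; your write-up simply supplies the standard proof. The route through ``$\ol{F}$ reflects zero objects'' is the usual one, and both halves are sound: faithfulness $\Leftrightarrow$ reflecting zero uses only that $\ol{F}$ is exact (so it preserves images, letting you pass from $\ol{F}(\phi)=0$ to $\im\phi\cong 0$), and reflecting zero $\Leftrightarrow$ $\cat{S}=\ker(F)$ uses essential surjectivity of $q$, the identity $F=\ol{F}\circ q$, and the fact that $q(A)\cong 0$ if and only if $A\in\cat{S}$. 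That last fact (the kernel of the quotient functor is exactly $\cat{S}$, not something larger) is the one genuinely nontrivial imported ingredient, and you correctly flag it as a structural property of the Serre quotient construction rather than attempting to rederive it.
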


The following statement is found in \cite{Gabriel62}*{Proposition~III.2.5} and \cite{Popescu73}*{Theorem 4.4.9}.

\begin{lem}\label{lem:SerreEquiv}
 Let $F \colon \cat{A} \to \cat{B}$ be an exact functor between abelian categories. If $F$ admits a right adjoint $G \colon \cat{B} \to \cat{A}$ that is fully faithful (equivalently, such that the counit $\ep \colon FG \Ra 1$ is an isomorphism), then the induced functor $\ol{F} \colon \cat{A}/ \ker(F) \to \cat{B}$ is an equivalence of categories.
\end{lem}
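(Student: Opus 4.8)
The plan is to verify that $\ol{F} \colon \cat{A}/\ker(F) \to \cat{B}$ is faithful, essentially surjective, and full; an additive functor with these three properties is an equivalence of categories. Faithfulness is immediate: by construction the subcategory being quotiented out is exactly $\ker(F)$, so \cref{lem:SerreFaithful} applies verbatim. For essential surjectivity, write $\eta \colon 1 \Ra GF$ for the unit of the adjunction; given $B \in \cat{B}$, the counit $\ep_B \colon FG(B) \to B$ is an isomorphism by hypothesis, and since $F = \ol{F} \circ q$ we get $\ol{F}(q(GB)) = F(GB) \cong B$.

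The real work is fullness. First I would record two consequences of the triangle identities: since $\ep \circ F\eta = \id_F$ and $\ep$ is invertible, $F(\eta_A) = \ep_{F(A)}^{-1}$ is an isomorphism for every $A \in \cat{A}$; and naturality of $\ep$ gives $FG(f) = \ep_{F(A')}^{-1} \circ f \circ \ep_{F(A)}$ for every $f \colon F(A) \to F(A')$ in $\cat{B}$. Now fix $A, A' \in \cat{A}$ and a morphism $f \colon F(A) \to F(A')$; I want to lift it to a morphism $q(A) \to q(A')$. Set $g \dfn G(f) \circ \eta_A \colon A \to GF(A')$ and form the pullback
\[
\xymatrix{
P \ar[r]^{p'} \ar[d]_{p} \pb & A' \ar[d]^{\eta_{A'}} \\
A \ar[r]_-{g} & GF(A').
}
\]
Since $F$ is exact it preserves this pullback, and substituting the two facts above (in particular $F(\eta_{A'}) = \ep_{F(A')}^{-1}$ and $F(g) = FG(f) \circ F(\eta_A) = \ep_{F(A')}^{-1} \circ f$) shows that the image square exhibits $F(P)$ as the pullback of the isomorphism $F(\eta_{A'})$; hence $F(p)$ is an isomorphism, and under this identification $F(p')$ corresponds to $f$, i.e.\ $F(p') = f \circ F(p)$.

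Because $F(p)$ is an isomorphism, its kernel and cokernel lie in $\ker(F) = \cat{S}$, so $q(p) \colon q(P) \to q(A)$ is an isomorphism in the Serre quotient. Then $\varphi \dfn q(p') \circ q(p)^{-1} \colon q(A) \to q(A')$ is a morphism in $\cat{A}/\ker(F)$, and using $\ol{F} \circ q = F$ we obtain $\ol{F}(\varphi) = F(p') \circ F(p)^{-1} = f$, so $\ol{F}$ is full. Combining the three properties finishes the proof. The main obstacle is this fullness step: it relies on knowing the construction of the Serre quotient well enough to use that $q$ inverts precisely the morphisms whose kernel and cokernel lie in $\cat{S}$, and then on running the diagram chase around $P$ while keeping the triangle identities and the naturality of $\ep$ straight; everything else is formal nonsense. (An alternative, more conceptual route is to observe that a fully faithful right adjoint $G$ exhibits $\cat{B}$ as a localization of $\cat{A}$ and to identify the inverted morphisms with $\{ f : \coker f, \ker f \in \cat{S} \}$, but the pullback argument above is the most self-contained.)
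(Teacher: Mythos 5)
The paper offers no proof of this lemma at all --- it is quoted from Gabriel (Proposition III.2.5) and Popescu (Theorem 4.4.9) --- so the comparison is with those classical arguments rather than with anything in the text. Your proof is correct, and it is essentially the classical one. Faithfulness via \cref{lem:SerreFaithful} and essential surjectivity via the invertible counit are exactly right. The fullness step also checks out: exactness of $F$ means it preserves pullbacks, the triangle identity $\ep_{F(A)} \circ F(\eta_A) = \id$ together with invertibility of $\ep$ gives $F(\eta_{A'}) = \ep_{F(A')}^{-1}$ and $F(g) = \ep_{F(A')}^{-1} \circ f$, so $F(P)$ is the pullback of an isomorphism, $F(p)$ is invertible, and $F(p') = f \circ F(p)$; hence $\varphi = q(p')\circ q(p)^{-1}$ maps to $f$ under $\ol{F}$.

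Two points you should make explicit rather than leave implicit. First, the reason $\ker p$ and $\coker p$ lie in $\ker(F)$ is again exactness of $F$: $F(\ker p) \cong \ker F(p) = 0$ and $F(\coker p) \cong \coker F(p) = 0$. Second, the fact that the quotient functor $q$ inverts precisely those morphisms whose kernel and cokernel lie in $\cat{S}$ is the crux of the argument; it is a standard property of Serre quotients (it appears in the references cited at the start of Section~2, e.g.\ the Stacks Project and Gabriel), but since the paper's Lemma~2.3 only records the universal property, this fact deserves an explicit citation. For comparison, Gabriel's own route constructs the quasi-inverse $q \circ G$ directly: the counit gives $\ol{F}\circ (qG) \cong 1$, and $q(\eta_A)$ is invertible by the same kernel/cokernel observation, though promoting this to a natural isomorphism $1 \Ra (qG)\ol{F}$ uses the $2$-categorical universal property of the quotient (natural transformations, not just functors). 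Your pullback argument trades that input for a slightly longer but fully self-contained diagram chase.
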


\section{Localization of persistence modules}

Let us fix some notation that will be used throughout the paper. 

\begin{nota}
We work over a base field $\kk$. 
Let $\Vect{\kk}$ denote the category of $\kk$-vector spaces and 
$\vect{\kk}$ the category of finite-dimensional $\kk$-vector spaces.
\end{nota}

\begin{nota}
Let $m \geq 1$ be an integer and denote the set $[m] \dfn \{ 1, 2, \ldots, m \}$. 
Consider the polynomial ring $R \dfn \kk[t_1, \ldots, t_m]$, which is $\N^m$-graded with multigrading
\[
\abs{t_i} = \vec{e}_i = (0, \ldots, \overbrace{1}^{i}, \ldots, 0).
\]
In other words, $R$ is the monoid algebra $R = \kk (\N^m)$. 
Let $\Mod{R}$ denote the category of $\N^m$-graded (left) $R$-modules and $\fgMod{R}$ the full subcategory of finitely generated $R$-modules. 
All $R$-modules will be $\N^m$-graded unless otherwise noted.
\end{nota}

\begin{nota}
Given a small category $I$ and a category $\cat{C}$, an \Def{$I$-shaped diagram} in $\cat{C}$ is a functor from $I$ to $\cat{C}$. The category of all such diagrams is denoted $\cat{C}^I = \Fun(I,\cat{C})$. 
\end{nota}

Viewing the poset $\N^m$ as a category, an $\N^m$-shaped diagram of $\kk$-vector spaces $M \colon \N^m \to \Vect{\kk}$ corresponds to a (multigraded) $R$-module $M$, called a \Def{persistence module}. This correspondence forms an isomorphism %
of categories
\[
\Fun(\N^m,\Vect{\kk}) \cong \Mod{R}
\]
\cite{HarringtonOST19}*{Theorem~2.6} \cite{CarlssonZ09}. 
Given an $R$-module $M$ and $\vec{d} = (d_1, \ldots, d_m) \in \N^m$, let $M(\vec{d})$ denote the $\kk$-vector space which is the part of $M$ in multidegree $\vec{d}$.

\begin{nota}
For a subset $\si \subseteq [m]$, denote the localization of monoids $\si^{-1} \N^m$, where we identify $i \in [m]$ with the standard basis element $\vec{e}_i \in \N^m$, for instance:
\[
\{ 1,3 \}^{-1} \N^3 = \Z \x \N \x \Z.
\]
Denote the localization of rings
\[
R_{\si} \dfn R[t_i^{-1} \mid i \in \si],
\]
which is $\si^{-1} \N^m$-graded. %
For instance, $R_{[m]} = \kk[t_1^{\pm}, \ldots, t_m^{\pm}]$ is $\Z^m$-graded. 
Likewise, $R_{\si}$-modules are $\si^{-1} \N^m$-graded. 
An $R$-module $M$ can be viewed as $\si^{-1} \N^m$-graded by setting
\[
M(\vec{d}) = 0 \quad \text{if } \vec{d} \in \si^{-1} \N^m \setminus \N^m.
\]
Denote by $\Mod{R}_{\si^{-1} \N^m}$ the category of $\si^{-1} \N^m$-graded $R$-modules, which can be viewed as a full subcategory of $\Z^m$-graded $R$-modules, denoted $\Mod{R}_{\Z^m}$. In this notation, we have $\Mod{R} = \Mod{R}_{\N^m}$ and $\Mod{R_{\si}} = \Mod{R_{\si}}_{\si^{-1} \N^m}$.
\end{nota}

The purpose of the category $\Mod{R}_{\si^{-1} \N^m}$ is to allow negative shifts of (multigraded) $R$-modules in certain directions.

\begin{ex}
In the case $m=2$ with $R = \kk[t_1,t_2]$, the $R$-module $M = t_1^{-5} R$ is an object of $\Mod{R}_{\{1\}^{-1} \N^2} = \Mod{R}_{\Z \x \N}$, but is \emph{not} an object of $\Mod{R}_{\{2\}^{-1} \N^2} = \Mod{R}_{\N \x \Z}$. Note that $M$ is \emph{not} a module over the localization $R_{\{1\}} = \kk[t_1^{\pm}, t_2]$, since $t_1 \in R$ does not act invertibly on $M$. 
\end{ex}

\begin{lem}\label{lem:ZmGraded}
The full subcategory $\Mod{R}_{\si^{-1} \N^m} \subseteq \Mod{R}_{\Z^m}$ is an abelian subcategory.
\end{lem}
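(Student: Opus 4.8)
The plan is to verify directly that $\Mod{R}_{\si^{-1}\N^m}$ is closed under the formation of kernels, cokernels, and finite direct sums inside $\Mod{R}_{\Z^m}$; since an abelian subcategory is precisely a nonempty full subcategory closed under these operations (with the inclusion then automatically exact), this suffices. The key observation is that all of these constructions in $\Mod{R}_{\Z^m}$ are computed degreewise: for a morphism $f\colon M \to N$ of $\Z^m$-graded modules, one has $(\ker f)(\vec d) = \ker\bigl(f(\vec d)\colon M(\vec d) \to N(\vec d)\bigr)$ and similarly for the cokernel, and $(M \op N)(\vec d) = M(\vec d) \op N(\vec d)$.

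First I would recall that a $\Z^m$-graded $R$-module $M$ lies in the full subcategory $\Mod{R}_{\si^{-1}\N^m}$ exactly when $M(\vec d) = 0$ for every $\vec d \in \Z^m \setminus \si^{-1}\N^m$, i.e.\ the support of $M$ (as a graded vector space) is contained in the submonoid $\si^{-1}\N^m$. The zero module trivially satisfies this, so the subcategory is nonempty. Next, given $f\colon M \to N$ with both $M, N$ supported in $\si^{-1}\N^m$: for $\vec d \notin \si^{-1}\N^m$ we have $M(\vec d) = N(\vec d) = 0$, hence $(\ker f)(\vec d) = 0$ and $(\coker f)(\vec d) = 0$, so both $\ker f$ and $\coker f$ (taken in $\Mod{R}_{\Z^m}$) again lie in $\Mod{R}_{\si^{-1}\N^m}$. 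The same degreewise vanishing gives $(M \op N)(\vec d) = 0$ for such $\vec d$, so finite direct sums stay in the subcategory.

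Finally I would note that a full subcategory of an abelian category that contains $0$ and is closed under kernels, cokernels, and finite direct sums is itself abelian, with the inclusion functor exact: the biproduct, the factorization of a morphism through its coimage, and the comparison map from coimage to image are all inherited from the ambient category, and the relevant universal properties are preserved because the ambient constructions already land in the subcategory. This completes the argument.

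I do not anticipate a genuine obstacle here; the only point requiring a moment's care is the bookkeeping that $\si^{-1}\N^m$ is indeed a submonoid of $\Z^m$ (so that the $R$-action does not move a homogeneous element out of the allowed degrees), and that kernels/cokernels/direct sums in $\Mod{R}_{\Z^m}$ really are computed one degree at a time — both of which are standard facts about graded modules over a graded ring. If one prefers a slicker formulation, one can instead observe that $\Mod{R}_{\si^{-1}\N^m}$ is the "Serre-type" subcategory of modules whose support lies in a fixed sub-poset, but the degreewise check above is the most self-contained route.
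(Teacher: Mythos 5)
Your argument is correct and is essentially the paper's own proof, spelled out in more detail: the paper simply notes that (co)limits in $\Mod{R}_{\Z^m}$ are computed degreewise in $\kk$-modules, so kernels, cokernels, and finite direct sums of modules supported in $\si^{-1}\N^m$ remain supported there. Your explicit verification of closure under these operations, together with the standard criterion for an abelian subcategory, is exactly the intended reasoning.
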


\begin{proof}
Limits and colimits in $\Mod{R}_{\si^{-1} \N^m}$ are computed as in $\Mod{R}_{\Z^m}$, namely, degreewise in $\kk$-modules for each multidegree $\vec{d} \in \Z^m$.
\end{proof}

\begin{defn}
A \Def{simplicial complex} on a vertex set $V$ is a collection $K \subseteq \PP{V} $ of subsets of $V$ that is downward closed, i.e.:
\[
\si \in K \quad \text{and} \quad \tau \subseteq \si \implies \tau \in K.
\]
The subsets $\si \in K$ are called \Def{faces} or \Def{simplices} of $K$. Subsets $\si \notin K$ are called \Def{missing faces} of~$K$. The \Def{dimension} of a face $\si \in K$ is its cardinality minus one: $\dim(\si) = \# \si - 1$. The \Def{$n$-skeleton} of $K$ is the subcomplex containing the faces of dimension at most $n$:
\[
\sk_n(K) = \{ \si \in K \mid \dim(\si) \leq n \}.
\]
\end{defn}

\begin{defn}\label{def:LocalModule}
Let $K \subseteq \PP{[m]}$ be a simplicial complex on the vertex set $[m]$. (We do not assume that all the elements $i \in [m]$ are $0$-simplices of $K$. In other words, $K$ may have ghost vertices.) A \Def{$K$-localized persistence module} $M$ consists of the following data:
\begin{enumerate}
\item For each missing face $\si \notin K$, a finitely generated $R_{\si}$-module $M_{\si}$.
\item For each $\si \subseteq \tau$ with $\si \notin K$ (and hence $\tau \notin K$), a map of $R_{\si}$-modules $\phy_{\si,\tau} \colon M_{\si} \to M_{\tau}$ such that the induced map of $R_{\tau}$-modules
\[
R_{\tau} \ot_{R_{\si}} M_{\si} \ral{\cong} M_{\tau}
\]
is an isomorphism. 
Here we view the $R_{\si}$-module $M_{\si}$ as $\tau^{-1} \N^m$-graded, to match the $\tau^{-1} \N^m$-grading of $M_{\tau}$.
\end{enumerate}
The transition maps are required to satisfy $\phy_{\si,\si} = \id$ and $\phy_{\tau,\upsilon} \circ \phy_{\si,\tau} = \phy_{\si,\upsilon}$ for all missing faces $\si \subseteq \tau \subseteq \upsilon$.

A morphism of $K$-localized persistence modules $f \colon M \to N$ consists of a map of $R_{\si}$-modules $f_{\si} \colon M_{\si} \to N_{\si}$ for each $\si \notin K$, compatible with the transition maps:
\[
\xymatrix{
M_{\si} \ar[d] \ar[r]^-{f_{\si}} & N_{\si} \ar[d] \\
M_{\tau} \ar[r]^-{f_{\tau}} & N_{\tau}. \\
}
\]
Let $\DD{K}$ denote the category of $K$-localized persistence modules. 
Let $\DD{K}_{\Z^m}$ denote the variant of $\DD{K}$ where all the $R_{\si}$-modules are $\Z^m$-graded rather than $\si^{-1}\N^m$-graded.
\end{defn}

\begin{ex}\label{ex:Extreme}
In the case $K = \emptyset = \sk_{-2} \De^{m-1}$, we have an equivalence $\DD{K} \cong \fgMod{R}$ given by $M \mapsto M_{\emptyset}$. The other extreme case is $K = \del \De^{m-1} = \sk_{m-2} \De^{m-1}$, which yields $\DD{K} = \fgMod{R_{[m]}} \cong \vect{\kk}$, cf.\ Remark~\ref{rem:GradedVect}.

For $m=2$, let us illustrate the case $K = \emptyset = \sk_{-2} \De^{1}$ with the free $R$-module on one generator in bidegree $\vec{a} = (a_1,a_2)$, namely:
\[
t^{\vec{a}} R = t_1^{a_1} t_2^{a_2} \kk[t_1,t_2] = (t_1^{a_1} t_2^{a_2}) \subset \kk[t_1,t_2]
\]
viewed as an %
ideal in $\kk[t_1,t_2]$. The corresponding $K$-localized persistence module $M$ consists of the following data:
\[
\xymatrix @C+3.8pc {
M_{\{2\}} = t_1^{a_1} \kk[t_1,t_2^{\pm}] \ar[r]^-{\phy_{\{2\}, \{1,2\}} = \text{ invert } t_1} & \kk[t_1^{\pm},t_2^{\pm}] = M_{\{1,2\}} \\
M_{\emptyset} = t_1^{a_1} t_2^{a_2} \kk[t_1,t_2] \ar[u]^{\phy_{\emptyset, \{2\}} = \text{ invert } t_2} \ar[r]^-{\phy_{\emptyset, \{1\}} = \text{ invert } t_1} & t_2^{a_2} \kk[t_1^{\pm},t_2] = M_{\{1\}} \ar[u]_{\phy_{\{1\}, \{1,2\}} = \text{ invert } t_2} \\
}
\]
where the transition maps $\phy_{\si,\tau}$ are the canonical inclusions, as illustrated in Figure~\ref{fig:AllLocaliz}. 
\end{ex}

\begin{figure}[H]
\begin{adjustbox}{width=0.55\textwidth}
\begin{tikzpicture}[scale=0.8]
 \begin{axis}[
axis lines=middle, 
ticks=none, 
axis equal, 
xmin=-0.5, xmax=5, ymin=-1, ymax=5]
\draw [blue, thick] (2,6) -- (2,-1);
\fill [blue, opacity=0.1] (2,-1) rectangle (6,6);
\node [below left] at (2,0) {$a_1$};
\draw [blue, dotted] (2,3) -- (0,3);
\node [left] at (0,3) {$a_2$};
 \end{axis}
\draw [->] (7.0,3) -- (8.6,3);
\node [above] at (7.8,3) {$\text{invert } t_1$};
\end{tikzpicture}
\begin{tikzpicture}[scale=0.8]
 \begin{axis}[
axis lines=middle, 
ticks=none, 
axis equal, 
xmin=-0.5, xmax=5, ymin=-1, ymax=5]
\fill [blue, opacity=0.1] (-1,-1) rectangle (6,6);
\draw [blue, dotted] (2,3) -- (2,0);
\node [below] at (2,0) {$a_1$};
\draw [blue, dotted] (2,3) -- (0,3);
\node [left] at (0,3) {$a_2$};
 \end{axis}
\end{tikzpicture}
\end{adjustbox}
\begin{adjustbox}{width=0.55\textwidth}
\begin{tikzpicture}[scale=0.8]
 \begin{axis}[
axis lines=middle, 
ticks=none, 
axis equal, 
xmin=-0.5, xmax=5, ymin=-1, ymax=5]
\draw [blue, fill=blue] (2,3) circle (\VertexSize);
\draw [blue, thick] (2,6) -- (2,3) -- (6,3);
\fill [blue, opacity=0.1] (2,3) rectangle (6,6);
\draw [blue, dotted] (2,3) -- (2,0);
\node [below] at (2,0) {$a_1$};
\draw [blue, dotted] (2,3) -- (0,3);
\node [left] at (0,3) {$a_2$};
 \end{axis}
\draw [->] (3,5.8) -- (3,7.2);
\node [right] at (3,6.5) {$\text{invert } t_2$};
\draw [->] (7.0,3) -- (8.6,3);
\node [above] at (7.8,3) {$\text{invert } t_1$};
\end{tikzpicture}
\begin{tikzpicture}[scale=0.8]
 \begin{axis}[
axis lines=middle, 
ticks=none, 
axis equal, 
xmin=-0.5, xmax=5, ymin=-1, ymax=5]
\draw [blue, thick] (-1,3) -- (6,3);
\fill [blue, opacity=0.1] (-1,3) rectangle (6,6);
\draw [blue, dotted] (2,3) -- (2,0);
\node [below] at (2,0) {$a_1$};
\node [below left] at (0,3) {$a_2$};
 \end{axis}
\draw [->] (3,5.8) -- (3,7.2);
\node [right] at (3,6.5) {$\text{invert } t_2$};
\end{tikzpicture}
\end{adjustbox}
\caption{The $R$-module $t^{\vec{a}} R$ together with its localizations.}
\label{fig:AllLocaliz}
\end{figure}

\begin{ex}\label{ex:2dim}
Consider the case $m=2$ and $K = \{ \emptyset \} = \sk_{-1} \De^1$. A $K$-localized persistence module $M$ consists of modules
\[
\xymatrix{
M_{\{2\}} \ar[r]^-{\phy_1} & M_{\{1,2\}} \\
\ar@{--}@[green][u] \ar@{--}@[green][r] & M_{\{1\}} \ar[u]_{\phy_2} \\
}
\]
where for each $i \in \{ 1,2 \}$, $M_{\{i\}}$ is a $R[t_i^{-1}]$-module, $\phy_{\{3-i\}} \colon M_{\{i\}} \to M_{\{1,2\}}$ is a map of \mbox{$R[t_i^{-1}]$-modules}, and the induced map of $R[t_1^{-1}, t_2^{-1}]$-modules
\[
R[t_1^{-1}, t_2^{-1}] \ot_{R[t_i^{-1}]} M_{\{i\}} \ral{\cong} M_{\{1,2\}}
\]
is an isomorphism. We used the notation
\[
\phy_i \dfn \phy_{[m] \setminus \{i\}, [m]} \colon M_{[m] \setminus \{i\} } \to M_{[m]},
\]
so that $\phy_i$ denotes the localization map inverting $t_i$. We will write $\si_i \dfn [m] \setminus \{i\}$.
\end{ex}

\begin{ex}
Consider the case $m=3$ and $K = \sk_0 \De^2 = \{ \emptyset, \{1\}, \{2\}, \{3\} \}$. A $K$-localized persistence module $M$ can be displayed as a diagram
\[
\xymatrix @C-0.7pc @R-0.7pc {
& M_{\si_1} \ar[rr]^-{\phy_{1}} & & M_{[3]} \\
\ar@{--}@[green][ur] \ar@{--}@[green][rr] & & M_{\si_2} \ar[ur]^-{\phy_{2}} & \\
& \ar@{--}@[green][uu] \ar@{--}@[green][rr] & & M_{\si_3}. \ar[uu]_{\phy_{3}} \\
\ar@{--}@[green][uu] \ar@{--}@[green][ur] \ar@{--}@[green][rr] & & \ar@{--}@[green][uu] \ar@{--}@[green][ur] & \\
}
\]
\end{ex}

\begin{rem}\label{rem:GradedVect}
A graded $\kk[t^{\pm}]$-module $M$ corresponds to a $\kk$-vector space
\[
M(0) \cong \colim \left( \cdots \to M(d-1) \ral{t} M(d) \ral{t} M(d+1) \to \cdots \right) 
\]
In fact, the functor taking the degree $0$ part
\[
\Mod{\kk[t^{\pm}]} \to \Vect{\kk}
\]
is an equivalence of categories. This is in stark contrast with an \emph{ungraded} $\kk[t^{\pm}]$-module, which consists of a $\kk$-vector space $V$ equipped with an automorphism $\mu_t \colon V \ral{\cong} V$. 
Likewise, for any $m \geq 1$, a $\Z^m$-graded $R_{[m]}$-module $M$ corresponds to the $\kk$-vector space $M(\vec{0})$ in multidegree $\vec{0} \in \Z^m$. Because of this, we will write %
\[
\dim_{\kk} M \dfn \dim_{\kk} M(\vec{0}).
\]
\end{rem}

\begin{defn}
For $\si \subseteq [m]$ and an $R_{\si}$-module $M$, the \Def{rank} of $M$ is
\[
\rank M \dfn \dim_{\kk} R_{[m]} \ot_{R_{\si}} M.
\]
For a simplicial complex $K$ on the vertex set $[m]$ and $M$ a $K$-localized persistence module, the \Def{rank} of $M$ is
\[
\rank M \dfn \dim_{\kk} M_{[m]},
\]
which equals $\rank(M_{\si})$ for any missing face $\si \notin K$.
\end{defn}
See also the discussion of rank in \cite{HarringtonOST19}*{\S 3}.

\begin{lem}\label{lem:PointwiseExact}
The category $\DD{K}$ is an abelian category. Kernels and cokernels in $\DD{K}$ are computed pointwise, i.e., for each $\si \notin K$, we have
\begin{align*}
(\ker f)_{\si} &= \ker(f_{\si}) \\
(\coker f)_{\si} &= \coker(f_{\si}).
\end{align*}
In particular, a sequence
\[
0 \to M \to N \to P \to 0
\]
in $\DD{K}$ is exact if and only if for each $\si \notin K$, the sequence of $R_{\si}$-modules %
\[
0 \to M_{\si} \to N_{\si} \to P_{\si} \to 0
\]
is exact.
\end{lem}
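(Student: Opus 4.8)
The plan is to check the abelian category axioms for $\DD{K}$ directly, reducing everything to the corresponding statements in the abelian categories $\Mod{R_\si}_{\si^{-1}\N^m}$ (which are abelian by \cref{lem:ZmGraded}) by working ``pointwise'' over the missing faces $\si \notin K$. The key subtlety is that a $K$-localized persistence module is not merely a tuple of $R_\si$-modules: it carries the transition maps $\phy_{\si,\tau}$ subject to the base-change isomorphism condition $R_\tau \ot_{R_\si} M_\si \xrightarrow{\cong} M_\tau$. So the real content of the lemma is that kernels and cokernels formed pointwise still satisfy this base-change condition, which works because localization $R_\tau \ot_{R_\si} (-)$ is an exact functor (both $R_\si \to R_\tau$ and $R_\si \to R_\tau$ are localizations, hence flat).

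First I would fix $f \colon M \to N$ in $\DD{K}$ and define $(\ker f)_\si \dfn \ker(f_\si)$, with transition maps induced by restricting $\phy^M_{\si,\tau}$; these land in the right place since $f$ commutes with the transition maps. To see $\ker f \in \DD{K}$, I apply the exact functor $R_\tau \ot_{R_\si} (-)$ to the inclusion $\ker(f_\si) \hookrightarrow M_\si$ and use that $R_\tau \ot_{R_\si} M_\si \cong M_\tau$ and $R_\tau \ot_{R_\si} N_\si \cong N_\tau$ via the structure maps of $M$ and $N$; exactness of localization then identifies $R_\tau \ot_{R_\si} \ker(f_\si)$ with $\ker(f_\tau)$, which is exactly the base-change condition for $\ker f$. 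One also needs $\ker(f_\si)$ finitely generated, which holds because $R_\si$ is Noetherian (being a localization of the Noetherian ring $R$) and $M_\si$ is finitely generated. The cokernel case is identical, using right-exactness (indeed exactness) of $R_\tau \ot_{R_\si}(-)$ and Noetherianity again. The universal properties of $\ker f$ and $\coker f$ in $\DD{K}$ follow immediately from the pointwise universal properties, since a morphism in $\DD{K}$ is a compatible tuple of morphisms.

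Next I would verify the remaining abelian axioms. Additivity: $\Hom$-sets are obviously abelian groups (subgroups of products of the $\Hom_{R_\si}$), composition is bilinear, and finite biproducts are computed pointwise (a pointwise direct sum of base-change isomorphisms is again a base-change isomorphism since $R_\tau \ot_{R_\si}(-)$ is additive, and a finite direct sum of finitely generated modules is finitely generated). Preadditive plus biproducts plus existence of kernels and cokernels gives a pre-abelian category. Finally, for abelianness I must check that every monomorphism is a kernel and every epimorphism is a cokernel. Since kernels and cokernels are pointwise and the forgetful-type ``evaluation'' functors $\DD{K} \to \Mod{R_\si}_{\si^{-1}\N^m}$, $M \mapsto M_\si$, are exact and jointly faithful, a map $f$ is mono (resp.\ epi) in $\DD{K}$ iff each $f_\si$ is mono (resp.\ epi); then $f = \ker(\coker f)$ can be checked pointwise, where it holds because each $\Mod{R_\si}_{\si^{-1}\N^m}$ is abelian. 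The last display — exactness of a sequence being detected pointwise — is then a formal consequence of kernels and cokernels being pointwise.

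The main obstacle, and the only place where anything beyond bookkeeping happens, is the stability of the base-change condition under pointwise kernels and cokernels; everything hinges on flatness/exactness of the localization functors $R_\tau \ot_{R_\si}(-)$ and on the Noetherianity of each $R_\si$ to preserve finite generation. Once those two points are in hand, the rest is a routine transfer of the abelian structure from the $\Mod{R_\si}_{\si^{-1}\N^m}$ through the jointly faithful exact evaluation functors.
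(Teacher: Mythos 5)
Your proof is correct; the paper in fact states this lemma without proof, treating it as routine, and your argument supplies exactly the verification the authors leave implicit: pointwise kernels and cokernels inherit the base-change condition $R_\tau \ot_{R_\si} (-)$ because the localizations $R_\si \to R_\tau$ are flat, finite generation of kernels is preserved because each $R_\si$ is Noetherian, and the abelian axioms then transfer through the exact, jointly faithful evaluation functors. One minor remark: Noetherianity is not needed for the cokernel step, since a quotient of a finitely generated module is automatically finitely generated; this is harmless but worth trimming.
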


\begin{proof}
The assignment $\si \mapsto R_{\si}$ defines a diagram of commutative (graded) rings, indexed by the missing faces $\si \in \PP{[m]} \setminus K$. Modules over a diagram of rings form a complete and cocomplete abelian category, with limits and colimits computed pointwise \cite{KashiwaraS06}*{Theorem~18.1.6}. Since each ring $R_{\si}$ is Noetherian, pointwise finitely generated modules form an abelian subcategory, which we denote $\mathcal{L}^{\text{no loc}}(K)$. %
Then $\DD{K} \subset \mathcal{L}^{\text{no loc}}(K)$ is the full subcategory of objects satisfying the localization condition $R_{\tau} \ot_{R_{\si}} M_{\si} \cong M_{\tau}$, which is an abelian subcategory.
\end{proof}

\section{An equivalence of categories}

\begin{defn}
Given a simplicial complex $K$ on the set of vertices $[m]$, consider the functor
\[
L_K \colon \fgMod{R} \to \DD{K}
\]
that sends an $R$-module $M$ to the $K$-localized persistence module $L_K(M)$ given by the localizations
\[
L_K(M)_{\si} = R_{\si} \ot_R M  
\]
and the canonical localization maps $R_{\si} \ot_{R} M \to R_{\tau} \ot_{R} M$ for missing faces $\si \subseteq \tau$. We call $L_K$ the \Def{$K$-localization functor}.
\end{defn}

\begin{lem}
The functor $L_K$ is exact.
\end{lem}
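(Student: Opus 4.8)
The plan is to check exactness of $L_K$ directly, using the pointwise description of exactness in $\DD{K}$ provided by \cref{lem:PointwiseExact}. By that lemma, a sequence in $\DD{K}$ is exact if and only if it is exact in $R_{\si}$-modules for every missing face $\si \notin K$. So it suffices to fix a short exact sequence $0 \to M' \to M \to M'' \to 0$ in $\fgMod{R}$ and show that for each $\si \notin K$, the sequence
\[
0 \to R_{\si} \ot_R M' \to R_{\si} \ot_R M \to R_{\si} \ot_R M'' \to 0
\]
is exact. First I would recall that $R_{\si} = R[t_i^{-1} \mid i \in \si]$ is a localization of $R$ at the multiplicative set generated by $\{ t_i \mid i \in \si \}$, and that localization at a multiplicative set is an exact functor on modules (equivalently, $R_{\si}$ is a flat $R$-module). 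Applying this exact functor degreewise preserves the $\si^{-1}\N^m$-grading, so the displayed sequence is exact in $\Mod{R}_{\si^{-1}\N^m}$; since by \cref{lem:ZmGraded} that category is an abelian subcategory of $\Mod{R}_{\Z^m}$ with exactness detected degreewise, this is genuine exactness. One should also note that $R_{\si} \ot_R M'$ is again finitely generated over $R_{\si}$ when $M'$ is finitely generated over $R$, so that the construction indeed lands in $\DD{K}$.

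The remaining point is a compatibility check: one must verify that $L_K$ is actually a functor into $\DD{K}$, i.e.\ that $L_K(M)$ satisfies condition~(2) of \cref{def:LocalModule}, namely that for $\si \subseteq \tau$ the induced map $R_{\tau} \ot_{R_{\si}} (R_{\si} \ot_R M) \to R_{\tau} \ot_R M$ is an isomorphism. This follows from the associativity/transitivity of localization, $R_{\tau} \ot_{R_{\si}} R_{\si} \ot_R M \cong R_{\tau} \ot_R M$, together with $\phy_{\si,\si} = \id$ and the cocycle condition $\phy_{\tau,\upsilon} \circ \phy_{\si,\tau} = \phy_{\si,\upsilon}$, both of which are immediate from the universal property of localization. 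This is presumably already implicit in the definition of $L_K$, but it is worth a sentence.

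I do not anticipate a genuine obstacle here: the statement is essentially the flatness of localization, repackaged through \cref{lem:PointwiseExact,lem:ZmGraded}. The only mild subtlety is bookkeeping about which category the modules live in — keeping track that ``$R_{\si} \ot_R (-)$'' is being applied to $\N^m$-graded modules and produces $\si^{-1}\N^m$-graded modules, and that exactness in all these graded categories is checked degreewise in $\Vect{\kk}$, where exactness of localization is the classical fact. Once that is set up, the proof is a single line invoking flatness of $R_{\si}$ over $R$.
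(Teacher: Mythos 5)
Your proposal is correct and follows exactly the paper's argument: reduce to pointwise exactness via \cref{lem:PointwiseExact} and then invoke flatness of each localization $R_{\si}$ over $R$. The extra remarks about grading and the localization compatibility condition are fine but not needed beyond what the paper's one-line proof already uses.
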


\begin{proof}
This follows from Lemma~\ref{lem:PointwiseExact} and the fact that each localization functor $R_{\si} \otimes_R -$ is exact, in other words, $R_{\si}$ is flat as an $R$-module, by (the graded analogue of) \cite{AtiyahM69}*{Corollary~3.6}.
\end{proof}

\begin{lem}\label{lem:Truncation}
For $\si \subseteq [m]$, let %
\[
\io \colon \Mod{R} \inj \Mod{R}_{\si^{-1} \N^m}
\]
denote the inclusion of the full subcategory of $\N^m$-graded $R$-modules.
\begin{enumerate}
\item\label{item:TruncationAdjoint} The truncation functor
\[
\tau_{\geq 0} \colon \Mod{R}_{\si^{-1} \N^m} \to \Mod{R}
\]
given by
\[
(\tau_{\geq 0} M)(\vec{d}) = M(\vec{d}) \quad \text{for } \vec{d} \in \N^m
\]
is right adjoint to $\io$.
\item\label{item:TruncationFinite} $\tau_{\geq 0}$ sends free $R$-modules to free $R$-modules and finitely generated $R$-modules to finitely generated $R$-modules. Hence the resulting functor
\[
\tau_{\geq 0} \colon \fgMod{R}_{\si^{-1} \N^m} \to \fgMod{R}
\]
is right adjoint to the inclusion functor.
\end{enumerate}
\end{lem}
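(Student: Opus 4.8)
The plan is to prove both parts by direct verification, starting from the explicit formula defining $\tau_{\geq 0}$. For part \eqref{item:TruncationAdjoint}, I would first check that $\tau_{\geq 0}$ is well defined as a functor $\Mod{R}_{\si^{-1}\N^m} \to \Mod{R}$: given a $\si^{-1}\N^m$-graded module $M$, the graded pieces $M(\vec{d})$ for $\vec{d} \in \N^m$ assemble into an $R$-module because multiplication by $t_i$ maps $M(\vec{d}) \to M(\vec{d}+\vec{e}_i)$ and $\N^m$ is closed under adding standard basis vectors; on morphisms $\tau_{\geq 0}$ is just restriction, so functoriality is immediate. Then I would establish the adjunction $\Hom_{\Mod{R}_{\si^{-1}\N^m}}(\io A, M) \cong \Hom_{\Mod{R}}(A, \tau_{\geq 0} M)$ by noting that since $\io A$ is concentrated in multidegrees $\vec{d} \in \N^m$, any graded map $\io A \to M$ is determined by, and freely specified by, its components $A(\vec{d}) \to M(\vec{d})$ for $\vec{d} \in \N^m$ — which is precisely a map $A \to \tau_{\geq 0} M$ in $\Mod{R}$. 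Naturality in both variables is clear because everything is defined degreewise. (The unit $A \to \tau_{\geq 0}\io A$ is the identity, and the counit $\io \tau_{\geq 0} M \to M$ is the inclusion of the non-negatively graded part.)

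For part \eqref{item:TruncationFinite}, the key observation is that the free $R_{\si}$-module $R_{\si}$ (shifted to sit in multidegree $\vec{a} \in \si^{-1}\N^m$) has $\tau_{\geq 0}$ equal to the free $R$-module on a single generator in multidegree $\ceil{\vec{a}}$ — indeed, the monomials $t^{\vec{b}}$ with $\vec{b} \in \si^{-1}\N^m$ and $\vec{a} + \vec{b} \in \N^m$ that span the truncation are exactly the $R$-multiples of the monomial landing in degree $\ceil{\vec{a}}$, since inverting the variables in $\si$ only lets us decrease the $\si$-coordinates, which are already forced up to their minimal non-negative values. Taking direct sums handles arbitrary (finite-rank) free modules. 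For a finitely generated $M$, choose a surjection from a finitely generated free graded $R_{\si}$-module $F \surj M$; since $\tau_{\geq 0}$ is a right adjoint it need not be right exact, so instead I would argue directly: pick finitely many homogeneous generators $x_1, \ldots, x_n$ of $M$ over $R_{\si}$, and observe that $\tau_{\geq 0} M$ is generated over $R$ by the shifted elements $\ceil{x_j}$ together with — more carefully — the finitely many elements $t^{\vec{b}} x_j$ needed to reach each "corner" of the non-negative region. The cleanest route is to use the surjection $F \surj M$ from a finitely generated free $R_\si$-module, apply $\tau_{\geq 0}$, and note that although $\tau_{\geq 0}$ is only left exact, the induced map $\tau_{\geq 0} F \to \tau_{\geq 0} M$ is still surjective in each multidegree $\vec{d} \in \N^m$ because surjectivity of $F(\vec{d}) \to M(\vec{d})$ holds pointwise for all $\vec{d}$; hence $\tau_{\geq 0} M$ is a quotient of the finitely generated free module $\tau_{\geq 0} F$ and is therefore finitely generated. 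Restricting the adjunction of part \eqref{item:TruncationAdjoint} to finitely generated objects then gives the last claim, using that $\io$ preserves finite generation trivially.

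I expect the main obstacle to be the finite-generation claim in part \eqref{item:TruncationFinite}, specifically the fact that $\tau_{\geq 0}$ is a right adjoint and hence a priori only left exact, so one cannot simply say "apply $\tau_{\geq 0}$ to a presentation." The resolution is the elementary but slightly fiddly point that $\tau_{\geq 0}$ is defined degreewise and a map of graded modules is surjective if and only if it is surjective in every multidegree, so applying $\tau_{\geq 0}$ to a degreewise-surjective map yields a degreewise-surjective — hence surjective — map of $R$-modules; combined with the free case this finishes the argument. Everything else is formal bookkeeping with the grading.
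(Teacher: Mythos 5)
Part (1) of your proposal is correct and is exactly the paper's argument (the paper states it more tersely: a map out of $\io A$ is determined degreewise on $\N^m$, which is precisely a map $A \to \tau_{\geq 0}M$). For part (2), however, your argument is aimed at the wrong class of modules. The domain of $\tau_{\geq 0}$ is $\Mod{R}_{\si^{-1}\N^m}$, the category of $\si^{-1}\N^m$-graded modules over $R$ itself, and $\fgMod{R}_{\si^{-1}\N^m}$ consists of such modules that are finitely generated over $R$; ``free'' likewise means free graded $R$-modules with generators in degrees in $\si^{-1}\N^m$. You instead work throughout with modules over the ring $R_{\si}$: your rank-one case is a shifted copy of $R_{\si}$, and your key step is ``choose a surjection $F \surj M$ from a finitely generated free graded $R_{\si}$-module.'' That step fails for a typical object of $\fgMod{R}_{\si^{-1}\N^m}$ --- e.g.\ $R/(t_i)$ with $i \in \si$, or $t^{\vec{a}}R$ with some $a_i < 0$ --- since such an $M$ is not an $R_{\si}$-module at all; conversely, finitely generated $R_{\si}$-modules are generally not finitely generated over $R$, so what you prove is a different statement from the one asserted. (There is also a small slip in your rank-one computation: $\tau_{\geq 0}$ of $R_{\si}$ shifted to degree $\vec{a}$ is free on a generator whose $\si$-coordinates are $0$, which agrees with $\ceil{\vec{a}}$ only when $a_i \leq 0$ for all $i \in \si$; this is harmless because $t_i$ is a unit in $R_{\si}$, so the shift can be normalized.)

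The repair is immediate and lands you on the paper's proof. Do the rank-one case for free graded $R$-modules: for $\vec{a} \in \si^{-1}\N^m$ one has $\tau_{\geq 0}(t^{\vec{a}}R) \cong t^{\ceil{\vec{a}}}R$, because the degrees $\vec{d} \in \N^m$ with $\vec{d} \geq \vec{a}$ are exactly those with $\vec{d} \geq \ceil{\vec{a}}$. Then run your covering argument with a finitely generated free graded $R$-module surjecting onto $M$ (which exists since $M$ is finitely generated over $R$). Your worry about right-exactness is in fact moot: kernels, cokernels and direct sums in these categories are computed degreewise (\cref{lem:ZmGraded}) and $\tau_{\geq 0}$ is degreewise restriction, so it preserves direct sums and cokernels (it is exact); this is precisely how the paper truncates an entire presentation $R\lan x_{\al} \mid y_{\be} \ran$ to the presentation $R\lan \ceil{x_{\al}} \mid \ceil{y_{\be}} \ran$. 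Your degreewise-surjectivity observation is the same point, just phrased without naming the exactness, and it is fine once the free modules are taken over $R$ rather than $R_{\si}$.
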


\begin{proof}
\Eqref{item:TruncationAdjoint} Let $M$ be an $\N^m$-graded $R$-module and $N$ a $\si^{-1} \N^m$-graded $R$-module. A map of $\si^{-1} \N^m$-graded \mbox{$R$-modules} $f \colon \io M \to N$ 
is determined by maps of $\kk$-modules $M(\vec{d}) \to N(\vec{d})$ for each multidegree $\vec{d} \in \N^m$, since $M$ is zero outside of that range. 
Since $R$ is $\N^m$-graded, 
$f$ corresponds to the data of a map of $R$-modules $M \to \tau_{\geq 0} N$, and this bijective correspondence is natural in $M$ and $N$.

\Eqref{item:TruncationFinite} For $x \in M$ in multidegree $\abs{x} = \vec{d}$, let $\ceil{x}$ denote the shift of $x$ obtained by replacing the negative degrees $d_i$ with zeroes, that is:
\[
\ceil{x} \dfn t^{\de} x \quad \text{where } \de = \sup \{ \vec{0}, -\abs{x} \} = -\inf \{ \vec{0}, \abs{x} \}
\]
using the componentwise partial order on $\si^{-1} \N^m$. 
For example, with $\abs{x} = (-3,5)$, we obtain $\ceil{x} = t_1^3 x$ %
in bidegree $(0,5)$. Now 
for a free $\si^{-1} \N^m$-graded $R$-module $R \lan x \ran$ %
on one generator $x$ in multidegree $\abs{x} \in \si^{-1} \N^m$, the truncation is
\[
\tau_{\geq 0}(R\lan x \ran) \cong R \lan \ceil{x} \ran \quad \text{in } \fgMod{R}.
\]
By Lemma~\ref{lem:ZmGraded}, the functor $\tau_{\geq 0}$ preserves direct sums and cokernels, which yields the result.
\end{proof}

\begin{lem}\label{lem:Delocalization}
\begin{enumerate}
\item\label{item:RightAdj} The functor $L_K \colon \fgMod{R} \to \DD{K}$ admits a right adjoint $\rho_K \colon \DD{K} \to \fgMod{R}$.
\item\label{item:Counit} The counit
\[
\ep \colon L_K \rho_K (M) \ral{\cong} M
\]
is an isomorphism for all $M$ in $\DD{K}$.
\end{enumerate}
\end{lem}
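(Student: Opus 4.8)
The plan is to construct $\rho_K$ explicitly and then verify the adjunction and the counit isomorphism using the truncation adjunction of \cref{lem:Truncation}. For a $K$-localized persistence module $M = (M_\si, \phy_{\si,\tau})$, the idea is that $\rho_K(M)$ should be the ``largest'' $\N^m$-graded $R$-module whose localization at each missing face $\si$ recovers $M_\si$. Concretely, I would first form, for each missing face $\si \notin K$, the truncated module $\tau_{\geq 0} M_\si$, which is a finitely generated $\N^m$-graded $R$-module by \cref{lem:Truncation}\Eqref{item:TruncationFinite}. These assemble into a diagram indexed by the poset of missing faces (a finite poset), and I would define $\rho_K(M)$ to be the limit of this diagram in $\fgMod{R}$ — equivalently the equalizer of the two evident maps $\prod_{\si \notin K} \tau_{\geq 0} M_\si \rightrightarrows \prod_{\si \subseteq \tau} \tau_{\geq 0} M_\tau$. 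One must check this limit is again finitely generated; this follows since $R$ is Noetherian, so it suffices that the limit is a submodule of a finitely generated module, and $\fgMod{R}$ is closed under kernels.

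Next I would establish the adjunction $L_K \dashv \rho_K$. Given $N \in \fgMod{R}$ and $M \in \DD{K}$, a morphism $L_K(N) \to M$ is by definition a compatible family of $R_\si$-module maps $R_\si \otimes_R N \to M_\si$, which by the tensor–hom / localization adjunction corresponds to a compatible family of $R$-module maps $N \to \tau_{\geq 0} M_\si$ (using \cref{lem:Truncation}\Eqref{item:TruncationAdjoint} to pass between $\si^{-1}\N^m$-graded and $\N^m$-graded maps). Compatibility with the transition maps $\phy_{\si,\tau}$ says exactly that this family equalizes the two maps into $\prod_{\si\subseteq\tau}\tau_{\geq 0}M_\tau$, hence factors uniquely through the limit $\rho_K(M)$. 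Naturality in both variables is routine. This proves \Eqref{item:RightAdj}.

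For \Eqref{item:Counit}, I need that the counit $\ep_\si \colon R_\si \otimes_R \rho_K(M) \to M_\si$ is an isomorphism for every missing face $\si$. Since $R_\si \otimes_R -$ is exact and commutes with finite limits of modules over which $t_i$ ($i \in \si$) acts invertibly after localization, the key point is: localizing the limit diagram $\{\tau_{\geq 0} M_\tau\}_{\tau \notin K}$ at $\si$ yields a diagram in which $R_\si \otimes_R \tau_{\geq 0} M_\tau \cong M_\tau$ whenever $\si \subseteq \tau$ (because inverting the $t_i$ for $i\in\si$ kills the truncation and then $M_\tau \cong R_\tau \otimes_{R_\si} M_\si$ by the defining isomorphism in \cref{def:LocalModule}), and the terms with $\si \not\subseteq \tau$ become initial/redundant in the localized limit. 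Thus the localized limit collapses onto the single term $M_\si$, giving $\ep_\si$ an isomorphism. I expect this last step — carefully checking that localization commutes with the relevant (finite) limit and that the non-comparable terms drop out, so that the localized equalizer really does reduce to $M_\si$ — to be the main obstacle, and it is presumably where one invokes \cref{lem:SerreEquiv} in the sequel. By \cref{lem:PointwiseExact} an isomorphism in $\DD{K}$ is detected pointwise, so $\ep$ is an isomorphism.
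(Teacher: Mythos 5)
Your part~\Eqref{item:RightAdj} is fine and is essentially the paper's construction: since $\tau_{\geq 0}$ is a right adjoint it commutes with the finite limit over the missing faces, so your $\lim_{\si}\tau_{\geq 0}M_{\si}$ agrees with the paper's $\tau_{\geq 0}\bigl(\lim_{\si}M_{\si}\bigr)$, and the adjunction check via \cref{lem:Truncation} and the $\si$-locality of $M_{\si}$ is the same; your Noetherian argument for finite generation is a welcome extra. The problem is in part~\Eqref{item:Counit}, which is where the real content of the lemma lies: your key intermediate claim is false. For $\si \subseteq \tau$ one does \emph{not} have $R_{\si}\ot_R \tau_{\geq 0}M_{\tau} \cong M_{\tau}$: inverting the $t_i$ with $i \in \si$ only undoes the truncation in the $\si$-coordinates, not in the coordinates of $\tau \setminus \si$. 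For instance, with $m=2$, $\si=\{1\}\subseteq\tau=\{1,2\}$ and $M_{\tau}=\kk[t_1^{\pm},t_2^{\pm}]$, one gets $R_{\si}\ot_R\tau_{\geq 0}M_{\tau}\cong\kk[t_1^{\pm},t_2]\neq M_{\tau}$. Likewise, the assertion that the terms with $\si\not\subseteq\tau$ become ``initial/redundant'' in the localized limit is precisely the point that needs proof, and you defer it (and \cref{lem:SerreEquiv} is not what fills it in; that lemma is only used afterwards, in \cref{prop:SerreQuotient}, once the counit isomorphism is already known).

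What actually makes the localized limit collapse, and is the paper's argument, is this: for \emph{every} missing face $\si'$ the transition map $M_{\si'}\to M_{\si\cup\si'}$ inverts exactly the $t_i$ with $i\in\si\setminus\si'$, hence becomes an isomorphism after applying $R_{\si}\ot_R-$; consequently the limit of the $\si$-localized (untruncated) diagram is computed on the subdiagram of the $\tau\supseteq\si$, where every term is already $\si$-local and $M_{\si}$ is initial, yielding $M_{\si}$. One must then separately reconcile this with your truncation: since limits and localizations of graded modules are computed degreewise, $R_{\si}\ot_R\tau_{\geq 0}N$ agrees with $R_{\si}\ot_R N$ in every degree of $\si^{-1}\N^m$ (the colimit defining the $\si$-localization is unaffected by discarding finitely many truncated stages), and it is this degreewise comparison -- not the false isomorphism $R_{\si}\ot_R\tau_{\geq 0}M_{\tau}\cong M_{\tau}$ -- that lets the truncated diagram have the same limit $M_{\si}$. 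As written, part~\Eqref{item:Counit} therefore has a genuine gap, though it is repairable along the lines just indicated.
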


\begin{proof}
\Eqref{item:RightAdj} We claim that the right adjoint $\rho_K$ is given by
\[
\rho_K (M) = \tau_{\geq 0} \left( \lim_{\si \in \PP{[m]} \setminus K} M_{\si} \right),
\]
where the limit over the missing faces $\si$ is computed in $\Mod{R}_{\Z^m}$. Indeed, %
for any object $P$ in $\fgMod{R}$, there is a natural bijection of hom-sets 
\begin{flalign*}
&& \fgMod{R} \left( P, \rho_K(M) \right) &= \fgMod{R} \left( P, \tau_{\geq 0} \left( \lim_{\si \in \PP{[m]} \setminus K} M_{\si} \right) \right) && \\
&& &= \Mod{R}_{\Z^m} \left( P, \lim_{\si \in \PP{[m]} \setminus K} M_{\si} \right) && \text{by Lemma~\ref{lem:Truncation}} \\
&& &\cong \Mod{R}_{\Z^m}^{\PP{[m]} \setminus K} \left( \cst(P), M \right) && \text{hom in $(\PP{[m]} \setminus K)$-shaped diagrams} \\
&& & && \text{from the constant diagram on $P$} \\
&& &\cong \mathcal{L}^{\text{no loc}}(K) \left( L_K (P), M \right) && \text{see Lemma~\ref{lem:PointwiseExact}} \\ %
&& &= \DD{K} \left( L_K(P), M \right). &&
\end{flalign*}
The next-to-last isomorphism uses the fact that for each missing face $\si \notin K$, the $R_{\si}$-module $M_{\si}$ is $\si$-local (i.e., $t_i$ acts invertibly on $M_{\si}$ for all $i \in \si$), which yields
\[
\Mod{R}_{\Z^m}(P, M_{\si}) \cong \Mod{R_{\si}}_{\Z^m}(R_{\si} \ot_R P, M_{\si}).
\]
\Eqref{item:Counit} The counit $\ep \colon L_K \rho_K (M) \to M$ is given in position $\si \in \PP{[m]} \setminus K$ by the map of $R_{\si}$-modules
\[
\xymatrix @R-1pc {
\left( L_K \rho_K (M) \right)_{\si} \ar@{=}[d] \ar[r]^-{\ep_{\si}} & M_{\si}, \\
R_{\si} \ot_{R} \rho_K (M)
}
\]
which we will show is an isomorphism. First, the $R$-module $\lim_{\si \in \PP{[m]} \setminus K} M_{\si}$ can have elements of multidegree $\vec{d}$ with some $d_i < 0$ only if $i$ belongs to all the missing faces $\si \in \PP{[m]} \setminus K$, so that $t_i$ already acts invertibly on all the $M_{\si}$. Thus we may assume without loss of generality that the missing faces have empty intersection $\bigcap_{\PP{[m]} \setminus K} \si = \emptyset$, in which case the truncation does nothing:
\[
\tau_{\geq 0} \left( \lim_{\si \in \PP{[m]} \setminus K} M_{\si} \right) = \lim_{\si \in \PP{[m]} \setminus K} M_{\si}.
\]
Now fix a missing face $\si$. Since localization $R_{\si} \ot_R -$ preserves finite limits, 
we have: 
\begin{equation}\label{eq:LocalizationLim}
R_{\si} \ot_R  \left( \lim_{\tau \in \PP{[m]} \setminus K} M_{\tau} \right) \cong \lim_{\tau \in \PP{[m]} \setminus K} (R_{\si} \ot_R M_{\tau}).
\end{equation}
For any other missing face $\si'$, the transition map
\[
\phy_{\si', \si \cup \si'} \colon M_{\si'} \to M_{\si \cup \si'}
\]
is inverting the $t_i$ for $i \in \si \setminus \si'$, in particular becomes an isomorphism after $\si$-localizing. Hence the limit \eqref{eq:LocalizationLim} is computed on the subdiagram past the position $\si$:
\begin{align*}
\lim_{\tau \in \PP{[m]} \setminus K} (R_{\si} \ot_R M_{\tau}) &\cong \lim_{\substack{\tau \in \PP{[m]} \setminus K\\ \si \subseteq \tau}} (R_{\si} \ot_R M_{\tau}) & \\
&\cong \lim_{\substack{\tau \in \PP{[m]} \setminus K\\ \si \subseteq \tau}} M_{\tau} & \text{since $M_{\tau}$ is already $\si$-local} \\
&\cong M_{\si}
\end{align*}
since $M_{\si}$ is initial among said subdiagram. 
\end{proof}

\begin{rem}
Being a right adjoint, the functor $\rho_K \colon \DD{K} \to \fgMod{R}$ preserves limits and thus is left exact. However, $\rho_K$ need \emph{not} be right exact. For example, consider the case $m=2$ with $K = \{ \emptyset \}$, as in Example~\ref{ex:2dim}. The quotient map $q \colon R \surj R/(t_1 t_2)$ in $\fgMod{R}$ yields the epimorphism in $\DD{K}$
\[
L_K(q) \colon L_K (R) \surj L_K(R/ (t_1 t_2)).
\]
However, the map in $\fgMod{R}$ 
\[
\rho_K L_K(q) \colon \rho_K L_K (R) \surj \rho_K L_K(R/ (t_1 t_2))
\]
is \emph{not} an epimorphism. Its source is $\rho_K L_K(R) \cong R$ and its target is
\begin{align*}
\rho_K L_K(R/ (t_1 t_2)) &= \tau_{\geq 0} \lim \left( \kk[t_1, t_2^{\pm}] / (t_1) \to 0 \leftarrow \kk[t_1^{\pm}, t_2] / (t_2) \right) \\
&= \tau_{\geq 0} \left( \kk[t_1, t_2^{\pm}] / (t_1) \op \kk[t_1^{\pm}, t_2] / (t_2) \right) \\
&= \kk[t_1, t_2] / (t_1) \op \kk[t_1, t_2] / (t_2) \\
&= R / (t_1) \op R / (t_2).
\end{align*}
The map $\rho_K L_K(q)$ is the map
\[
\xymatrix{
R \ar[r]^-{\smat{q_1 \\ q_2}} & R/(t_1) \op R/(t_2) \\
}
\]
with coordinates the two quotient maps, which is not surjective in bidegree $(0,0)$. 
\end{rem}

\begin{prop}\label{pr:SerreQuotient}
For any simplicial complex $K$ on the vertex set $[m]$, the induced functor $\ol{L_K}$ out of the Serre quotient 
is an equivalence of categories:
\[
\xymatrix{
\fgMod{R} \ar[d]_{q} \ar[r]^-{L_K} & \DD{K}. \\
\fgMod{R} / \ker(L_K) \ar@{-->}[ur]_{\ol{L_K}}^{\simeq} & \\
}
\]
\end{prop}

\begin{proof}
This follows from Lemmas~\ref{lem:Delocalization} and \ref{lem:SerreEquiv}.
\end{proof}

We will argue the $\Z^m$-graded analogue of Proposition~\ref{pr:SerreQuotient}. The localization functor
\[
L_K \colon \fgMod{R}_{\Z^m} \to \DD{K}_{\Z^m}
\]
is defined the same way, though the construction of a right adjoint as in Lemma~\ref{lem:Delocalization} does \emph{not} work in the $\Z^m$-graded case. Nevertheless, we will reduce the problem to the $\N^m$-graded case by shifting the degrees.

\begin{prop}\label{pr:SerreQuotientZm}
For any simplicial complex $K$ on the vertex set $[m]$, the induced functor $\ol{L_K}$ out of the Serre quotient 
is an equivalence of categories:
\[
\xymatrix{
\fgMod{R}_{\Z^m} \ar[d]_{q} \ar[r]^-{L_K} & \DD{K}_{\Z^m}. \\
\fgMod{R}_{\Z^m} / \ker(L_K) \ar@{-->}[ur]_{\ol{L_K}}^{\simeq} & \\
}
\]
\end{prop}

\begin{proof}
By Lemma~\ref{lem:SerreFaithful}, $\ol{L_K}$ is faithful. We now argue that $\ol{L_K}$ is essentially surjective. 

Given a multidegree $\vec{e} \in \Z^m$, let $T_{\vec{e}} \colon \fgMod{R}_{\Z^m} \ral{\cong} \fgMod{R}_{\Z^m}$ denote the isomorphism of categories that shifts the modules by $\vec{e}$, that is:
\[
(T_{\vec{e}} M)(\vec{d}) = M(\vec{d} - \vec{e}).
\]
Also denote by $T_{\vec{e}} \colon \DD{K}_{\Z^m} \ral{\cong} \DD{K}_{\Z^m}$ the induced isomorphism of categories. 
Now 
let $M$ be an object of $\DD{K}_{\Z^m}$. There is a large enough multidegree $\vec{e} \in \Z^m$ such that the shift $T_{\vec{e}} M$ lies in the full 
subcategory $\DD{K}$. By Proposition~\ref{pr:SerreQuotient}, there is an $R$-module $A$ satisfying $\ol{L_K}(A) \cong T_{\vec{e}} M$. Undoing the shift yields
\[
\ol{L_K}(T_{-\vec{e}} A) \cong T_{-\vec{e}} \ol{L_K}(A) \cong T_{-\vec{e}} T_{\vec{e}} M = M.
\]
The same argument works for maps $f \colon M \to N$ in $\DD{K}_{\Z^m}$, showing that $\ol{L_K}$ is full. 
\end{proof}

\begin{ex}
In the case $m=2$ still with $K = \{ \emptyset \} = \sk_{-1} \De^1$, %
consider the %
ideal $(t_1, t_2^2) \subset R$ and the quotient $R$-module
\[
M = (t_1, t_2^2) / (t_1^3, t_1^2 t_2^3).
\] 
The delocalization of the localization of $M$ is
\[
\rho_{K} L_{K}(M) = R/(t_1^2) 
\]
as illustrated in Figure~\ref{fig:Delocaliz}. The unit map $\eta \colon M \to \rho_{K} L_{K}(M)$ fills in the area shaded in green and quotients out the area shaded in red.

\begin{figure}[H]
\begin{adjustbox}{width=0.6\textwidth}
\begin{tikzpicture}[scale=1]
 \begin{axis}[
axis lines=middle, 
ticks=none, 
axis equal, 
xmin=-0.2, xmax=5, ymin=-1, ymax=5]
\draw [blue, fill=blue] (0,2) circle (\VertexSize);
\draw [blue, fill=blue] (1,0) circle (\VertexSize);
\draw [red, fill=red] (3,0) circle (\VertexSize);
\draw [red, fill=red] (2,3) circle (\VertexSize);
\draw [blue, thick] (0,6) -- (0,2) -- (1,2) -- (1,0) -- (3,0);
\draw [blue, dotted] (3,0) -- (3,3) -- (2,3) -- (2,6);
\fill [blue, opacity=0.1] (0,6) -- (0,2) -- (1,2) -- (1,0) -- (3,0) -- (3,3) -- (2,3) -- (2,6) -- (0,6);
\node at (3,-0.8) {$M$};
 \end{axis}
\draw [->] (6.8,3) -- (7.5,3);
\node [above] at (7.1,3) {$\eta$};
\end{tikzpicture}
\hspace{3ex}
\begin{tikzpicture}[scale=1]
 \begin{axis}[
axis lines=middle, 
ticks=none, 
axis equal, 
xmin=-0.2, xmax=5, ymin=-1, ymax=5]
\draw [blue, fill=blue] (0,0) circle (\VertexSize);
\draw [red, fill=red] (2,0) circle (\VertexSize);
\draw [blue, thick] (0,6) -- (0,0) -- (2,0);
\fill [blue, opacity=0.1] (0,0) rectangle (2,6);
\draw [blue, dotted] (2,0) -- (2,6);
\fill [green, opacity=0.1] (0,0) rectangle (1,2);
\fill [red, opacity=0.1] (2,0) rectangle (3,3);
\node at (2.7,-0.8) {$\rho_{K} L_{K}(M)$};
 \end{axis}
\end{tikzpicture}
\end{adjustbox}
\caption{Delocalization of the localization of the $\kk[t_1,t_2]$-module $M$.}
\label{fig:Delocaliz}
\end{figure}
\end{ex}

\section{Torsion pair for localized persistence modules}\label{sec:TorsionTh}

In this section and the next two we restrict to the simplicial complex 
\[
K_m \coloneqq \sk_{m-3} \De^{m-1} = \{ \sigma \subseteq [m] \mid \#\sigma \leq m-2\}, 
\]
the codimension $2$ skeleton of $\Delta^{m-1}$. We construct two subcategories of 
$\DD{K_m}$ , a torsion subcategory $\cat{T}$ and a torsion-free subcategory $\cat{F}$ which together give a torsion pair. We show that objects in $\DD{K_m}$ break into a direct sum of their torsion and torsion-free parts. In the following two sections, we break the torsion objects into direct sums of indecomposables, and also the torsion-free objects in the case $m=2$. 

Background on torsion pairs (also called torsion theories) can be found in \cite{Dickson66}, \cite{Popescu73}*{\S 4.8}, or \cite{Borceux94v2}*{\S 1.12}.

\begin{defn}
Let $\cat{A}$ be an abelian category. A \Def{torsion pair}, $(\cat{T}, \cat{F})$ 
of $\cat{A}$ is a pair of 
full %
subcategories $\cat{T}, \cat{F} \subseteq \cat{A}$ both closed under isomorphisms, such that:
\begin{enumerate}
\item for every object $M\in \cat{A}$ there is a short exact sequence 
\begin{equation}\label{eq:SEStorsion}
\xymatrix{
0 \ar[r] & T(M) \ar[r] & M \ar[r] & F(M) \ar[r] & 0 \\
}
\end{equation}
where $T(M)\in \cat{T}$ and $F(M)\in \cat{F}$, and 
\item $\Hom(T,F)=0$ for any $T\in \cat{T}$ and $F\in \cat{F}$.
\end{enumerate}
\end{defn}

The subcategory $\cat{T}$ is called the \emph{torsion} class and its objects torsion objects and $\cat{F}$ is the \emph{torsion-free} class whose objects are also called torsion-free.

The short exact sequence~\eqref{eq:SEStorsion} is unique up to isomorphism \cite{Borceux94v2}*{Proposition~1.12.4} and may be chosen to be functorial in $M$ \cite{BauerBOS20}*{Remark~2.4}. 

For $i\in [m]$ let $\sigma_i=[m]\setminus \{ i\}$. 
For $M\in  \fgMod{R_{\sigma_i}}$, $T_i(M)$ is the submodule of torsion elements in $M$, so 
$x\in T_i(M)$ if for some $k$, $t_i^k x = 0$. 
We define two full subcategories $\cat{T}_i, \cat{F}_i\subset \fgMod{R_{\sigma_i}}$ by 
\begin{align*} 
\cat{T}_i &= \{ M \mid T_i(M)=M \} = \{ M \mid M\otimes R_{[m]}=0 \} \\
\cat{F}_i &= \{ M \mid T_i(M)=0 \}.
\end{align*}
Note that $T_i$ %
defined an endofunctor of $\fgMod{R_{\sigma_i}}$ 
and we define the endofunctor $F_i$ by $F_i(M)=M/T_i(M)$. 
We record some of this in a proposition. 

\begin{prop}\label{pr:weareTT}
For every $M$ in $\fgMod{R_{\sigma_i}}$ we have a short exact sequence 
\[
0\to T_i(M)\to M \to F_i(M) \to 0
\]
where $T_i(M)\in \cat{T}_i$ and $F_i(M)\in \cat{F}_i$.

Also for any $A \in \cat{T}_i$ and $B\in \cat{F}_i$, $\Hom(A,B)=0$, hence 
$(\cat{T}_i, \cat{F}_i)$ is a torsion pair. 
\end{prop}

We now recall 
the decomposition of objects 
in $\fgMod{R_{\sigma_i}}$ into indecomposables, namely interval modules. We use %
interval notation 
\begin{align*}
&[a,b)_{i} = t_i^{a} R_{\sigma_i} / t_i^{b} = \coker (t_i^{a} R_{\sigma_i} \ral{t_i^{b-a}} t_i^{a} R_{\sigma_i}) \\
&[a,\infty)_{i} = t_i^{a} R_{\sigma_i}.
\end{align*}

\begin{prop}\label{pr:wesplit}
\begin{enumerate}
\item The $R_{\sigma_i}$-modules %
$[a,b)_{i}$ and $[a,\infty)_{i}$ 
are indecomposable. 
\item For any $M$ the short exact sequence in Proposition~\ref{pr:weareTT} splits, and thus 
$M\cong T_i(M)\oplus F_i(M)$. 
\item For any torsion module $M\in \cat{T}_i$, there is a decomposition 
\[
M\cong \bigoplus_j [a_j,b_j)_{i}.
\]
For any torsion-free module $M\in \cat{F}_i$, there is a decomposition 
\[
M\cong \bigoplus_j [a_j,\infty)_{i}.
\]
\end{enumerate}
\end{prop}

\begin{proof}
The inclusion $\kk[t_i] \to R_{\sigma_i}$ induces the restriction functor
$\fgMod{R_{\sigma_i}} \to \fgMod{\kk[t_i]}$, which is an equivalence of categories, cf.\ Remark~\ref{rem:GradedVect}. The claims then follow from the same %
results in $\fgMod{\kk[t_i]}$, by the structure theorem for finitely generated graded modules over a graded principal ideal domain, as observed in  \cite{CarlssonZ05}*{Theorem~2.1}. %
\end{proof}

We define these two subcategories of $\DD{K_m}$: 
\begin{align*}
&\cat{T}=\{ M \mid M_{\sigma_i} \in \cat{T}_i {\rm \ for \ all \ } i\in [m] \} = \{ M\mid M_{[m]}=0\} \\
&\cat{F}=\{ M \mid M_{\sigma_i} \in \cat{F}_i {\rm \ for \ all \ } i\in [m] \}.
\end{align*}
Corresponding to $\cat{T}$ and $\cat{F}$ we have endofunctors 
$T,F \colon \DD{K_m} \to \DD{K_m}$ defined by
\[ 
T(M)_{\sigma}=
\begin{cases}
T_i(M_{\sigma_i}) & {\rm if \ } \sigma=\sigma_i \\
0 & {\rm if \ } \sigma=[m]
\end{cases}
\]
and 
\[
F(M)_{\sigma}=M_{\sigma}/T(M)_{\sigma}.
\]

\begin{prop}\label{pr:allareTT}
\begin{enumerate}
\item \label{item:allareTT} For any $M$ in $\DD{K_m}$, $T(M)\in \cat{T}$, 
$F(M)\in \cat{F}$ and 
we have a short exact sequence 
\[
0\to T(M)\to M \to F(M) \to 0.
\]
Also for any $A \in \cat{T}$ and $B\in \cat{F}$, $\Hom(A,B)=0$, hence 
$(\cat{T}, \cat{F})$ is a torsion pair in $\DD{K_m}$. 
\item \label{item:allsplit} For any $M$ in $\DD{K_m}$, the short exact sequence in part~\Eqref{item:allareTT} splits, and thus 
$M\cong T(M)\oplus F(M)$. 
\end{enumerate}
\end{prop}

\begin{proof}
\Eqref{item:allareTT} It is clear that %
$T$ lands in $\cat{T}$. That 
$F$ lands in $\cat{F}$ follows from the fact that $F_i$ lands in $\cat{F}_i$ 
by Proposition~\ref{pr:weareTT} and since cokernels are computed pointwise (Lemma~\ref{lem:PointwiseExact}). That the sequence is exact and that $M/T(M)$ is torsion-free both follow since cokernels are computed pointwise.  
Finally $\Hom(A,B)=0$ since for every $\sigma \not\in K_m$, 
$\Hom(A_{\sigma},B_{\sigma})=0$ by Proposition~\ref{pr:weareTT}.

\Eqref{item:allsplit} We construct a retraction of the inclusion
\[
I\colon T(M) \to M.
\]
For $i\in [m]$, by Proposition~\ref{pr:wesplit}, 
$I_{\sigma_i}\colon T(M)_{\sigma_i} \to M_{\sigma_i}$ has a retraction
$r_{\sigma_i}\colon M_{\sigma_i}\to T(M)_{\sigma_i}$. 
Since $T(M)_{[m]}=0$, letting $r_{[m]}=0$ defines a retraction to $I$. 
\end{proof}

\section{Decomposition of torsion objects}\label{sec:Torsion}

We want to describe some objects in $\DD{K_m}$ that will be our indecomposables. 

\begin{nota}\label{nota:Interval}
For $a<b\in \N$ and $i\in [m]$, we view the $R_{\si_i}$-module $[a,b)_i$ as an object of $\DD{K_m}$ also denoted (by abuse of notation)
$[a,b)_i\in \DD{K_m}$, defined by 
\[
[a,b)_i(\sigma)=
\begin{cases}
[a,b)_{i} &\text{if } \sigma=\sigma_i \\
0 &\text{if } \sigma=\sigma_j, \ j \neq i \\
0 &\text{if } \sigma=[m]. \\
\end{cases}
\]
\end{nota}
The objects $[a,b)_i$ are illustrated in Figure~\ref{fig:Strips} in the case $m=2$.

\begin{lem}\label{lem:LocalizTorsion}
There is an isomorphism in $\DD{K_m}$
\[
[a,b)_i \cong L_{K_m} \left( t_i^a R / t_i^b R \right).
\] 
\end{lem}

\begin{lem}\label{lem:IndecompoTorsion}
The objects $[a,b)_i$ in $\cat{T}$ are indecomposable.
\end{lem}

\begin{prop}\label{pr:DecompoTorsion}
Each torsion object $M\in \cat{T}$ decomposes as a direct sum of objects of the form $[a,b)_i$. 
\end{prop}

\begin{proof}
For each $i \in [m]$, the $R_{\si_i}$-module admits an interval decomposition
\[
M_{\sigma_i} \cong \bigoplus_j [a_j, b_j)_i.
\]
Since $M \in \cat{T}$ is torsion, it satisfies $M_{[m]}=0$. Thus the decompositions assemble into a decomposition 
$M \cong \bigoplus_i \bigoplus_j [a_j, b_j)_i$, 
where each interval $[a_j, b_j)_i$ is viewed as an object of $\DD{K_m}$ as in Notation~\ref{nota:Interval}.
\end{proof}

\section{Decomposition of torsion-free objects in dimension \texorpdfstring{$m=2$}{2}}\label{sec:Torsionfree}

In this section, we investigate the structure of the torsion-free objects $\cat{F} \subset \DD{K_m}$ defined after Proposition~\ref{pr:wesplit}.

\begin{nota}\label{nota:Quadrant}
For a multidegree $\vec{a} = (a_1, \cdots , a_m) \in \N^m$, define $[\vec{a},\infty) \in \DD{K_m}$ by 
\[
[\vec{a},\infty)_{\sigma} = 
\begin{cases}
[a_i,\infty)_{i} &\text{if } \sigma = \sigma_i \\
R_{[m]} &\text{if } \sigma = [m]. \\
\end{cases}
\]
The transition map $\phy_i \colon [a_i,\infty)_{i} \to R_{[m]}$ is the inclusion of $R_{\si_i}$-submodule.
\end{nota}

\begin{figure}[H]
\begin{adjustbox}{width=\textwidth}
\begin{tikzpicture}[scale=1]
 \begin{axis}[
axis lines=middle, 
ticks=none, 
axis equal, 
xmin=-0.5, xmax=5, ymin=-1, ymax=5]
\draw [blue, thick] (1,-1) -- (1,6);
\draw [blue, dashed] (3,-1) -- (3,6);
\fill [blue, opacity=0.1] (1,-1) rectangle (3,6);
\node [below left] at (1,0) {$a$};
\node [below right] at (3,0) {$b$};
 \end{axis}
\node at (3.5,-0.7) {$[a, b)_1$};
\end{tikzpicture}
\hspace{1ex}
\begin{tikzpicture}[scale=1]
 \begin{axis}[
axis lines=middle, 
ticks=none, 
axis equal, 
xmin=-0.5, xmax=5, ymin=-1, ymax=5]
\draw [blue, thick] (-1,1) -- (6,1);
\draw [blue, dashed] (-1,3) -- (6,3);
\fill [blue, opacity=0.1] (-1,1) rectangle (6,3);
\node [below left] at (0,1) {$a$};
\node [above left] at (0,3) {$b$};
 \end{axis}
\node at (3.5,-0.7) {$[a,b)_2$};
\end{tikzpicture}
\hspace{1ex}
\begin{tikzpicture}[scale=1]
 \begin{axis}[
axis lines=middle, 
ticks=none, 
axis equal, 
xmin=-0.5, xmax=5, ymin=-1, ymax=5]
\draw [blue, fill=blue] (2,2) circle (\VertexSize);
\draw [blue, thick] (2,6) -- (2,2) -- (6,2);
\fill [blue, opacity=0.1] (2,2) rectangle (6,6);
\draw [blue, dotted] (2,2) -- (2,0);
\node [below] at (2,0) {$a_1$};
\draw [blue, dotted] (2,2) -- (0,2);
\node [left] at (0,2) {$a_2$};
 \end{axis}
\node at (3.5,-0.7) {$[\vec{a}, \infty)$};
\end{tikzpicture}
\end{adjustbox}
\caption{Some indecomposable objects of $\DD{K_2}$.}
\label{fig:Strips}
\end{figure}

\begin{lem}\label{lem:LocalizFree}
For any %
$\vec{a} \in \N^m$, there is an isomorphism in $\DD{K_m}$
\[
[\vec{a},\infty ) \cong L_{K_m} (t^{\vec{a}} R),
\]
where $t^{\vec{a}} = t_1^{a_1} \cdots t_m^{a_m}$.
\end{lem}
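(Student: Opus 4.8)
The plan is to compute both sides of the claimed isomorphism explicitly and check that the identification is compatible with transition maps. First I would recall that $L_{K_m}(t^{\vec a} R)$ is by definition the $K_m$-localized persistence module whose value at a missing face $\sigma \notin K_m$ is $R_\sigma \otimes_R t^{\vec a} R$, together with the canonical localization maps. Since the missing faces of $K_m = \sk_{m-3}\Delta^{m-1}$ are exactly the sets of cardinality $m-1$ (the $\sigma_i$) together with $[m]$ itself, it suffices to identify these values. For $\sigma = \sigma_i$ we have $R_{\sigma_i} \otimes_R t^{\vec a} R = t^{\vec a} R_{\sigma_i}$; the point is that in $R_{\sigma_i}$, where all $t_j$ with $j \neq i$ are already units, the ideal generated by $t^{\vec a} = t_1^{a_1}\cdots t_m^{a_m}$ coincides with the ideal generated by $t_i^{a_i}$, so $t^{\vec a} R_{\sigma_i} = t_i^{a_i} R_{\sigma_i} = [a_i, \infty)_i$. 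For $\sigma = [m]$ we similarly get $R_{[m]} \otimes_R t^{\vec a} R = t^{\vec a} R_{[m]} = R_{[m]}$, since $t^{\vec a}$ is a unit in $R_{[m]}$.

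Next I would check that these identifications are compatible with the transition maps. The transition map $\phy_i$ of $L_{K_m}(t^{\vec a}R)$ from position $\sigma_i$ to position $[m]$ is the canonical localization map $t^{\vec a} R_{\sigma_i} \to t^{\vec a} R_{[m]}$, which under the identifications above becomes the inclusion of $R_{\sigma_i}$-submodules $t_i^{a_i} R_{\sigma_i} \hookrightarrow R_{[m]}$. This is precisely the transition map specified in \cref{nota:Quadrant} for $[\vec a, \infty)$. Since there are no other nontrivial inclusions among missing faces (any two of the $\sigma_i$ are incomparable, and each $\sigma_i$ is contained only in $[m]$), and the conditions $\phy_{\sigma,\sigma} = \id$ and cocycle conditions are automatic, this exhausts the data of the $K_m$-localized persistence module. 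Hence the componentwise identifications assemble into an isomorphism $[\vec a, \infty) \cong L_{K_m}(t^{\vec a} R)$ in $\DD{K_m}$.

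I do not anticipate a serious obstacle here; this is essentially a bookkeeping lemma. The only point that deserves a sentence of care is the equality of ideals $t^{\vec a} R_{\sigma_i} = t_i^{a_i} R_{\sigma_i}$ inside the localization, i.e.\ that inverting all variables except $t_i$ makes the monomial $t^{\vec a}$ differ from $t_i^{a_i}$ by a unit; this is immediate since $t^{\vec a} = t_i^{a_i} \cdot \prod_{j \neq i} t_j^{a_j}$ and each $t_j$ for $j \neq i$ is invertible in $R_{\sigma_i}$. One should also note in passing that $t^{\vec a} R$ is finitely generated over $R$, so $L_{K_m}(t^{\vec a}R)$ is indeed a well-defined object of $\DD{K_m}$, whose components $[a_i,\infty)_i$ and $R_{[m]}$ are visibly finitely generated over $R_{\sigma_i}$ and $R_{[m]}$ respectively.
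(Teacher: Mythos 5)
Your proof is correct and is exactly the routine componentwise verification intended here: the paper states this lemma without proof, and your computation of $L_{K_m}(t^{\vec a}R)$ at the missing faces $\sigma_i$ and $[m]$, together with the unit observation $t^{\vec a}=t_i^{a_i}\cdot\prod_{j\neq i}t_j^{a_j}$ identifying $t^{\vec a}R_{\sigma_i}=t_i^{a_i}R_{\sigma_i}=[a_i,\infty)_i$ and the check that the localization maps become the submodule inclusions of \cref{nota:Quadrant}, is precisely the omitted argument.
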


\begin{proof}
In position $\si_i$, we have the $R_{\si_i}$-module 
$L_{K_m} (t^{\vec{a}} R)(\sigma_i) = R_{\sigma_i} \otimes_R t^{\vec{a}} R 
\cong t_i^{a_i} R_{\sigma_i} = [a_i, \infty)_i$. 
By naturality, these pointwise isomorphisms are compatible with the transition maps.
\end{proof}

In the case $m=2$, Figure~\ref{fig:Strips} illustrates the $R$-modules $t_1^a R / t_1^b R$, $t_2^a R / t_2^b R$, and $t_1^{a_1}t_2^{a_2} R$, which can be thought of as the 
objects $[a,b)_1$, $[a,b)_2$ and $[(a_1,a_2),\infty)$ in $\DD{K_2}$, by Lemmas~\ref{lem:LocalizTorsion} and \ref{lem:LocalizFree}. 

\begin{lem}\label{lem:MinDegree}
Let $M$ be a torsion-free finitely generated $R_{\si_i}$-module, for some $1 \leq i \leq m$. Let $x \in M$ be a (non-zero) element of minimal degree in the %
$i$\textsuperscript{th} direction, 
that is:
\[
\abs{x}_i \leq \abs{y}_i \quad \text{for all } y \in M,
\]
and let $\lan x \ran \subseteq M$ denote the $R_{\si_i}$-submodule generated by $x$. Then the quotient map $M \surj M / \lan x \ran$ admits a section. In particular, %
$M / \lan x \ran$ is also torsion-free.
\end{lem}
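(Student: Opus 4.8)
The plan is to reduce to the case of modules over $\kk[t_i]$ via the equivalence $\fgMod{R_{\si_i}} \simeq \fgMod{\kk[t_i]}$ used in \cref{prop:wesplit}, and there to produce the section directly from the structure theory of f.g.\ modules over the PID $\kk[t_i]$. First I would pass $M$ and the chosen element $x$ through the restriction equivalence, so that without loss of generality $M$ is a finitely generated $\kk[t_i]$-module and $x \in M$ is an element whose degree is minimal among all nonzero homogeneous elements of $M$ (keeping track of the grading, which corresponds to the $\Z$-grading under which $\kk[t_i]$ has $\abs{t_i} = 1$). By the classification in \cref{prop:wesplit}, write $M \cong \bigoplus_j [a_j, b_j)_i \oplus \bigoplus_k [c_k, \infty)_i$ as a direct sum of interval modules. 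The degree of a minimal nonzero element of such a summand $[a,b)_i$ (resp.\ $[c,\infty)_i$) is $a$ (resp.\ $c$), so minimality of $\abs{x}$ forces $x$ to have degree equal to $\min_j a_j = \min_k c_k$, the overall minimum, call it $d$.

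Next I would identify $\lan x \ran$. Since $x$ lives in the bottom degree $d$, writing $x = (x_j)_j \oplus (x_k)_k$ in the decomposition, each component $x_j$ or $x_k$ is a scalar multiple of the generator of the corresponding interval module sitting in degree $d$ — it is zero for any summand whose starting index exceeds $d$. Pick one summand, say $N$, realizing the minimum $d$ on which the corresponding component of $x$ is nonzero; after an automorphism of $M$ (a change of basis among the bottom-degree generators) I can arrange $x$ to be precisely the generator of $N$, i.e.\ $M \cong N \oplus M'$ with $x$ the canonical generator of $N$. Then $\lan x \ran = N$, so $M/\lan x \ran \cong M'$, and the quotient map is the projection $N \oplus M' \surj M'$ followed by the identification; this visibly splits via the inclusion of $M'$. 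Finally, if $M$ is torsion-free then every summand is of the form $[c_k,\infty)_i$; deleting one of them leaves $M' \cong \bigoplus_{k \neq k_0} [c_k,\infty)_i$, which is again torsion-free, giving the last sentence.

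The one point requiring care — and the main obstacle — is the automorphism step: one must check that the change of basis on the finitely many bottom-degree generators of $M$ that moves the vector $(x_j, x_k)$ to a standard basis vector actually extends to a (graded) automorphism of $M$, not merely of the degree-$d$ part. This is fine because two interval modules $[a,b)_i$ and $[a,b')_i$ with the same bottom index $a$ need not agree, so one cannot freely permute bottom generators across summands with different tops; instead the correct statement is that we only need an automorphism fixing the summand $N$ realizing the minimum and adjusting the other bottom generators to kill their $N$-components, which is triangular and hence invertible. Alternatively, and perhaps more cleanly, one avoids bases entirely: choose the summand $N \cong [d,b)_i$ or $[d,\infty)_i$ on which $x$ projects nontrivially, and observe that the composite $N \hookrightarrow M \surj M/N'$ — where $N'$ is the complementary summand — sends the generator of $N$ to (a unit multiple of) the image of $x$, hence $\lan x \ran \cap N' = 0$ and $\lan x\ran + N' = M$, so $M = \lan x \ran \oplus N'$ and the quotient splits. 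This dodges the need to normalize $x$ on the nose. Transporting the splitting back through the equivalence $\fgMod{\kk[t_i]} \simeq \fgMod{R_{\si_i}}$ completes the argument; the torsion-free claim is then immediate since the complement $N'$ is a sub-sum of the original interval decomposition.
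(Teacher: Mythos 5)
Your reduction to $\fgMod{\kk[t_i]}$ and the use of the interval decomposition match the paper's strategy, but both of your ways of finishing have the same genuine gap: you never impose any condition on \emph{which} summand $N$ you select, and the choice matters as soon as $M$ has torsion. Concretely, take $M = [0,1) \op [0,2)$ over $\kk[t]$ with degree-$0$ generators $g_1, g_2$, and $x = g_1 + g_2$. If you pick $N = [0,1)$ (a summand of minimal start on which $x$ projects nontrivially, as your recipe allows), then $t x = t g_2$ is a nonzero element of $\lan x \ran \cap N'$, so the claims $\lan x\ran \cap N' = 0$ and $M = \lan x \ran \op N'$ in your ``cleaner'' variant are false; likewise the assignment $g_1 \mapsto x$, $g_2 \mapsto g_2$ is not even a module map (it must send $t g_1 = 0$ to $t x \neq 0$), so the ``triangular'' automorphism you invoke does not exist for this choice. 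The phrase ``adjusting the other bottom generators to kill their $N$-components'' does not repair this: the obstruction is not about the other generators but about the annihilator of the generator of $N$ versus the lifetime of $x$.

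The missing idea is exactly what the paper's proof is built around: set $b = \abs{x} + \min\{\ell \mid t^{\ell}x = 0\}$ and choose $N$ (equivalently, work inside the subsum $M''$ of summands $[a,b_j)$ with $b_j \leq b$) so that the endpoint of $N$ is \emph{maximal} among the summands on which $x$ has a nonzero component, i.e.\ equal to $b$. With that choice the map sending the generator of $N$ to $x$ and fixing the other generators is a well-defined graded endomorphism (since $t^{b-a}x=0$) and an automorphism, and then $N'$ really is a complement of $\lan x \ran$, which gives the section. Note also that in the torsion-free case every summand is $[c,\infty)_i$ and any choice of $N$ works, so your argument does prove the ``in particular'' clause that is used later; but the lemma as stated allows torsion, and for that generality your proof needs the maximal-endpoint choice (your claim that minimality forces $\min_j a_j = \min_k c_k$ is also not quite right, though harmless).
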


\begin{proof}
By the equivalence of categories $\fgMod{R_{\si_i}} \cong \fgMod{\kk[t_i]}$, we may assume $m=1$. The \mbox{$\kk[t]$-module} $M$ has an interval decomposition 
$M \cong \bigoplus_j [a_j,\infty)$. 
Writing $a = \min_j \{ a_j \}$, the element $x \in M$ must lie in the submodule
\[
M' = \bigoplus_{\substack{j \\ a_j = a}} [a,\infty)
\]
because of its minimal degree $\abs{x} = a$. 
There is an automorphism of the $\kk$-vector space %
$M'(a) \cong \kk^r$ 
sending $x$ to a generator of a summand $[a,\infty)$ which moreover extends to an automorphism of the $\kk[t]$-module $M'$. Hence we may assume that $x$ is a generator of a summand $[a,\infty)$, in which case the quotient module $M / \lan x \ran$ consists of the other summands. 
\end{proof}

The following lemma follows from \cite{AtiyahM69}*{\S 3}, and will be used in the next proof.

\begin{lem}\label{lem:Localiz}
Let $R$ be a commutative ring and $S \subset R$ a multiplicative set. Let $f \colon M \to N$ be a map of $R$-modules which is an $S$-localization, i.e., isomorphic to $M \to S^{-1} M \cong (S^{-1} R) \ot_{R} M$.
\begin{enumerate}
\item\label{item:SubSource} Given an $R$-submodule $M' \subseteq M$ and taking $N' \dfn f(M) \subseteq N$, the composite
\[
\xymatrix{
M' \ar[r]^-{f} & N' \ar[r] & S^{-1} N' \\
}
\]
is an $S$-localization.
\item\label{item:SubTarget} Given an $S^{-1} R$-submodule $N' \subseteq N$ and taking $M' \dfn f^{-1}(N') \subseteq M$, the map $f' \colon M' \to N'$ obtained by restricting $f$ is an $S$-localization.
\end{enumerate}
\end{lem}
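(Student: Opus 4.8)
The plan is to derive both statements from exactness of the localization functor $S^{-1}(-)$, together with two elementary facts: every submodule and every quotient of an $S^{-1}R$-module is again $S$-local, and the localization maps $\ell_M \colon M \to S^{-1}M$ are natural in $M$. Throughout I identify $N$ with $S^{-1}M$ via $f$, so that $f = \ell_M$ and, for an $S$-local $R$-module, the localization map is an isomorphism.

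For \ref{item:SubSource}: apply $S^{-1}(-)$ to the inclusion $\io \colon M' \inj M$. Exactness shows that $S^{-1}\io \colon S^{-1}M' \to S^{-1}M = N$ is injective, and its image is the $S^{-1}R$-submodule of $N$ generated by $f(M') = N'$. Since $N$ is $S$-local, that submodule is precisely the image of the (also injective, by exactness) canonical map $S^{-1}N' \to S^{-1}N = N$. Hence $S^{-1}\io$ identifies $S^{-1}M'$ with $S^{-1}N'$, compatibly with the maps into $N$. By naturality of the localization maps, the composite $M' \xrightarrow{\ell_{M'}} S^{-1}M' \cong S^{-1}N'$ coincides with $M' \xrightarrow{f} N' \to S^{-1}N'$, which is therefore an $S$-localization.

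For \ref{item:SubTarget}: consider the short exact sequence $0 \to M' \to M \to M/M' \to 0$ with $M' = f^{-1}(N')$. The induced map $M/M' \to N/N'$ is injective, \emph{precisely because} $M' = f^{-1}(N')$: an element maps to $0$ exactly when its representative lies in $f^{-1}(N') = M'$. Since $N/N'$ is an $S^{-1}R$-module, hence $S$-local, applying $S^{-1}(-)$ keeps this map injective: $S^{-1}(M/M') \inj S^{-1}(N/N') = N/N'$. Now localizing the short exact sequence gives $0 \to S^{-1}M' \to N \to S^{-1}(M/M') \to 0$, so $S^{-1}M' = \ker\!\bigl(N \to S^{-1}(M/M')\bigr)$. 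Composing $N \to S^{-1}(M/M') \inj N/N'$ recovers the quotient map $N \to N/N'$, whose kernel is $N'$; since the second map is injective, $\ker\!\bigl(N \to S^{-1}(M/M')\bigr) = N'$. Thus $S^{-1}M' \cong N'$ inside $N$, and by naturality of $\ell$ this identification carries $\ell_{M'}$ to the restricted map $f' \colon M' \to N'$, so $f'$ is an $S$-localization.

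Neither step poses a genuine obstacle; the only points requiring care are the bookkeeping of the canonical identifications inside $N$ and, in \ref{item:SubTarget}, the observation that injectivity of $M/M' \to N/N'$ is exactly what the hypothesis $M' = f^{-1}(N')$ provides (without it, $S^{-1}(M/M')$ need not inject into $N/N'$ and the kernel computation fails). One could instead phrase the argument abstractly via the universal property of $S$-localization, but the direct computation above is the most self-contained.
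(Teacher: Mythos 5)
Your proof is correct. Note, though, that the paper itself offers no proof of this lemma: it is explicitly recalled as a ``basic fact from algebra'' and stated without argument before being used in the proof of the scanning lemma, so there is nothing to compare your route against. Your verification via exactness of $S^{-1}(-)$, naturality of the localization maps $\ell_M \colon M \to S^{-1}M$, and the fact that submodules and quotients of $S^{-1}R$-modules are $S$-local is the standard, self-contained way to check both parts; in particular your observation in \ref{item:SubTarget} that injectivity of $M/M' \to N/N'$ is exactly what $M' = f^{-1}(N')$ buys is the key point, and your kernel computation identifying $S^{-1}M'$ with $N'$ inside $N$ is sound. One small remark: the statement of \ref{item:SubSource} in the paper reads $N' \dfn f(M)$, which is evidently a typo for $f(M')$; you silently (and correctly) work with $N' = f(M')$, which is the version actually needed.
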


\begin{lem}\label{lem:Scanning}
Let $M$ be a non-zero object in $\cat{F}$. Then there exists a monomorphism $\mu \colon [\vec{a}, \infty) \inj M$ for some $\vec{a} \in \N^m$ such that the quotient module $\coker(\mu) = M / [\vec{a}, \infty)$ is also in $\cat{F}$.
\end{lem}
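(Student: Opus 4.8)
The plan is to build the monomorphism $\mu$ componentwise, starting from a single nonzero vector of $M_{[m]}$ and pulling it back to a rank-one submodule of each $M_{\si_i}$. First I would observe that $M_{[m]} \neq 0$: if it vanished, then for each $i$ the module $M_{\si_i}$ would lie in $\cat{F}_i$ (since $M \in \cat{F}$) and simultaneously in $\cat{T}_i$ (since $M_{\si_i} \ot_{R_{\si_i}} R_{[m]} \cong M_{[m]} = 0$), hence would be zero by the torsion pair of \cref{prop:weareTT}, forcing $M = 0$, a contradiction. So I may fix a nonzero homogeneous element $v \in M_{[m]}$ of multidegree $\vec{0}$ (using \cref{rem:GradedVect}) and set $N \coloneqq R_{[m]} v \subseteq M_{[m]}$; as $v \neq 0$ sits in degree $\vec{0}$, this submodule is free of rank one, so $\rank N = 1$.

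Next, for each $i \in [m]$, the transition map $\phy_i \colon M_{\si_i} \to M_{[m]}$ is a $t_i$-localization by the defining condition of $\DD{K_m}$. Applying part~\Eqref{item:SubTarget} of \cref{lem:Localiz} to the $R_{[m]}$-submodule $N \subseteq M_{[m]}$, the preimage $M^{(i)} \coloneqq \phy_i^{-1}(N) \subseteq M_{\si_i}$ has the property that the restriction $\phy_i \colon M^{(i)} \to N$ is again a $t_i$-localization; moreover $M^{(i)}$ is finitely generated (Noetherianity of $R_{\si_i}$), torsion-free (a submodule of $M_{\si_i} \in \cat{F}_i$), and of rank one (as $R_{[m]} \ot_{R_{\si_i}} M^{(i)} \cong N$). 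By the classification in \cref{prop:wesplit}, $M^{(i)} \cong [a_i,\infty)_i$ for a unique $a_i \in \N$, with a homogeneous generator $y_i$ of multidegree $a_i \vec{e}_i$. Since $\phy_i(y_i)$ generates $N$ over $R_{[m]}$ and lies in the line $\kk \, t_i^{a_i} v$, after rescaling $y_i$ I may assume $\phy_i(y_i) = t_i^{a_i} v$. Setting $\vec{a} = (a_1,\ldots,a_m)$, I then define $\mu \colon [\vec{a},\infty) \to M$ by $1 \mapsto v$ on the $[m]$-component and $t_i^{a_i} \mapsto y_i$ on the $\si_i$-component (cf.\ \cref{nota:Quadrant}). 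The relation $\phy_i(y_i) = t_i^{a_i} v$ is exactly the compatibility of $\mu$ with the transition maps, and each component is injective (the annihilator of $y_i$ vanishes since $M_{\si_i}$ is torsion-free; on the $[m]$-part this is immediate from \cref{rem:GradedVect}), so $\mu$ is a monomorphism by the pointwise description of kernels in \cref{lem:PointwiseExact}.

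For the cokernel, \cref{lem:PointwiseExact} gives $(\coker \mu)_{\si_i} = M_{\si_i}/M^{(i)}$. Because $M^{(i)} = \phy_i^{-1}(N)$, the map $\phy_i$ induces an injection $M_{\si_i}/M^{(i)} \inj M_{[m]}/N$; the target is a module over $R_{[m]}$, on which $t_i$ acts invertibly, so it lies in $\cat{F}_i$, and hence so does its submodule $(\coker \mu)_{\si_i}$. Therefore $\coker \mu \in \cat{F}$, which finishes the argument.

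The step I expect to be the real obstacle is ensuring simultaneously that the quotient of each $M_{\si_i}$ by the chosen rank-one submodule stays torsion-free \emph{and} that the choices for different $i$ are mutually compatible. A naive approach would pick a minimal-degree generator $x_i \in M_{\si_i}$ and invoke \cref{lem:MinDegree} to make $M_{\si_i}/\langle x_i \rangle$ torsion-free; but the minimal-degree parts of different $M_{\si_i}$ need not have overlapping images in $M_{[m]}$, so there may be no common $v$ with $\phy_i(x_i) \in \kk\, t_i^{a_i} v$ for all $i$ at once. The device that resolves this is to reverse the order: choose $v$ downstairs in $M_{[m]}$ first and let $y_i$ generate the \emph{saturated} submodule $\phy_i^{-1}(R_{[m]} v)$ — saturation is precisely what forces $M_{\si_i}/\langle y_i \rangle$ to embed into the $t_i$-torsion-free module $M_{[m]}/N$, so torsion-freeness of the quotient holds for every choice of $v$ and the compatibility conflict disappears.
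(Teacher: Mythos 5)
Your proof is correct, but it takes a genuinely different route from the paper's. The paper runs an $m$-step ``scanning'': at the $i$-th step it decomposes the current module at position $\si_i$ into intervals, keeps only the summands with \emph{minimal} starting degree $a_i$, and pulls back along the remaining transition maps; it then lifts a single element through the resulting pullback squares to build $\mu$, and proves torsion-freeness of $\coker(\mu)$ by a case analysis along the filtration $M^{(0)} \supseteq \cdots \supseteq M^{(m)}$ together with \cref{lem:MinDegree}. You instead choose the endpoint first: a nonzero $v \in M_{[m]}(\vec{0})$, and at each position take the saturation $M^{(i)} = \phy_i^{-1}(R_{[m]}v)$, which by \cref{lem:Localiz}~\Eqref{item:SubTarget} and \cref{prop:wesplit} is a single interval $[a_i,\infty)_i$; torsion-freeness of the cokernel is then immediate from the embedding $M_{\si_i}/M^{(i)} \inj M_{[m]}/R_{[m]}v$ (a small wording point: $M_{[m]}/R_{[m]}v$ is not finitely generated over $R_{\si_i}$, so it is not literally an object of $\cat{F}_i$, but all you use is that it has no $t_i$-torsion, which holds since $t_i$ acts invertibly on it). This is shorter and cleaner for the existence statement, and all the verifications you give (injectivity of the components, pointwise cokernels via \cref{lem:PointwiseExact}, compatibility with the transition maps per \cref{nota:Quadrant}) are sound. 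What your construction does not deliver is the minimality of $\vec{a}$ that is built into the paper's scanning, and that minimality is exactly what the later arguments exploit: \cref{lem:Detect}, \cref{lem:Battleship}, \cref{lem:LiftingM2}, and \cref{lem:M3Indecompo} quantify over maps ``constructed as in \cref{lem:Scanning}'' and invoke, for instance, $a_1 \geq a^j_1$ for all $j$. With an arbitrary $v$ this can fail: for $M = [(1,0),\infty) \op [(0,1),\infty)$ and $v$ the sum of the (degree-shifted) images of the two generators, your recipe gives $\vec{a} = (1,1)$ and the quotient map $M \surj [(0,0),\infty)$ is, up to sign, the non-split epimorphism of \cref{ex:NotSplit}. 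So your argument proves the lemma exactly as stated, but it could not be swapped in for the paper's proof without reworking the downstream detection and splitting arguments (or constraining the choice of $v$ to recover minimality).
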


\begin{proof}
Note that $M$ being non-zero and torsion-free forces $M_{[m]} \neq 0$, which in turn forces $M_{\si_i} \neq 0$ for all $1 \leq i \leq m$. We will construct certain subobjects
\[
M^{(m)} \subseteq M^{(m-1)} \subseteq \cdots \subseteq M^{(1)} \subseteq M^{(0)} = M
\]
in $m$ steps. The process is illustrated in Figure~\ref{fig:Scanning} in the case $m=2$.

\textbf{Step $i=1$.} Since $M_{\si_1} \neq 0$ is a torsion-free $R_{\si_1}$-module, by Proposition~\ref{pr:wesplit}, it admits a finite direct sum decomposition
\begin{equation}\label{eq:Decompo}
M_{\si_1} \cong \bigoplus_j [a_{1,j}, \infty)_1.
\end{equation}
Take $a_1 \dfn \min_j \{ a_{1,j} \}$ and take the submodule $M^{(1)}_{\si_1} \subseteq M_{\si_1}$ corresponding to 
\[
M^{(1)}_{\si_1} \cong \bigoplus_{\substack{j\\a_{1,j=a_1}}} [a_{1,j}, \infty)_1
\]
via the decomposition~\eqref{eq:Decompo}, i.e., the summands with the minimal starting point. Now apply the transition map $\phy_1 \colon M_{\si_1} \to M_{[m]}$ and consider the $R_{[m]}$-submodule
\[
M^{(1)}_{[m]} \dfn t_1^{-1} \phy_1(M^{(1)}_{\si_1}) \cong R_{[m]} \ot_{R_{\si_1}} \phy_1(M^{(1)}_{\si_1}) \subseteq M_{[m]}
\]
with transition map
\[
\phy^{(1)}_1 \colon M^{(1)}_{\si_1} \to M^{(1)}_{[m]}
\]
induced by $\phy_1$. 
Via the decomposition~\eqref{eq:Decompo}, $M^{(1)}_{[m]} \subseteq M_{[m]}$ corresponds to a summand inclusion:
\[
\xymatrix{
M^{(1)}_{[m]} \ar@{-}[d]_{\cong} \ar@{^{(}->}[r] & M_{[m]} \ar@{-}[d]^{\cong} \\
\bigoplus_{a_{1,j=a_1}} R_{[m]} \ar@{^{(}->}[r] & \bigoplus_j R_{[m]}. \\
}
\]
For the remaining positions $\ell \neq 1$, take the pullback
\[
M^{(1)}_{\si_{\ell}} \dfn \phy_{\ell}^{-1} (M^{(1)}_{[m]}) \subseteq M_{\si_{\ell}}
\]
with transition map
\[
\phy^{(1)}_{\ell} \colon M^{(1)}_{\si_{\ell}} \to M^{(1)}_{[m]}
\]
induced by $\phy_{\ell}$. 
The submodules $M^{(1)}_{\si_i} \subseteq M_{\si_i}$ assemble into a subobject $M^{(1)} \subseteq M$. Note that the condition $M^{(1)}_{[m]} \neq 0$ ensures $M^{(1)} \neq 0$.

\textbf{Steps $i > 1$.} Starting from $M^{(1)}$, repeat the process for the step $i=2$, which yields a subobject $M^{(2)} \subseteq M^{(1)}$ satisfying $M^{(2)} \neq 0$. Repeat the process for the steps $i=3, \ldots, m$. Collect the numbers $a_i$ into a multidegree $\vec{a} = (a_1, \ldots, a_m) \in \N^m$.

\textbf{Constructing a monomorphism $\la \colon [\vec{a},\infty) \inj M^{(m)}$.} Note that a morphism $[\vec{a},\infty) \to N$ in $\DD{K_m}$ is the same data as a family of elements $x_i \in N_{\si_i}(\vec{a})$ %
that all map to the same element in the terminal position:
\[
\phy_i(x_i) = \phy_j(x_j) \in N_{[m]}(\vec{a}) \quad \text{for all } 1 \leq i, j, \leq m.
\]
We will produce such elements for $N = M^{(m)}$. By construction, the $R_{\si_m}$-module $M^{(m)}_{\si_m}$ has an interval decomposition
\[
M^{(m)}_{\si_m} \cong \bigoplus_j [a_{m}, \infty)_m.
\]
Pick one of the summands $[a_{m}, \infty)_m \inj M^{(m)}_{\si_m}$ and let $x_m \in M^{(m)}_{\si_m}(\vec{a})$ be the image of the generator $t^{\vec{a}} 1 \in [a_{m}, \infty)_m (\vec{a}) \cong \kk$. 
Take $x := \phy^{(m)}_m(x_m) \in M^{(m)}_{[m]}(\vec{a})$. To find the next element $x_{m-1}$, consider the pullback diagram in \mbox{$R_{\si_{m-1}}$-modules}
\[
\xymatrix @R-0.2pc {
\therefore \exists \, x_{m-1} \ar@{}[r]|{\in} & M^{(m)}_{\si_{m-1}} \pb \ar[d]_{\inc_{\si_{m-1}}} \ar[r]^-{\phy^{(m)}_{m-1}} & M^{(m)}_{[m]} \ar[d]^{\inc_{[m]}} & x \ar@{}[l]|{\ni} \\
\exists \, \tild{x}_{m-1} \ar@{}[r]|{\in} & M^{(m-1)}_{\si_{m-1}} \ar@{-}[d]_{\cong} \ar[r]^-{\phy^{(m-1)}_{m-1}} & M^{(m-1)}_{[m]} \ar@{-}[d]^{\cong} & x \ar@{}[l]|{\ni} \\
& \bigoplus [a_{m-1}, \infty)_{m-1} \ar[r]^-{\bigoplus \inc} & \bigoplus R_{[m]} \\
}
\]
where the rightward maps invert $t_{m-1}$. Since the bottom map is an isomorphism of $\kk$-modules in multidegree $\vec{a}$, there is an element $\tild{x}_{m-1} \in  M^{(m-1)}_{\si_{m-1}}$ satisfying $\phy^{(m-1)}_{m-1}(\tild{x}_{m-1}) = x$, hence also an element in the pullback $x_{m-1} \in  M^{(m)}_{\si_{m-1}}$ satisfying $\phy^{(m)}_{m-1}(x_{m-1}) = x$.

To find the next element $x_{m-2}$, apply the same argument to the pullback diagram in $R_{\si_{m-2}}$-modules
\[
\xymatrix @R-0.2pc {
\therefore \exists \, x_{m-2} \ar@{}[r]|{\in} & M^{(m)}_{\si_{m-2}} \pb \ar[d]_{\inc_{\si_{m-2}}} \ar[r]^-{\phy^{(m)}_{m-2}} & M^{(m)}_{[m]} \ar[d]^{\inc_{[m]}} & x \ar@{}[l]|{\ni} \\
& M^{(m-1)}_{\si_{m-2}} \pb \ar[d]_{\inc_{\si_{m-2}}} \ar[r]^-{\phy^{(m-1)}_{m-2}} & M^{(m-1)}_{[m]} \ar[d]^{\inc_{[m]}} & \\
\exists \, \tild{x}_{m-2} \ar@{}[r]|{\in} & M^{(m-2)}_{\si_{m-2}} \ar@{-}[d]_{\cong} \ar[r]^-{\phy^{(m-2)}_{m-2}} & M^{(m-2)}_{[m]} \ar@{-}[d]^{\cong} & x \ar@{}[l]|{\ni} \\
& \bigoplus [a_{m-2}, \infty)_{m-2} \ar[r]^-{\bigoplus \inc} & \bigoplus R_{[m]}. \\
}
\]
Repeat the process to find the remaining elements $x_i \in M^{(m)}_{\si_i}(\vec{a})$, which jointly define a morphism $\la \colon [\vec{a},\infty) \to M^{(m)}$ in $\DD{K_m}$. For each $1 \leq i \leq m$, the $R_{\si_i}$-module $M^{(m)}_{\si_i}$ is torsion-free, hence the map
\[
\la_{\si_i} \colon [a_i ,\infty)_{i} \inj M^{(m)}_{\si_i}
\]
is a monomorphism. By Lemma~\ref{lem:PointwiseExact}, the map $\la \colon [\vec{a},\infty) \inj M^{(m)}$ is also a monomorphism. 
Define $\mu \colon [\vec{a}, \infty) \inj M$ as %
$\la$ followed by the inclusion $M^{(m)} \subseteq M$.

\textbf{The cokernel of $\mu$ is torsion-free.} For each $1 \leq i \leq m$, let us show that the $R_{\si_i}$-module
\[
\coker(\mu)_{\si_i} = M_{\si_i} / \lan x_i \ran
\]
is torsion-free. Let $z \in M_{\si_i} \setminus \lan x_i \ran$. We want to show that for any non-zero scalar $\al \in R_{\si_i}$, the condition $\al z \in M_{\si_i} \setminus \lan x_i \ran$ still holds. We distinguish two cases, depending where $z$ lies in the decreasing filtration
\[
M_{\si_i} = M^{(0)}_{\si_i} \supseteq M^{(1)}_{\si_i} \supseteq \cdots \supseteq M^{(m)}_{\si_i}.
\]

\textbf{Case: $z$ does not survive all the way.} Assume $z \in M^{(\ell-1)}_{\si_i} \setminus M^{(\ell)}_{\si_i}$ for some $1 \leq \ell \leq m$. Then the condition $\al z \in M^{(\ell-1)}_{\si_i} \setminus M^{(\ell)}_{\si_i}$ also holds. In the case $i \neq \ell$, this follows from the pullback square on the left:
\[
\xymatrix @R-0.2pc @C+0.6pc {
& \bigoplus R_{[m]} \ar@{-}[d]_{\cong} & \bigoplus [a_{\ell}, \infty)_{\ell} \ar[l]_-{\bigoplus \inc} \ar@{-}[d]^{\cong} \\
M^{(\ell)}_{\si_i} \pb \ar[d]_{\inc_{\si_i}} \ar[r]^-{\phy^{(\ell)}_i} & M^{(\ell)}_{[m]} \ar[d]^{\inc_{[m]}} & M^{(\ell)}_{\si_{\ell}} \ar[d]^{\inc_{\si_{\ell}}} \ar[l]_-{\phy^{(\ell)}_{\ell}} \\
M^{(\ell-1)}_{\si_i} \ar[r]^-{\phy^{(\ell-1)}_i} & M^{(\ell-1)}_{[m]} & M^{(\ell-1)}_{\si_{\ell}} \ar[l]_-{\phy^{(\ell-1)}_{\ell}} \\
& \bigoplus_j R_{[m]} \ar@{-}[u]^{\cong} & \bigoplus_j [a_{\ell,j}, \infty)_{\ell} \ar[l]_-{\bigoplus \inc} \ar@{-}[u]_{\cong} \\
}
\]
and the fact that $R_{[m]}$ has no zero-divisors. In the case $i = \ell$, it follows from the rightmost column and the fact that each $R_{\si_{\ell}}$-module $[a_{\ell,j}, \infty)_{\ell}$ is torsion-free. In either case, $\al z$ cannot lie in the submodule $\lan x_i \ran \subseteq M^{(m)}_{\si_i} \subseteq M^{(\ell)}_{\si_i}$.

\textbf{Case: $z$ survives all the way.} Assume $z \in M^{(m)}_{\si_i}$. By construction, the $R_{\si_i}$-module $M^{(m)}_{\si_i}$ is concentrated in degrees $\vec{d} \in \si_i^{-1} \N^m$ with $d_i \geq a_i$. Since $x_i$ achieves the minimal degree $\abs{x_i}_i = a_i$, the quotient $M^{(m)}_{\si_i} / \lan x_i \ran$ is torsion-free, by Lemma~\ref{lem:MinDegree}.
\end{proof}

\begin{figure}[H]
\begin{adjustbox}{width=\textwidth}
\begin{tikzpicture}[scale=1]
 \begin{axis}[
axis lines=middle, 
ticks=none, 
axis equal, 
xmin=-0.5, xmax=5, ymin=-1, ymax=5]
\draw [blue, fill=blue] (1,2) circle (\VertexSize);
\draw [blue, thick] (1,6) -- (1,2) -- (6,2);
\fill [blue, opacity=0.1] (1,2) rectangle (6,6);
\draw [blue, dotted] (1,2) -- (1,0);
\node [below] at (1,0) {$a_1$};
\draw [green, fill=green] (1,4) circle (\VertexSize);
\draw [green, thick] (1,6) -- (1,4) -- (6,4);
\fill [green, opacity=0.1] (1,4) rectangle (6,6);
\draw [red, fill=red] (3,1) circle (\VertexSize);
\draw [red, thick] (3,6) -- (3,1) -- (6,1);
\fill [red, opacity=0.1] (3,1) rectangle (6,6);
\node at (3,-0.8) {$M = M^{(0)}$};
 \end{axis}
\end{tikzpicture}
\hspace{1ex}
\begin{tikzpicture}[scale=1]
 \begin{axis}[
axis lines=middle, 
ticks=none, 
axis equal, 
xmin=-0.5, xmax=5, ymin=-1, ymax=5]
\draw [blue, fill=blue] (1,2) circle (\VertexSize);
\draw [blue, thick] (1,6) -- (1,2) -- (6,2);
\fill [blue, opacity=0.1] (1,2) rectangle (6,6);
\draw [blue, dotted] (1,2) -- (0,2);
\node [left] at (0,2) {$a_2$};
\draw [green, fill=green] (1,4) circle (\VertexSize);
\draw [green, thick] (1,6) -- (1,4) -- (6,4);
\fill [green, opacity=0.1] (1,4) rectangle (6,6);
\node at (2.7,-0.8) {$M^{(1)}$};
 \end{axis}
\end{tikzpicture}
\hspace{1ex}
\begin{tikzpicture}[scale=1]
 \begin{axis}[
axis lines=middle, 
ticks=none, 
axis equal, 
xmin=-0.5, xmax=5, ymin=-1, ymax=5]
\draw [blue, fill=blue] (1,2) circle (\VertexSize);
\draw [blue, thick] (1,6) -- (1,2) -- (6,2);
\fill [blue, opacity=0.1] (1,2) rectangle (6,6);
\draw [blue, dotted] (1,2) -- (1,0);
\node [below] at (1,0) {$a_1$};
\draw [blue, dotted] (1,2) -- (0,2);
\node [left] at (0,2) {$a_2$};
\node at (2.7,-0.8) {$M^{(2)}$};
 \end{axis}
\end{tikzpicture}
\end{adjustbox}
\caption{The ``scanning'' process in the proof of Lemma~\ref{lem:Scanning}.}
\label{fig:Scanning}
\end{figure}

\begin{lem}\label{lem:Dim1}
Let $M$ be an object in $\cat{F}$.
\begin{enumerate}
\item\label{item:Dim0} If %
$\rank M = 0$ holds, 
then $M=0$ holds.
\item\label{item:Dim1} If %
$\rank M = 1$ holds, 
then $M$ is isomorphic to $[\vec{d}, \infty)$ for some $\vec{d}$.
\end{enumerate}
\end{lem}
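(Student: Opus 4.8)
The plan is to establish \eqref{item:Dim0} first, then bootstrap to \eqref{item:Dim1} using the ``scanning'' \cref{lem:Scanning}.

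For \eqref{item:Dim0}, I would argue as follows. If $\rank M = \dim_\kk M_{[m]} = 0$, then $M_{[m]} = 0$ (using \cref{rem:GradedVect}). Fix $i \in [m]$: since $M \in \cat{F}$, the $R_{\si_i}$-module $M_{\si_i}$ is torsion-free, so \cref{prop:wesplit} gives a decomposition $M_{\si_i} \cong \bigoplus_j [a_j,\infty)_i$. Applying $R_{[m]} \ot_{R_{\si_i}} -$ turns this into $0 = M_{[m]} \cong \bigoplus_j R_{[m]}$, which forces the index set to be empty, so $M_{\si_i} = 0$. As $i$ is arbitrary, $M = 0$.

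For \eqref{item:Dim1}, I would assume $\rank M = 1$, so in particular $M \neq 0$, and invoke \cref{lem:Scanning} to get a monomorphism $\mu \colon [\vec{a},\infty) \inj M$ for some $\vec{a} \in \N^m$ with $C \dfn \coker(\mu) \in \cat{F}$. I would then evaluate the short exact sequence $0 \to [\vec{a},\infty) \xrightarrow{\mu} M \to C \to 0$ at the top face $[m]$; this is an exact operation because kernels and cokernels in $\DD{K_m}$ are computed pointwise (\cref{lem:PointwiseExact}), giving $0 \to R_{[m]} \xrightarrow{\mu_{[m]}} M_{[m]} \to C_{[m]} \to 0$. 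Since $\mu_{[m]}$ is injective and $\dim_\kk R_{[m]} = 1 = \dim_\kk M_{[m]}$, it must be an isomorphism, whence $C_{[m]} = 0$, i.e.\ $\rank C = 0$. Now part \eqref{item:Dim0} forces $C = 0$, so $\mu$ is epi as well as mono, hence an isomorphism, and $M \cong [\vec{a},\infty)$. We may then take $\vec{d} = \vec{a}$.

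I do not expect a genuine obstacle here: the heavy lifting is entirely inside \cref{lem:Scanning}, and the remaining steps — exactness of ``evaluate at $[m]$'' and the dimension count upgrading $\mu_{[m]}$ from injective to bijective — are routine. The only thing to be slightly careful about is that $\rank$ is defined through the $\vec{0}$-graded piece, so all of the dimension bookkeeping should be run through \cref{rem:GradedVect}.
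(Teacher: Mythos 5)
Your proposal is correct, and for part~\Eqref{item:Dim1} it is the same argument as the paper's: apply \cref{lem:Scanning}, evaluate the resulting short exact sequence at the position $[m]$ (exact by \cref{lem:PointwiseExact}), and use the dimension count $1-1=0$ together with part~\Eqref{item:Dim0} to kill the cokernel. The only divergence is in part~\Eqref{item:Dim0}: the paper proves the contrapositive by taking a single nonzero $x \in M_{\si_i}$, noting that torsion-freeness makes $t^{\abs{x}}R_{\si_i} \to M_{\si_i}$ injective, and using flatness of $R_{[m]}$ to get $M_{[m]} \neq 0$; you instead invoke the full interval decomposition of \cref{prop:wesplit} and localize it. Both are valid — the paper's route is slightly more economical since it avoids the structure theorem, while yours makes the rank count $\dim_{\kk} M_{[m]} = \#\{\text{summands}\}$ completely explicit — and your caution about routing the dimension bookkeeping through \cref{rem:GradedVect} is exactly the right point to watch.
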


\begin{proof}
\Eqref{item:Dim0} 
Since each $R_{\si_i}$-module $M_{\si_i}$ is torsion-free, it embeds in its localization $R_{[m]} \ot_{R_{\si_i}} M_{\si_i} \cong M_{[m]} = 0$, yielding $M_{\si_i} = 0$ and thus $M = 0$.

\Eqref{item:Dim1} By Lemma~\ref{lem:Scanning}, there exists a monomorphism $\mu \colon [\vec{d}, \infty) \inj M$ such that the quotient $\coker(\mu)$ is also in $\cat{F}$. Evaluating at the position $[m]$ yields a short exact sequence of $R_{[m]}$-modules
\[
\xymatrix{
0 \ar[r] & R_{[m]} \ar[r]^-{\mu_{[m]}} & M_{[m]} \ar[r] & \coker(\mu)_{[m]} \ar[r] & 0 \\
}
\]
from which we obtain the dimension count
\[
\dim_{\kk} \coker(\mu)_{[m]} = \dim_{\kk} M_{[m]} - \dim_{\kk} R_{[m]} = 1 - 1 = 0.
\]
Part~\Eqref{item:Dim0} then implies $\coker(\mu) = 0$, so that $\mu \colon [\vec{d}, \infty) \ral{\cong} M$ is an isomorphism.
\end{proof}

\begin{lem}\label{lem:IndecompoFree}
For any %
$\vec{d} \in \N^m$, the object $[\vec{d},\infty)$ in $\DD{K_m}$ is indecomposable.
\end{lem}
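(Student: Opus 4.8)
The plan is to compute the endomorphism ring of $[\vec{d},\infty)$ in $\DD{K_m}$ and to show it is isomorphic to the field $\kk$. Since a field has no idempotents other than $0$ and $1$, and since $[\vec{d},\infty)$ is nonzero (its value at $[m]$ is $R_{[m]}$), idempotent splitting in the abelian category $\DD{K_m}$ then forces $[\vec{d},\infty)$ to be indecomposable.

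Recall that the missing faces of $K_m$ are exactly $\sigma_1, \ldots, \sigma_m$ together with $[m]$, that $[\vec{d},\infty)_{\sigma_i} = [d_i,\infty)_i = t_i^{d_i} R_{\sigma_i}$ is free of rank one over $R_{\sigma_i}$ and $[\vec{d},\infty)_{[m]} = R_{[m]}$ is free of rank one over $R_{[m]}$, and that the transition maps $\phy_i \colon t_i^{d_i} R_{\sigma_i} \inj R_{[m]}$ are the submodule inclusions. An endomorphism $f$ of $[\vec{d},\infty)$ is thus a tuple $(f_{\sigma_1}, \ldots, f_{\sigma_m}, f_{[m]})$ compatible with the $\phy_i$. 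The first step is to observe that a degree-preserving endomorphism of the free rank-one module $t_i^{d_i} R_{\sigma_i}$ is determined by the image of its generator, which lies in the degree-$\vec{d}$ part $[d_i,\infty)_i(\vec{d})$; this piece is $1$-dimensional over $\kk$, spanned by $t^{\vec{d}}$, so $f_{\sigma_i} = \la_i \cdot \id$ for a scalar $\la_i \in \kk$. Likewise $f_{[m]} = \mu \cdot \id$ for a scalar $\mu \in \kk$, since the degree-$\vec{0}$ part of $R_{[m]}$ is $1$-dimensional.

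The second step is to impose compatibility: the relation $\phy_i \circ f_{\sigma_i} = f_{[m]} \circ \phy_i$ reads $\la_i \phy_i = \mu \phy_i$, and since $\phy_i$ is a monomorphism this yields $\la_i = \mu$ for every $i$. Hence every endomorphism of $[\vec{d},\infty)$ is multiplication by a single scalar acting in all positions, so $\Hom_{\DD{K_m}}\!\big([\vec{d},\infty), [\vec{d},\infty)\big) \cong \kk$ as a ring, and we conclude as above. There is essentially no obstacle here; the only point requiring a little care is the identification of the relevant graded pieces as $1$-dimensional.

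As an alternative I would bypass the pointwise bookkeeping using the adjunction $(L_{K_m}, \rho_{K_m})$ of \cref{lem:Delocalization} together with the isomorphism $[\vec{d},\infty) \cong L_{K_m}(t^{\vec{d}} R)$ established above: then $\Hom_{\DD{K_m}}\!\big([\vec{d},\infty),[\vec{d},\infty)\big) \cong \Hom_{\fgMod{R}}\!\big(t^{\vec{d}} R, \rho_{K_m} L_{K_m}(t^{\vec{d}} R)\big)$, and a direct computation of the limit defining $\rho_{K_m}$ gives $\rho_{K_m} L_{K_m}(t^{\vec{d}} R) = \tau_{\geq 0}\big(\bigcap_{i} t^{\vec{d}} R_{\sigma_i}\big) = t^{\vec{d}} R$, so the endomorphism ring is $\Hom_{\fgMod{R}}(t^{\vec{d}} R, t^{\vec{d}} R) \cong \kk$ exactly as before.
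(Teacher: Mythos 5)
Your proof is correct, but it takes a genuinely different route from the paper. The paper's proof is a one-liner built on the torsion-pair machinery: given a decomposition $[\vec{d},\infty) \cong P \oplus Q$, both summands lie in $\cat{F}$ because the torsion-free class is closed under subobjects, and the rank count of \cref{lem:Dim1} (rank is additive, $\rank [\vec{d},\infty) = 1$, and a rank-zero object of $\cat{F}$ is zero) forces one summand to vanish. You instead compute the endomorphism ring directly: since all modules in sight are graded and morphisms preserve the multidegree, each $f_{\si_i}$ is determined by the image of the generator $t^{\vec{d}}$ in the one-dimensional piece $[d_i,\infty)_i(\vec{d})$, likewise $f_{[m]}$ by the one-dimensional piece $R_{[m]}(\vec{0})$, and compatibility with the injective transition maps $\phy_i$ identifies all the scalars, giving $\operatorname{End}_{\DD{K_m}}([\vec{d},\infty)) \cong \kk$; absence of nontrivial idempotents then yields indecomposability. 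Your alternative via the adjunction is also sound: the limit over the missing faces is the intersection $\bigcap_i t^{\vec{d}} R_{\si_i} = t^{\vec{d}} R$ inside $R_{[m]}$, the truncation is vacuous, and $\Hom_{\fgMod{R}}(t^{\vec{d}}R, t^{\vec{d}}R) \cong \kk$. What the paper's argument buys is brevity given the machinery already in place (and it stays entirely at the level of the class $\cat{F}$, so it is the same count used throughout \cref{sec:Torsionfree}); what yours buys is independence from the torsion pair and \cref{lem:Dim1}, plus a strictly stronger conclusion, namely that $[\vec{d},\infty)$ is a brick with endomorphism ring $\kk$, not merely indecomposable. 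The only point to be explicit about is the grading convention (morphisms are degree-preserving), which is what makes the relevant graded pieces one-dimensional.
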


\begin{proof}
Consider a direct sum decomposition $[\vec{d},\infty) \cong P \op Q$. Since the subcategory $\cat{F}$ is closed under subobjects, both summands $P$ and $Q$ lie in $\cat{F}$. A dimension count as in the proof of Lemma~\ref{lem:Dim1} \Eqref{item:Dim1} then forces $P = 0$ or $Q = 0$.
\end{proof}

\begin{lem}\label{lem:Battleship}
Let $M \in \cat{F}$ be a torsion-free module of rank $r = \dim_{\kk} M_{[m]} \geq 2$. Then applying the ``scanning'' Lemma~\ref{lem:Scanning} $r-1$ times produces an epimorphism %
of the form $M \surj [\vec{a}, \infty)$.
\end{lem}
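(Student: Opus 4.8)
The plan is to iterate the ``scanning'' \cref{lem:Scanning}, keeping track of the rank at the terminal position $[m]$, and then to compose the resulting quotient maps.

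First I would set $M^{(0)} \dfn M$. Since $\rank M^{(0)} = r \geq 2 > 0$, the space $M^{(0)}_{[m]}$ is non-zero, hence $M^{(0)} \neq 0$, and \cref{lem:Scanning} produces a monomorphism $[\vec{a}_1,\infty) \inj M^{(0)}$ whose cokernel $M^{(1)}$ lies in $\cat{F}$, together with the quotient map $p_1 \colon M^{(0)} \surj M^{(1)}$. Evaluating the short exact sequence $0 \to [\vec{a}_1,\infty) \to M^{(0)} \to M^{(1)} \to 0$ at the position $[m]$ --- which is exact by \cref{lem:PointwiseExact} --- and using that $[\vec{a}_1,\infty)_{[m]} = R_{[m]}$ by definition, while $\dim_{\kk} R_{[m]} = 1$ by \cref{rem:GradedVect}, I obtain the dimension count $\rank M^{(1)} = \rank M^{(0)} - 1 = r-1$, exactly as in the proof of \cref{lem:Dim1} \Eqref{item:Dim1}.

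Next I would repeat. Inductively, given $M^{(k)} \in \cat{F}$ with $\rank M^{(k)} = r-k \geq 1$, the module $M^{(k)}$ is non-zero (its rank is positive), so \cref{lem:Scanning} applies and yields a monomorphism $[\vec{a}_{k+1},\infty) \inj M^{(k)}$ with cokernel $M^{(k+1)} \in \cat{F}$, a quotient map $p_{k+1} \colon M^{(k)} \surj M^{(k+1)}$, and, by the same dimension count at $[m]$, $\rank M^{(k+1)} = r-k-1$. After $r-1$ applications I reach $M^{(r-1)} \in \cat{F}$ with $\rank M^{(r-1)} = 1$, which by \cref{lem:Dim1} \Eqref{item:Dim1} is isomorphic to $[\vec{a},\infty)$ for some $\vec{a} \in \N^m$.

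Finally I would form the composite
\[
M = M^{(0)} \ral{p_1} M^{(1)} \ral{p_2} \cdots \ral{p_{r-1}} M^{(r-1)} \cong [\vec{a},\infty).
\]
Each $p_k$ is an epimorphism in $\DD{K_m}$ and a composite of epimorphisms is an epimorphism, so this composite is the asserted epimorphism $M \surj [\vec{a},\infty)$, with both source and target in $\cat{F}$. I do not expect a genuine obstacle: the whole argument is a finite induction whose only content is that each intermediate module stays non-zero and in $\cat{F}$ --- both handed to us by \cref{lem:Scanning} and the positivity of the rank --- and that the rank drops by exactly one at each stage; the substantive work is already contained in \cref{lem:Scanning}.
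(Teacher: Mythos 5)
Your proposal is correct and follows essentially the same route as the paper: iterate \cref{lem:Scanning}, note via the short exact sequence at position $[m]$ that the rank drops by exactly one each time, identify the rank-one remainder as $[\vec{a},\infty)$ by \cref{lem:Dim1}, and compose the quotient epimorphisms. Your explicit dimension count and the check that each intermediate object is non-zero are details the paper leaves implicit, but the argument is the same.
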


The process is illustrated in Figure~\ref{fig:Battleship}.

\begin{proof}
Applying Lemma~\ref{lem:Scanning} $r-1$ times to $M$ produces an epimorphism from $M$ to a torsion-free module of rank $r-(r-1)=1$, hence isomorphic to $[\vec{a}, \infty)$ for some $\vec{a} \in \N^m$, by Lemma~\ref{lem:Dim1}.
\end{proof}

\begin{figure}[H]
\begin{adjustbox}{width=\textwidth}
\begin{tikzpicture}[scale=1]
 \begin{axis}[
axis lines=middle, 
ticks=none, 
axis equal, 
xmin=-0.2, xmax=5, ymin=-1, ymax=5]
\draw [blue, fill=blue] (1,2) circle (\VertexSize);
\draw [blue, thick] (1,6) -- (1,2) -- (6,2);
\fill [blue, opacity=0.1] (1,2) rectangle (6,6);
\draw [blue, dotted] (1,2) -- (1,0);
\node [below] at (1,0) {$d^1_1$};
\draw [blue, dotted] (1,2) -- (0,2);
\node [left] at (0,2) {$d^1_2$};
\node [red] at (1,2) {$\bigtimes$};
\draw [green, fill=green] (1,4) circle (\VertexSize);
\draw [green, thick] (1,6) -- (1,4) -- (6,4);
\fill [green, opacity=0.1] (1,4) rectangle (6,6);
\draw [red, fill=red] (3,1) circle (\VertexSize);
\draw [red, thick] (3,6) -- (3,1) -- (6,1);
\fill [red, opacity=0.1] (3,1) rectangle (6,6);
\node at (3,-0.8) {$M = M^{0}$};
 \end{axis}
\draw [->>] (6.9,3) -- (7.4,3);
\end{tikzpicture}
\begin{tikzpicture}[scale=1]
 \begin{axis}[
axis lines=middle, 
ticks=none, 
axis equal, 
xmin=-0.2, xmax=5, ymin=-1, ymax=5]
\draw [green, fill=green] (1,4) circle (\VertexSize);
\draw [green, thick] (1,6) -- (1,4) -- (6,4);
\fill [green, opacity=0.1] (1,4) rectangle (6,6);
\draw [green, dotted] (1,4) -- (1,0);
\node [below] at (1,0) {$d^2_1$};
\draw [green, dotted] (1,4) -- (0,4);
\node [left] at (0,4) {$d^2_2$};
\node [red] at (1,4) {$\bigtimes$};
\draw [red, fill=red] (3,1) circle (\VertexSize);
\draw [red, thick] (3,6) -- (3,1) -- (6,1);
\fill [red, opacity=0.1] (3,1) rectangle (6,6);
\node at (2.7,-0.8) {$M^{1}$};
 \end{axis}
\draw [->>] (6.9,3) -- (7.4,3);
\end{tikzpicture}
\begin{tikzpicture}[scale=1]
 \begin{axis}[
axis lines=middle, 
ticks=none, 
axis equal, 
xmin=-0.2, xmax=5, ymin=-1, ymax=5]
\draw [red, fill=red] (3,1) circle (\VertexSize);
\draw [red, thick] (3,6) -- (3,1) -- (6,1);
\fill [red, opacity=0.1] (3,1) rectangle (6,6);
\draw [red, dotted] (3,1) -- (3,0);
\node [below] at (3,0) {$a_1$};
\draw [red, dotted] (3,1) -- (0,1);
\node [left] at (0,1) {$a_2$};
\node at (2.7,-0.8) {$M^{2} \cong [\vec{a},\infty)$};
 \end{axis}
\end{tikzpicture}
\end{adjustbox}
\caption{The ``battleship game'' in the proof of Lemma~\ref{lem:Battleship}.}
\label{fig:Battleship}
\end{figure}

\begin{lem}\label{lem:LiftingM2}
In the case $m=2$, let $M$ be a torsion-free module in $\cat{F}$. Then any epimorphism $q \colon M \surj [\vec{a}, \infty)$ constructed as in Lemma~\ref{lem:Battleship} is split.
\end{lem}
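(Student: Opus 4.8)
The plan is to write down an explicit section of $q$ by hand, exploiting what the ``battleship'' construction produces at the two leg positions $\si_1$ and $\si_2$. Recall from the proof of \cref{lem:Scanning} that a morphism $[\vec a, \infty) \to N$ in $\DD{K_2}$ is exactly a pair of elements $x_1 \in N_{\si_1}(\vec a)$ and $x_2 \in N_{\si_2}(\vec a)$ with $\phy_1(x_1) = \phy_2(x_2)$ in $N_{[m]}(\vec a)$; under this identification $\id_{[\vec a,\infty)}$ corresponds to $(t^{\vec a}, t^{\vec a})$, where $t^{\vec a}$ spans the one-dimensional space $[a_i,\infty)_i(\vec a)$, and post-composition with $q$ sends $(x_1,x_2)$ to $(q_{\si_1}(x_1), q_{\si_2}(x_2))$. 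Hence it suffices to produce $x_i \in M_{\si_i}(\vec a)$ with $\phy_1(x_1) = \phy_2(x_2)$ and $q_{\si_i}(x_i) = t^{\vec a}$.

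First I would analyze $F \dfn \ker q$ at the position $\si_1$. It lies in $\cat F$ since $\cat F$ is closed under subobjects. By \cref{lem:Battleship}, $F$ admits a finite filtration whose successive subquotients are the objects $[\vec{d}^{1},\infty), \ldots, [\vec{d}^{r-1},\infty)$ arising from the iterated applications of \cref{lem:Scanning}. Evaluating at $\si_1$ (exactness is computed pointwise by \cref{lem:PointwiseExact}) and using that finitely generated torsion-free $R_{\si_1}$-modules are graded free (\cref{prop:wesplit}), so that any short exact sequence of such modules with free quotient splits, one obtains $F_{\si_1} \cong \bigoplus_{j=1}^{r-1} [d^{j}_1, \infty)_1$ and therefore $M_{\si_1} \cong [a_1, \infty)_1 \oplus F_{\si_1}$. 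The crucial numerical point is that each application of \cref{lem:Scanning} picks a corner whose first coordinate is the \emph{smallest} $t_1$-degree occurring in the current module, while the summand $[a_1,\infty)_1$ is never removed (the maps onto $[\vec a,\infty)$ stay surjective); hence $d^{j}_1 \leq a_1$ for all $j$, i.e., every summand of $F_{\si_1}$ starts no later than $a_1$.

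Then the remaining work is linear algebra in the single multidegree $\vec a$. Since $[d^{j}_1,\infty)_1 = t_1^{d^{j}_1} R_{\si_1}$ becomes $R_{[m]}$ after inverting $t_1$ and is one-dimensional in every multidegree $\vec d$ with $d_1 \geq d^{j}_1$, the inequalities $d^{j}_1 \leq a_1$ imply that the transition map $\phy_1 \colon F_{\si_1}(\vec a) \to F_{[m]}(\vec a)$ is an isomorphism of $\kk$-vector spaces. Now I would choose, using that $q_{\si_1}$ and $q_{\si_2}$ are surjective in multidegree $\vec a$, elements $y_i \in M_{\si_i}(\vec a)$ with $q_{\si_i}(y_i) = t^{\vec a}$. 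Naturality of the transition maps and the definition of $[\vec a,\infty)$ give $q_{[m]}(\phy_1(y_1)) = t^{\vec a} = q_{[m]}(\phy_2(y_2))$, so $u \dfn \phy_1(y_1) - \phy_2(y_2)$ lies in $F_{[m]}(\vec a)$. Picking $z_1 \in F_{\si_1}(\vec a)$ with $\phy_1(z_1) = u$ and setting $x_1 \dfn y_1 - z_1$, $x_2 \dfn y_2$, we get $\phy_1(x_1) = \phy_2(x_2)$ and $q_{\si_i}(x_i) = t^{\vec a}$ (using $z_1 \in \ker q_{\si_1}$), so $(x_1,x_2)$ defines a morphism $s \colon [\vec a,\infty) \to M$ with $q \circ s = \id$.

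I expect the main obstacle to be the bookkeeping behind the inequalities $d^{j}_1 \leq a_1$: one must track how the $\si_1$-components of the modules $M = M^{0}, M^{1}, \ldots, M^{r-1} \cong [\vec a,\infty)$ decompose and verify that each scanning step only deletes one graded free summand of minimal $t_1$-start while leaving the $[a_1,\infty)_1$ summand intact. The rest is routine. Finally, I would stress that this argument is special to $m = 2$: for $m \geq 3$ the compatibility of a candidate section involves several transition maps at once and cannot in general be achieved by adjusting along a single leg, in line with \cref{prop:Mgeq3}.
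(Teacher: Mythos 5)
Your proof is correct and follows essentially the same route as the paper: identify a section with a compatible pair of degree-$\vec{a}$ preimages of the generators, and use the battleship bookkeeping (the first coordinate $a_1$ dominates the starting degrees of all interval summands at position $\si_1$, so the localization map $\phy_1$ is an isomorphism in bidegree $\vec{a}$) to fix up compatibility by adjusting only along the $\si_1$ leg. The only cosmetic difference is that you correct an arbitrary preimage by an element of $\ker q$ via the filtration of the kernel, whereas the paper takes the unique $\phy_1$-preimage in $M_{\si_1}$ directly; and your justification of the inequality $d^j_1 \leq a_1$ (surjectivity onto $[\vec{a},\infty)$ at every stage) correctly fills in what the paper asserts ``by construction.''
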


\begin{proof}
Write $y_i := t^{\vec{a} - a_i \vec{e}_i} t_i^{a_i} 1 \in [a_i, \infty)_{i}$ for the canonical generator, shifted to bidegree $\vec{a}$ for convenience. Those agree in the terminal corner:
\[
\phy_1(y_1) = \phy_2(y_2) = y = t^{\vec{a}} 1 \in \kk[t_1^{\pm}, t_2^{\pm}] = R_{[2]} = [\vec{a}, \infty)_{[2]}.
\]
A section of $q$ amounts to preimages $x_i \in M_{\si_i}(\vec{a})$ of the generators $y_i$ that agree in the terminal corner: $\phy_1(x_1) = \phy_2(x_2) \in M_{[2]}(\vec{a})$, as illustrated in the diagram
\[
\xymatrix @R-0.5pc @C-0.5pc {
& y_1 \in [a_1,\infty)_1 \ar@{^{(}->}[rr]^-{\phy_1} & & R_{[2]} \ni y \\
x_1 \in M_{\si_1} \ar@{->>}[ur]^{q_{\si_1}} \ar@{^{(}->}[rr]^-{\phy_1} & & M_{[2]} \ar@{->>}[ur]^{q_{[2]}} \ni x & \\
& & & [a_2, \infty)_2 \ar@{^{(}->}[uu]_{\phy_2} \ni y_2 \\
& & M_{\si_2} \ni x_2. \ar@{->>}[ur]^{q_{\si_2}} \ar@{^{(}->}[uu]_{\phy_2} & \\
}
\]
In position $\si_2 = \{1\}$, consider the epimorphism of $R_{\si_2}$-modules %
$q_{\si_2} \colon M_{\si_2} \surj [a_2, \infty)_{2}$ and pick any preimage $x_2$ of $y_2$. Take $x := \phy_2(x_2) \in M_{[2]}(\vec{a})$, which is a preimage of $y \in [\vec{a}, \infty)_{[2]}(\vec{a})$. 
Now the $R_{\si_1}$-module $M_{\si_1}$ admits an interval decomposition
\[
M_{\si_1} \cong \bigoplus_j [a^j, \infty)_1.
\]
By construction of the epimorphism in Lemma~\ref{lem:Battleship}, we have $a_1 \geq a^j_1$ for all $j$. Hence the localization map 
$\phy_1 \colon M_{\si_1} \inj M_{[2]}$ 
is an isomorphism in bidegree $\vec{a}$. Therefore there exists a unique $x_1 \in M_{\si_1}(\vec{a})$ satisfying $\phy_1(x_1) = x$. %
One readily checks that 
$x_1$ satisfies
\[
\phy_1 q_{\si_1}(x_1) = \phy_1 (y_1).
\]
Since the localization map $\phy_1 \colon [a_1, \infty)_{1} \inj R_{[2]}$ %
is injective, this forces $q_{\si_1}(x_1) = y_1$, so that $x_1$ is a preimage of $y_1$.
\end{proof}

\begin{prop}\label{pr:M2Decompo}
In the case $m=2$, each torsion-free object $M \in \cat{F}$ decomposes as a direct sum of modules of the form $[\vec{d}, \infty )$. 
\end{prop}

\begin{proof}
Write %
$r = \rank M$. 
The case $r \leq 1$ of the statement was proved in Lemma~\ref{lem:Dim1}, so we assume $r > 1$. 
Pick any epimorphism $M \surj [\vec{a}, \infty)$ constructed as in Lemma~\ref{lem:Battleship}, which admits a section, by Lemma~\ref{lem:LiftingM2}. 
This produces a splitting $M \cong P \op [\vec{a}, \infty)$, so that the complementary summand $P$ satisfies
\[
\rank P = \rank M - 1 = r-1.
\]
By induction on $r$, we obtain a direct sum decomposition of $M$ into %
$r$ summands of the form $[\vec{d}, \infty)$.
\end{proof}

\section{Torsion-free objects in dimension \texorpdfstring{$m \geq 3$}{at least 3}}

In this section, we show that the analogue of Proposition~\ref{pr:M2Decompo} \emph{fails} for $m \geq 3$. 
In Section~\ref{sec:Wild}, we will show that it fails miserably.

The following lemma says that the algorithm used in Proposition~\ref{pr:M2Decompo} has a detection property: \emph{If} a torsion-free object admits a direct sum decomposition into %
objects of the form 
$[\vec{d},\infty)$, then the algorithm is guaranteed to find such a decomposition.

\begin{lem}\label{lem:Detect}
Let $M$ be a torsion-free object in $\cat{F}$ of the form
\[
M \cong \bigoplus_{j} [\vec{a}^j, \infty),
\]
and write $\vec{b} := \min_{j} \vec{a}^j$, the minimal multidegree in \emph{lexicographic} order.
\begin{enumerate}
\item \label{item:MinDegree} The subobject $M^{(m)}$ constructed in Lemma~\ref{lem:Scanning} is of the form
\[
M^{(m)} \cong \bigoplus_{\substack{j\\\vec{a}^j = \vec{b}}} [\vec{b}, \infty)
\]
i.e., keeping only the summands that start in the minimal multidegree $\vec{b}$. 
\item \label{item:ScanningSplit} Any monomorphism $\mu \colon [\vec{b}, \infty) \inj M$ constructed in Lemma~\ref{lem:Scanning} is split.
\item \label{item:BattleshipSplit} Any epimorphism $M \surj [\vec{a}, \infty)$ constructed as in Lemma~\ref{lem:Battleship} is split.
\end{enumerate}
\end{lem}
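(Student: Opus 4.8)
The plan is to run the scanning algorithm of \cref{lem:Scanning} explicitly when it is fed the module $M \cong \bigoplus_j [\vec{a}^j,\infty)$, and to observe that at every stage it merely discards whole quadrant summands or collapses a block of mutually equal quadrant summands along a diagonal --- operations that are manifestly split. The structural fact underlying all three parts is that the subobject $M^{(i)}$ produced after $i$ steps is \emph{literally} the sub-direct-summand $\bigoplus_{j\in J_i}[\vec{a}^j,\infty)\subseteq M$, where $J_i=\{j\mid (a^j_1,\dots,a^j_i)\text{ is lexicographically least among the }\vec{a}^j\}$; in particular $J_m=\{j\mid \vec{a}^j=\vec{b}\}$.

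For \Eqref{item:MinDegree}, the one subtlety is that \cref{lem:Scanning} chooses an interval decomposition of $M^{(i-1)}_{\si_i}$ before extracting ``the summands with minimal starting point''. So I would first record a choice-free description of that submodule: it is the $R_{\si_i}$-submodule of $M^{(i-1)}_{\si_i}$ generated by the homogeneous elements of minimal $t_i$-degree. (For any interval decomposition $\bigoplus_k[c_k,\infty)_i$ the two agree, since a homogeneous element of $t_i$-degree equal to the minimum $c$ has vanishing component in every summand with $c_k>c$, while each summand with $c_k=c$ is generated by its degree-$c$ elements.) With this description an induction on $i$ yields the structural fact: if $M^{(i-1)}=\bigoplus_{j\in J_{i-1}}[\vec{a}^j,\infty)$ then the submodule of $M^{(i-1)}_{\si_i}$ generated in minimal $t_i$-degree is $\bigoplus_{j\in J_i}[a^j_i,\infty)_i$, its $t_i$-saturation at position $[m]$ is $\bigoplus_{j\in J_i}R_{[m]}$, and pulling back along the block-diagonal transition maps $\phy_\ell$ ($\ell\neq i$) gives $\bigoplus_{j\in J_i}[a^j_\ell,\infty)_\ell$; hence $M^{(i)}=\bigoplus_{j\in J_i}[\vec{a}^j,\infty)$. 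Taking $i=m$ proves \Eqref{item:MinDegree}, and exhibits the inclusion $M^{(m)}\hookrightarrow M$ as a split monomorphism with complement $\bigoplus_{\vec{a}^j\neq\vec{b}}[\vec{a}^j,\infty)$.

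For \Eqref{item:ScanningSplit}, the monomorphism $\mu$ factors as $[\vec{b},\infty)\ral{\la}M^{(m)}\hookrightarrow M$, and the second map is split by the previous paragraph, so it suffices to split $\la$. Since every $\vec{a}^j$ with $j\in J_m$ equals $\vec{b}$, we have $M^{(m)}\cong[\vec{b},\infty)^{\oplus n}$ with $n=\#J_m$, and every transition map of $M^{(m)}$ is an isomorphism in multidegree $\vec{b}$. Tracing the construction of $\la$ in \cref{lem:Scanning}: the choice of a summand $[b_m,\infty)_m\hookrightarrow M^{(m)}_{\si_m}$ pins down a nonzero vector $v\in M^{(m)}_{\si_m}(\vec{b})\cong\kk^n$, and because the remaining transition maps are isomorphisms in degree $\vec{b}$, the elements $x_{m-1},\dots,x_1$ are then forced and correspond to the same vector $v$ in each position. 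Thus $\la$ is the ``diagonal'' inclusion $[\vec{b},\infty)\hookrightarrow[\vec{b},\infty)^{\oplus n}$ determined by $v$, which is split: since $\Hom_{\DD{K_m}}([\vec{b},\infty),[\vec{b},\infty))\cong R$ contains the scalars, any row of scalars $\psi$ with $\psi\cdot v=1$ is a retraction. Hence $\mu$ is split, and $\coker(\mu)\cong[\vec{b},\infty)^{\oplus(n-1)}\oplus\bigoplus_{\vec{a}^j\neq\vec{b}}[\vec{a}^j,\infty)$ is again a direct sum of quadrants.

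Finally \Eqref{item:BattleshipSplit} follows by induction on $r=\rank M$. If $r\leq 1$ the map is an isomorphism by \cref{lem:Dim1}, so assume $r\geq 2$. By \cref{lem:Battleship} the epimorphism $M\surj[\vec{a},\infty)$ is the composite of $M\surj M^1\dfn\coker(\mu^1)$ with an epimorphism $M^1\surj[\vec{a},\infty)$ built by the same recipe for $M^1$. By \Eqref{item:ScanningSplit}, $\mu^1$ is a split monomorphism, so $M\surj M^1$ is a split epimorphism and $M^1$ is again a direct sum of $r-1$ quadrants; by the inductive hypothesis (with the base case $\rank M^1=1$, where $M^1\cong[\vec{a},\infty)$ by \cref{lem:Dim1}) the map $M^1\surj[\vec{a},\infty)$ is split, and a composite of split epimorphisms is split. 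The one genuine obstacle is the bookkeeping in \Eqref{item:MinDegree}: one must isolate a choice-free description of the submodule extracted by \cref{lem:Scanning} and verify that it propagates correctly through localization and pullback. Once $M^{(i)}$ is identified with the sub-summand $\bigoplus_{j\in J_i}[\vec{a}^j,\infty)$, everything else is formal.
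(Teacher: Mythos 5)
Your proof is correct and follows essentially the same route as the paper: identify each $M^{(\ell)}$ as the sub-sum of quadrant summands whose first $\ell$ coordinates agree with $\vec{b}$, deduce that the scanning monomorphism is a split summand-type inclusion, and conclude that the battleship epimorphism is a composite of split epimorphisms — with your choice-free description of the extracted submodule in part \Eqref{item:MinDegree} filling in detail the paper leaves implicit. One minor slip: $\Hom_{\DD{K_m}}([\vec{b},\infty),[\vec{b},\infty)) \cong \kk$ rather than $R$ (morphisms in $\DD{K_m}$ are degree-preserving), but since all you use is that scalar matrices give morphisms between copies of $[\vec{b},\infty)$, the retraction argument is unaffected.
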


\begin{proof}
\Eqref{item:MinDegree} Since isomorphisms in $\DD{K_m}$ preserve the multigrading, we may assume $M = \bigoplus_{j} [\vec{a}^j, \infty)$. The successive subobjects constructed in Lemma~\ref{lem:Scanning} consist of fewer and fewer of the summands:
\[
M^{(\ell)} = \bigoplus_{\substack{j\\ \vec{a}^j_i = \vec{b}_i \text{ for } 1 \leq i \leq \ell}} [\vec{a}^j, \infty).
\]
\Eqref{item:ScanningSplit} By the analogue of Lemma~\ref{lem:MinDegree} in $\DD{K_m}$ instead of $R_{\si_i}$-modules, the monomorphism $\la \colon [\vec{b}, \infty) \inj M^{(m)}$ is isomorphic to the inclusion of one of the summands. The inclusion $M^{(m)} \subseteq M$ is also an inclusion of summands. Hence the composite monomorphism $\mu \colon [\vec{b}, \infty) \inj M$ is split, and its cokernel $\coker(\mu)$ still satisfies the assumption of part~\Eqref{item:MinDegree}. The epimorphism constructed in Lemma~\ref{lem:Battleship} is then a composite of $r-1$ split epimorphisms, which proves \Eqref{item:BattleshipSplit}.
\end{proof}

\begin{ex}\label{ex:NotSplit}
For $M$ in $\cat{F}$ and $\vec{a} \in \N^m$, an epimorphism $f \colon M \surj [\vec{a},\infty)$ need \emph{not} admit a section. Here is an example in the case $m=2$. Consider the map of $R$-modules
\[
\xymatrix @C+1.4pc {
t_1 R \op t_2 R \ar[r]^-{g = \smat{\inc & \inc}} & R \\
}
\]
and denote the generators respectively by
\[
\begin{cases}
x = t_1 \cdot 1 \in t_1 R, &\abs{x} = (1,0) \\
y = t_2 \cdot 1 \in t_2 R, &\abs{y} = (0,1) \\
z = 1 \in R, &\abs{z} = (0,0). \\
\end{cases}
\]
Take $f = L_K(g)$, which can be written using Notation~\ref{nota:Quadrant}:
\[
\xymatrix @C+1.4pc {
M = [(1,0),\infty) \op [(0,1),\infty) \ar[r]^-{f = \smat{\inc & \inc}} & [(0,0),\infty). \ar@/^1.5pc/@{-->}[l]^{\not\exists \, s} \\
}
\]
The map $f$ is an epimorphism in $\DD{K_2}$, since in position $\si_1$ it is the epimorphism of $R_{\si_1}$-modules
\[
\xymatrix @C+1.4pc {
[1,\infty)_1 \op [0,\infty)_1 \ar@{->>}[r]^-{f_{\si_1} = \smat{\inc & \id}} & [0,\infty)_1 \\
}
\]
and similarly for position $\si_2$. However, $f$ does \emph{not} admit a section $s$. Indeed, %
a section of \mbox{$R_{\si_1}$-modules} $s_1$ in position $\si_1$ and a section $s_2$ in position $\si_2$ cannot assemble into a compatible section $s$ in $\DD{K_2}$.
\end{ex}

\begin{lem}\label{lem:M3Indecompo}
For $m = 3$, there exists a torsion-free object in $\cat{F}$ of rank $2$ that is indecomposable. %
\end{lem}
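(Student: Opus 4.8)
The plan is to exhibit one explicit rank-$2$ torsion-free object $M \in \cat{F} \subseteq \DD{K_3}$ and verify by hand that it is indecomposable. The guiding picture is that a rank-$2$ object of $\cat{F}$ is essentially a two-dimensional vector space $E = M_{[3]}(\vec{0})$ together with the three $R_{\si_i}$-lattices $M_{\si_i} \subseteq M_{[3]}$, and $M$ decomposes as a sum of two quadrants $[\vec{a},\infty) \oplus [\vec{b},\infty)$ exactly when all three lattices are simultaneously ``diagonalized'' by a single pair of lines in $E$. A \emph{pair} of lattices (the case $m=2$) can always be simultaneously diagonalized — this is essentially the content of \cref{prop:M2Decompo} — but a triple cannot in general, and I would realize the obstruction by arranging that the three lattices ``start'' along three \emph{distinct} lines of $\kk^2$.

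Concretely, I would set $M_{[3]} = R_{[3]} f_1 \oplus R_{[3]} f_2$ with $\abs{f_1} = \abs{f_2} = \vec{0}$, and define the submodules
\[
M_{\si_1} = R_{\si_1} f_1 + R_{\si_1}\, t_1 f_2, \qquad M_{\si_2} = R_{\si_2} f_2 + R_{\si_2}\, t_2 f_1, \qquad M_{\si_3} = R_{\si_3}(f_1 + f_2) + R_{\si_3}\, t_3 f_1,
\]
with the inclusions $M_{\si_i} \inj M_{[3]}$ as transition maps. The routine checks are: each $M_{\si_i}$ is a finitely generated $R_{\si_i}$-module supported in degrees with $d_i \geq 0$; inverting $t_i$ carries $M_{\si_i}$ onto $M_{[3]}$ (for $i=3$ because $\{f_1+f_2,\, t_3 f_1\}$ has the same localization as $\{f_1,f_2\}$), so the localization condition in \cref{def:LocalModule} holds; and the cocycle conditions are automatic since everything lives inside $M_{[3]}$. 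Finally each $M_{\si_i}$, being a submodule of the free $R_{[3]}$-module $M_{[3]}$, is torsion-free over $R_{\si_i}$, so $M \in \cat{F}$, and $\rank M = \dim_\kk M_{[3]}(\vec{0}) = 2$.

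For indecomposability I would argue by contradiction. Suppose $M \cong A \oplus B$ with $A, B \neq 0$. Since kernels in $\DD{K_3}$ are computed pointwise (\cref{lem:PointwiseExact}), $\cat{F}$ is closed under subobjects, so $A, B \in \cat{F}$; since $\rank$ is additive on direct sums and vanishes only on $0$ (\cref{lem:Dim1} \Eqref{item:Dim0}), we get $\rank A = \rank B = 1$, whence $A \cong [\vec{a},\infty)$ and $B \cong [\vec{b},\infty)$ by \cref{lem:Dim1} \Eqref{item:Dim1}. Fixing an isomorphism $M \cong A \oplus B$ and passing to the multidegree-$\vec{0}$ part of the $[3]$-component gives a splitting $E = \ell_A \oplus \ell_B$ of $E = M_{[3]}(\vec{0})$ into two distinct lines (here $\ell_A = A_{[3]}(\vec{0})$ is a line because $A_{[3]} \cong R_{[3]}$ up to a grading shift and every graded piece of $R_{[3]}$ is one-dimensional), under which the image of $M_{\si_i}(\vec{0})$ equals $\phy_i(A_{\si_i}(\vec{0})) \oplus \phy_i(B_{\si_i}(\vec{0}))$ and hence is one of $0,\ \ell_A,\ \ell_B,\ E$. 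On the other hand, a direct computation of the degree-$\vec{0}$ parts of the three lattices gives
\[
M_{\si_1}(\vec{0}) = \kk f_1, \qquad M_{\si_2}(\vec{0}) = \kk f_2, \qquad M_{\si_3}(\vec{0}) = \kk(f_1+f_2),
\]
three pairwise distinct lines in $\kk^2$ (distinct even when $\kk = \F_2$). Under the isomorphism they would map to three distinct lines, all forced to lie in the two-element set $\{\ell_A, \ell_B\}$ — impossible. Hence $M$ is indecomposable.

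I do not expect a genuinely hard step; the only part requiring thought is the \emph{construction} itself — choosing the three lattices so that simultaneously (i) the localization condition of \cref{def:LocalModule} is satisfied and (ii) the ``first-layer'' lines $M_{\si_i}(\vec{0})$ are pairwise distinct. The $t_i$-shift on the second generator of each $M_{\si_i}$ is exactly what makes $M_{\si_i}(\vec{0})$ one-dimensional while still letting $M_{\si_i}$ localize onto all of $M_{[3]}$; everything else is bookkeeping. (The same phenomenon as in \cref{ex:NotSplit} is at work: one could alternatively run the ``scanning''/``battleship'' algorithm on $M$ and invoke \cref{lem:Detect} \Eqref{item:BattleshipSplit} to conclude that the resulting epimorphism $M \surj [\vec{a},\infty)$ is not split, hence $M$ is not a sum of quadrants; but the degree-$\vec{0}$ argument above is more direct.)
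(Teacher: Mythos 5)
Your construction and argument are correct, but your indecomposability proof takes a genuinely different route from the paper's. The paper works with $M = L_{K_3}(\ol{M})$ for $\ol{M} = R\langle w, y, z \mid t_1 w = t_3 y - t_2 z\rangle$ and proves indecomposability by showing that the epimorphism coming from the scanning step becomes, after inverting $t_1$ and passing to $\DD{K_2}$, exactly the non-split epimorphism of \cref{ex:NotSplit}, then concludes via the detection \cref{lem:Detect}~\Eqref{item:BattleshipSplit} together with \cref{lem:Dim1}. You instead specify the three lattices directly inside $M_{[3]} = R_{[3]} f_1 \oplus R_{[3]} f_2$ and locate the obstruction purely in multidegree $\vec{0}$: a splitting $M \cong A \oplus B$ with both summands non-zero forces, by additivity of rank and \cref{lem:Dim1}~\Eqref{item:Dim0}, that $A_{[3]}(\vec{0})$ and $B_{[3]}(\vec{0})$ are lines, and then each $\phy_i(M_{\si_i}(\vec{0}))$ is carried to $\phy_i(A_{\si_i}(\vec{0})) \oplus \phy_i(B_{\si_i}(\vec{0}))$ with each factor a subspace of a line, hence must equal one of the two lines $A_{[3]}(\vec{0})$, $B_{[3]}(\vec{0})$ --- impossible, since $\kk f_1$, $\kk f_2$, $\kk(f_1+f_2)$ are three pairwise distinct lines mapped injectively. (You do not even need \cref{lem:Dim1}~\Eqref{item:Dim1}: only the one-dimensionality of the two degree-$\vec{0}$ pieces is used.) Your verifications that $M$ lies in $\DD{K_3}$ and in $\cat{F}$ --- the localization condition via flatness plus injectivity of the inclusions $M_{\si_i} \subseteq M_{[3]}$, torsion-freeness, rank $2$ --- are sound, and your object is essentially the paper's example in disguise: the degree-$\vec{0}$ parts of the paper's three localizations are likewise three pairwise distinct lines of $\kk^2$. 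What your route buys is a short, self-contained linear-algebra argument (the classical ``three distinct lines cannot all be coordinate lines of a single splitting'' picture) that bypasses \cref{lem:Scanning}, \cref{lem:Battleship}, \cref{lem:Detect} and \cref{ex:NotSplit}; what the paper's route buys is the explicit reduction of the $m \geq 3$ failure to the two-parameter non-splitting phenomenon of \cref{ex:NotSplit}, exercising the scanning/detection machinery developed in the surrounding sections.
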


\begin{proof}
Consider the $R$-module
\[
\ol{M} = R \lan w, y, z \mid t_1 w = t_3 y - t_2 z \ran, \quad \abs{w} = (0,1,1), \: \abs{y} = (1,1,0), \: \abs{z} = (1,0,1) 
\]
and take its localization $M \dfn L_{K_3}(\ol{M})$. The ``scanning'' Lemma~\ref{lem:Scanning} produces a monomorphism
\[
\mu \colon [(0,1,1), \infty) \inj M
\]
picking out the image of $w$. More precisely, consider the short exact sequence of $R$-modules
\[
\xymatrix{
0 \ar[r] & t^{\abs{w}} R \ar@{^{(}->}[r]^-{w} & \ol{M} \ar@{->>}[r] & \ol{M} / \lan w \ran \ar[r] & 0. \\
}
\]
Applying the exact functor $L_{K_3}$ yields a short exact sequence in $\DD{K_3}$
\[
\xymatrix{
0 \ar[r] & [(0,1,1), \infty) \ar@{^{(}->}[r]^-{\mu} & M \ar@{->>}[r]^-{q} & L_{K_3}(\ol{M} / \lan w \ran) \ar[r] & 0. \\
}
\]
Restricting to the positions $\si \subseteq [3]$ containing $1$ and evaluating the multidegree $\vec{d}$ at $d_1 = 0$ defines functors
\[
\xymatrix @C+0.5pc {
\DD{K_3} \ar[r]^-{\text{restrict}} & \DD{K_3}[t_1^{\pm}] \ar[r]^-{\text{set } d_1=0}_-{\cong} & \DD{K_2} \\ 
}
\]
whose composite sends the epimorphism $q \colon M \surj \coker(\mu)$ to the non-split epimorphism from Example~\ref{ex:NotSplit}. Hence the epimorphism $q$ itself is not split. By Lemma~\ref{lem:Detect}, $M$ does not decompose as a direct sum of modules of the form $[\vec{d}, \infty )$. By Lemma~\ref{lem:Dim1}, any direct sum decomposition of $M$ would be of that form.
\end{proof}

\begin{prop}\label{pr:Mgeq3}
For any $m \geq 3$, there exists a torsion-free module in $\cat{F}$ of rank $2$ that is indecomposable. 
\end{prop}

\begin{proof}
Here we will denote subsets of $[3]$ by the letter $\tau$, and the graded ring $S = \kk[t_1, t_2, t_3]$ to avoid ambiguity with $[m]$. 
Consider the functor $G \colon \DD{K_3} \to \DD{K_m}$ %
defined by
\[
G(M)_{\si_i} = \begin{cases}
R_{\si_i} \ot_{S_{\tau_i}} M_{\tau_i} &\text{if } 1 \leq i \leq 3 \\
R_{\si_i} \ot_{S_{[3]}} M_{[3]} &\text{if } 4 <  i \leq m \\
\end{cases}
\]
with the canonical localization maps $\phy_i \colon G(M)_{\si_i} \to G(M)_{[m]}$. The composite of $G$ followed by the functors
\[
\xymatrix @C+3.5pc {
\DD{K_m} \ar[r]^-{\text{restrict}} & \DD{K_m}[t_4^{\pm}, \cdots, t_m^{\pm}] \ar[r]^-{\text{set } d_4=0, \ldots, d_m=0}_-{\cong} & \DD{K_3} \\ 
}
\]
is an equivalence of categories. Now take $M$ to be the rank $2$ indecomposable torsion-free object in $\DD{K_3}$ from Lemma~\ref{lem:M3Indecompo}. Then $G(M)$ is a rank $2$ torsion-free object in $\DD{K_m}$ which is indecomposable.
\end{proof}

The functors used in the proof allow us to compare the categories $\DD{K}$ for different $K$. The next lemma generalizes the functor $G$. Since we will have different vertex sets $V$, we will include the vertex set in the notation $\DD{K;V}$.

\begin{lem}\label{lem:ExtendComplex}
Let $K$ be a simplicial complex on a vertex set $V$ and $L$ a simplicial complex on the vertex set $W$, with $V \subseteq W$. Assume that $L$ extends $K$, that is:
\[
K \subseteq L \vert_{V} \dfn L \cap \PP{V}.
\]
Then the following construction defines an exact functor
\[
\Phi_{K,L} \colon \DD{K;V} \to \DD{L;W}.
\]
Given %
$M$ in $\DD{K;V}$, the module $\Phi_{K,L}(M)$ in $\DD{L;W}$ has at position $\si \in L^c$
\[
\Phi_{K,L}(M)_{\si} \dfn R_{\si} \ot_{R_{\si \cap V}} M_{\si \cap V},
\]
which is defined since $\si \cap V \in K^c$ is a missing face of $K$. For $\si \subseteq \tau$, the transition map $\Phi_{K,L}(M)_{\si} \to \Phi_{K,L}(M)_{\tau}$ is the composite
\[
\xymatrix @C+3.2pc {
R_{\si} \ot_{R_{\si \cap V}} M_{\si \cap V} \ar[r]^-{\mathrm{localize} \ot \phy_{\si \cap V, \tau \cap V}} & R_{\tau} \ot_{R_{\si \cap V}} M_{\tau \cap V} \ar[r] & R_{\tau} \ot_{R_{\tau \cap V}} M_{\tau \cap V}.
}
\]
\end{lem}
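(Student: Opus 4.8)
The plan is to verify, in this order, that $\Phi_{K,L}(M)$ is a well-defined object of $\DD{L;W}$ for each $M$, that $\Phi_{K,L}$ respects identities and composition, and that it is exact. The first point begins with the combinatorial fact underlying the whole construction: if $\si$ is a missing face of $L$, then $\si \cap V$ is a missing face of $K$ --- this is the combinatorial content of the assumption that $L$ extends $K$, and it is what makes $M_{\si \cap V}$, hence $\Phi_{K,L}(M)_{\si} = R_{\si} \ot_{R_{\si \cap V}} M_{\si \cap V}$, available. Granting it, $\Phi_{K,L}(M)_{\si}$ is a finitely generated $R_{\si}$-module, since the structure map $R_{\si \cap V} \to R_{\si}$ presents $R_{\si}$ as a localization of a polynomial extension of $R_{\si \cap V}$ (adjoin the variables indexed by $W \setminus V$, invert those indexed by $\si \setminus V$); in particular $R_{\si}$ is flat over $R_{\si \cap V}$, which I will reuse below. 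For $\si \subseteq \tau$ in $L^c$, the transition map is the prescribed composite, and the relations $\phy_{\si,\si} = \id$ and $\phy_{\tau,\upsilon} \circ \phy_{\si,\tau} = \phy_{\si,\upsilon}$ reduce to the corresponding relations for $M$ together with naturality of the canonical maps $R_{\tau} \ot_{R_{\si \cap V}} N \to R_{\tau} \ot_{R_{\tau \cap V}} N$.

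The substantive part of the first point is the localization condition of \cref{def:LocalModule}: for $\si \subseteq \tau$ in $L^c$ I need $R_{\tau} \ot_{R_{\si}} \Phi_{K,L}(M)_{\si} \cong \Phi_{K,L}(M)_{\tau}$. By associativity of the tensor product the left-hand side is $R_{\tau} \ot_{R_{\si \cap V}} M_{\si \cap V}$, and factoring the ring map $R_{\si \cap V} \to R_{\tau}$ through $R_{\tau \cap V}$ rewrites it as $R_{\tau} \ot_{R_{\tau \cap V}} \bigl( R_{\tau \cap V} \ot_{R_{\si \cap V}} M_{\si \cap V} \bigr)$; the localization condition for $M$ --- applicable because $\si \cap V \subseteq \tau \cap V$ are missing faces of $K$ with $R_{\tau \cap V}$ the corresponding localization of $R_{\si \cap V}$ --- identifies the inner factor with $M_{\tau \cap V}$, yielding $R_{\tau} \ot_{R_{\tau \cap V}} M_{\tau \cap V} = \Phi_{K,L}(M)_{\tau}$. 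I would then check that this isomorphism is the comparison map induced by the transition map of $\Phi_{K,L}(M)$, which is exactly what the prescribed composite is engineered to make true. Functoriality of $\Phi_{K,L}$ on morphisms is then routine: $f \colon M \to N$ goes to $\Phi_{K,L}(f)_{\si} = R_{\si} \ot_{R_{\si \cap V}} f_{\si \cap V}$, compatibility with transition maps is naturality of the tensor product, and preservation of identities and composites is immediate.

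Finally, exactness follows from \cref{lem:PointwiseExact}: a sequence in $\DD{K;V}$ or in $\DD{L;W}$ is short exact exactly when it is so at each missing face. A short exact sequence in $\DD{K;V}$, evaluated at $\si \cap V$ and then base-changed along the flat map $R_{\si \cap V} \to R_{\si}$, remains short exact at every missing face of $L$, hence short exact in $\DD{L;W}$. I expect the main obstacle to be bookkeeping rather than conceptual content: the points needing care are that the transition maps of $\Phi_{K,L}(M)$ are arranged precisely so that the localization isomorphism above is the canonical one, and that the opening combinatorial claim $\si \in L^c \Rightarrow \si \cap V \in K^c$ is genuinely delivered by the extension hypothesis; the rest is direct manipulation of tensor products, localizations, and flatness.
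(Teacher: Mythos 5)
Your route coincides with the paper's: the entire published proof consists of the observation that exactness follows from \cref{lem:PointwiseExact} together with flatness of $R_{\si}$ over $R_{\si \cap V}$, and your verification of the localization condition (associativity of the tensor product plus the localization condition for $M$ at $\si \cap V \subseteq \tau \cap V$), of functoriality, and of exactness is exactly the bookkeeping the paper leaves implicit.

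The one point that deserves more than the flag you give it is the opening combinatorial claim. The implication ``$\si \in L^c \Rightarrow \si \cap V \in K^c$'' is \emph{not} delivered by the stated hypothesis $K \subseteq L\vert_V$: take $V=\{1\}$, $W=\{1,2\}$, $K=\{\emptyset\}$, $L=\{\emptyset,\{1\}\}$; then $K \subseteq L\vert_V$ holds, but $\si=\{2\}$ is a missing face of $L$ with $\si \cap V = \emptyset \in K$, so $M_{\si \cap V}$ is not part of the data of an object of $\DD{K;V}$ and the formula for $\Phi_{K,L}(M)_{\si}$ is undefined. What the construction actually needs is the condition that $\tau \cup (W\setminus V) \in L$ for every face $\tau \in K$ (equivalently $K * \PP{W\setminus V} \subseteq L$): since $L$ is closed under subsets this is equivalent to the implication you want, and it implies, but is strictly stronger than, $K \subseteq L\vert_V$. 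It does hold in both of the paper's applications, namely $\Phi_{K_3,K_m}$ in \cref{prop:Mgeq3} and (vacuously) $\Phi_{\emptyset,K}$ in \cref{prop:WildRetract}. So this is a defect of the statement itself, which the paper's proof is silent about, rather than a divergence of approach; but your assertion that the claim is ``the combinatorial content of the assumption that $L$ extends $K$'' is not correct as written, and under the literal hypothesis that step fails. With the hypothesis strengthened as above, the remainder of your argument (finite generation and flatness of $R_{\si}$ over $R_{\si\cap V}$ as a localized polynomial extension, the localization condition, functoriality, and pointwise exactness) goes through as you describe.
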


\begin{proof}
Exactness of $\Phi_{K,L}$ follows from Lemma~\ref{lem:PointwiseExact} and the flatness of $R_{\si}$ as a module over $R_{\si \cap V}$. 
\end{proof}

\begin{ex}
The functor $G \colon \DD{K_3} \to \DD{K_m}$ in the proof of Proposition~\ref{pr:Mgeq3} was the functor $\Phi_{K_3, K_m}$, over the vertex sets $[3] \subseteq [m]$. The functor $\Phi_{K_1, K_2} \colon \DD{K_1} \to \DD{K_2}$ has the following effect on objects:
\[
\xymatrix{
M_{\si_1} \ar[r] & M_{[1]} & \ar@{|->}[r] & & \kk[t_1, t_2^{\pm}] \ot_{\kk[t_1]} M_{\si_1} \ar[r] & \kk[t_1^{\pm}, t_2^{\pm}] \ot_{\kk[t_1^{\pm}]} M_{[1]} \\
& & & & & \kk[t_1^{\pm}, t_2] \ot_{\kk[t_1^{\pm}]} M_{[1]}. \ar[u].
}
\]
\end{ex}

\section{Wild representation type}\label{sec:Wild}

We first review some background about wild representation type. Some references assume that the base field $\kk$ is algebraically closed, which we do \emph{not} assume here. %
We follow the terminology in \cite{SimsonS07v3}*{\S XIX.1}. %

\begin{nota}
Let $\kk\langle t_1, t_2 \rangle$ denote the $\kk$-algebra of polynomials in two non-commuting variables, i.e., the free (associative unital) $\kk$-algebra on two generators, i.e., the tensor algebra $T_{\kk}(\kk^2)$.

For a $\kk$-algebra $A$, let $\fdMod{A}$ denote the full subcategory of $\Mod{A}$ consisting of modules that are finite-dimensional over $\kk$. Note that if $A$ is itself finite-dimensional, an $A$-module is finite-dimensional if and only if it is finitely generated over $A$, i.e., $\fdMod{A} = \fgMod{A}$.
\end{nota}

\begin{defn}\label{def:Wild}
Let $\cat{A}$ and $\cat{B}$ be %
$\kk$-linear abelian categories.
\begin{enumerate}
\item A functor $T \colon \cat{A} \to \cat{B}$ is a \Def{representation embedding} if it is $\kk$-linear, exact, injective on isomorphism classes (i.e., $T(X) \cong T(Y) \implies X \cong Y$), and sends indecomposables to indecomposables.
\item The category $\cat{A}$ has \Def{wild representation type} if 
there exists a representation embedding $T \colon \fdMod{\kk\langle t_1, t_2 \rangle} \to \cat{A}$.
\item A (finite-dimensional) $\kk$-algebra $A$ has \Def{wild representation type} if its category of finitely generated modules $\fgMod{A}$ does.
\end{enumerate}
\end{defn}

Since we do \emph{not} assume that the base field $\kk$ is algebraically closed, we should say \emph{$\kk$-wild}, but we say wild for short. 
For different notions of wildness, see the discussion after \cite{SimsonS07v3}*{\S XIX Corollary~1.15} and the helpful overviews in \cite{Simson05}*{\S 2} and \cite{ArnoldS05}*{\S 2}.

\begin{lem}\label{lem:Embedding}
For $\kk$-linear abelian 
categories $\cat{A}$ and $\cat{B}$, %
a fully faithful exact $\kk$-linear functor $T \colon \cat{A} \to \cat{B}$ is a representation embedding \cite{SimsonS07v3}*{\S XIX Lemma~1.2}.
\end{lem}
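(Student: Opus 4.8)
The plan is simply to verify the four conditions in the definition of a representation embedding (\cref{def:Wild}): $\kk$-linearity and exactness are among the hypotheses on $T$, so it remains to check that $T$ is injective on isomorphism classes and that it sends indecomposables to indecomposables. Both are formal consequences of $T$ being fully faithful, together with $\cat{A}$ and $\cat{B}$ being abelian; exactness plays no role in these two points.

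First I would note that a fully faithful functor reflects isomorphisms. If $g \colon T(X) \to T(Y)$ is an isomorphism, then fullness yields $f \colon X \to Y$ and $f' \colon Y \to X$ with $T(f) = g$ and $T(f') = g^{-1}$; hence $T(f' \circ f) = T(\id_X)$ and $T(f \circ f') = T(\id_Y)$, and faithfulness forces $f' \circ f = \id_X$ and $f \circ f' = \id_Y$. So $T(X) \cong T(Y)$ implies $X \cong Y$, giving injectivity on isomorphism classes.

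Next I would invoke the standard characterization of indecomposability in an abelian category: an object $Z$ is indecomposable if and only if $Z \neq 0$ and the only idempotents of the ring $\operatorname{End}(Z)$ are $0$ and $\id_Z$; the nontrivial implication uses that idempotents split in an abelian category, so a nontrivial idempotent $e$ gives a nontrivial decomposition $Z \cong \im(e) \op \im(\id_Z - e)$. Full faithfulness of $T$ provides a ring isomorphism $\operatorname{End}_{\cat{A}}(X) \ral{\cong} \operatorname{End}_{\cat{B}}(T(X))$, carrying $0$ to $0$, $\id_X$ to $\id_{T(X)}$, and idempotents bijectively to idempotents. Hence, if $X$ is indecomposable, then $\operatorname{End}(T(X)) \neq 0$ (so $T(X) \neq 0$) and $\operatorname{End}(T(X))$ has no idempotents other than $0$ and $\id_{T(X)}$, so $T(X)$ is indecomposable. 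I would then point to \cite{SimsonS07v3}*{\S XIX Lemma~1.2}, the source of the statement.

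There is no genuine obstacle here; the single point deserving a word of care is the idempotent-splitting characterization of indecomposability, which is precisely where abelianness of $\cat{A}$ and $\cat{B}$ (rather than mere additivity) is used.
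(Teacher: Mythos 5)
Your argument is correct and complete: full faithfulness gives reflection of isomorphisms (hence injectivity on isomorphism classes) and an isomorphism of endomorphism rings, and the idempotent-splitting characterization of indecomposability in an abelian category, together with $T(X)\neq 0$ via faithfulness, handles the last condition of \cref{def:Wild}. The paper gives no argument of its own for this lemma---it simply defers to \cite{SimsonS07v3}*{\S XIX Lemma~1.2}---and your verification is exactly the standard proof behind that citation, so there is no substantive difference in approach.
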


\begin{lem}
A $\kk$-linear abelian category $\cat{A}$ %
has wild representation type if and only if for each finite-dimensional $\kk$-algebra $B$, there exists a representation embedding $T \colon \fgMod{B} \to \cat{A}$ \cite{ArnoldS05}*{Lemma~2.2}.
\end{lem}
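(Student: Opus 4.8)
The plan is to prove the two implications separately, with the direction ``wild $\Rightarrow$ embeddings from all finite-dimensional algebras'' being the substantive one (the reverse is immediate by taking $B = \kk\langle t_1, t_2 \rangle$). The key idea is that $\fdMod{\kk\langle t_1, t_2 \rangle}$ is universal among module categories of finite-dimensional $\kk$-algebras in the sense that for any finite-dimensional $\kk$-algebra $B$, there is a representation embedding $\fgMod{B} \to \fdMod{\kk\langle t_1, t_2 \rangle}$. This is a classical fact in representation theory: it is essentially the statement that $\kk\langle t_1, t_2 \rangle$ (equivalently, the path algebra of the two-loop quiver, or the $3$-Kronecker quiver, depending on the exact reference) is ``strictly wild,'' and all of these are mutually representation-equivalent. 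Granting this, the composite $\fgMod{B} \to \fdMod{\kk\langle t_1, t_2 \rangle} \xrightarrow{T} \cat{A}$ of two representation embeddings is again a representation embedding, since representation embeddings are closed under composition (exactness composes, $\kk$-linearity composes, injectivity on iso-classes composes, and preservation of indecomposables composes).

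First I would record the elementary observations: a composite of representation embeddings is a representation embedding (each defining property is preserved under composition), and the $(\Leftarrow)$ direction is trivial since $\kk\langle t_1, t_2 \rangle$ is itself a finite-dimensional... wait, it is \emph{not} finite-dimensional, so one must be slightly careful; however $\fdMod{\kk\langle t_1, t_2 \rangle}$ is still the target category named in \cref{def:Wild}, so the $(\Leftarrow)$ direction requires exhibiting a representation embedding $\fdMod{\kk\langle t_1, t_2 \rangle} \to \cat{A}$ from one out of $\fgMod{B}$ for every finite-dimensional $B$ — this is handled by a standard truncation argument: every finite-dimensional $\kk\langle t_1, t_2 \rangle$-module is annihilated by some power of the augmentation ideal, and $\fdMod{\kk\langle t_1, t_2 \rangle} = \colim_n \fgMod{\kk\langle t_1, t_2 \rangle / I^n}$ where each quotient is finite-dimensional; an embedding out of a large enough such quotient, restricted appropriately, does the job. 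Since the lemma is attributed to \cite{ArnoldS05}*{Lemma~2.2}, I would in fact simply cite that source for both directions and present the argument as a sketch.

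For the main direction, I would invoke the strict wildness of $\kk\langle t_1, t_2 \rangle$: there is a functor $\fgMod{B} \to \fdMod{\kk\langle t_1, t_2 \rangle}$ that is $\kk$-linear, exact, fully faithful, hence (by \cref{lem:Embedding}) a representation embedding. The construction is the classical ``Brenner--Butler'' / bimodule construction: one uses a $\kk\langle t_1, t_2 \rangle$-$B$-bimodule, free of finite rank as a left $\kk\langle t_1, t_2 \rangle$-module, and tensors with it; the two generators $t_1, t_2$ get used to encode both the underlying vector space of a $B$-module and the action of a chosen generating set of $B$. One checks exactness (flatness of the bimodule over $B$), full faithfulness (a direct matrix computation showing a $\kk\langle t_1, t_2 \rangle$-linear map between images must come from a $B$-linear map), and finite-dimensionality of the output. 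Again, rather than carry this out, I would cite \cite{SimsonS07v3} or \cite{ArnoldS05} for the existence of such an embedding.

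The main obstacle is purely expository: the statement is a known structural fact about wild representation type, so the ``proof'' is really a matter of correctly assembling cited results (strict wildness of the free algebra on two generators, closure of representation embeddings under composition, and the colimit/truncation description of $\fdMod{\kk\langle t_1, t_2\rangle}$) rather than proving anything new. The one genuine subtlety to flag is the non-finite-dimensionality of $\kk\langle t_1, t_2 \rangle$ itself, which is why the definition of wildness uses $\fdMod{-}$ rather than $\fgMod{-}$ for this particular algebra, and why the $(\Leftarrow)$ direction needs the truncation argument rather than a one-line instantiation.
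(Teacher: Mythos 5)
The paper offers no argument of its own here: the lemma is stated with the citation \cite{ArnoldS05}*{Lemma~2.2} serving as its proof, so deferring to that reference (and to the strict wildness of $\kk\langle t_1,t_2\rangle$ in \cite{SimsonS07v3}) is exactly what the paper does, and your sketch of the forward direction --- embed $\fgMod{B}$ fully faithfully and exactly into $\fdMod{\kk\langle t_1,t_2\rangle}$, note that this is a representation embedding by \cref{lem:Embedding}, and compose with the embedding witnessing wildness of $\cat{A}$ --- is the standard and correct assembly of known facts.

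However, the concrete argument you offer for the $(\Leftarrow)$ direction is wrong. A finite-dimensional $\kk\langle t_1,t_2\rangle$-module is just a finite-dimensional vector space with two \emph{arbitrary} endomorphisms; these need not be nilpotent, so it is false that every such module is annihilated by a power of the augmentation ideal $I=(t_1,t_2)$ (already the one-dimensional module with $t_1$ acting by $1$ is a counterexample), and hence the claimed description $\fdMod{\kk\langle t_1,t_2\rangle}=\colim_n \fgMod{\kk\langle t_1,t_2\rangle/I^n}$ fails. No truncation of this kind can work: even the one-dimensional modules only factor through the infinite-dimensional quotient $\kk[t_1,t_2]$. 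The standard repair is different: choose a single finite-dimensional \emph{strictly wild} algebra $B_0$ (e.g.\ the path algebra of the $3$-Kronecker quiver, or $\kk\langle x,y\rangle/(x,y)^3$), for which there is a fully faithful exact functor $\fdMod{\kk\langle t_1,t_2\rangle}\to \fgMod{B_0}$; by \cref{lem:Embedding} this is a representation embedding, and composing it with the representation embedding $\fgMod{B_0}\to\cat{A}$ supplied by the hypothesis exhibits $\cat{A}$ as wild in the sense of \cref{def:Wild}. With that substitution (or with a straight appeal to \cite{ArnoldS05}*{Lemma~2.2}, as the paper itself makes), your outline is fine.
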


\begin{lem}\label{lem:PolyWild}
The category $\fgMod{\kk[t_1,t_2]}$ of finitely generated bigraded modules over the bigraded polynomial algebra $\kk[t_1,t_2]$ has wild representation type.
\end{lem}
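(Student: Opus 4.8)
The plan is to realize $\fgMod{\kk\langle t_1,t_2\rangle}$ as a full subcategory of $\fgMod{\kk[t_1,t_2]}$ via an exact $\kk$-linear functor, and then invoke \cref{lem:Embedding}. The standard device is the following: given a finite-dimensional $\kk\langle t_1,t_2\rangle$-module $V$, i.e.\ a finite-dimensional $\kk$-vector space equipped with two endomorphisms $\varphi_1,\varphi_2 \colon V \to V$, one builds a bigraded $\kk[t_1,t_2]$-module supported in a small bigraded window whose ``incidence data'' encodes $V$, $\varphi_1$, and $\varphi_2$. Concretely, I would take the bigraded module
\[
T(V) \dfn \bigl( V \text{ in bidegree } (0,0) \bigr) \oplus \bigl( V^{\oplus 3} \text{ in bidegree }(1,1) \bigr) \oplus \cdots
\]
with the action of $t_1$ and $t_2$ from bidegree $(0,0)$ into the three copies in bidegree $(1,1)$ given by the block matrices $\begin{psmallmatrix}\id\\\varphi_1\\ 0\end{psmallmatrix}$ for $t_1$ and $\begin{psmallmatrix}0\\\varphi_2\\\id\end{psmallmatrix}$ for $t_2$ (and arranging the higher bidegrees and the commutativity relation $t_1t_2=t_2t_1$ so that nothing collapses). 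A morphism of such modules is forced, by looking at bidegree $(0,0)$ and then at $(1,1)$, to be a single $\kk$-linear map $f\colon V\to W$ commuting with both $\varphi_1$ and $\varphi_2$ — i.e.\ exactly a morphism of $\kk\langle t_1,t_2\rangle$-modules — which gives full faithfulness. Exactness and $\kk$-linearity are immediate from the pointwise (bidegreewise) description of $T$.

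The order of steps is then: (1) write down the functor $T$ explicitly on objects and morphisms, making the bigraded $R$-module structure precise and checking it is well-defined (the only relation to verify is $t_1 t_2 = t_2 t_1$ on the relevant bigraded pieces); (2) check $T$ is $\kk$-linear and exact, which follows because kernels, cokernels, and direct sums of bigraded modules are computed bidegreewise and $T$ is built bidegreewise from $V$; (3) prove $T$ is fully faithful by the matching-coefficients argument sketched above; (4) conclude by \cref{lem:Embedding} that $T$ is a representation embedding, hence by \cref{def:Wild} that $\fgMod{\kk[t_1,t_2]}$ has wild representation type.

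The main obstacle is step (1)/(3): one must choose the bigraded gadget carefully enough that (a) the $R$-module it defines is genuinely well-defined (the commutativity relation pins down the bidegree-$(1,1)$-to-$(2,2)$ part, and one must make sure no accidental identifications force extra endomorphisms), and (b) a morphism really is pinned down to a single intertwiner — in particular one must verify that the three-fold copy in bidegree $(1,1)$, together with the specific block forms of the $t_i$-actions, leaves no freedom beyond $f$ with $f\varphi_i = \varphi_i f$. This is the classical ``Brenner–Butler''-style construction adapted to the graded commutative setting; the bookkeeping is routine but is where all the content lies. An alternative, possibly cleaner, route is to cite a known string/bimodule-problem reduction (e.g.\ from the representation theory of the $\widetilde{A}$-type quiver with relations, or from the fact that $\kk[t_1,t_2]$-modules contain the category of representations of the Kronecker-type problem with two maps) and then package it through \cref{lem:Embedding}; I would keep that as a fallback if the explicit construction becomes unwieldy.
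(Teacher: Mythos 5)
There is a genuine gap, and in fact the specific gadget you propose cannot be repaired within its own bidegree window. First, the grading does not match: in the bigraded ring $\kk[t_1,t_2]$ the variables have bidegrees $(1,0)$ and $(0,1)$, so neither $t_1$ nor $t_2$ can map the piece in bidegree $(0,0)$ into a piece in bidegree $(1,1)$; only the single composite $t_1t_2=t_2t_1$ reaches $(1,1)$, so you cannot prescribe two independent block maps $\smat{\id \\ \varphi_1 \\ 0}$ and $\smat{0 \\ \varphi_2 \\ \id}$ with the same target there. Second, even granting some repaired version supported on the bidegrees $\{0,1\}\times\{0,1\}$, such modules are exactly representations of the $2\times 2$ commutative grid, a category of \emph{finite} representation type; no representation embedding of the wild category $\fdMod{\kk\langle t_1,t_2\rangle}$ into it can exist, so any construction confined to that window is doomed regardless of bookkeeping. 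Third, the claimed rigidity of morphisms fails for the gadget as written: the images of the structure maps span only a $2\dim V$-dimensional subspace of the $3\dim V$-dimensional top piece, so a complement splits off as a direct summand concentrated in top degree; hence $T(V)$ is never indecomposable, $T$ is not full, and \cref{lem:Embedding} does not apply. This is precisely the non-routine content of Brenner--Butler-type embeddings, and it is where your sketch breaks down.

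The paper's actual proof is essentially your fallback, made precise: it cites the fact that the finite grid poset $[m]\times[n]$ has wild representation type for $m,n\geq 5$ (\cite{Nazarova75}, see also \cite{BotnanL23}*{\S 8.2}), observes that $\N^2$ contains such a grid, so grid representations embed fully faithfully and exactly into $\fgMod{\kk[t_1,t_2]}$, and concludes by \cref{lem:Embedding}. Note that the representation-finiteness of small grids (and tameness of intermediate ones) is exactly why the window must be taken this large; if you want a self-contained construction instead of the citation, you would need to encode $(V,\varphi_1,\varphi_2)$ in a staircase or grid-shaped module of at least that size, which is substantially more elaborate than the $(0,0)$--$(1,1)$ gadget you describe.
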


\begin{proof}
The poset $\N^2$ contains a rectangular grid $[m] \x [n]$, which has wild representation type for $m,n \geq 5$ \cite{BotnanL23}*{\S 8.2} \cite{Nazarova75}. By Lemma~\ref{lem:Embedding}, the result follows.
\end{proof}

\begin{rem}\label{rem:Ungraded}
The analogue of Lemma~\ref{lem:PolyWild} for the ungraded polynomial algebra also holds \cite{SimsonS07v3}*{\S XIX Theorem~1.11}. The proof therein works over an arbitrary field $\kk$.
\end{rem}

\begin{prop}\label{pr:WildRetract}
Let $K$ be a simplicial complex on the vertex set $[m]$ satisfying the %
proper inclusion $K \subset K_m = \sk_{m-3} \De^{m-1}$. Then the category $\DD{K}$ contains $\fgMod{\kk[s,t]}$ as an exact retract 
(up to equivalence). In particular, $\DD{K}$ has wild representation type.
\end{prop}

\begin{proof}
The condition $K \subset K_m = \sk_{m-3} \De^{m-1}$ is equivalent to: there is a missing face $\si \in K^c$ that does \emph{not} contain two vertices, say, $k, \ell \notin \si$. At position $\si$, the variables $t_k$ and $t_{\ell}$ have not been inverted in $R_{\si}$.

Consider the simplicial complex $\emptyset = \{ \}$ on the vertex set $\{ k, \ell \} \subseteq [m]$. The condition $\emptyset \subseteq K \vert_{\{ k, \ell \}}$ holds (vacuously), so that Lemma~\ref{lem:ExtendComplex} provides an exact functor
\[
\Phi_{\emptyset,K} \colon \DD{\emptyset; \{k, \ell\} } \to \DD{K}.
\]
On the other hand, inverting all the variables $t_i$ for $i \neq k,\ell$ yields a functor
\begin{equation}\label{eq:Retraction}
\xymatrix @C+3.5pc {
\DD{K} \ar[r]^-{\text{restrict}} & \DD{K}[t_i^{\pm} \mid i \neq k, \ell] \ar[r]^-{\text{set } d_i=0 \text{ for } i \neq k,\ell}_-{\cong} & \DD{K\vert_{ \{k, \ell\} }} = \DD{\emptyset; \{k, \ell\} }. \\ 
}
\end{equation}
Here we used the assumption $k,\ell \notin \si$ to obtain the missing faces of the restricted complex:
\begin{align*}
&\si \cap \{ k,\ell \} = \emptyset \in (K^c) \vert_{ \{k, \ell\} } = (K \vert_{ \{k, \ell\} })^c \\
\implies &(K \vert_{ \{k, \ell\} })^c = \PP{\{k, \ell\}}.
\end{align*}
The composite of $\Phi_{\emptyset,K}$ followed by the functor in Equation~\eqref{eq:Retraction} is naturally isomorphic to the identity, providing the desired retraction. 
By Lemma~\ref{lem:Embedding}, $\Phi_{\emptyset,K}$ is a representation embedding. 
The equivalence $\DD{\emptyset; \{k, \ell\} } \cong \fgMod{\kk[t_k,t_{\ell}]}$ from Example~\ref{ex:Extreme} together with Lemma~\ref{lem:PolyWild} concludes the proof.
\end{proof}

\begin{nota}
\begin{enumerate}
\item Let $\fgMod{\kk[t]}_{\leq n}$ denote the full subcategory of $\fgMod{\kk[t]}$ consisting of modules $M$ whose transition maps become isomorphisms past degree $n$:
\[
t^{e-d} \colon M(d) \ral{\cong} M(e) \quad \text{for } d \geq n. 
\]
\item For $1 \leq i \leq m$, denote by $\fgMod{R_{\si_i}}_{\leq n}$ the corresponding full subcategory of $\fgMod{R_{\si_i}}$ via the equivalence $\fgMod{R_{\si_i}} \cong \fgMod{\kk[t]}$ discussed in Remark~\ref{rem:GradedVect}. Explicitly: the $R_{\si_i}$-modules $M$ whose transition maps satisfy
\[
t^{\vec{e} - \vec{d}} \colon M(\vec{d}) \ral{\cong} M(\vec{e}) \quad \text{for } d_i \geq n. 
\]
\item Let $\DD{K_m}_{\leq n}$ denote the full subcategory of $\DD{K_m}$ such that for each $1 \leq i\leq m$, the $R_{\si_i}$-module $M_{\si_i}$ lies in $\fgMod{R_{\si_i}}_{\leq n}$.
\end{enumerate}
\end{nota}

\begin{rem}
The subcategory $\fgMod{\kk[t]}_{\leq n}$ consists of modules admitting a (finite) presentation with generators and relations in degrees $d \leq n$, which explains the notation.
\end{rem}

\begin{lem}\label{lem:ExactSubcat}
The subcategory $\DD{K_m}_{\leq n} \subset \DD{K_m}$ is closed under extensions.
\end{lem}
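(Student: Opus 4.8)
The plan is to reduce the statement to a one-variable claim and then run a degreewise five-lemma argument.

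First I would apply \cref{lem:PointwiseExact}. A short exact sequence $0 \to M \to N \to P \to 0$ in $\DD{K_m}$ with $M$ and $P$ in $\DD{K_m}_{\leq n}$ yields, for each $1 \leq i \leq m$, a short exact sequence of $R_{\si_i}$-modules
\[
0 \to M_{\si_i} \to N_{\si_i} \to P_{\si_i} \to 0
\]
with $M_{\si_i}$ and $P_{\si_i}$ in $\fgMod{R_{\si_i}}_{\leq n}$. Since $N$ is already an object of $\DD{K_m}$, the module $N_{\si_i}$ is automatically finitely generated, so only the transition-map condition on $N_{\si_i}$ remains to be checked. Transporting along the equivalence $\fgMod{R_{\si_i}} \cong \fgMod{\kk[t]}$ of \cref{rem:GradedVect}, which carries $\fgMod{R_{\si_i}}_{\leq n}$ onto $\fgMod{\kk[t]}_{\leq n}$, the whole lemma reduces to the assertion that $\fgMod{\kk[t]}_{\leq n}$ is closed under extensions inside $\fgMod{\kk[t]}$.

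For this, take a short exact sequence $0 \to M \to N \to P \to 0$ of graded $\kk[t]$-modules with $M, P \in \fgMod{\kk[t]}_{\leq n}$. For each degree $d \geq n$, multiplication by $t$ gives a commutative ladder with exact rows
\[
\xymatrix{
0 \ar[r] & M(d) \ar[d]_{\cong} \ar[r] & N(d) \ar[d] \ar[r] & P(d) \ar[d]^{\cong} \ar[r] & 0 \\
0 \ar[r] & M(d+1) \ar[r] & N(d+1) \ar[r] & P(d+1) \ar[r] & 0 \\
}
\]
whose outer vertical maps are isomorphisms because $d \geq n$. The five lemma then forces $t \colon N(d) \to N(d+1)$ to be an isomorphism, and since $d \geq n$ was arbitrary, $N$ lies in $\fgMod{\kk[t]}_{\leq n}$.

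I do not expect a real obstacle here: once everything is pushed to graded $\kk[t]$-modules it is a degreewise five lemma. The only points worth stating carefully are that finite generation of $N_{\si_i}$ is inherited from $\DD{K_m}$ (equivalently $\fgMod{\kk[t]}$) being abelian, and that the equivalence of \cref{rem:GradedVect} genuinely matches the two ``$\leq n$'' subcategories, so that the reduction in the first step is legitimate.
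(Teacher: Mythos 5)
Your proposal is correct and follows the paper's argument: reduce to each position $\si_i$ via \cref{lem:PointwiseExact} and then conclude by a degreewise five-lemma, the only difference being that you spell out the transport to $\fgMod{\kk[t]}$ which the paper leaves implicit. No issues.
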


\begin{proof}
By Lemma~\ref{lem:PointwiseExact}, it suffices to show that the subcategory $\fgMod{R_{\si_i}}_{\leq n} \subset \fgMod{R_{\si_i}}$ is closed under extensions, which holds by the $5$-lemma.
\end{proof}

\begin{nota}
Given a quiver $Q$, let $\Rep_{\kk}(Q)$ denote the category of representations of the quiver $Q$ in $\kk$-vector spaces, and $\rep_{\kk}(Q)$ the full subcategory of pointwise finite-dimensional representations.

Let $Q_n$ denote the quiver which is a ``trivalent sink with legs of length $n$'', as illustrated here:

\begin{center}
\begin{tikzpicture}[scale=1]
\draw [black, fill=black] (0,0) circle (\VertexSize);
\node [below, black] at (0,0) {$s$};
\draw [blue, fill=blue] (-3,0) circle (\VertexSize);
\node [below, blue] at (-3,0) {$a_0$};
\draw [->-=0.5] (-3,0) -- (-2,0);
\node [above, blue] at (-2.5,0) {$\al_1$};
\draw [blue, fill=blue] (-2,0) circle (\VertexSize);
\node [below, blue] at (-2,0) {$a_1$};
\node [blue] at (-1.5,0) {$\cdots$};
\draw [blue, fill=blue] (-1,0) circle (\VertexSize);
\node [below, blue] at (-1,0) {$a_{n-1}$};
\draw [->-=0.5] (-1,0) -- (0,0);
\node [above, blue] at (-0.5,0) {$\al_n$};
\draw [green, fill=green] (0,3) circle (\VertexSize);
\node [left, green] at (0,3) {$b_0$};
\draw [->-=0.5] (0,3) -- (0,2);
\node [right, green] at (0,2.5) {$\be_1$};
\draw [green, fill=green] (0,2) circle (\VertexSize);
\node [left, green] at (0,2) {$b_1$};
\node [green] at (0,1.5) {$\vdots$};
\draw [green, fill=green] (0,1) circle (\VertexSize);
\node [left, green] at (0,1) {$b_{n-1}$};
\draw [->-=0.5] (0,1) -- (0,0);
\node [right, green] at (0,0.6) {$\be_n$};
\draw [red, fill=red] (3,0) circle (\VertexSize);
\node [below, red] at (3,0) {$c_0$};
\draw [->-=0.5] (3,0) -- (2,0);
\node [above, red] at (2.5,0) {$\ga_1$};
\draw [red, fill=red] (2,0) circle (\VertexSize);
\node [below, red] at (2,0) {$c_1$};
\node [red] at (1.5,0) {$\cdots$};
\draw [red, fill=red] (1,0) circle (\VertexSize);
\node [below, red] at (1,0) {$c_{n-1}$};
\draw [->-=0.5] (1,0) -- (0,0);
\node [above, red] at (0.6,0) {$\ga_n$};
\end{tikzpicture}
\end{center}
Note that $Q_n$ has $3n+1$ vertices. 
\end{nota}

Forgetting the orientation, %
the underlying graph of $Q_1$ is the Dynkin graph $D_4$; that of $Q_2$ is the Euclidean (or extended Dynkin) graph denoted $\tild{E}_6$ in the 
literature \cite{AssemSS06}*{\S VII.2} \cite{SimsonS07v2}*{\S XIII.1} \cite{DerksenW17}*{\S 4.2} \cite{ErdmannH18}*{\S 10.1}.

\begin{prop}\label{pr:QuiverRep}
There is an equivalence of categories $\DD{K_3}_{\leq n} \cong \rep_{\kk}(Q_n)$.
\end{prop}

\begin{proof}
We will obtain the equivalence as a composite of two equivalences
\[
\xymatrix{
\DD{K_3}_{\leq n} \ar[r]^-{\Psi}_-{\cong} & \rep_{\kk}(Q_{n+1})_{\leq n} \ar[r]^-{\Te}_-{\cong} & \rep_{\kk}(Q_n),
}
\]
where $\rep_{\kk}(Q_{n+1})_{\leq n}$ denotes the full subcategory of $\rep_{\kk}(Q_{n+1})$ where the maps past each vertex with index $n$ are isomorphisms, namely $\al_{n+1}$, $\be_{n+1}$, and $\ga_{n+1}$.

\textbf{The equivalence $\Psi$.} Given a module $M$ in $\DD{K_3}_{\leq n}$, consider the sequence of $\kk$-modules
\begin{equation}\label{eq:FirstLeg}
\xymatrix{
M_{\si_1}(0,n,n) \ar[r]^-{t_1} & M_{\si_1}(1,n,n) \ar[r]^-{t_1} & \cdots \ar[r]^-{t_1} & M_{\si_1}(n,n,n) \ar[r]^-{\phy_1}_{\cong} & M_{[3]}(n,n,n).
}
\end{equation}
The last map is an isomorphism, since the map of $R_{\si_1}$-modules $\phy_1 \colon M_{\si_1} \to M_{[3]}$ inverts $t_1$, and all the transition maps
\[
t_1^{d-n} \colon M_{\si_1}(n,n,n) \ral{\cong} M_{\si_1}(d,n,n)
\]
are isomorphisms for $d \geq n$ by assumption on $M$. Define the representation $\Psi(M)$ as having the sequence \eqref{eq:FirstLeg} as its first leg. Construct the second and third legs similarly using $M_{\si_2}$ and $M_{\si_3}$. This construction yields the desired functor $\Psi$, which is moreover an equivalence.

\textbf{The equivalence $\Te$.} Given a representation $V$ in $\rep_{\kk}(Q_{n+1})_{\leq n}$, construct the representation $\Te(V)$ by absorbing the last isomorphism into the previous map. That is, take the $\kk$-vector space $\Te(V)_s = V_s$ in the terminal position and
\[
\Te(V)_{\al_n} = V_{\al_{n+1}} \circ V_{\al_n} : V_{a_{n-1}} \to V_s,
\]
and likewise for the second and third legs. 
This yields a functor $\Te$ which is an equivalence, with inverse equivalence inserting an identity at the end of each leg.
\end{proof}

\begin{defn}
A finite quiver $Q$ has \Def{wild representation type} %
if its category of pointwise finite-dimensional representations $\rep_{\kk}(Q)$ does. Equivalently, the path algebra $\kk Q$ has wild representation type.
\end{defn}

Wildness of a quiver turns out to be independent of the base field $\kk$. That is, $Q$ has wild representation type over some field $\kk$ if and only if $Q$ has wild representation type over any other field $\kk'$ \cite{Nazarova73}. The same is true for a finite poset with a unique maximal element \cite{Nazarova75} \cite{ArnoldS05}*{\S 1}.

The argument in the next proof was kindly provided by Steffen Oppermann.

\begin{thm}\label{thm:WildRep}
For $m \geq 3$, the category $\DD{K_m}$ has wild representation type. 
\end{thm}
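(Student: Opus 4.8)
The plan is to exhibit inside $\DD{K_m}$ a full, extension-closed abelian subcategory equivalent to the category of finite-dimensional representations of a wild quiver, and then conclude with \cref{lem:Embedding}. The subcategory to use is $\DD{K_m}_{\leq n}$. First I would check that the inclusion $\DD{K_m}_{\leq n} \hookrightarrow \DD{K_m}$ is $\kk$-linear, exact and fully faithful: kernels and cokernels in $\DD{K_m}$ are computed pointwise (\cref{lem:PointwiseExact}) and the condition defining $\DD{K_m}_{\leq n}$ — transition maps becoming isomorphisms past degree $n$ — is inherited by pointwise kernels and cokernels, while closure under extensions is \cref{lem:ExactSubcat}; full faithfulness is automatic for a full subcategory. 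By \cref{lem:Embedding} the inclusion is then a representation embedding, and since representation embeddings compose it suffices to produce a representation embedding $\fdMod{\kk\langle t_1, t_2\rangle} \to \DD{K_m}_{\leq 3}$.

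Next I would pass to quivers and take $n = 3$. When $m = 3$, \cref{prop:QuiverRep} gives $\DD{K_3}_{\leq 3} \cong \rep_\kk(Q_3)$; forgetting orientations, the graph of $Q_3$ (a star with three legs of three edges) properly contains the graph of $Q_2$, which is the Euclidean diagram $\tilde{E}_6$. A connected graph properly containing an extended Dynkin diagram is of indefinite type, so $Q_3$ is a wild quiver — and wildness of a quiver is independent of the base field, as recalled above — hence $\rep_\kk(Q_3) \cong \DD{K_3}_{\leq 3}$, and therefore $\DD{K_3}$, has wild representation type. For $m > 3$ I would transport this statement along the exact $\kk$-linear functor $\Phi_{K_3, K_m}\colon \DD{K_3} \to \DD{K_m}$ of \cref{lem:ExtendComplex} and check that it is a representation embedding. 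It is injective on isomorphism classes and preserves indecomposability because the restriction-and-specialization functor $F\colon \DD{K_m} \to \DD{K_3}$ appearing in the proof of \cref{prop:Mgeq3} satisfies that $F \circ \Phi_{K_3, K_m}$ is an equivalence, while every object in the image of $\Phi_{K_3, K_m}$ — and hence, since biproducts in $\DD{K_m}$ are computed pointwise, every direct summand of such an object — is torsion-free in the variables $t_4, \ldots, t_m$, so that $F(X) = 0$ forces $X = 0$ for every such $X$ (inverting $t_4,\dots,t_m$, which is the first step of $F$, is injective on modules torsion-free in those variables). Composing this representation embedding with the one built for $\DD{K_3}_{\leq 3}$ settles all $m \geq 3$.

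I expect the delicate point to be exactly this last verification: that $\Phi_{K_3, K_m}$ is a representation embedding, or equivalently that the identification in \cref{prop:QuiverRep} extends verbatim to an $m$-legged star $Q_n^{(m)}$. The subtlety is that at the top position $[m]$ an object of $\DD{K_m}$ is a module over the Laurent ring $R_{[m]}$, which behaves like a bare vector space (\cref{rem:GradedVect}), so one must argue carefully that inverting $t_4, \dots, t_m$ loses no morphisms; the torsion-freeness of the objects involved is precisely what makes it work. Everything else is pointwise linear algebra together with the classification of representation-finite and tame quivers. As an alternative to using $\Phi_{K_3, K_m}$, one can simply rerun the proof of \cref{prop:QuiverRep} with $m$ legs, obtaining $\DD{K_m}_{\leq 3} \cong \rep_\kk(Q_3^{(m)})$ directly; since $Q_3^{(m)}$ properly contains $\tilde{E}_6$ for every $m \geq 3$, it is again wild, and the inclusion $\DD{K_m}_{\leq 3} \hookrightarrow \DD{K_m}$ finishes as before.
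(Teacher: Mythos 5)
Your proposal is correct and takes essentially the same route as the paper: reduce $m>3$ to $m=3$ via the functor $\Phi_{K_3,K_m}$ of \cref{lem:ExtendComplex} (the paper invokes the retract structure from the proof of \cref{prop:Mgeq3} together with \cref{lem:Embedding}, whereas you verify the representation-embedding property of $\Phi_{K_3,K_m}$ directly via the retraction and the $t_i$-torsion-freeness of summands of image objects), and then use the extension-closed subcategory $\DD{K_3}_{\leq n} \cong \rep_{\kk}(Q_n)$ from \cref{prop:QuiverRep} together with wildness of $Q_n$ for $n \geq 3$, which the paper cites and you instead deduce from the finite/tame/wild trichotomy since $Q_3$ properly contains $\tild{E}_6$, plus field-independence of quiver wildness. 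All the steps you flag as delicate do go through, so this is in substance the paper's proof.
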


\begin{proof}
In the proof of Proposition~\ref{pr:Mgeq3}, we showed that the category $\DD{K_3}$ is an exact retract of $\DD{K_m}$. By %
Lemma~\ref{lem:Embedding}, 
it suffices to show that $\DD{K_3}$ has wild representation type. Its full subcategory $\DD{K_3}_{\leq n}$ %
is an exact subcategory, by Lemma~\ref{lem:ExactSubcat}. Thus it suffices to show that $\DD{K_3}_{\leq n}$ has wild representation type for $n$ large enough. We have an equivalence $\DD{K_3}_{\leq n} \cong \rep_{\kk}(Q_n)$ by Proposition~\ref{pr:QuiverRep}. For $n \geq 3$, the quiver $Q_n$ is known to have wild representation type \cite{SimsonS07v3}*{\S XVIII Theorem~4.1} \cite{Nazarova73}*{Lemma~8}. 
\end{proof}

In the remainder of the section, we interpret this result in terms of the torsion pair on $\DD{K_m}$ introduced in Section~\ref{sec:TorsionTh}.

\begin{rem}
The setup of Definition~\ref{def:Wild} can be generalized to categories $\cat{A}$ of the following form. Consider $\cat{A}'$ a $\kk$-linear abelian category and $\cat{A} \subseteq \cat{A}'$ a non-empty full %
subcategory that is closed under extensions (in particular finite direct sums) and summands. 
For example, if the abelian category $\cat{A}'$ has a torsion pair $(\cat{T}, \cat{F})$, then both classes $\cat{T}$ and $\cat{F}$ are subcategories of that form. 

The setup can be generalized further to a $\kk$-linear (Quillen) exact category $\cat{A}$ that is idempotent complete (i.e., in which every idempotent $e \colon X \to X$ splits). Background on exact categories can be found in \cite{Buhler10}. For background on idempotents and decompositions, see \cite{Shah23}*{\S 3}.
\end{rem}

\begin{lem}\label{lem:InducedTorsion}
Let $\cat{B}$ be an abelian category with a torsion pair $(\cat{T},\cat{F})$, and let $\cat{A} \subseteq \cat{B}$ be an abelian subcategory. Assume that for every object $A$ in $\cat{A}$, 
the monomorphism $i \colon T(A) \inj A$ in $\cat{B}$ 
lies in $\cat{A}$. %
Then the torsion pair on $\cat{B}$ restricts to a torsion pair $(\cat{T} \cap \cat{A}, \cat{F} \cap \cat{A})$ on $\cat{A}$.
\end{lem}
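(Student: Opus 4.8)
### Proof Proposal

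The plan is to verify the two axioms of a torsion pair for the pair $(\cat{T} \cap \cat{A}, \cat{F} \cap \cat{A})$ inside $\cat{A}$, leveraging the hypothesis that the torsion subobject $T(A) \inj A$ always lands in $\cat{A}$. Axiom (2), the orthogonality $\Hom_{\cat{A}}(T, F) = 0$ for $T \in \cat{T} \cap \cat{A}$ and $F \in \cat{F} \cap \cat{A}$, is immediate: since $\cat{A} \subseteq \cat{B}$ is a full subcategory, $\Hom_{\cat{A}}(T,F) = \Hom_{\cat{B}}(T,F)$, and the latter vanishes because $T \in \cat{T}$ and $F \in \cat{F}$ and $(\cat{T},\cat{F})$ is a torsion pair on $\cat{B}$.

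For axiom (1), I would take an arbitrary object $A$ in $\cat{A}$ and consider its canonical torsion sequence in $\cat{B}$,
\[
0 \to T(A) \xrightarrow{i} A \to F(A) \to 0,
\]
with $T(A) \in \cat{T}$ and $F(A) \in \cat{F}$. By hypothesis, the monomorphism $i \colon T(A) \inj A$ lies in $\cat{A}$; in particular $T(A)$ is an object of $\cat{A}$, and since $\cat{A}$ is an abelian subcategory of $\cat{B}$, the cokernel of $i$ computed in $\cat{A}$ agrees with the cokernel computed in $\cat{B}$, namely $F(A)$. Hence $F(A)$ is also an object of $\cat{A}$, and the short exact sequence above is a short exact sequence in $\cat{A}$ with $T(A) \in \cat{T} \cap \cat{A}$ and $F(A) \in \cat{F} \cap \cat{A}$. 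This establishes axiom (1). Finally, both $\cat{T} \cap \cat{A}$ and $\cat{F} \cap \cat{A}$ are closed under isomorphisms in $\cat{A}$ because $\cat{T}$ and $\cat{F}$ are closed under isomorphisms in $\cat{B}$ and $\cat{A}$ is full.

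The only subtle point — and the step I would be most careful about — is the claim that an abelian subcategory $\cat{A} \subseteq \cat{B}$ computes cokernels compatibly, so that the cokernel of $i$ in $\cat{A}$ really is $F(A)$ rather than some other object. This is exactly what "abelian subcategory" is taken to mean here (kernels and cokernels in $\cat{A}$ are computed as in $\cat{B}$); in the application to $\DD{K_m}$ this holds because both kernels and cokernels are computed pointwise, by \cref{lem:PointwiseExact}. With that understood, no further work is needed: the restriction of the torsion pair is automatic once we know the torsion subobject stays inside $\cat{A}$.
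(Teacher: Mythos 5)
Your proof is correct: the paper states \cref{lem:InducedTorsion} without proof, and your verification — orthogonality by fullness, and the restriction of the canonical sequence $0 \to T(A) \to A \to F(A) \to 0$ to $\cat{A}$ using that $T(A)$ lies in $\cat{A}$ and that an abelian subcategory computes cokernels compatibly — is exactly the routine argument the authors leave implicit. The only phrasing nicety is that the cokernel of $i$ taken in $\cat{A}$ is a priori only isomorphic in $\cat{B}$ to $F(A)$, which suffices since $\cat{F}$ is closed under isomorphisms; you have correctly identified the compatibility of cokernels as the one point where the meaning of ``abelian subcategory'' matters, and it holds in the intended application by \cref{lem:PointwiseExact}.
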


\begin{lem}
The torsion pair $(\cat{T}, \cat{F})$ on $\DD{K_m}$ induces a torsion pair $(\cat{T}_{\leq n}, \cat{F}_{\leq n})$ on the full subcategory $\DD{K_m}_{\leq n}$, for any $n \geq 0$.
\end{lem}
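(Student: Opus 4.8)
The plan is to deduce this directly from \cref{lem:InducedTorsion}, applied with $\cat{B} = \DD{K_m}$, $\cat{A} = \DD{K_m}_{\leq n}$, and the torsion pair $(\cat{T},\cat{F})$ of \cref{prop:allareTT}. Two hypotheses of that lemma need to be verified: that $\DD{K_m}_{\leq n}$ is an \emph{abelian} subcategory of $\DD{K_m}$, and that for every object $A$ of $\DD{K_m}_{\leq n}$ the canonical monomorphism $T(A) \inj A$ already lies in $\DD{K_m}_{\leq n}$.

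First I would check that $\DD{K_m}_{\leq n}$ is an abelian subcategory. Since kernels and cokernels in $\DD{K_m}$ are computed pointwise (\cref{lem:PointwiseExact}), and since $\DD{K_m}_{\leq n}$ is clearly closed under finite direct sums, it suffices to see that the one-variable subcategory $\fgMod{R_{\si_i}}_{\leq n} \cong \fgMod{\kk[t]}_{\leq n}$ (via \cref{rem:GradedVect}) is closed under kernels and cokernels of its own morphisms. This is an immediate degreewise computation: for $f \colon M \to N$ with $M, N \in \fgMod{\kk[t]}_{\leq n}$ and $d \geq n$, the commuting square whose vertical maps $M(d) \xrightarrow{\cong} M(d+1)$ and $N(d) \xrightarrow{\cong} N(d+1)$ are isomorphisms forces the induced transition maps on $\ker f$ and on $\coker f$ to be isomorphisms as well. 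This refines \cref{lem:ExactSubcat}.

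The substance of the proof is the second hypothesis. Recall that $T(A)_{\si_i} = T_i(A_{\si_i})$ is the torsion submodule of $A_{\si_i}$, and the inclusion $T(A) \inj A$ is pointwise the inclusion of torsion submodules. Via the equivalence $\fgMod{R_{\si_i}} \cong \fgMod{\kk[t]}$ it therefore suffices to show: if a $\kk[t]$-module $M$ lies in $\fgMod{\kk[t]}_{\leq n}$, then so does its torsion submodule. Decomposing $M \cong \bigoplus_j [a_j, b_j)$ with $b_j \in \N \cup \{\infty\}$ (\cref{prop:wesplit}) and using that transition maps of a direct sum are the direct sums of the transition maps, each summand $[a_j, b_j)$ must itself lie in $\fgMod{\kk[t]}_{\leq n}$; for the finite summands this forces $b_j \leq n$, since otherwise the transition map $[a_j, b_j)(b_j - 1) \to [a_j, b_j)(b_j) = 0$ would be a non-isomorphism in degree $b_j - 1 \geq n$. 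Hence the torsion submodule $\bigoplus_{b_j < \infty} [a_j, b_j)$ is a finite direct sum of objects of $\fgMod{\kk[t]}_{\leq n}$ (indeed it vanishes in every degree $\geq n$), so it again lies in $\fgMod{\kk[t]}_{\leq n}$. Translating back, $T(A) \in \DD{K_m}_{\leq n}$, and since this subcategory is full, the monomorphism $T(A) \inj A$ lies in it.

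With both hypotheses checked, \cref{lem:InducedTorsion} yields the restricted torsion pair $(\cat{T} \cap \DD{K_m}_{\leq n},\ \cat{F} \cap \DD{K_m}_{\leq n})$, which is by definition $(\cat{T}_{\leq n}, \cat{F}_{\leq n})$. I expect the only step requiring genuine thought to be the degree bound $b_j \leq n$ on the finite interval summands; the rest is bookkeeping. As a sanity check one can also note that the quotient $F(A)$ lies in $\DD{K_m}_{\leq n}$ for the same reason (cokernels are pointwise), although \cref{lem:InducedTorsion} is formulated to require only the torsion-subobject condition.
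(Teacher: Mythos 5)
Your proof is correct and takes essentially the same route as the paper's: both reduce the statement to checking, via \cref{lem:InducedTorsion}, that the inclusion $T(A) \inj A$ stays in $\DD{K_m}_{\leq n}$, which holds pointwise because $T(A)_{\si_i} = T_i(A_{\si_i})$ remains in $\fgMod{R_{\si_i}}_{\leq n}$. You merely spell out two points the paper leaves implicit, namely the interval-decomposition argument forcing $b_j \leq n$ on the finite summands and the verification that $\DD{K_m}_{\leq n}$ is an abelian subcategory, both of which are correct.
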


\begin{proof}
Let $M$ be an object in the subcategory $\DD{K_m}_{\leq n}$, i.e., $M_{\si_i}$ lies in $\fgMod{R_{\si_i}}_{\leq n}$ for all $1 \leq i \leq m$. Its torsion part $T(M)$ is given by
\[
T(M)_{\si_i} = T(M_{\si_i}),
\]
which also lies in $\fgMod{R_{\si_i}}_{\leq n}$, so that $T(M)$ lies in $\DD{K_m}_{\leq n}$. 
By Lemma~\ref{lem:InducedTorsion}, the claim follows.
\end{proof}

\begin{lem}
Via the equivalence from Proposition~\ref{pr:QuiverRep}, the torsion pair on $\DD{K_3}_{\leq n}$ corresponds to the two classes:
\begin{align*}
\cat{T}_{\leq n} &\cong \{ V \in \rep_{\kk}(Q_n) \mid V_s = 0 \} \\
\cat{F}_{\leq n} &\cong \{ V \in \rep_{\kk}(Q_n) \mid \text{the maps in $V$ are monomorphisms} \}.
\end{align*}
\end{lem}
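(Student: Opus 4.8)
The plan is to trace through the equivalence $\DD{K_3}_{\leq n} \cong \rep_{\kk}(Q_n)$ constructed in \cref{prop:QuiverRep} and check what happens to the torsion and torsion-free subcategories under it. Recall that the equivalence sends a module $M$ to the representation whose three legs are the sequences \eqref{eq:FirstLeg} (and its analogues for $\si_2$, $\si_3$), with the terminal vertex $s$ carrying the vector space $M_{[3]}(n,n,n)$.

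First I would handle the torsion class. By definition $\cat{T}_{\leq n}$ consists of the $M$ with $M_{[3]} = 0$, or equivalently $T_i(M_{\si_i}) = M_{\si_i}$ for all $i$. Under the equivalence $\Psi$, the condition $M_{[3]} = 0$ translates directly to $V_s = 0$, since $V_s = M_{[3]}(n,n,n)$. Conversely, if $V_s = 0$ then the last map of each leg \eqref{eq:FirstLeg} is an isomorphism onto $0$, forcing $M_{\si_i}(d,n,n) = 0$ for $d \geq n$; since every nonzero element of $M_{\si_i}$ sits in some degree that is $\geq (d,n,n)$ for suitable $d$ after multiplying by a power of $t_1$ (using $M \in \DD{K_3}_{\leq n}$ and $\fgMod{R_{\si_i}} \cong \fgMod{\kk[t_i]}$, cf.\ \cref{rem:GradedVect}), this forces $M_{\si_i}$ to be entirely torsion, i.e.\ $M \in \cat{T}_{\leq n}$. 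So $\cat{T}_{\leq n} \cong \{ V \mid V_s = 0 \}$.

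Next I would handle the torsion-free class. By definition $\cat{F}_{\leq n}$ consists of the $M$ with each $M_{\si_i}$ torsion-free, equivalently (under $\fgMod{R_{\si_i}} \cong \fgMod{\kk[t_i]}$) each $M_{\si_i}$ is a direct sum of intervals of the form $[a, \infty)$, equivalently all the transition maps $t_i \colon M_{\si_i}(\vec{d}) \to M_{\si_i}(\vec{d} + \vec{e}_i)$ are injective. Along the $i$-th leg of $\Psi(M)$, the maps are exactly these transition maps $t_i$ followed by the final localization isomorphism $\phy_i$, which is injective; hence torsion-freeness of $M_{\si_i}$ is equivalent to all maps in the $i$-th leg being monomorphisms. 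Since $Q_n$ has all arrows pointing toward the sink $s$ and every arrow of $Q_n$ lies on one of the three legs, "all maps in $V$ are monomorphisms" is exactly the condition that each leg consists of monomorphisms. This gives $\cat{F}_{\leq n} \cong \{ V \mid \text{the maps in } V \text{ are monomorphisms} \}$.

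The only mild subtlety — and the step I would be most careful about — is the converse direction for the torsion description: one must confirm that $V_s = 0$ genuinely forces $M_{\si_i}$ to be torsion rather than merely its "top" part vanishing. This is where the standing assumption $M \in \DD{K_3}_{\leq n}$ is essential, because it guarantees the transition maps past degree $n$ are isomorphisms, so $M_{\si_i}(d, n, n) = 0$ for $d \geq n$ propagates to say that $M_{\si_i}$ (viewed as a $\kk[t_i]$-module) has no summand $[a, \infty)$, i.e.\ it is torsion. Everything else is a direct unwinding of the definitions through $\Psi$ and $\Te$, using \cref{lem:PointwiseExact} to compute kernels pointwise.

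\begin{proof}
Write the equivalence of \cref{prop:QuiverRep} as the composite $\Te \circ \Psi$. Unwinding $\Psi$, the vector space at the sink $s$ of $\Psi(M)$ is $M_{[3]}(n,n,n)$, and the $i$-th leg is the sequence
\[
\xymatrix{
M_{\si_i}(\cdots) \ar[r] & \cdots \ar[r] & M_{\si_i}(n,n,n) \ar[r]^-{\phy_i}_-{\cong} & M_{[3]}(n,n,n),
}
\]
whose non-final maps are the transition maps of $M_{\si_i}$ and whose final map is the (injective) localization map $\phy_i$. Passing through $\Te$ does not change which maps are monomorphisms.

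For the torsion class: $M \in \cat{T}_{\leq n}$ means $M_{[3]} = 0$, which by \cref{rem:GradedVect} is equivalent to $M_{[3]}(n,n,n) = 0$, i.e., $V_s = 0$. Conversely, if $V_s = 0$ then each leg ends at $0$, so $M_{\si_i}(d,n,n) = 0$ for all $d \geq n$; since $M \in \DD{K_m}_{\leq n}$, the transition maps of $M_{\si_i}$ are isomorphisms past degree $n$, hence under $\fgMod{R_{\si_i}} \cong \fgMod{\kk[t_i]}$ the module $M_{\si_i}$ has no interval summand of the form $[a,\infty)$, so $M_{\si_i} \in \cat{T}_i$ by \cref{prop:wesplit}. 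Thus $\cat{T}_{\leq n} \cong \{ V \mid V_s = 0 \}$.

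For the torsion-free class: $M \in \cat{F}_{\leq n}$ means each $M_{\si_i}$ is torsion-free, equivalently (again via $\fgMod{R_{\si_i}} \cong \fgMod{\kk[t_i]}$ and \cref{prop:wesplit}) all the transition maps $t_i$ of $M_{\si_i}$ are monomorphisms. Along the $i$-th leg of $\Psi(M)$ these are precisely the non-final maps, and the final map $\phy_i$ is injective as well; hence $M_{\si_i}$ is torsion-free if and only if all maps in the $i$-th leg are monomorphisms. Since every arrow of $Q_n$ lies on one of the three legs, this holds for all three $i$ if and only if all maps of $V = \Te\Psi(M)$ are monomorphisms. Thus $\cat{F}_{\leq n} \cong \{ V \mid \text{the maps in } V \text{ are monomorphisms} \}$.
\end{proof}
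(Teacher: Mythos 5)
Your proof is correct. The paper actually states this lemma without proof, treating it as an immediate unwinding of the equivalence $\Te \circ \Psi$ from \cref{prop:QuiverRep}, and your argument supplies exactly that unwinding: $V_s = M_{[3]}(n,n,n)$ detects $M_{[3]} = 0$ (via \cref{rem:GradedVect}), and injectivity of the leg maps detects torsion-freeness of each $M_{\si_i}$.

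Two small points worth tightening, neither a genuine gap. First, the phrase ``these are precisely the non-final maps'' is literally too strong: the $i$-th leg only exhibits the transition maps $t_i$ in degrees $0 \leq d \leq n-1$ with the other two coordinates frozen at $n$. To conclude that \emph{all} of $M_{\si_i}$ is torsion-free from injectivity of just these finitely many maps, you need the same two observations you made explicit in the torsion case: the hypothesis $M \in \DD{K_3}_{\leq n}$ makes the transition maps isomorphisms past degree $n$ (so injectivity there is automatic), and the variables $t_j$, $j \neq i$, act invertibly on $M_{\si_i}$, so every multidegree is isomorphic, compatibly with $t_i$, to one with the other coordinates equal to $n$. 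Second, the claim that passing through $\Te$ ``does not change which maps are monomorphisms'' deserves one clause of justification: the absorbed map is $\phy_i$ in degree $(n,n,n)$, which is an isomorphism by the $\leq n$ condition, so the composite $\Te(V)_{\al_n} = \phy_i \circ t_i$ is a monomorphism if and only if $t_i$ in degree $n-1$ is.
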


\begin{cor}\label{cor:ProductAn}
The torsion class $\cat{T}_{\leq n}$ in $\DD{K_3}_{\leq n}$ is equivalent to a product of categories
\[
\cat{T}_{\leq n} \cong \rep_{\kk}(\vec{A}_n)^3,
\]
where $\vec{A}_n$ denotes the linearly ordered quiver of type $A_n$ (with $n$ vertices), as illustrated here:
\begin{center}
\begin{tikzpicture}[scale=1.2]
\draw [blue, fill=blue] (-3,0) circle (\VertexSize);
\node [below, blue] at (-3,0) {$a_0$};
\draw [->-=0.5] (-3,0) -- (-2,0);
\node [above, blue] at (-2.5,0) {$\al_1$};
\draw [blue, fill=blue] (-2,0) circle (\VertexSize);
\node [below, blue] at (-2,0) {$a_1$};
\node [blue] at (-1.5,0) {$\cdots$};
\draw [blue, fill=blue] (-1,0) circle (\VertexSize);
\node [below, blue] at (-1,0) {$a_{n-2}$};
\draw [->-=0.5] (-1,0) -- (0,0);
\node [above, blue] at (-0.5,0) {$\al_{n-1}$};
\draw [blue, fill=blue] (0,0) circle (\VertexSize);
\node [below, blue] at (0,0) {$a_{n-1}$};
\end{tikzpicture}
\end{center}
\end{cor}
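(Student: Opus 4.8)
The plan is to bootstrap off the quiver-theoretic description of $\cat{T}_{\leq n}$ established just above, and then invoke the elementary fact that representations of a disjoint union of quivers assemble into a product category. By the preceding lemma, the equivalence $\DD{K_3}_{\leq n} \cong \rep_{\kk}(Q_n)$ of \cref{prop:QuiverRep} carries $\cat{T}_{\leq n}$ onto the full subcategory $\cat{C} \subseteq \rep_{\kk}(Q_n)$ of representations $V$ with $V_s = 0$, so it suffices to produce an equivalence $\cat{C} \cong \rep_{\kk}(\vec{A}_n)^3$.

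The key observation is that the central vertex $s$ is a sink of $Q_n$, receiving exactly the three arrows $\al_n$, $\be_n$, $\ga_n$. For a representation $V$ with $V_s = 0$, the three maps $V_{\al_n}, V_{\be_n}, V_{\ga_n}$ are automatically zero, and likewise the $s$-component of any morphism between such representations is the zero map $0 \to 0$. Hence restriction to the full subquiver $Q_n' \coloneqq Q_n \setminus \{s\}$ defines a functor $\cat{C} \to \rep_{\kk}(Q_n')$, and I would check that it is an equivalence: a quasi-inverse extends a representation of $Q_n'$ by setting the value at $s$ to be $0$ and the three new maps to be $0$, and one verifies that the two composites are (naturally) isomorphic to the respective identities, the bijection on Hom-sets being immediate since a morphism in $\cat{C}$ is determined by its components away from $s$.

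Finally, $Q_n'$ is, as an abstract quiver, the disjoint union of its three legs, each of which is a copy of the linearly ordered $A_n$-quiver $\vec{A}_n$ on the vertices $a_0, \ldots, a_{n-1}$ (respectively the $b$'s and the $c$'s), with arrows $\al_1, \ldots, \al_{n-1}$ (respectively the $\be$'s and $\ga$'s). A representation of a disjoint union of quivers is precisely a tuple of representations of the components, with morphisms decomposing componentwise, so $\rep_{\kk}(Q_n') \cong \rep_{\kk}(\vec{A}_n)^3$; composing with the equivalence of the previous paragraph finishes the proof. I do not expect a genuine obstacle here: the only point needing a little care is the verification that restriction to $Q_n'$ is fully faithful and essentially surjective on $\cat{C}$, which is exactly the place where the hypothesis that $s$ is a sink is used, so that the maps into $s$ are unconstrained and can be freely set to zero.
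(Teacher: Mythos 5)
Your proposal is correct and is exactly the argument the paper has in mind: the corollary is an immediate consequence of the preceding lemma identifying $\cat{T}_{\leq n}$ with the representations of $Q_n$ vanishing at the sink $s$, after which deleting $s$ leaves the disjoint union of three copies of $\vec{A}_n$ and hence a product of categories. (The only cosmetic remark is that the sink property of $s$ is not really needed for the restriction/extension-by-zero equivalence — $V_s=0$ already forces all maps adjacent to $s$ to vanish — but this does not affect the proof.)
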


\begin{cor}\label{cor:TorsionFreeWild}
For $n \geq 3$, the torsion-free class $\cat{F}_{\leq n}$ in $\DD{K_3}_{\leq n}$ has wild representation type.
\end{cor}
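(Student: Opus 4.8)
The plan is to combine the description of $\cat{F}_{\leq n}$ from the lemma above with the classical classification of representation types of finite posets, reducing everything to the statement that a certain minimal poset is of wild type.

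First, via \cref{prop:QuiverRep} and the preceding lemma, $\cat{F}_{\leq n}$ is equivalent to the full subcategory of $\rep_{\kk}(Q_n)$ consisting of the monic representations (those whose every structure map is injective). I would then unwind this: identifying, along each of the three legs, each leg space with its image in the sink space $V_s$, a monic representation of $Q_n$ is precisely the data of a finite-dimensional $\kk$-vector space $V$ together with three increasing chains of subspaces
\[
V_{a_0} \subseteq V_{a_1} \subseteq \cdots \subseteq V_{a_{n-1}} \subseteq V,
\]
and two analogous chains built from the $b$- and $c$-legs, each chain of length $n$, with morphisms the linear maps of ambient spaces carrying each subspace into the corresponding one. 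In other words, $\cat{F}_{\leq n}$ is equivalent to the category of subspace representations (``$\mathcal{P}$-spaces'') of the poset $P_n = \mathbf{n} \oplus \mathbf{n} \oplus \mathbf{n}$, the disjoint union of three $n$-element chains. (For $n=1,2$ this category is of finite type, resp.\ tame — $P_1=(1,1,1)$ and $P_2=(2,2,2)$ — so the hypothesis $n\geq 3$ is necessary.)

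Next I would localize the wildness to a minimal wild poset. For $n \geq 3$ the poset $P_n$ contains $P' = \mathbf{2} \oplus \mathbf{2} \oplus \mathbf{3}$ as a full subposet that is an order filter (take the top two elements of two of the chains and the top three of the third). Extending a $\mathcal{P}'$-space by $V_p = 0$ for $p \in P_n \setminus P'$ realizes the category of $\mathcal{P}'$-spaces as a full subcategory of $\cat{F}_{\leq n}$ which is closed under extensions and direct summands, the inclusion being $\kk$-linear and exact. Now $P' = (2,2,3)$ is one of the finitely many minimal (``hypercritical'') posets of wild representation type; by Nazarova's trichotomy for poset representations the category of $\mathcal{P}'$-spaces is wild, and since wildness of poset representations is independent of the base field (as already recorded for posets with a unique maximal element), this holds over our $\kk$; see \cite{Nazarova75}, \cite{SimsonS07v3}, \cite{Simson05}. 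Composing a representation embedding $\fdMod{\kk\langle t_1,t_2\rangle} \to \{\mathcal{P}'\text{-spaces}\}$ with the exact fully faithful inclusion into $\cat{F}_{\leq n}$ — itself a representation embedding by (the exact-category version of) \cref{lem:Embedding} — exhibits $\cat{F}_{\leq n}$ as having wild representation type.

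The only genuinely non-formal points are: (i) the translation of monic representations of $Q_n$ into chains of subspaces, which is a direct unwinding; and (ii) matching the notion of wildness used in the poset literature with \cref{def:Wild} over a possibly non-algebraically-closed field. I expect (ii) to require the most care: here one observes that the category of $\mathcal{P}$-spaces is (equivalent to) the module category of a finite-dimensional $\kk$-algebra with a unique maximal element, so the field-independence of wildness already invoked for such algebras applies, and one must pin down a reference giving the wildness of the $(2,2,3)$ poset in this generality. Everything else — closure under extensions and summands, idempotent-completeness of the torsion-free class, and composing representation embeddings — is routine bookkeeping.
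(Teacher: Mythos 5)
Your argument is correct, but it takes a genuinely different route from the paper's proof. The paper argues indirectly: by \cref{cor:ProductAn} the torsion class $\cat{T}_{\leq n}$ is a product of categories $\rep_{\kk}(\vec{A}_n)$, hence of finite representation type by Gabriel's theorem; since the torsion pair on $\DD{K_3}_{\leq n}$ splits and the ambient category is wild for $n \geq 3$ (\cref{thm:WildRep}), the wildness must be carried by $\cat{F}_{\leq n}$. You instead build a representation embedding landing directly in $\cat{F}_{\leq n}$: identify it with monic representations of $Q_n$, i.e.\ subspace representations of the disjoint union of three $n$-chains, pass to the order filter $(2,2,3)$ by extending by zero, and invoke Nazarova's hypercritical poset $(2,2,3)$; composing with the fully faithful exact inclusion and \cref{lem:Embedding} finishes the argument. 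This is essentially the direct argument the paper only sketches in the remark after the corollary, where it is noted that Nazarova \cite{Nazarova81}, \cite{Nazarova75} proves wildness of $Q_3$ using representations whose structure maps are subspace inclusions. What each approach buys: the paper's proof is very short given what has already been established (\cref{prop:QuiverRep}, \cref{cor:ProductAn}, \cref{thm:WildRep}) and needs no further input from poset representation theory, whereas yours is self-contained at the level of $\cat{F}_{\leq n}$, bypasses both the splitting of the torsion pair and the computation of the torsion class, and pinpoints the minimal wild configuration. The price is the point you yourself flag: matching the poset-theoretic notion of wildness with \cref{def:Wild} over a not necessarily algebraically closed field. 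Note that your parenthetical justification there is slightly off --- the category of $\mathcal{P}$-spaces is not a module category of a finite-dimensional algebra (it is a non-abelian exact subcategory, related to modules over the incidence algebra of the poset with a maximal element adjoined) --- but the field-independence statement the paper records for finite posets with a unique maximal element, citing \cite{Nazarova75} and \cite{ArnoldS05}, is exactly what is needed, so this does not affect the validity of your argument; it is the same calibre of citation the paper relies on elsewhere.
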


\begin{proof}
Quivers of type $A_n$ have finite representation type, by Gabriel's theorem \cite{AssemSS06}*{\S VII Theorem~5.10} \cite{ErdmannH18}*{Theorem~11.1}. By Corollary~\ref{cor:ProductAn}, the torsion class $\cat{T}_{\leq n}$ has finite representation type. But the torsion pair $(\cat{T}_{\leq n}, \cat{F}_{\leq n})$ splits and $\DD{K_3}_{\leq n}$ has wild representation type, which forces %
$\cat{F}_{\leq n}$ to have wild representation type.
\end{proof}

\begin{rem}
A closer look at the proof that the quiver $Q_3$ has wild representation type shows Corollary~\ref{cor:TorsionFreeWild} directly. In \cite{Nazarova81}*{Theorem~2'} 
\cite{Nazarova75}, %
Nazarova shows that $Q_3$ is wild using representations in which all the $\kk$-linear maps are %
subspace inclusions.
\end{rem}

\section{Relationship to the rank invariant}\label{sec:Rank}

In this section, we work with the simplicial complex $K = K_m$ and investigate the relationship between the rank invariant of an $R$-module $M$ and the decomposition of the localization $L_{K_m}(M)$ in $\DD{K_m}$.

\begin{defn}
Let $M$ be an $R$-module. The \Def{rank invariant} of $M$ is the function assigning to each pair of multidegrees $\vec{a}, \vec{b} \in \N^m$ with $\vec{a} \leq \vec{b}$ the integer
\[
\rk_M (\vec{a},\vec{b}) = \rank \left( M(\vec{a}) \ral{t^{\vec{b} - \vec{a}}} M(\vec{b}) \right). 
\] 
\end{defn}

In general, the rank invariant does \emph{not} determine the decomposition of $L_{K_2}(M)$ in $\DD{K_2}$.

\begin{ex}\label{ex:SameRank}
Let $(t_1, t_2) \subset R$ be the %
ideal generated by $t_1$ and $t_2$ and take the $R$-modules
\begin{align*}
M &= (t_1, t_2) \op t_1 t_2 R \\
N &= t_1 R \op t_2 R. 
\end{align*}

\begin{figure}[H]
\begin{adjustbox}{width=0.6\textwidth}
\begin{tikzpicture}[scale=1]
 \begin{axis}[
axis lines=middle, 
ticks=none, 
axis equal, 
xmin=-0.2, xmax=5, ymin=-1, ymax=5]
\draw [blue, fill=blue] (0,1) circle (\VertexSize);
\draw [blue, fill=blue] (1,0) circle (\VertexSize);
\draw [blue, thick] (0,6) -- (0,1) -- (1,1) -- (1,0) -- (6,0);
\fill [blue, opacity=0.1] (0,6) -- (0,1) -- (1,1) -- (1,0) -- (6,0) -- (6,6) -- (0,6);
\draw [red, fill=red] (1,1) circle (\VertexSize);
\draw [red, thick] (1,6) -- (1,1) -- (6,1);
\fill [red, opacity=0.1] (1,1) rectangle (6,6);
\node at (3,-0.8) {$M$};
 \end{axis}
\end{tikzpicture}
\hspace{3ex}
\begin{tikzpicture}[scale=1]
 \begin{axis}[
axis lines=middle, 
ticks=none, 
axis equal, 
xmin=-0.2, xmax=5, ymin=-1, ymax=5]
\draw [blue, fill=blue] (0,1) circle (\VertexSize);
\draw [blue, thick] (0,6) -- (0,1) -- (6,1);
\fill [blue, opacity=0.1] (0,1) rectangle (6,6);
\draw [red, fill=red] (1,0) circle (\VertexSize);
\draw [red, thick] (1,6) -- (1,0) -- (6,0);
\fill [red, opacity=0.1] (1,0) rectangle (6,6);
\node at (2.7,-0.8) {$N$};
 \end{axis}
\end{tikzpicture}
\end{adjustbox}
\caption{The modules $M$ and $N$ in Example~\ref{ex:SameRank}.}
\label{fig:SameRank}
\end{figure}

The modules $M$ and $N$ have the same rank invariant:
\[
\rk_M (\vec{a},\vec{b}) = \begin{cases}
0 &\text{if } \vec{a} = (0,0) \\
1 &\text{if } \vec{a} = (a,0) \text{ or } \vec{a} = (0,a) \text{ for some } a > 0 \\
2 &\text{if } \vec{a} \geq (1,1).
\end{cases}
\]
However, their $K_2$-localizations are \emph{not} isomorphic in $\DD{K_2}$:
\begin{align*}
L_{K_2}(M) \cong [(0,0), \infty) \op [(1,1), \infty) \\
L_{K_2}(N) \cong [(1,0), \infty) \op [(0,1), \infty).
\end{align*}
\end{ex}

\begin{lem}\label{lem:FinPres}
Let $M$ be a finitely generated $R$-module. There exists $\vec{d} \in \N^{m}$ such that for every $\pvec{c}' \geq \vec{c} \geq \vec{d}$ in $\N^{m}$, the transition map of $M$
\[
M(\vec{c}) \ral{t^{\pvec{c}' - \vec{c}}} M(\pvec{c}')
\]
is an isomorphism.
\end{lem}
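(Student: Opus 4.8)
The plan is to reduce the statement to a one-variable-at-a-time stabilization and then apply Noetherianity twice, once to bound torsion and once to bound generators. Concretely, it suffices to produce $\vec{d} \in \N^m$ such that for every $i \in [m]$ and every $\vec{c} \geq \vec{d}$ the map $t_i \colon M(\vec{c}) \to M(\vec{c}+\vec{e}_i)$ is an isomorphism. Granting this, for any $\vec{c}' \geq \vec{c} \geq \vec{d}$ one factors the transition map $t^{\vec{c}'-\vec{c}} \colon M(\vec{c}) \to M(\vec{c}')$ as a composite of maps $t_i \colon M(\vec{b}) \to M(\vec{b}+\vec{e}_i)$, increasing the coordinates one unit at a time so that all intermediate multidegrees $\vec{b}$ satisfy $\vec{b} \geq \vec{c} \geq \vec{d}$; each factor is an isomorphism, hence so is the composite.

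For injectivity, fix $i$ and consider the $t_i$-power-torsion submodule $\{ x \in M \mid t_i^k x = 0 \text{ for some } k \}$. Since $R$ is Noetherian this submodule is finitely generated, hence annihilated by $t_i^{N_i}$ for some $N_i$, so it is a finitely generated module over $R/(t_i^{N_i})$, whose graded pieces vanish in $t_i$-degree $\geq N_i$. Therefore this torsion submodule is concentrated in $t_i$-degrees below some bound $P_i$, and for $c_i \geq P_i$ the kernel of $t_i \colon M(\vec{c}) \to M(\vec{c}+\vec{e}_i)$, being contained in it, is zero. For surjectivity, observe that $M/t_iM$ is a finitely generated module over $R/(t_i)$, a ring concentrated in $t_i$-degree $0$; hence $M/t_iM$ is concentrated in the (finitely many) $t_i$-degrees of a homogeneous generating set, say below a bound $Q_i$. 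Since $(M/t_iM)(\vec{c}+\vec{e}_i) = \coker\bigl(t_i \colon M(\vec{c}) \to M(\vec{c}+\vec{e}_i)\bigr)$, the map $t_i$ is surjective onto $M(\vec{c}+\vec{e}_i)$ whenever $c_i + 1 \geq Q_i$. Taking $\vec{d} = (D,\ldots,D)$ with $D = \max_i \max(P_i, Q_i)$ makes every $t_i \colon M(\vec{c}) \to M(\vec{c}+\vec{e}_i)$ with $\vec{c} \geq \vec{d}$ simultaneously injective and surjective, which completes the argument.

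There is no deep obstacle here; the content is entirely the two finiteness inputs, and bookkeeping with the bounds $P_i, Q_i$ is routine. The step most easily overlooked is the surjectivity half: one must notice that $M/t_iM$ is ``thin'' in the $t_i$-direction, which is what pins the cokernels to a bounded range. As an alternative I could run the argument through \cref{rem:GradedVect}: the localization map $M \to R_{[m]} \otimes_R M$ has finitely generated kernel annihilated by a power of $t_1 \cdots t_m$ (so vanishing on a translate of $\N^m$), is eventually surjective since $R_{[m]} \otimes_R M$ has finite-dimensional graded pieces, and maps into a diagram all of whose transition maps are isomorphisms; composing these facts gives the same conclusion.
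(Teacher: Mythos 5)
Your proof is correct, but it takes a different route from the paper's. The paper's argument is a two-line presentation argument: since $R$ is Noetherian, $M$ admits a finite presentation $F_1 \to F_0 \to M \to 0$; taking $\vec{d}$ to bound the multidegrees of the generators and relations, the transition maps of $F_1$ and $F_0$ are isomorphisms in degrees $\geq \vec{d}$, and the induced map on cokernels $M(\vec{c}) \to M(\pvec{c}')$ is then an isomorphism by the usual diagram chase. You instead reduce to single steps $t_i \colon M(\vec{c}) \to M(\vec{c}+\vec{e}_i)$ and control kernel and cokernel separately: injectivity because the $t_i$-power-torsion submodule is finitely generated (Noetherianity), hence killed by a fixed power of $t_i$ and concentrated in bounded $t_i$-degrees; surjectivity because $M/t_iM$ is finitely generated over $R/(t_i)$ and hence lives in finitely many $t_i$-degrees. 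Both halves are sound, the composition-of-steps reduction is valid since all intermediate multidegrees stay $\geq \vec{d}$, and your choice $D = \max_i \max(P_i,Q_i)$ closes the argument. The trade-off: the paper's proof is shorter and uses finite presentability once, with the bound read off from a single presentation; yours is more hands-on, isolates exactly which two finiteness phenomena (bounded $t_i$-torsion and thinness of $M/t_iM$ in the $t_i$-direction) make the stabilization happen, and would adapt directly to settings where one has these two bounds without an explicit presentation. Your sketched alternative via \cref{rem:GradedVect} is only an outline, but the main argument stands on its own.
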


\begin{proof}
Since $M$ is finitely generated and $R$ is Noetherian, $M$ is finitely presented. Pick a finite presentation %
of $M$ and let $\vec{d} \in \N^m$ be an upper bound for the multidegrees of the generators and relations. Such a $\vec{d}$ satisfies the statement. 
\end{proof}

In the next statement, we will have a subset $\si \subseteq [m]$ and multidegrees viewed as functions %
$\vec{a} \in \N^{[m] \setminus \si}$ 
and %
$\vec{c} \in \N^{\si}$. 
Denote by %
$\vec{a} \ast \vec{c} \in \N^m$ 
the function extending $\vec{a}$ and $\vec{c}$.

\begin{lem}
Let $M$ be a finitely generated $R$-module and $\si \subseteq [m]$. For every %
$\vec{a}, \vec{b} \in \N^{[m] \setminus \si}$, 
the rank invariant of the localization $M_{\si} \dfn R_{\si} \ot_R M$ is given by
\begin{equation}\label{eq:RankLocaliz}
\rk_{M_{\si}}(\vec{a},\vec{b}) = \lim_{\vec{c} \in \N^{\si}} \rk_{M}(\vec{a} \ast \vec{c}, \vec{b} \ast \vec{c}).
\end{equation}
\end{lem}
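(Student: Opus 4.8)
The plan is to evaluate both sides of \eqref{eq:RankLocaliz} at a single multidegree that is ``large in the $\si$-directions'', where the localization map $M \to M_{\si}$ becomes a degreewise isomorphism and the right-hand limit has already stabilized.

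First I would establish a variant of \cref{lem:FinPres}: there is an integer $D$, depending only on $M$, such that for all $\vec{c}' \geq \vec{c}$ in $\N^{\si}$ with every coordinate of $\vec{c}$ at least $D$, both transition maps $M(\vec{a} \ast \vec{c}) \ral{t^{\vec{c}'-\vec{c}}} M(\vec{a} \ast \vec{c}')$ and $M(\vec{b} \ast \vec{c}) \ral{t^{\vec{c}'-\vec{c}}} M(\vec{b} \ast \vec{c}')$ are isomorphisms. The argument is the one in the proof of \cref{lem:FinPres}: pick a finite presentation $F_1 \to F_0 \surj M$ and take $D$ to bound the $\si$-coordinates of the degrees of generators and relations; for a free module $F = \bigoplus_k t^{\vec{g}_k} R$ and such $\vec{c}$, whether a summand contributes in degree $\vec{a} \ast \vec{c}$ depends only on the coordinates of $\vec{g}_k$ \emph{outside} $\si$ (the $\si$-coordinates of $\vec{g}_k$ being already $\leq D$, hence $\leq$ each coordinate of $\vec{c}$), so it does not depend on $\vec{c}$, and $F(\vec{a}\ast\vec{c}) \to F(\vec{a}\ast\vec{c}')$ is an isomorphism; then chase the diagram with exact rows $F_1 \to F_0 \to M \to 0$ exactly as in \cref{lem:FinPres}. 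The crucial difference from \cref{lem:FinPres} is that no constraint is imposed on $\vec{a}$ or $\vec{b}$, only on the $\si$-coordinates that we are free to enlarge, and $D$ is uniform in $\vec{a},\vec{b}$.

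Granting this, set $\vec{d}_{\si} \dfn (D,\ldots,D) \in \N^{\si}$. For the right-hand side: for $\vec{c}' \geq \vec{c} \geq \vec{d}_{\si}$ the square formed by the (isomorphism) transition maps of $M$ and the two copies of $t^{\vec{b}-\vec{a}}$ commutes, so $\rk_M(\vec{a}\ast\vec{c}, \vec{b}\ast\vec{c})$ is constant for $\vec{c} \geq \vec{d}_{\si}$; hence the limit in \eqref{eq:RankLocaliz} exists and equals $\rk_M(\vec{a}\ast\vec{d}_{\si}, \vec{b}\ast\vec{d}_{\si})$. For the left-hand side, I would use that localization is the filtered colimit of the multiplication-by-$t_j$ maps ($j \in \si$) and that taking a fixed graded piece commutes with filtered colimits, giving a canonical identification $M_{\si}(\vec{e}) \cong \colim_{\vec{c}\in\N^{\si}} M(\vec{e}+\vec{c})$ for every $\vec{e}$, where $\vec{c}$ is added to the $\si$-coordinates (cf.\ \cref{rem:GradedVect}). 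Evaluating at $\vec{e} = \vec{a}\ast\vec{d}_{\si}$ and $\vec{e} = \vec{b}\ast\vec{d}_{\si}$ and invoking the first step, these colimits are already attained at $\vec{c} = \vec{0}$, so the localization map $M \to M_{\si}$ induces isomorphisms $M(\vec{a}\ast\vec{d}_{\si}) \ral{\cong} M_{\si}(\vec{a}\ast\vec{d}_{\si})$ and $M(\vec{b}\ast\vec{d}_{\si}) \ral{\cong} M_{\si}(\vec{b}\ast\vec{d}_{\si})$; since $M \to M_{\si}$ is $R$-linear these intertwine the two copies of $t^{\vec{b}-\vec{a}}$, so $\rk_M(\vec{a}\ast\vec{d}_{\si}, \vec{b}\ast\vec{d}_{\si}) = \rk_{M_{\si}}(\vec{a},\vec{b})$, the latter being independent of the chosen $\si$-coordinates by \cref{rem:GradedVect}. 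Comparing this with the value of the limit above gives \eqref{eq:RankLocaliz}.

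The main obstacle is the first step — isolating exactly the finiteness input needed. \cref{lem:FinPres} as stated requires \emph{all} coordinates to be large, which is useless here since $\vec{a},\vec{b}$ are fixed; the point is that finite presentability still forces the transition maps to stabilize once only the $\si$-coordinates grow, with a bound depending on $M$ alone. Everything after that is bookkeeping with filtered colimits. (One should also record the convention that $\rk_{M_{\si}}(\vec{a},\vec{b})$ denotes the rank of $M_{\si}(\vec{a}\ast\vec{e}) \ral{t^{\vec{b}-\vec{a}}} M_{\si}(\vec{b}\ast\vec{e})$ for any $\vec{e} \in \Z^{\si}$, which is well-defined by \cref{rem:GradedVect}.)
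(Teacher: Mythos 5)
Your proposal is correct and follows essentially the same route as the paper: stabilize the transition maps in the $\si$-directions using a finite presentation, observe via the commuting square that the ranks on the right-hand side of \eqref{eq:RankLocaliz} are eventually constant, and identify the stable value with $\rk_{M_{\si}}(\vec{a},\vec{b})$ through the degreewise isomorphisms $M(\vec{a}\ast\vec{c}) \ral{\cong} M_{\si}(\vec{a}\ast\vec{c})$ for large $\vec{c}$. The only (immaterial) difference is that the paper obtains the stabilization by applying \cref{lem:FinPres} to the slice $\kk[t_i \mid i \in \si]$-module $M(\vec{a}\ast-)$, with a bound $\vec{d}_{\vec{a}}$ depending on $\vec{a}$ and then taking $\sup(\vec{d}_{\vec{a}},\vec{d}_{\vec{b}})$, whereas you rerun the presentation argument over $R$ to get a bound uniform in $\vec{a},\vec{b}$, and you spell out the colimit description of $M_{\si}(\vec{e})$ and the convention for $\rk_{M_{\si}}$ that the paper leaves implicit.
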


\begin{proof}
For a fixed $\vec{a} \in \N^{[m] \setminus \si}$, apply Lemma~\ref{lem:FinPres} to the $\kk[t_i \mid i \in \si]$-module $M(\vec{a} \ast -)$. 
There exists a $\vec{d} = \vec{d}_{\vec{a}} \in \N^{\si}$ such that for every $\pvec{c}' \geq \vec{c} \geq \vec{d}$ in $\N^{\si}$, the transition map of $M$
\[
M(\vec{a} \ast \vec{c}) \ral{t^{\pvec{c}' - \vec{c}}} M(\vec{a} \ast \pvec{c}')
\]
is an isomorphism. The commutative square of transition maps
\[
\xymatrix{
M(\vec{a} \ast \vec{c}) \ar[d] \ar[r]^-{\cong} & M(\vec{a} \ast \pvec{c}') \ar[d] \\
M(\vec{b} \ast \vec{c}) \ar[r]^-{\cong} & M(\vec{b} \ast \pvec{c}') \\
}
\]
shows that the ranks on the right-hand side of Equation~\eqref{eq:RankLocaliz} are eventually constant, with the limit value being achieved for all $\vec{c} \geq \sup (\vec{d}_{\vec{a}}, \vec{d}_{\vec{b}})$. 
Moreover, the localization map $M \to M_{\si}$ induces an isomorphism 
\[
M(\vec{a} \ast \vec{c}) \ral{\cong} M_{\si}(\vec{a} \ast \vec{c})
\]
for all $\vec{c} \geq \vec{d}$, showing that the limit value in Equation~\eqref{eq:RankLocaliz} is indeed the rank $\rk_{M_{\si}}(\vec{a},\vec{b})$.
\end{proof}

\begin{cor}\label{cor:RankTorsion}
The rank invariant of an object $M$ of $\DD{K_m}$ determines the modules $M_{\si_i}$ and $M_{[m]}$ up to isomorphism.
\end{cor}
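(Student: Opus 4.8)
The plan is to reduce the statement to the one-parameter classification of persistence modules using the preceding lemma. By \cref{prop:SerreQuotient} we may choose a finitely generated $R$-module $P$ with $L_{K_m}(P) \cong M$, so that $M_{\si_i} \cong R_{\si_i} \ot_R P$ and $M_{[m]} \cong R_{[m]} \ot_R P$; it then suffices to prove that the rank invariant $\rk_P$ determines each $M_{\si_i}$ and $M_{[m]}$ up to isomorphism.

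For $M_{[m]}$: by \cref{lem:FinPres} the integer $\rk_P(\vec c, \vec c) = \dim_\kk P(\vec c)$ is eventually constant, with value $\dim_\kk(R_{[m]} \ot_R P) = \dim_\kk M_{[m]}$ (which also follows from formula~\eqref{eq:RankLocaliz} with $\si = [m]$). By \cref{rem:GradedVect}, a $\Z^m$-graded $R_{[m]}$-module is determined up to isomorphism by the dimension of its part in degree $\vec 0$, so $\rk_P$ determines $M_{[m]}$.

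For $M_{\si_i}$: apply the preceding lemma, i.e.\ formula~\eqref{eq:RankLocaliz}, with $\si = \si_i = [m] \setminus \{i\}$, so that $[m] \setminus \si_i = \{i\}$; it shows that $\rk_P$ determines $\rk_{M_{\si_i}}(a_i, b_i)$ for all $a_i \leq b_i$ in $\N$, i.e.\ the rank invariant of $M_{\si_i}$. Under the equivalence $\fgMod{R_{\si_i}} \cong \fgMod{\kk[t_i]}$ of \cref{prop:wesplit}, $M_{\si_i}$ is a finitely generated one-parameter persistence module, which by \cref{prop:wesplit} decomposes as $\bigoplus_j [a_j, b_j)$ with $a_j \in \N$ and $b_j \in \N \cup \{\infty\}$. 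Its rank invariant is the function $(a \leq b) \mapsto \#\{ j \mid a_j \leq a \leq b < b_j \}$, and from this function one recovers the multiset of intervals by a finite inclusion--exclusion for the finite intervals together with a limit as $b \to \infty$ for the half-infinite ones. Hence $\rk_{M_{\si_i}}$ determines $M_{\si_i}$ up to isomorphism, and the corollary follows.

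The only step with genuine content is the last one: the classical fact that the rank invariant is a complete invariant for finitely generated one-parameter persistence modules, together with matching the rank invariant of the $R_{\si_i}$-module $M_{\si_i}$ with that of the associated $\kk[t_i]$-module. I expect that to be the single point needing care (or an explicit citation); everything else is formal, given the preceding lemma and \cref{prop:wesplit,rem:GradedVect}.
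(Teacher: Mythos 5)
Your proposal is correct and follows essentially the same route as the paper: the preceding lemma (formula~\eqref{eq:RankLocaliz}) reduces everything to the one-parameter case, and the paper's entire proof is the one-line observation that a finitely generated $\kk[t]$-module is determined by its rank invariant, which is exactly the classical fact you spell out via interval decomposition and inclusion--exclusion. Your extra steps (choosing $P$ with $L_{K_m}(P)\cong M$, and handling $M_{[m]}$ via \cref{rem:GradedVect}) just make explicit what the paper leaves implicit.
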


\begin{proof}
A finitely generated $\kk[t]$-module is determined by its rank invariant.
\end{proof}

\begin{ex}
For $m=2$, the rank invariant of an $R$-module $M$ determines the interval decompositions of the $\kk[t_1,t_2^{\pm}]$-module $M_{\si_1}$ and the $\kk[t_1^{\pm}, t_2]$-module $M_{\si_2}$. Each finite interval module $[a,b)_1$ in $M_{\si_1}$ contributes a ``vertical strip'' $[a,b)_1$ to the decomposition of $M$ in $\DD{K_2}$. Likewise, each finite interval module $[a,b)_2$ in $M_{\si_2}$ contributes a ``horizontal strip'' $[a,b)_2$ to the decomposition of $M$ in $\DD{K_2}$. The number of infinite interval modules $[a_i, \infty)_1$ in $M_{\si_1}$ equals the number of infinite interval modules in $M_{\si_2}$, which is $\dim_{\kk} M_{[m]}$. 
\end{ex}

\begin{prop}
For $m=2$, if an $R$-module $M$ lies in the image of the delocalization functor
$\rho_{K_2} \colon \DD{K_2} \to \fgMod{R}$ 
(see Lemma~\ref{lem:Delocalization}), then the rank invariant of $M$ determines the decomposition of $L_{K_2}(M)$ in $\DD{K_2}$.
\end{prop}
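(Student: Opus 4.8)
The plan is to split $L_{K_2}(M)$ into its torsion and torsion-free parts, note that the torsion part is already a function of $\rk_M$ for \emph{any} finitely generated $R$-module, and use the image hypothesis only to pin down the torsion-free part. By \cref{prop:allsplit,prop:DecompoTorsion,prop:M2Decompo} (equivalently \cref{thm:ThmA}), write
\[
L_{K_2}(M) \cong T \oplus \bigoplus_j [\vec{d}^j,\infty),
\]
where $T$ is a direct sum of vertical strips $[a,b)_1$ and horizontal strips $[a,b)_2$. As in the discussion following \cref{cor:RankTorsion}, the vertical (resp.\ horizontal) strips are exactly the finite interval summands of the localized $\kk[t_1]$-module $M_{\si_1}$ (resp.\ $\kk[t_2]$-module $M_{\si_2}$); since \cref{eq:RankLocaliz} expresses each $\rk_{M_{\si_i}}$ in terms of $\rk_M$, and a finitely generated $\kk[t]$-module is determined by its rank invariant (see the proof of \cref{cor:RankTorsion}), $\rk_M$ determines the $M_{\si_i}$ up to isomorphism, hence their interval decompositions, hence $T$. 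So the whole problem reduces to recovering the multiset of quadrant degrees $\{\vec{d}^j\}$.

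Next I would use that $M$ lies in the image of $\rho_{K_2}$. Since the counit $L_{K_2}\rho_{K_2} \Ra 1$ is an isomorphism (\cref{lem:Delocalization}), this is equivalent to $M \cong \rho_{K_2}(L_{K_2}(M))$. Being a right adjoint, $\rho_{K_2}$ is additive and preserves finite direct sums, so
\[
M \cong \rho_{K_2}(T) \oplus \bigoplus_j \rho_{K_2}([\vec{d}^j,\infty)).
\]
A short computation of the limit defining $\rho_{K_2}$ identifies $\rho_{K_2}([\vec{d},\infty))$ with $t^{\vec{d}}R$, the rank-$1$ free $R$-module generated in multidegree $\vec{d}$; and applying $L_{K_2}$ and using the counit isomorphism again gives $R_{[2]}\ot_R \rho_{K_2}(T) \cong T_{[2]} = 0$, so $\rho_{K_2}(T)$ localizes to zero. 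Thus, up to isomorphism, $M$ is the direct sum of a module that localizes to zero and the free modules $t^{\vec{d}^j}R$.

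To finish I would extract $\{\vec{d}^j\}$ from $\rk_M$. For $\vec{a}\in\N^2$ the ranks $\rk_M(\vec{a},\vec{a}+(k,k))$ are non-increasing in $k$ (the rank of a composite is at most that of either factor), hence eventually constant; let $s_M(\vec{a})$ be the stable value, which is plainly a function of $\rk_M$. The assignment $s_M$ is additive over direct sums, it vanishes on any module $B$ with $B[t_1^{-1},t_2^{-1}] = 0$ (each element of $B$ is eventually annihilated by $t^{(k,k)}$), and for a free module one computes $s_{t^{\vec{d}}R}(\vec{a}) = 1$ if $\vec{a}\geq\vec{d}$ and $0$ otherwise. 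Combining with the displayed decomposition of $M$ yields $s_M(\vec{a}) = \#\{j : \vec{d}^j \leq \vec{a}\}$, and the multiset $\{\vec{d}^j\}$ is read off from this function by finite differences: the multiplicity of $\vec{a}$ is $s_M(\vec{a}) - s_M(\vec{a}-\vec{e}_1) - s_M(\vec{a}-\vec{e}_2) + s_M(\vec{a}-\vec{e}_1-\vec{e}_2)$, where $s_M$ is extended by $0$ to multidegrees having a negative coordinate. Hence $\rk_M$ determines $\{\vec{d}^j\}$, and together with the first paragraph this determines the entire decomposition of $L_{K_2}(M)$.

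The step I expect to be the crux is the middle one: the image hypothesis is used precisely to replace $M$ by $\rho_{K_2}(L_{K_2}(M))$, whose transparent direct-sum form makes the rank computation possible. It cannot be dropped: \cref{ex:SameRank} exhibits $R$-modules with equal rank invariants but non-isomorphic $K_2$-localizations, and (consistently with the statement) neither of them lies in the image of $\rho_{K_2}$. Everything else is routine bookkeeping with ranks.
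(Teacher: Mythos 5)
Your proof is correct and follows essentially the same route as the paper: the torsion part is pinned down by \cref{cor:RankTorsion}, the image hypothesis is used to replace $M$ by $\rho_{K_2} L_{K_2}(M)$, and $\rho_{K_2}$ of each quadrant summand is identified with a shifted free module $t^{\vec{d}}R$. The only (cosmetic) difference is the final extraction: the paper first subtracts the torsion contribution (``we may assume $L_{K_2}(M)$ is torsion-free'') and reads the quadrant degrees off the Hilbert function $\rk_M(\vec{a},\vec{a})$, whereas you read them off the stabilized diagonal ranks $\lim_k \rk_M(\vec{a},\vec{a}+(k,k))$, which kills the torsion contribution automatically --- equivalent bookkeeping.
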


\begin{proof}
By Corollary~\ref{cor:RankTorsion}, the rank invariant of $M$ always determines the torsion part $T( L_{K_2}M )$ in $\DD{K_2}$. Since $M$ lies in the image of the right adjoint $\rho_{K_2}$, the unit map
$M \ral{\cong} \rho_{K_2} L_{K_2}(M)$ 
is an isomorphism. By additivity of the rank invariant under direct sum, we may assume that $L_{K_2}(M)$ is torsion-free. By Proposition~\ref{pr:M2Decompo}, $L_{K_2}(M)$ is a direct sum of ``quadrant modules'' $\bigoplus_i [\vec{d}^{(i)},\infty)$, so that $M$ is a free $R$-module
\[
M \cong \rho_{K_2} \left( \bigoplus_i [\vec{d}^{(i)},\infty) \right) \cong \bigoplus_i t^{\vec{d}^{(i)}} R.
\]
Such an $R$-module is determined (up to isomorphism) by its Hilbert function $\rk_M(\vec{a},\vec{a})$, in particular by its rank invariant.
\end{proof}

Our next goal is to find a refinement of the rank invariant of a $\kk[t_1,t_2]$-module $M$ that determines the torsion-free part of $L_{K_2}(M)$ in $\DD{K_2}$. 
An analogue of the %
following 
invariant was used in \cite{Zhang19} to decompose the representations of the quiver with relations consisting of a commutative square.

\begin{defn}\label{def:IntersectionRank}
Let $M$ be a $\kk[t_1,t_2]$-module. 
The \Def{intersection rank invariant} of $M$ assigns to any three bidegrees $(a_1,a_2), (b_1,b_2), (c_1,c_2) \in \N^2$ 
satisfying $(a_1,a_2) \leq (c_1,c_2)$ and $(b_1,b_2) \leq (c_1,c_2)$ the dimension of the intersection of images
\[
\irk_M((a_1,a_2), (b_1,b_2), (c_1,c_2)) = \dim_{\kk} \left( \im \left( M(\vec{a}) \ral{t^{\vec{c} - \vec{a}}} M(\vec{c}) \right) \cap \im \left( M(\vec{b}) \ral{t^{\vec{c} - \vec{b}}} M(\vec{c}) \right) \right). 
\]
The \Def{stabilized intersection rank invariant} of $M$ is the function $\sirk_M \colon \N^2 \to \N$ defined by 
\[
\sirk_M(a_1,b_2) = \lim_{a_2,b_1 \to \infty} \lim_{c_1,c_2 \to \infty} 
\irk((a_1,a_2), (b_1,b_2), (c_1,c_2)).
\]
\end{defn}

The formula is illustrated %
in Figure~\ref{fig:IntersectionRank}.

\begin{figure}[H]
\begin{tikzpicture}[scale=0.9]
 \begin{axis}[
axis lines=middle, 
ticks=none, 
axis equal, 
xmin=-0.5, xmax=7, ymin=-1, ymax=7.1]
\draw [blue, fill=blue] (1,4) circle (\VertexSize);
\draw [blue, fill=blue] (4,2) circle (\VertexSize);
\draw [red, fill=red] (6,6) circle (\VertexSize);
\draw [->,blue] (1,4) -- (6,6);
\draw [->,blue] (4,2) -- (6,6);
\draw [blue, dotted] (1,4) -- (1,0);
\node [below, blue] at (1,0) {$a_1$};
\draw [blue, dotted] (4,2) -- (0,2);
\node [left, blue] at (0,2) {$b_2$};
\node [above] at (1,4) {$M(\vec{a})$};
\node [below] at (4,2) {$M(\vec{b})$};
\node [right] at (6,6) {$M(\vec{c})$};
\node [above, blue] at (1,5) {$\vdots$};
\node [right, blue] at (5,2) {$\cdots$};
\node [red] at (7,7) {$\iddots$};
 \end{axis}
\end{tikzpicture}
\caption{The stabilized intersection rank of $M$.} %
\label{fig:IntersectionRank}
\end{figure}

\begin{lem}
\begin{enumerate}
\item Suppose $A \to M \to N \to B$ is an exact sequence of $R$-modules such that $A$ and $B$ are finite 
(i.e., finitely supported as diagrams $\N^2 \to \Vect{\kk}$). 
Then $\sirk_M = \sirk_N$. 
\item The function $\sirk_{(-)} \colon \Obj \left( \fgMod{R} \right) \to \{ f \colon \N^2 \to \N \}$ extends to 
\[
\sirk_{(-)} \colon \Obj \left( \DD{K_2} \right) \to \{ f\colon \N^2 \to \N \}.
\]
\end{enumerate}
\end{lem}

\begin{proof}
(1) Exactness of $R$-modules (viewed as diagrams $\N^2 \to \Vect{\kk}$) is pointwise, i.e., for every multidegree $\vec{d} \in \N^2$, the sequence of $\kk$-vector spaces
\[
A(\vec{d}) \to M(\vec{d}) \to N(\vec{d}) \to B(\vec{d})
\]
is exact. Since $A$ and $B$ are finite, the map of $R$-modules $M \to N$ is an isomorphism outside of a finite region in $\N^2$. In Definition~\ref{def:IntersectionRank}, letting $a_2,b_1 \in \N$ grow large enough shows that $\sirk_M$ is unchanged if we modify $M$ only in a finite region of $\N^2$.

(2) Two $R$-modules $M$ and $N$ become isomorphic in the Serre quotient $\DD{K}$ if and only if they are connected by a zigzag of $\ker(L_{K})$-equivalences, in fact the cospan
\[
\xymatrix{
M \ar[r]^-{\eta}_-{\sim} & \rho_{K} L_K (M) \ar[d]^{\cong} \\
N \ar[r]^-{\eta}_-{\sim} & \rho_{K} L_K (N) \\
}
\]
\cite{Popescu73}*{Proposition~4.4.3}. By part~(1) and Lemma~\ref{lem:SEquiv}, two $R$-modules connected by a $\ker(L_{K_2})$-equivalence have the same stabilized intersection rank invariant.
\end{proof}

\begin{lem}
\begin{enumerate}\label{lem:basic}
\item The stabilized intersection rank invariant of the $\kk[t_1,t_2]$-module $[\vec{d},\infty)$ is
\[
\sirk_{[\vec{d},\infty)}(a,b) = 
\begin{cases} 
1 & (a,b) \geq \vec{d} \\
0 & \text{otherwise.} \\ 
\end{cases}
\]
\item For $M,N \in \DD{K_2}$, we have $\sirk_{M \oplus N} = \sirk_M + \sirk_N$. 
\end{enumerate}
\end{lem}

We now show that the stabilized intersection rank invariant determines the torsion-free part of $L_{K_2}(M)$ in $\DD{K_2}$.

\begin{thm}\label{thm:RefinedRank}
If two torsion-free objects $M,N \in \DD{K_2}$ satisfy $\sirk_M = \sirk_N$, then they are isomorphic. In other words, the function 
\[
\sirk_{(-)} \colon \Obj \left( \cat{F} \DD{K_2} \right)/\mathrm{iso} \to \{f \colon \N^2 \to \N \}
\]
is injective. 
\end{thm}

\begin{proof}
Suppose the statement is false. Then there are torsion-free objects $M,N$ such that $\sirk_M = \sirk_N$ but $M \not\cong N$. 
By Proposition~\ref{pr:M2Decompo}, there are decompositions
\[
M \cong \bigoplus_{i=1}^m [\vec{d}^i, \infty) \quad \text{and} \quad N \cong \bigoplus_{j=1}^n [\vec{e}^j, \infty).
\]
We can assume that $M$ and $N$ 
have been chosen such that 
$m$ is the smallest possible. Let $\vec{d} \in \N^2$ be the smallest 
in lexicographic order 
such that $\sirk_M(\vec{d}) \neq 0$, then Lemma~\ref{lem:basic} implies that 
$M \cong [\vec{d}, \infty) \oplus M'$. Similarly $N \cong [\vec{d}, \infty) \oplus N'$. 
Again using Lemma~\ref{lem:basic}, $\sirk_{M'} = \sirk_{N'}$. Since~$M$ was the smallest counterexample, 
$M' \cong N'$ and so $M \cong N$, leading to a contradiction. 
\end{proof}

\begin{ex}
The $\kk[t_1,t_2]$-modules $M$ and $N$ in Example~\ref{ex:SameRank} satisfy
\[
\sirk_M(0,0) = 1 \quad \text{but} \quad \sirk_N(0,0) = 0,
\]
which distinguishes their $K_2$-localizations $L_{K_2}(M) \not\cong L_{K_2}(N)$.
\end{ex}

\section{Classification of tensor ideals}\label{sec:Classif}

In this section we work both in $\fgMod{R}$ and $\fgMod{R}_{\Z^m}$ and a module will mean a module in one of those categories. 
The methods of this section were developed in \cite{StanleyW11}, where more background material can be found. 

\begin{lem}
The homogeneous primes of $R$ are of the form $(t_{i_1}, \cdots , t_{i_k})$. 
\end{lem}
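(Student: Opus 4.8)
The plan is to use the standard correspondence between homogeneous (here, $\N^m$-graded, equivalently multigraded) primes and monomial primes, exploiting that $R = \kk[t_1,\dots,t_m]$ is a polynomial ring with the fine $\N^m$-grading $\abs{t_i} = \vec e_i$. First I would recall that a homogeneous ideal $\mathfrak p$ with respect to this grading is generated by its homogeneous elements, and each homogeneous element of $R$ is a \emph{scalar multiple of a monomial} $t^{\vec a} = t_1^{a_1}\cdots t_m^{a_m}$, since the graded piece $R(\vec a)$ is one-dimensional over $\kk$, spanned by $t^{\vec a}$. Hence $\mathfrak p$ is a monomial ideal: it is generated by a set of monomials.

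Next I would use primeness to pin down which monomials can occur. Suppose $t^{\vec a} \in \mathfrak p$ with $\vec a \neq \vec 0$; pick some $i$ with $a_i > 0$ and write $t^{\vec a} = t_i \cdot t^{\vec a - \vec e_i}$. Since $\mathfrak p$ is prime, either $t_i \in \mathfrak p$ or $t^{\vec a - \vec e_i} \in \mathfrak p$; iterating (induction on $\abs{\vec a} = \sum a_i$) shows that some single variable $t_i$ dividing $t^{\vec a}$ already lies in $\mathfrak p$. Therefore the set $\sigma \coloneqq \{\, i \in [m] \mid t_i \in \mathfrak p \,\}$ has the property that every monomial generator of $\mathfrak p$ is divisible by some $t_i$ with $i \in \sigma$, which gives the inclusion $\mathfrak p \subseteq \langle t_i \mid i \in \sigma \rangle$. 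The reverse inclusion $\langle t_i \mid i \in \sigma\rangle \subseteq \mathfrak p$ is immediate from the definition of $\sigma$, so $\mathfrak p = \langle t_i \mid i \in \sigma\rangle$.

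Finally I would check that such ideals are indeed prime, to confirm the classification is exact rather than just an upper bound: for $\sigma \subseteq [m]$, the quotient $R / \langle t_i \mid i \in \sigma\rangle \cong \kk[t_j \mid j \notin \sigma]$ is a polynomial ring, hence an integral domain, so $\langle t_i \mid i \in \sigma\rangle$ is prime. Relabeling $\sigma = \{i_1,\dots,i_k\}$ gives the stated form.

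I do not expect a genuine obstacle here; the only point requiring a little care is the observation that with the \emph{fine} $\N^m$-grading each homogeneous element is (up to a scalar) a single monomial — this is what reduces homogeneous primes to monomial primes and is false for coarser gradings such as the total-degree $\N$-grading. Everything else is the routine induction on monomial degree plus the quotient-ring computation.
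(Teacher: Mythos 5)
Your proof is correct. The paper itself gives no argument for this lemma: it simply cites \cite{HarringtonOST19}*{Lemma~4.35}, so your write-up supplies the standard self-contained proof that the reference encapsulates. The crux is exactly the point you flag: with the fine $\N^m$-grading every graded piece of $R$ is one-dimensional, so homogeneous elements are scalar multiples of monomials, homogeneous ideals are monomial ideals, and then primeness plus the induction on $\sum a_i$ forces the ideal to be generated by a subset of the variables; the quotient computation $R/\langle t_i \mid i \in \sigma\rangle \cong \kk[t_j \mid j \notin \sigma]$ confirms the converse. Two small remarks: your argument silently but correctly covers the zero ideal (the case $\sigma = \emptyset$, i.e.\ $k=0$), which is needed since $(0)$ is a homogeneous prime of $R$; and your warning that the statement fails for coarser gradings (e.g.\ the total-degree grading, where $(t_1 - t_2)$ is homogeneous and prime) is exactly why the fine grading must be invoked, so it is good that you made the use of one-dimensional graded pieces explicit.
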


\begin{proof}
See \cite{HarringtonOST19}*{Lemma~4.35}.
\end{proof}

\begin{nota}
Let $\Spec(R)$ denote the set of homogeneous prime ideals of $R$. 
A subset $S$ of $\Spec(R)$ is closed if $p\in S$ and $p\subset q$ 
implies that $q\in S$. We denote the closure of $S$ by $\overline S$. 

The \Def{support} of a module $M$ is
$\Supp(M)=\{ \sigma \mid R_{\sigma^c}\otimes M \neq 0 \}$, where $\sigma^c$ is the complement of $\sigma\subset [m]$. We use $\Supp^c(M)=\{ \sigma \mid R_{\sigma}\otimes M \neq 0 \}$ for the set of complements of the support and also call it the support 
and consider it a subset of $\PP{[m]}$.

For each $\sigma\in \Supp^c(M)$ the corresponding prime ideal 
is $p = (t_i)_{i\not\in \sigma}$. In other words $p \in \Supp(M)$ if and only if $p^c \in \Supp^c(M)$.
We see that $\Supp^c(M)$ is closed under taking subsimplices and so a simplicial complex
since $\Supp(M)$ is a closed subset of $\Spec(R)$.  

We define $\Ass(M)$ to be the associated primes of $M$, 
by letting $p\in \Ass(M)$ if there exists $x\in M$ 
such that $p$ is the annihilator of $x$. 
\end{nota}

Here are some standard results about support and associated primes \cite{AtiyahM69}*{Exercise~3.19}. 

\begin{lem}\label{lem:suppass}
Let $M,N,L$ be modules and $0\to L\to M\to N\to 0$ be a short exact sequence. 
\begin{enumerate}
\item \label{item:ClosureAss} $\overline{\Ass(M)} = \Supp(M)$
\item $\Supp(L)\cup \Supp (N)=\Supp (M)$. 
\end{enumerate}
\end{lem}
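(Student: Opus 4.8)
The plan is to deduce both identities from the exactness of homogeneous localization together with the classical facts about associated primes of finitely generated modules over a Noetherian ring, all read in the graded sense. Recall that every homogeneous prime of $R$ is a monomial prime $p = \langle t_i \rangle_{i \in S}$, and that localizing a graded module $M$ at such a $p$ inverts exactly the variables $t_j$ with $j \notin S$; under the bijection $\sigma \leftrightarrow p_\sigma \dfn \langle t_i \rangle_{i \notin \sigma}$ this homogeneous localization is precisely the functor $R_\sigma \otimes_R -$. In particular $p_\sigma \in \Supp(M)$ iff $M_{p_\sigma} \neq 0$ iff $R_\sigma \otimes_R M \neq 0$ iff $\sigma \in \Supp^c(M)$, and each functor $R_\sigma \otimes_R -$ is exact, $R_\sigma$ being flat over $R$.

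For the first identity I would prove the two inclusions separately. For $\overline{\Ass(M)} \subseteq \Supp(M)$: if $p = \operatorname{Ann}(x)$ for a nonzero homogeneous element $x \in M$, then $R/p$ embeds in $M$, and applying the exact localization at $p$ keeps this a monomorphism with nonzero source $(R/p)_p \neq 0$, so $M_p \neq 0$ and $p \in \Supp(M)$; thus $\Ass(M) \subseteq \Supp(M)$, and since $\Supp(M)$ is closed (as already observed in the running text) it follows that $\overline{\Ass(M)} \subseteq \Supp(M)$. For $\Supp(M) \subseteq \overline{\Ass(M)}$: given $p \in \Supp(M)$, the localization $M_p$ is a nonzero finitely generated graded module over the Noetherian ring $R_p$, hence has a graded associated prime; by the standard compatibility of $\Ass$ with localization for finitely generated modules over a Noetherian ring, every associated prime of $M_p$ has the form $q R_p$ for some $q \in \Ass(M)$ with $q \subseteq p$. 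In particular there is $q \in \Ass(M)$ with $q \subseteq p$, so $p \in \overline{\Ass(M)}$.

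For the second identity, apply the exact functor $R_\sigma \otimes_R -$ to the short exact sequence $0 \to L \to M \to N \to 0$, obtaining a short exact sequence $0 \to R_\sigma \otimes L \to R_\sigma \otimes M \to R_\sigma \otimes N \to 0$. The middle term of a three-term exact sequence vanishes if and only if both outer terms vanish, equivalently it is nonzero if and only if at least one outer term is nonzero. Applying this for every $\sigma$ gives $\Supp^c(M) = \Supp^c(L) \cup \Supp^c(N)$, and hence, passing to complements, $\Supp(M) = \Supp(L) \cup \Supp(N)$.

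The only point requiring a little care is that this section works with multigraded modules, so that "prime", "localization" and "$\Ass$" must be understood as their graded counterparts. But every classical fact invoked — exactness of localization, the embedding $R/p \hookrightarrow M$ for $p \in \Ass(M)$, existence of an associated prime of a nonzero finitely generated module over a Noetherian ring, and compatibility of $\Ass$ with localization — holds verbatim for finitely generated graded modules over the Noetherian graded ring $R$, and is moreover entirely concrete here since the homogeneous primes are just the monomial primes. So I do not expect any genuine obstacle; indeed both identities are instances of the standard material cited above as \cite{AtiyahM69}*{Exercise~3.19}.
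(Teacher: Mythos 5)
Your proof is correct, and it is exactly the standard argument that the paper itself invokes without proof by citing \cite{AtiyahM69}*{Exercise~3.19}: the embedding $R/p \hookrightarrow M$ for $p \in \Ass(M)$ plus exactness of $R_{\si}\otimes_R -$ for one inclusion, existence and localization-compatibility of associated primes for the other, and exactness of localization for part (2). Your handling of the multigraded setting is also sound, since homogeneous primes (and homogeneous annihilators) are monomial here, so $R_{\si}\otimes_R-$ does play the role of homogeneous localization at $p_{\si}=\langle t_i\rangle_{i\notin\si}$, and your first-inclusion argument does not actually need the element $x$ to be homogeneous, which squares with the paper's definition of $\Ass$.
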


\begin{nota}
For a subcategory $\cat{C}\subset \fgMod{R}$ (or $\fgMod{R}_{\Z^m}$), 
we define its \Def{support} as
$\Supp^c(\cat{C}) = \bigcup_{M\in \cat{C}} \Supp^c(M)$. 
Note that $\Supp^c(\cat{C})$ is also a simplicial complex. 
\end{nota}

\begin{defn}\label{def:TensorIdeal}
Let $\otimes$ be the tensor product on $\fgMod{R}_{\Z^m}$, which 
turns it into a symmetric monoidal category. 
\begin{enumerate}
 \item A Serre subcategory $\cat{C}\subset \fgMod{R}_{\Z^m}$ is 
a \Def{tensor ideal} if the conditions $M \in \cat{C}$ and $N \in \fgMod{R}_{\Z^m}$ 
imply $M \otimes N\in \cat{C}$. 
 \item A Serre subcategory $\cat{C} \subset \fgMod{R}$ 
is a \Def{tensor ideal} if the conditions $M \in \cat{C}$, $N\in \fgMod{R}_{\Z^m}$, 
and $M \otimes N \in \fgMod{R}$ imply $M \otimes N \in \cat{C}$. 
\end{enumerate}
Note that a tensor ideal in $\fgMod{R}$ is the 
same as $\cat{C} \cap \fgMod{R}$ for a tensor ideal $\cat{C}$ in $\fgMod{R}_{\Z^m}$. 
A tensor ideal is also called a \emph{tensor-closed Serre subcategory}.

For an object $M\in \fgMod{R}$, let $S(M)$ be the smallest Serre subcategory of 
$\fgMod{R}$ containing $M$ and $S^{\otimes}(M)$ be the smallest tensor ideal 
of $\fgMod{R}$ containing $M$. 
\end{defn}

\begin{lem}\label{lem:getprime}
For a prime $p$, $p\in \Supp(M)$ implies that for some 
$\vec{d}$, $t^{\vec{d}}R/p \in S(M)$ and thus $R/p\in S^{\otimes}(M)$
\end{lem}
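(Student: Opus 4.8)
The plan is to extract from a non-zero element witnessing $p \in \Supp(M)$ a cyclic submodule of a localization of $M$, and then transport it back into $\fgMod{R}$ using the truncation functor. First I would use \cref{lem:suppass}\Eqref{item:ClosureAss}: since $p \in \Supp(M) = \overline{\Ass(M)}$, there is an associated prime $\mathfrak q \in \Ass(M)$ with $\mathfrak q \subseteq p$. By the previous lemma, the homogeneous primes are exactly $\langle t_i \mid i \notin \sigma \rangle$ for simplices $\sigma$, so write $p = \langle t_i \mid i \notin \sigma \rangle$ and $\mathfrak q = \langle t_i \mid i \notin \tau \rangle$ with $\sigma \subseteq \tau$. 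Since $\mathfrak q \in \Ass(M)$, there is a homogeneous $x \in M$ with $\mathrm{ann}(x) = \mathfrak q$, hence a submodule $R\langle x\rangle \cong (R/\mathfrak q)(\vec e)$ of $M$ for some shift $\vec e$; thus $t^{\vec d} R/\mathfrak q \in S(M)$ for a suitable $\vec d$ (absorbing the shift).

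Next I would pass from $\mathfrak q$ up to $p$. The quotient $R/\mathfrak q = \kk[t_i \mid i \in \tau]$ is a polynomial ring, and $R/p = \kk[t_i \mid i \in \sigma]$ is obtained from it by killing the variables $t_i$ for $i \in \tau \setminus \sigma$. Killing one variable at a time exhibits $R/p$ (up to a degree shift) as iterated cokernels of multiplication-by-$t_i$ maps on modules built from $R/\mathfrak q$, all of which stay inside the Serre subcategory $S(M)$ because Serre subcategories are closed under cokernels. So $t^{\vec d'} R/p \in S(M)$ for some $\vec d'$, which is the first assertion. For the second assertion, note $R/p$ and any shift $t^{\vec d'} R/p$ differ by tensoring with the invertible graded module $t^{\vec d'} R$ (an object of $\fgMod{R}_{\Z^m}$), and $t^{\vec d'} R/p \ot t^{-\vec d'} R \cong R/p$ lies in $\fgMod{R}$; hence $R/p \in S^\otimes(M)$ by definition of the tensor ideal generated by $M$ (which contains $S(M)$).

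The main obstacle I expect is the bookkeeping in the middle step: making sure the ``kill one variable at a time'' argument genuinely produces $R/p$ as a subquotient lying in $S(M)$ rather than merely a module with the same support. Concretely, one must check that for a polynomial ring $A = \kk[u_1,\dots,u_k]$ the quotient $A/(u_k)$ lies in the Serre subcategory generated by $A$ — this is immediate since $A/(u_k) = \coker(A \xrightarrow{u_k} A)$ — and then iterate, at each stage applying the (exact) functor ``tensor up along $R \to A_j$'' or simply working inside $\fgMod{R}$ with the modules regarded as $R$-modules via the quotient maps $R \surj R/\mathfrak q \surj \cdots \surj R/p$. Since every step is a cokernel of an $R$-linear map between objects already known to lie in $S(M)$, closure of Serre subcategories under cokernels finishes it. The degree shift $\vec d$ versus $\vec d'$ is harmless and can be chosen uniformly at the end.
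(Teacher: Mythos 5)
Your proposal is correct and follows the paper's own argument: use \cref{lem:suppass} to find $\mathfrak{q} \in \Ass(M)$ with $\mathfrak{q} \subseteq p$, embed a shifted copy of $R/\mathfrak{q}$ into $M$ (so it lies in $S(M)$ by closure under subobjects), pass to $R/p$, and finally remove the degree shift using the tensor ideal structure. The only difference is cosmetic: your ``kill one variable at a time'' cokernel step can be replaced by the one-line observation that $t^{\vec{d}}R/p$ is directly a quotient of $t^{\vec{d}}R/\mathfrak{q}$ since $\mathfrak{q} \subseteq p$, which is how the paper concludes via closure of Serre subcategories under quotients.
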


\begin{proof}
By Lemma~\ref{lem:suppass}~\eqref{item:ClosureAss}, there is $q\subset p$ with $q\in \Ass(M)$. So for some 
$\vec{d}$ and $\alpha \in M(\vec{d})$ with anihilator $q$  we get that there is an injective map $t^{\vec{d}}R/q\to M$ and so since Serre subcategories are closed under taking subobjects $t^{\vec{d}}R/q\in S(M)$. Then since Serre subcategories are closed under quotients we get that  $t^{\vec{d}}R/p\in S(M)$ and so $R/p\in S^{\otimes}(M)$.
\end{proof}

\begin{lem}\label{lem:makeM}
$M\in S^{\otimes}(\bigoplus_{p\in \Supp(M)} R/p)$. 
\end{lem}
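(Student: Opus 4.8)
The plan is to use the graded prime filtration of a finitely generated module over the Noetherian ring $R$. Write $N \coloneqq \bigoplus_{p \in \Supp(M)} R/p$, which is a \emph{finite} direct sum since $R$ has only finitely many homogeneous primes. Since $S^{\otimes}(N)$ is a Serre subcategory it is closed under subobjects, quotients, and extensions; and since it is a tensor ideal it is closed under tensoring (within $\fgMod{R}$, resp.\ $\fgMod{R}_{\Z^m}$) with objects of $\fgMod{R}_{\Z^m}$. In particular, each $R/p$ with $p \in \Supp(M)$ is a direct summand, hence a subobject, of $N$, so $R/p \in S^{\otimes}(N)$; and for any $\vec{d} \in \N^m$ the shift $t^{\vec{d}} R \otimes_R (R/p)$ --- the cyclic module $R/p$ with its generator placed in multidegree $\vec{d}$ --- again lies in $S^{\otimes}(N)$.

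First I would construct a finite filtration
\[
0 = M_0 \subseteq M_1 \subseteq \cdots \subseteq M_n = M
\]
with $M_i / M_{i-1} \cong t^{\vec{d}_i} R \otimes_R (R/p_i)$ for homogeneous primes $p_i$ and $\vec{d}_i \in \N^m$. This is standard: as long as $M / M_{i-1} \neq 0$, Noetherianity lets us pick a nonzero homogeneous element $\bar{x}_i \in M / M_{i-1}$ whose annihilator $p_i$ is maximal among annihilators of nonzero homogeneous elements; such a $p_i$ is prime, and $R \bar{x}_i \cong t^{\vec{d}_i} R \otimes_R (R/p_i)$ with $\vec{d}_i = \abs{\bar{x}_i} \in \N^m$. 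Setting $M_i / M_{i-1} \coloneqq R \bar{x}_i$ and iterating, the process terminates because $M$ is Noetherian. (In $\fgMod{R}_{\Z^m}$ the same works verbatim, with $\vec{d}_i \in \Z^m$.)

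Next I would check that each $p_i$ lies in $\Supp(M)$. Since $p_i \in \Ass(M / M_{i-1})$ we have $p_i \in \Supp(M / M_{i-1})$, and \cref{lem:suppass} gives $\Supp(M / M_{i-1}) \subseteq \Supp(M_{i-1}) \cup \Supp(M / M_{i-1}) = \Supp(M)$. Hence every filtration quotient $M_i / M_{i-1}$ is of the form $t^{\vec{d}_i} R \otimes_R (R/p_i)$ with $p_i \in \Supp(M)$, so it lies in $S^{\otimes}(N)$ by the observations in the first paragraph. An induction up the filtration using closure under extensions then finishes the proof: $M_0 = 0 \in S^{\otimes}(N)$, and if $M_{i-1} \in S^{\otimes}(N)$ then the short exact sequence $0 \to M_{i-1} \to M_i \to M_i / M_{i-1} \to 0$ forces $M_i \in S^{\otimes}(N)$; thus $M = M_n \in S^{\otimes}(N)$.

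There is no serious obstacle here; the one point that needs care is that the primes $p_i$ produced by the prime filtration all lie in $\Supp(M)$, and that the degree shifts cause no trouble --- they are absorbed by tensoring with the free module $t^{\vec{d}_i} R$, which keeps us inside the tensor ideal $S^{\otimes}(N)$ and, since $\vec{d}_i \in \N^m$, inside $\fgMod{R}$.
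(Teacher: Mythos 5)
Your proposal is correct and follows essentially the same route as the paper: a prime filtration with subquotients $t^{\vec{d}}R/p$, the observation via \cref{lem:suppass} that every prime occurring lies in $\Supp(M)$, and an induction using closure of the tensor ideal under extensions (the paper phrases the filtration as successive quotients $M(i)$, whose kernels $\ker(M \to M(i))$ are exactly your ascending chain). No gaps.
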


\begin{proof}
We construct a sequence of modules $M(i)$. Let $M(0)=M$. Suppose we have already constructed $M(i)$ and that $\Supp(M(i))\subset \Supp(M)$. If $M(i) \neq 0$ let $p\in \Ass(M(i))\subset \Supp (M(i))\subset \Supp(M)$ and define
$M(i+1)$ so there is an exact sequence
\begin{equation}\label{eq:Sses}
0\to t^{\vec{d}}R/p \to M(i)\to M(i+1)\to 0
\end{equation}
We also have $\Supp(M(i+1))\subset \Supp(M(i)) \subset \Supp(M)$ 
by Lemma~\ref{lem:suppass} and the induction hypothesis. 

The increasing sequence of submodules $\ker(M\to M(i))\subset M$ must stabilize and so 
$M(n)=0$ for some $n$. Then clearly $M(n)\in S^{\otimes}(\bigoplus_{p\in \Supp(M)} R/p)$.

As each $R/p$ used has $p\in \Supp M$ we get that $R/p\in S(\bigoplus_{p\in \Supp(M)} R/p)$
and $t^{\vec{d}}R/p \in S^{\otimes}(\bigoplus_{p\in \Supp(M)} R/p)$. 
Using Equation~\eqref{eq:Sses} and that Serre subcategories are closed under 
extensions we get that 
$M(n-1)\in S^{\otimes}(\bigoplus_{p\in \Supp(M)} R/p)$. Continuing in this way we see that
$M=M(0)\in S^{\otimes}(\bigoplus_{p\in \Supp(M)} R/p)$. 
\end{proof}

\begin{lem}\label{lem:closingM}
$\Supp(M)=\Supp(S(M))=\Supp(S^{\otimes}(M))$. 
\end{lem}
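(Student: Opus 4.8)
The plan is to prove the statement by the standard ``smallest subcategory'' argument, leveraging the fact that support behaves well under the operations defining Serre subcategories and tensor ideals. First, since $M \in S(M) \subseteq S^{\otimes}(M)$, applying $\Supp$ immediately gives $\Supp(M) \subseteq \Supp(S(M)) \subseteq \Supp(S^{\otimes}(M))$, so the entire content of the lemma is the reverse inclusion $\Supp(S^{\otimes}(M)) \subseteq \Supp(M)$, and it suffices to show that every object $N$ of $S^{\otimes}(M)$ satisfies $\Supp(N) \subseteq \Supp(M)$.

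To do this I would introduce the full subcategory $\cat{C}' = \{ N \mid \Supp(N) \subseteq \Supp(M) \}$ and verify that it is a tensor ideal containing $M$; the minimality of $S^{\otimes}(M)$ then forces $S^{\otimes}(M) \subseteq \cat{C}'$, which is exactly what is needed, and the same argument, using only that $\cat{C}'$ is a Serre subcategory containing $M$, handles $S(M)$ simultaneously. Containment of $M$ in $\cat{C}'$ is trivial. Closure under subobjects, quotients, and extensions is immediate from \cref{lem:suppass}, which gives $\Supp(N) = \Supp(L) \cup \Supp(P)$ for any short exact sequence $0 \to L \to N \to P \to 0$, so $N$ lies in $\cat{C}'$ precisely when both $L$ and $P$ do. The last point is closure under tensoring: if $N \in \cat{C}'$ and $N' \in \fgMod{R}_{\Z^m}$ with $N \otimes N' \in \fgMod{R}$, then for each $\sigma \subseteq [m]$ the natural isomorphism $R_{\sigma} \otimes_R (N \otimes_R N') \cong (R_{\sigma} \otimes_R N) \otimes_{R_{\sigma}} (R_{\sigma} \otimes_R N')$ shows that $R_{\sigma} \otimes_R N = 0$ forces the left-hand side to vanish, whence $\Supp^c(N \otimes N') \subseteq \Supp^c(N)$, equivalently $\Supp(N \otimes N') \subseteq \Supp(N) \subseteq \Supp(M)$.

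I do not anticipate a genuine obstacle here: the argument reduces entirely to the two recorded facts about support in \cref{lem:suppass} together with the elementary compatibility of localization with tensor products. The only thing requiring mild care is the bookkeeping between $\Supp$ (indexed by homogeneous primes) and $\Supp^c$ (indexed by the complementary simplices), and the exact form of the tensor-ideal condition in $\fgMod{R}$ versus $\fgMod{R}_{\Z^m}$; since the localization computation is insensitive to these distinctions, the same proof applies verbatim in both categories, so I would state it once and note that it covers both cases. Note also that \cref{lem:getprime} and \cref{lem:makeM} are not needed for this lemma.
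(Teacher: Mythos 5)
Your proposal is correct and follows essentially the same route as the paper: the paper likewise notes the easy inclusions are definitional and then observes that the Serre closure operations (via \cref{lem:suppass}) and tensoring do not increase support, which is exactly your ``smallest subcategory'' argument phrased informally. Your only addition is to spell out the base-change isomorphism proving $\Supp(N\otimes N')\subseteq\Supp(N)$, a fact the paper simply asserts.
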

\begin{proof}
That $\Supp(M)\subset \Supp(S(M))\subset \Supp(S^{\otimes}(M))$ follow directly from the definitions. That $\Supp(S(M))\subset \Supp(M)$ follows since by Lemma~\ref{lem:suppass} the closure operations of a Serre subcategory, that is taking subobjects, quotients and extensions do not increase support. Also since $\Supp(M\otimes N)\subset \Supp(M)$ we get that 
$\Supp(S(M))\subset \Supp(S^{\otimes}(M))$.
\end{proof}

\begin{nota}
For $K$ a simplicial complex on $[m]$, recall the \Def{face ring} (or \Def{Stanley--Reisner ring}) of $K$, denoted $\SR{K}$. For a simplex $\{ i_1,\dots, i_k \} =\sigma\in \PP{[m]}$ we denote the monomial %
$t_{\sigma}=t_{i_1} \cdots t_{i_k}$. Then, let 
$I(K)= (t_{\sigma})_{\sigma\not\in K}$ and 
\[
\SR{K}=R/I(K).
\]
In other words, it is the (multigraded) polynomial ring modulo the 
ideal $I(K)$ generated by missing faces. Remembering we work with finitely generated modules, we call a module $M$, $I$-torsion if $I^s M=0$ for some $s$. We let 
$\tors{I}$ denote the full subcategory of $I$-torsion modules.  
\end{nota}

\begin{lem}\label{lem:same}
$\ker L_K = \tors{I(K)} = S^{\otimes}(\SR{K}) = S^{\otimes}(\oplus_{p^c\in K} R/p)$.
\end{lem}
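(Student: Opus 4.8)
The strategy is to identify all four subcategories with one common object: the full subcategory
\[
\cat{S}_K \dfn \{ M \in \fgMod{R} \mid \Supp^c(M) \subseteq K \}
\]
of finitely generated $R$-modules supported on $K$. By \cref{lem:suppass}, the support $\Supp^c$ is additive along short exact sequences, so $\cat{S}_K$ is a Serre subcategory; and since $\Supp^c(M \otimes N) \subseteq \Supp^c(M)$, it is even a tensor ideal. I would then check the four equalities one at a time against $\cat{S}_K$.

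For $\ker L_K$ this is immediate: since $L_K(M)_\sigma = R_\sigma \otimes_R M$ for every missing face $\sigma \notin K$, the object $L_K(M)$ vanishes exactly when $R_\sigma \otimes_R M = 0$ for all $\sigma \notin K$, i.e.\ when $\Supp^c(M) \cap K^c = \emptyset$. For $\tors{I(K)}$: if $I(K)^s M = 0$ and $\sigma \notin K$, then $t_\sigma$ is a generator of $I(K)$, so $t_\sigma^s M = 0$; since $t_\sigma$ is a unit in $R_\sigma$, this forces $R_\sigma \otimes_R M = 0$, hence $\tors{I(K)} \subseteq \cat{S}_K$. Conversely, if $\Supp^c(M) \subseteq K$, then for each non-face $\tau$ the vanishing $R_\tau \otimes_R M = 0$ together with finiteness of $M$ produces a monomial in the $t_i$ ($i \in \tau$) that annihilates $M$, hence a power $t_\tau^{e_\tau} M = 0$ (since $t_\tau^{e_\tau}$ with $e_\tau$ large is a multiple of that monomial); as there are only finitely many non-faces, a sufficiently high power of $I(K)$ annihilates $M$, so $\cat{S}_K \subseteq \tors{I(K)}$.

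For the two tensor ideals I would first record the supports of the generators. One has $\Supp^c(\SR{K}) = K$: indeed $R_\sigma \otimes_R \SR{K} = R_\sigma / I(K) R_\sigma$ is nonzero iff no generator $t_\tau$, $\tau \notin K$, is a unit in $R_\sigma$, i.e.\ iff $\sigma$ contains no non-face, i.e.\ iff $\sigma \in K$. Likewise $\Supp^c(R/p_\sigma) = \{ \tau \mid \tau \subseteq \sigma \}$ for $p_\sigma = \langle t_i \mid i \notin \sigma \rangle$, so (using that $K$ is finite) $\Supp^c(\bigoplus_{\sigma \in K} R/p_\sigma) = K$. Hence both generators lie in $\cat{S}_K$, and by \cref{lem:closingM} both $S^{\otimes}(\SR{K})$ and $S^{\otimes}(\bigoplus_{p^c \in K} R/p)$ are contained in $\cat{S}_K$. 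For the reverse inclusions, take $M \in \cat{S}_K$. By \cref{lem:makeM}, $M$ lies in the tensor ideal generated by $\bigoplus_{p \in \Supp(M)} R/p$; and for each such $p = p_\sigma$ we have $\sigma \in \Supp^c(M) \subseteq K = \Supp^c(\SR{K})$, so $p \in \Supp(\SR{K})$ and therefore $R/p \in S^{\otimes}(\SR{K})$ by \cref{lem:getprime}. Since $\Supp(M)$ is finite, $\bigoplus_{p \in \Supp(M)} R/p \in S^{\otimes}(\SR{K})$, whence $M \in S^{\otimes}(\SR{K})$; running the identical argument with $\bigoplus_{p^c \in K} R/p$ in place of $\SR{K}$ (it also has support $K$) gives $\cat{S}_K \subseteq S^{\otimes}(\bigoplus_{p^c \in K} R/p)$. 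Chaining the four identifications proves the lemma.

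There is no deep obstacle here: the content is entirely in the support bookkeeping, and \cref{lem:getprime,lem:makeM,lem:closingM} do the real work. The step deserving the most care is keeping the dictionary $\sigma \leftrightarrow p_\sigma = \langle t_i \mid i \notin \sigma \rangle$ between faces and homogeneous primes applied consistently --- in particular checking that the condition ``$I(K) \subseteq p_\sigma$'' (equivalently, ``$\sigma$ contains no non-face of $K$'') is exactly ``$\sigma \in K$'' --- and taking $\Supp^c$ of the correct module ($\SR{K}$ versus $R/p_\sigma$ versus $\bigoplus_{p \in \Supp M} R/p$) at each step. The one genuinely fiddly micro-step is the passage, in the $\tors{I(K)}$ argument, from ``some monomial in the $t_i$, $i \in \tau$, kills $M$'' to ``a power $t_\tau^{e}$ kills $M$'', which is routine.
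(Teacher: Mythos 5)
Your proof is correct, and it leans on the same core lemmas as the paper (\cref{lem:getprime}, \cref{lem:makeM}, \cref{lem:closingM}, plus the observation that $M_\sigma=0$ iff a power of $t_\sigma$ kills $M$), but it is organized differently in a way worth noting. The paper proves $\ker L_K=\tors{I(K)}$ directly, proves $\ker L_K=S^{\otimes}(\oplus_{p^c\in K}R/p)$ by two inclusions (one via \cref{lem:makeM}), and then gets the remaining equality $S^{\otimes}(\SR{K})=S^{\otimes}(\oplus_{p^c\in K}R/p)$ by appealing \emph{forward} to \cref{lem:RiseStan} and \cref{thm:Classif} (injectivity of $\Supp^c$ on tensor ideals), which appear later in the section. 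You instead introduce the comparison object $\cat{S}_K=\{M\mid\Supp^c(M)\subseteq K\}$, identify all four subcategories with it, and handle the Stanley--Reisner ideal by computing $\Supp^c(\SR{K})=K$ inline (a compressed version of \cref{lem:RiseStan}, with the small but necessary check that $I(K)R_\sigma$ is a proper ideal when $\sigma\in K$) and then applying \cref{lem:getprime} and \cref{lem:closingM}; in effect you also reprove the relevant case of \cref{lem:supptest}. What your route buys is logical self-containment: no dependence on the later classification theorem, and a uniform ``everything equals the support category'' picture. What the paper's route buys is brevity once \cref{thm:Classif} is available, since the equality of the two tensor ideals follows in one line from equality of supports. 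The only steps in your write-up that deserve the care you already flag are the passage from ``a monomial in $t_i$, $i\in\tau$, kills $M$'' to ``$t_\tau^{e}M=0$'' and from there (finitely many non-faces, pigeonhole on products of generators) to ``$I(K)^N M=0$''; both are fine as you argue them.
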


\begin{proof}
First observe that $M_{\sigma}=0 \iff (t_{\sigma})^s M = 0$ for some $s$. 
Then 
\begin{align*}
M\in \ker L_K  & \iff  M_{\sigma}=0 \mbox{ for all } \sigma\not\in K
\\
& \iff t_{\sigma}^s M = 0 \mbox{ for all } \sigma\not\in K 
\mbox{ for some } s 
\\
& \iff (I(K))^s M = 0 \mbox{ for some } s
\\
& \iff M \mbox{ is in } \tors{I(K)}.
\end{align*}
This proves the first equality. The last equality follows from Lemma~\ref{lem:RiseStan} and the computation 
$\Supp^c(\oplus_{p^c\in K} R/p) = K$ since by Theorem~\ref{thm:Classif} $\Supp^c$ determines tensor ideals in $\fgMod{R}$ ($\fgMod{R}_{\Z^m}$). Since for each $p^c \in K$ and $\sigma \not\in K$, 
$R_{\sigma}\otimes R/p=0$ and so $R/p^c\in \ker L_K$. It follows that 
$S^{\otimes}(\oplus_{p^c\in K} R/p)\subset \ker L_K$. 
If $M\in \ker L_K$ then $M_\sigma=0$ for each $\sigma \not\in K$ so $\Supp^c M\subset K$, 
thus by Lemma~\ref{lem:makeM}, %
$M \in S^{\otimes}(\oplus_{p^c\in K} R/p)$. This shows that the first class is equal to the last. %
\end{proof}

\begin{rem}
The open subspace complement of the closed subset $V(K)$ 
cut out by the ideal $I(K)$ is written in polyhedral product language as $(\C, \C^*)^K$. The homotopy quotient 
of  $(\C, \C^*)^K$ by $(S^1)^m$ is the Davis--Januszkiewicz space $DJ(K)$. 
The cohomology algebra $H^*(DJ(K);\kk)$, and therefore the equivariant cohomology of $(\C, \C^*)^K$, are both isomorphic to $\SR{K}$, the Stanley--Reisner ring of $K$ \cite{BuchstaberP15}*{\S 4.3, 4.7}. Coincidentally the category of $(\C^*)^m$-equivariant coherent sheaves on  $(\C, \C^*)^K$ is 
related to $\fgMod{R}_{\Z^m}$ modulo the tensor ideal %
generated by $H^*(DJ(K);\kk)$. 
\end{rem}

\begin{lem}\label{lem:RiseStan}
$\Supp^c(\SR{K}) = K$.
\end{lem}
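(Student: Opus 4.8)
The plan is to compute the localizations $R_\sigma \ot_R \SR{K}$ directly for each $\sigma \subseteq [m]$ and determine exactly when they vanish. Since $\SR{K} = R/I(K)$ is cyclic, $R_\sigma \ot_R \SR{K}$ is the localization of $R/I(K)$ at the multiplicative set $S_\sigma$ generated by $\{t_i : i \in \sigma\}$, and such a localization vanishes precisely when $I(K) \cap S_\sigma \neq \emptyset$, i.e.\ when some monomial in the variables $t_i$, $i \in \sigma$, already lies in the monomial ideal $I(K)$. (Equivalently, one can use the elementary observation that for a finitely generated module $M_\sigma = 0$ iff $(t_\sigma)^s M = 0$ for some $s$, applied to the cyclic module $M = \SR{K}$, which reduces the question to whether $(t_\sigma)^s \in I(K)$ for some $s$.)

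Next I would unwind membership in $I(K) = (t_\tau : \tau \notin K)$. A monomial lies in a monomial ideal if and only if it is divisible by one of the generators; hence a monomial supported on $\sigma$ lies in $I(K)$ if and only if it is divisible by $t_\tau$ for some non-face $\tau$, which forces $\tau \subseteq \sigma$. Conversely, if $\sigma \notin K$ then $t_\sigma \in I(K) \cap S_\sigma$ (take $\tau = \sigma$). Using that $K$ is downward closed — so that $\sigma$ contains a non-face exactly when $\sigma$ is itself a non-face — we get $I(K) \cap S_\sigma \neq \emptyset$ if and only if $\sigma \notin K$, independently of any exponents chosen.

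Putting the two steps together yields $R_\sigma \ot_R \SR{K} \neq 0 \iff \sigma \in K$, which is precisely the assertion $\Supp^c(\SR{K}) = K$ as subsets of $\PP{[m]}$. As a sanity check one should verify the degenerate conventions of the paper: the void complex $K = \emptyset$ gives $I(K) = R$ and $\SR{K} = 0$, so $\Supp^c(\SR{K}) = \emptyset = K$; and $K = \{\emptyset\}$ gives $\SR{K} = \kk$ with $\Supp^c(\kk) = \{\emptyset\}$.

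I do not expect a genuine obstacle: the argument is elementary Stanley--Reisner combinatorics. The only slightly delicate point is the first reduction — the statement that the localization of $R/I$ at $S$ vanishes iff $I \cap S \neq \emptyset$, together with the observation that since $I(K)$ is a monomial ideal and $S_\sigma$ is generated by a subset of the variables, $I(K) \cap S_\sigma \neq \emptyset$ can be tested on a single monomial. Everything after that is bookkeeping with downward-closed simplicial complexes.
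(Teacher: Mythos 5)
Your proof is correct and amounts to essentially the same computation as the paper's: both determine exactly when $R_\sigma \ot_R \SR{K}$ vanishes and reduce this, via downward-closedness of $K$, to whether $\sigma$ contains a missing face. The only difference is packaging: the paper tests surjectivity of the localized presentation $\bigoplus_{\tau \notin K} t_\tau R \to R$ (checking the multidegree-$\vec{0}$ part for the non-vanishing direction), whereas you invoke the equivalent criterion that $S_\sigma^{-1}(R/I(K)) = 0$ iff $S_\sigma \cap I(K) \neq \emptyset$ together with divisibility in the monomial ideal $I(K)$.
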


\begin{proof}
We have an exact sequence 
\[
\bigoplus_{\sigma\not\in K} t_{\sigma} R
\stackrel{\Sigma_{\sigma\not\in K} f_{\sigma}}{\rightarrow}
R \rightarrow \SR{K}\rightarrow 0
\]
where $f_{\sigma}\colon  t_{\sigma} R \rightarrow R$ 
is the inclusion. 
Tensoring with $R_{\delta}$ preserves this exact sequence, so 
$\SR{K}\otimes R_{\delta}=0$ if and only if 
$\Sigma_{\sigma\not\in K} f_{\sigma}\otimes R_{\delta}$ is a surjection. 

Let $\sigma\subseteq\delta \subseteq [m]$. Recall that $R_{\delta}$ then inverts all the 
elements in $\delta$. So all $t_i$ that make up $t_{\sigma}$ are inverted and thus
$f_{\sigma}\otimes R_{\delta}$ becomes an isomorphism. %
If there is a missing face $\sigma \not\in K$ satisfying $\sigma \subseteq \delta$, then the map 
$\Sigma_{\sigma\not\in K} f_{\sigma}\otimes R_{\delta}$ is a surjection and 
$\SR{K}\otimes R_{\delta}=0$. So if the inclusion $\sigma \subseteq \delta$ holds for some missing face 
$\sigma \not\in K$, then $\delta \not\in \Supp^c(\SR{K})$. Also if $\delta \not\in K$, then there is a $\sigma \not\in K$ with $\sigma \subseteq \delta$. Thus $\Supp^c(\SR{K}) \subseteq K$.

On the other hand if $\sigma \not\subseteq \delta$, then 
not all the $t_i$ that make up $t_{\sigma}$ are inverted and so 
$t_{\sigma}R_{\sigma}\otimes R_{\delta}(\vec{0}) = 0$. 
This implies that if for every missing face $\sigma \not\in K$, $\sigma \not\subseteq \delta$ then 
$\Sigma_{\sigma\not\in K} f_{\sigma}\otimes  R_{\delta}$ is not surjective,
$\SR{K}\otimes R_{\delta} \neq 0$ and so $\delta \in \Supp^c(\SR{K})$. Observe that any $\delta \in K$ will have this property. Therefore
$K \subseteq \Supp^c(\SR{K})$, and the proof is complete. 
\end{proof}

\begin{lem}\label{lem:supptest}
For any tensor ideal $\cat{C}\subseteq\fgMod{R}$ and $R$-module, M
\begin{equation*}
M \in \cat{C} \iff \Supp(M) \subseteq \Supp(\cat{C}).
\end{equation*}
\end{lem}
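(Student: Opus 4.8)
The plan is to prove the two implications separately. The forward direction is immediate from the definition of $\Supp(\cat{C})$, and the reverse direction reduces the entire statement to the cyclic modules $R/p$ by combining \cref{lem:makeM} and \cref{lem:getprime}.

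For the forward implication, suppose $M \in \cat{C}$. Since $\Supp^c(\cat{C}) = \bigcup_{N \in \cat{C}} \Supp^c(N)$ by definition (equivalently $\Supp(\cat{C}) = \bigcup_{N \in \cat{C}} \Supp(N)$ under the bijection $\sigma \leftrightarrow p = \langle t_i \rangle_{i \notin \sigma}$), the containment $\Supp(M) \subseteq \Supp(\cat{C})$ holds with no further work.

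For the reverse implication, assume $\Supp(M) \subseteq \Supp(\cat{C})$. First I would observe that it suffices to show $R/p \in \cat{C}$ for every prime $p \in \Supp(M)$. Indeed, $\Supp(M)$ is a finite set (it is a simplicial complex on $[m]$), so $\bigoplus_{p \in \Supp(M)} R/p$ is a finite direct sum of objects of $\cat{C}$, hence lies in $\cat{C}$; and \cref{lem:makeM} then gives $M \in S^{\otimes}\bigl(\bigoplus_{p \in \Supp(M)} R/p\bigr) \subseteq \cat{C}$, using that $\cat{C}$ is a tensor-closed Serre subcategory. To prove $R/p \in \cat{C}$ for a given $p \in \Supp(M)$: since $p \in \Supp(M) \subseteq \Supp(\cat{C})$, there is some $N \in \cat{C}$ with $p \in \Supp(N)$; then \cref{lem:getprime} applied to $N$ yields $R/p \in S^{\otimes}(N)$, and since $\cat{C}$ is a tensor ideal containing $N$ we have $S^{\otimes}(N) \subseteq \cat{C}$, so $R/p \in \cat{C}$. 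This finishes the argument.

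I do not expect a genuine obstacle here: all the substance has already been packed into \cref{lem:makeM} (building $M$ from the cyclic modules $R/p$ by iterated extensions, staying inside a Serre subcategory) and \cref{lem:getprime} (extracting $R/p$ from any module whose support contains $p$). The only points requiring care are bookkeeping: keeping track of the two flavours of support $\Supp$ and $\Supp^c$, making sure the ambient category is consistent so that the tensor ideal $S^{\otimes}(N)$ genuinely sits inside $\cat{C} \subseteq \fgMod{R}$, and recording that $\Supp(M)$ is finite so that the relevant direct sum is assembled from finitely many extensions and therefore remains in the Serre subcategory $\cat{C}$.
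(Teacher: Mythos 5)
Your proposal is correct and follows essentially the same route as the paper: forward direction immediate from the definition of $\Supp(\cat{C})$, and for the converse, obtaining $R/p\in\cat{C}$ for each $p\in\Supp(M)$ (via \cref{lem:getprime} and tensor-closedness), closing under the finite direct sum, and then invoking \cref{lem:makeM}. The only difference is that you make explicit the appeal to \cref{lem:getprime} at the step ``$R/p\in\cat{C}$,'' which the paper leaves implicit.
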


\begin{proof}
If $M \in \cat{C}$ then clearly $\Supp(M)\subseteq \Supp(\cat{C})$. 

Next assume that $\Supp(M) \subseteq \Supp(\cat{C})$. Then for each $p\in \Supp(M)$ there is 
$N \in \cat{C}$ such that $p \in \Supp(N)$. Then by Lemma~\ref{lem:getprime} $R/p \in \cat{C}$. 
Since $\cat{C}$ is closed under extensions it is closed under 
finite direct sums. Then since $\Supp(M)$ is finite we get that 
$\bigoplus_{p\in \Supp(M)} R/p\in \cat{C}$. So from Lemma~\ref{lem:makeM} $M\in \cat{C}$. 
\end{proof}

\begin{thm}\label{thm:Classif}
$\Supp^c$ induces a bijection (of sets)
\[
\mbox{tensor ideals in } \fgMod{R}\ (
\mbox{or } \fgMod{R}_{\Z^m}) \to
\mbox{simplicial complexes on }[m]
\]
with inverse $K \mapsto S^{\otimes}(\SR{K})$. 
\end{thm}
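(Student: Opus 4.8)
The strategy is to observe that the theorem is a repackaging of \cref{lem:RiseStan,lem:closingM,lem:supptest}, and to verify the two maps are well defined and mutually inverse. First I would check well-definedness. For a tensor ideal $\cat{C}$, each $\Supp^c(M)$ is a downward-closed subset of $\PP{[m]}$, hence so is the union $\Supp^c(\cat{C}) = \bigcup_{M \in \cat{C}} \Supp^c(M)$; thus $\Supp^c(\cat{C})$ is a simplicial complex on $[m]$. Conversely, for a simplicial complex $K$ the subcategory $S^{\otimes}(\SR{K})$ is a tensor ideal by its very definition (the smallest tensor ideal containing $\SR{K}$). So both assignments make sense.

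Next I would show the composite $K \mapsto S^{\otimes}(\SR{K}) \mapsto \Supp^c\bigl(S^{\otimes}(\SR{K})\bigr)$ is the identity. By \cref{lem:closingM} we have $\Supp\bigl(S^{\otimes}(\SR{K})\bigr) = \Supp(\SR{K})$, hence $\Supp^c\bigl(S^{\otimes}(\SR{K})\bigr) = \Supp^c(\SR{K})$, which equals $K$ by \cref{lem:RiseStan}.

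For the other composite, fix a tensor ideal $\cat{C}$ and set $K \dfn \Supp^c(\cat{C})$; the claim is $\cat{C} = S^{\otimes}(\SR{K})$. Here I would use the complementation bijection $\sigma \leftrightarrow p = \langle t_i \rangle_{i \notin \sigma}$ between subsets of $[m]$ and homogeneous primes of $R$, under which $p \in \Supp(M) \iff p^c \in \Supp^c(M)$. Applying this to $\cat{C}$ gives $\Supp(\cat{C}) = \{ p \mid p^c \in K \}$, and applying it to $\SR{K}$ together with \cref{lem:RiseStan} gives $\Supp(\SR{K}) = \{ p \mid p^c \in K \}$ as well; hence $\Supp(\cat{C}) = \Supp(\SR{K})$ as closed subsets of $\Spec(R)$. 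Since $S^{\otimes}(\SR{K})$ is itself a tensor-closed Serre subcategory with support $\Supp(\SR{K})$ (again \cref{lem:closingM}), \cref{lem:supptest} applies to both $\cat{C}$ and $S^{\otimes}(\SR{K})$ and yields, for every module $M$,
\[
M \in \cat{C} \iff \Supp(M) \subseteq \Supp(\cat{C}) = \Supp(\SR{K}) \iff M \in S^{\otimes}(\SR{K}),
\]
so the two subcategories coincide. This proves the bijection in $\fgMod{R}$; running the same argument with the $\Z^m$-graded versions of \cref{lem:RiseStan,lem:closingM,lem:supptest} gives it in $\fgMod{R}_{\Z^m}$, and the two statements are compatible via $\cat{C} \mapsto \cat{C} \cap \fgMod{R}$.

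I do not expect a genuine obstacle: all the substance is in the earlier lemmas. The only point needing care is the bookkeeping between $\Supp$ (a closed subset of $\Spec(R)$) and $\Supp^c$ (a simplicial complex), i.e.\ checking that the complementation $p \leftrightarrow p^c$ intertwines the identity ``$\Supp(\cat{C}) = \Supp(\SR{K})$'' used in \cref{lem:supptest} with the identity ``$\Supp^c(\cat{C}) = K = \Supp^c(\SR{K})$'' defining the two maps.
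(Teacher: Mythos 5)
Your proposal is correct and follows essentially the same route as the paper: one composite is the identity by combining \cref{lem:closingM} with \cref{lem:RiseStan}, and the other follows by applying \cref{lem:supptest} to both the given tensor ideal and $S^{\otimes}(\SR{K})$ once their supports are identified. The extra bookkeeping you include (the $p \leftrightarrow p^c$ translation between $\Supp$ and $\Supp^c$, and the well-definedness checks) is implicit in the paper's argument, so there is no substantive difference.
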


\begin{proof}
For a simplicial complex $K$ on $[m]$, we have 
\begin{equation*}
\Supp^c(S^{\otimes}(\SR{K})) = \Supp^c(\SR{K}) = K,
\end{equation*}
the first equality being Lemma~\ref{lem:closingM}  
and the second being Lemma~\ref{lem:RiseStan}. 
We conclude that $\Supp\circ S$ is the identity. 
The equation also implies that for any %
tensor ideal $\cat{C}$, 
\begin{equation*} 
\Supp \cat{C} = \Supp S^{\otimes}(\SR{\Supp \cat{C}}).
\end{equation*}
Lemma~\ref{lem:supptest} then shows that $M\in \cat{C}$ if 
and only if $M\in S (\Supp \cat{C})$, which shows $S\circ \Supp$ is 
also the identity. %
\end{proof}

\section{Simples}\label{sec:Simple}

For this section $K$ is again any simplicial complex on $[m]$. 
In this section we will classify the simples in $\DD{K}$ and 
use that classification to show that when $K$ is 
a skeleton of $\De^{m-1}$, $\DD{K}$ %
is obtained from $\fgMod{R}$ by 
iteratively quotienting out the simples. 

A subset $\sigma\subset [m]$ is a \Def{minimal missing face} of $K$ if 
$\sigma\not\in K$ and for any $\tau\subsetneq \sigma$, $\tau\in K$. 
For $K=\emptyset$,  $\emptyset$ is considered a minimal missing face. 

For a minimal missing face $\sigma$ of $K$, consider the $R_{\sigma}$-module, 
$S(\sigma)=R_{\sigma}/(t_i)_{i\not\in \sigma}$. 
Let $S(\sigma)\in \DD{K}$ be given by the formula
\begin{equation}\label{eq:simpleform}
S(\sigma)_{\tau} = 
\begin{cases}
R_{\sigma}/(t_i)_{i\notin \sigma} & \text{if } \tau = \sigma \\
0 & \text{otherwise.}
\end{cases}
\end{equation}
The following alternative way of looking at $S(\sigma)$ also 
shows the formulas define an element of $\DD{K}$. 

\begin{lem}
For a minimal missing face $\sigma\in K$, 
$S(\sigma)\cong L_K R/(t_i)_{i\not\in \sigma}$.
\end{lem}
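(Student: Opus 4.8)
The plan is to unwind the definition of the localization functor $L_K$ and compare it, position by position, with the formula~\eqref{eq:simpleform} defining $S(\sigma)$. For each missing face $\tau \notin K$ we have
\[
\bigl(L_K(R/(t_i)_{i \notin \sigma})\bigr)_{\tau} = R_{\tau} \otimes_R R/(t_i)_{i \notin \sigma} \cong R_{\tau}\big/(t_i)_{i \notin \sigma} R_{\tau},
\]
so everything reduces to understanding these quotient rings.

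The first observation is that this module vanishes whenever $\tau \not\subseteq \sigma$: picking some $i \in \tau \setminus \sigma$, the element $t_i$ becomes a unit in $R_{\tau}$ while simultaneously lying in the ideal $(t_i)_{i \notin \sigma} R_{\tau}$, which forces the quotient ring to be zero. The second, and key, observation uses the hypothesis that $\sigma$ is a \emph{minimal} missing face: every proper subset of $\sigma$ lies in $K$, so the only missing face $\tau$ satisfying $\tau \subseteq \sigma$ is $\tau = \sigma$ itself. Combining the two, $\bigl(L_K(R/(t_i)_{i \notin \sigma})\bigr)_{\tau} = 0$ for every missing face $\tau \neq \sigma$, whereas at $\tau = \sigma$ one gets $R_{\sigma}/(t_i)_{i \notin \sigma} = S(\sigma)$ on the nose, since $R_{\sigma} \otimes_R -$ acts as the identity in position $\sigma$.

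Finally I would note that the transition maps on both sides match automatically: apart from $\phy_{\sigma,\sigma} = \id$, every transition map in sight has zero source or zero target, so the position-wise identifications assemble into an isomorphism in $\DD{K}$; in passing this re-verifies that the formulas~\eqref{eq:simpleform} really do define an object of $\DD{K}$ (i.e.\ satisfy the localization condition $R_{\tau} \otimes_{R_{\sigma}} M_{\sigma} \cong M_{\tau}$), as promised in the text. There is no genuine obstacle here; the only point to keep in mind is that $\tau$ ranges over missing faces only, which is exactly what makes minimality of $\sigma$ do the work. The degenerate case $K = \emptyset$, $\sigma = \emptyset$ (where $R/(t_i)_{i \notin \sigma} = \kk$) is covered verbatim by the same argument.
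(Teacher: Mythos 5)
Your proof is correct, and it is exactly the routine position-by-position verification that the paper leaves implicit (the lemma is stated there without proof): computing $(L_K(R/(t_i)_{i\notin\sigma}))_\tau \cong R_\tau/(t_i)_{i\notin\sigma}R_\tau$, using minimality of $\sigma$ to see that $\sigma$ is the only missing face contained in $\sigma$, and noting that all other transition maps are forced to be zero. One small phrasing quibble: at $\tau=\sigma$ the functor $R_\sigma\otimes_R-$ is not ``the identity'' (it inverts the $t_j$ for $j\in\sigma$), but your general formula already gives $R_\sigma\otimes_R R/(t_i)_{i\notin\sigma}\cong R_\sigma/(t_i)_{i\notin\sigma}=S(\sigma)_\sigma$, so nothing is affected.
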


\begin{lem}
For any minimal missing face $\sigma$ of $K$ and $\vec{d}\in \N^m$,
$t^{\vec{d}} S(\sigma)\in \DD{K}$ is simple. 
\end{lem}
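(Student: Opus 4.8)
The plan is to show directly that $t^{\vec d} S(\sigma)$ has no nonzero proper subobjects. Let $N \hookrightarrow t^{\vec d} S(\sigma)$ be a subobject in $\DD{K}$. By \cref{lem:PointwiseExact}, $N$ is determined by the $R_\tau$-submodules $N_\tau \subseteq (t^{\vec d} S(\sigma))_\tau$ for each missing face $\tau \notin K$, and these submodules must be compatible with the transition maps. Since $(t^{\vec d}S(\sigma))_\tau = 0$ for every $\tau \ne \sigma$, the only possibly nonzero piece is $N_\sigma \subseteq t^{\vec d}(R_\sigma/(t_i)_{i\notin\sigma})$. Moreover the localization condition in \cref{def:LocalModule} forces, for each missing face $\tau \supsetneq \sigma$, that $R_\tau \otimes_{R_\sigma} N_\sigma \cong N_\tau = 0$; this is automatic here because $R_\tau \otimes_{R_\sigma}(R_\sigma/(t_i)_{i\notin\sigma})$ is already zero (some $t_i$ with $i \notin \sigma$ gets inverted while being nilpotent, in fact zero, on the module), so there is no constraint beyond $N_\sigma$ being an $R_\sigma$-submodule. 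Thus the subobjects of $t^{\vec d}S(\sigma)$ in $\DD{K}$ correspond exactly to the $R_\sigma$-submodules of $t^{\vec d}(R_\sigma/(t_i)_{i\notin\sigma})$.

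Next I would identify $R_\sigma/(t_i)_{i\notin\sigma}$ as a ring. Inverting the variables $t_i$ for $i \in \sigma$ and killing the variables $t_i$ for $i \notin \sigma$ leaves the Laurent polynomial ring $\kk[t_i^{\pm}\mid i\in\sigma]$, which is $\sigma^{-1}\Z^{\sigma}$-graded (equivalently, $\si^{-1}\N^m$-graded with support on $\si$). By \cref{rem:GradedVect}, as a graded ring it behaves like a field: the degree-zero part of any nonzero graded module over $\kk[t_i^{\pm}\mid i\in\sigma]$ already determines the module, and $R_\sigma/(t_i)_{i\notin\sigma}$ itself is one-dimensional over $\kk$ in each multidegree lying in the shifted sublattice, zero elsewhere. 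The shift $t^{\vec d}$ just translates the grading. Hence $t^{\vec d}(R_\sigma/(t_i)_{i\notin\sigma})$ is a simple object in $\Mod{R_\sigma}$ (it is a cyclic module whose annihilator is the maximal graded ideal $(t_i)_{i\notin\sigma}$ of $R_\sigma$), so its only $R_\sigma$-submodules are $0$ and itself.

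Combining the two steps: any subobject $N$ of $t^{\vec d}S(\sigma)$ in $\DD{K}$ has $N_\sigma$ equal to $0$ or all of $t^{\vec d}(R_\sigma/(t_i)_{i\notin\sigma})$, and all other components zero, so $N = 0$ or $N = t^{\vec d}S(\sigma)$. Since $t^{\vec d}S(\sigma)$ is visibly nonzero, it is simple. I do not expect a serious obstacle here; the only point requiring a little care is the bookkeeping that $\si$ being a \emph{minimal} missing face is exactly what guarantees every proper subset $\tau \subsetneq \si$ lies in $K$ (so is not a missing face and imposes no condition), while for $\tau \supsetneq \si$ the relevant localized module already vanishes — this is what makes the formula \eqref{eq:simpleform} well-defined in $\DD{K}$ in the first place, as noted in the preceding lemma, and it is what keeps the subobject analysis confined to position $\si$.
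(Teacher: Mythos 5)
Your proof is correct and follows essentially the same route as the paper: reduce to position $\sigma$ via the pointwise computation of kernels/cokernels (\cref{lem:PointwiseExact}), and observe that $t^{\vec d}(R_\sigma/(t_i)_{i\notin\sigma})$ is a simple (graded) $R_\sigma$-module because the quotient is the graded field $\kk[t_i^{\pm}\mid i\in\sigma]$. The paper states this more tersely, but the substance — including the automatic vanishing at positions $\tau\supsetneq\sigma$ and the role of minimality in making \eqref{eq:simpleform} well-defined — is the same.
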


\begin{proof}
Observe that $S(\sigma)_{\sigma}=R_{\sigma}/(t_i)_{i\not\in \sigma}$ is a simple 
$R_{\sigma}$-module. Using Lemma~\ref{lem:PointwiseExact} this implies that $S(\sigma)$ is simple, 
and similarly $t^{\vec{d}} S(\sigma)\in \DD{K}$ is simple. 
\end{proof}

Next we show that these are all the simples in $\DD{K}$. 

\begin{lem}\label{lem:simp}
Any simple in $\DD{K}$ is isomorphic to $t^{\vec{d}} S(\sigma)$ 
for some minimal missing face $\sigma$ and some $\vec{d}\in \N^m$.
\end{lem}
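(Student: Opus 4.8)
The plan is to show that a simple object $S$ in $\DD{K}$ must be "concentrated" at a single minimal missing face. First I would use the fact that kernels and cokernels in $\DD{K}$ are computed pointwise (\cref{lem:PointwiseExact}). Pick a missing face $\sigma \notin K$ for which $S_{\sigma} \neq 0$; such a face exists since $S \neq 0$. Among all missing faces with $S_\sigma \neq 0$, choose one that is minimal with respect to inclusion inside the poset of missing faces; I will argue this $\sigma$ is in fact a minimal missing face of $K$, i.e.\ every proper subset of $\sigma$ lies in $K$. Indeed, if some $\tau \subsetneq \sigma$ were a missing face, then since $S$ satisfies the localization condition $R_\sigma \otimes_{R_\tau} S_\tau \cong S_\sigma \neq 0$, we would have $S_\tau \neq 0$, contradicting minimality of $\sigma$.

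Next I would produce a subobject of $S$ concentrated at $\sigma$. The idea: pick a nonzero element $x \in S_\sigma$ of minimal degree, generating a cyclic $R_\sigma$-submodule $\lan x \ran \subseteq S_\sigma$, and show it can be chosen to be killed by all $t_i$ with $i \notin \sigma$, so that $\lan x \ran$ is a shifted copy of $S(\sigma)_\sigma = R_\sigma/(t_i)_{i\notin\sigma}$. Concretely, since $\sigma$ is a minimal missing face, for each $i \notin \sigma$ the set $\sigma \cup \{i\}$ is still a missing face, but it might or might not have $S_{\sigma\cup\{i\}} = 0$; regardless, I define a subobject $S' \subseteq S$ by setting $S'_\tau = 0$ unless $\tau = \sigma$, and $S'_\sigma$ to be a simple $R_\sigma$-submodule of $S_\sigma$ of the form $t^{\vec d} S(\sigma)_\sigma$. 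The point is that $R_\sigma/(t_i)_{i\notin\sigma}$-type simples are the only simple $R_\sigma$-modules with the relevant support, and a minimal-degree cyclic submodule of $S_\sigma$ supported away from those $t_i$ will be of this form — here I invoke the structure of $\N^m$-graded $R_\sigma$-modules, essentially the classification as in \cref{prop:wesplit} after further quotienting, or directly: any nonzero $R_\sigma$-module contains a copy of some $t^{\vec d}(R_\sigma/\mathfrak m_\sigma)$ by taking an associated prime, and the only associated prime forcing support exactly at $\sigma$ is $(t_i)_{i\notin\sigma}$. One must check $S'$ genuinely satisfies the localization axiom, which it does because at $\sigma$ it is a torsion module (killed by the $t_i$, $i\notin\sigma$) so $R_\tau \otimes_{R_\sigma} S'_\sigma = 0 = S'_\tau$ for all missing faces $\tau \supsetneq \sigma$.

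Then the inclusion $S' \inj S$ is a nonzero morphism in $\DD{K}$, and since $S$ is simple it must be an isomorphism, so $S \cong S' \cong t^{\vec d} S(\sigma)$, which is exactly the desired conclusion.

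The main obstacle I anticipate is the second step: carefully producing the subobject $S'$ and verifying it is a legitimate $K$-localized persistence module (the localization condition), and in particular pinning down that a nonzero simple $R_\sigma$-module supported only at $\sigma$ (equivalently, annihilated by all $t_i$ with $i \notin \sigma$, which is forced because any $t_i$ with $i \in \sigma$ acts invertibly on $R_\sigma$) is necessarily a shift $t^{\vec d}(R_\sigma/(t_i)_{i\notin\sigma})$. This is really a statement about $\N^m$-graded modules over the localized polynomial ring, and I would reduce it — as is done repeatedly in the paper — to the one-variable graded case via the equivalences $\fgMod{R_\sigma} \cong \fgMod{\kk[t_i \mid i \notin \sigma]}$, where a simple graded module is $\kk$ in a single multidegree with all nonidentity $t_i$ acting as zero.
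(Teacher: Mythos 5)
Your overall strategy---locate a minimal missing face $\sigma$ with $S_\sigma \neq 0$, exhibit a shifted copy of $S(\sigma)$ inside (or mapping to) $S$, and finish by simplicity---is the same as the paper's, and your first step is fine: minimality of $\sigma$ follows from the localization condition exactly as you say. The gap is in the second step: you never prove that $S_\sigma$ contains a nonzero homogeneous element annihilated by every $t_i$ with $i \notin \sigma$, which is precisely what the existence of a submodule $S'_\sigma \cong t^{\vec{d}}\, R_\sigma/(t_i)_{i \notin \sigma}$ requires. The justification you give---``any nonzero $R_\sigma$-module contains a copy of some $t^{\vec{d}}(R_\sigma/\mathfrak{m}_\sigma)$ by taking an associated prime''---is false: an associated prime only yields a copy of $t^{\vec{d}} R_\sigma/p$ for \emph{some} prime $p$, which need not be $(t_i)_{i\notin\sigma}$; for instance a torsion-free module such as $R_\sigma$ itself has $p=(0)$ and contains no copy of $R_\sigma/(t_i)_{i\notin\sigma}$ at all. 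Your appeal to ``the relevant support'' does not close this, because minimality of $\sigma$ only rules out \emph{smaller} missing faces; you have not shown $S_\tau = 0$ for missing faces $\tau \supsetneq \sigma$ (you explicitly concede $S_{\sigma\cup\{i\}}$ ``might or might not'' vanish), so nothing established so far constrains the associated primes of $S_\sigma$. Of course such torsion-freeness is incompatible with $S$ being simple---but ruling it out is exactly the step that is missing.

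The missing ingredient is to use simplicity of $S$ \emph{as an object of $\DD{K}$} at this point, which is what the paper does. For $i \notin \sigma$, the collection $(t_i S_\upsilon)_{\upsilon \notin K}$ is a subobject of $S$ (images are computed pointwise by \cref{lem:PointwiseExact}, and localization is exact, so the localization condition is inherited), and it is a \emph{proper} subobject because $t_i S_\sigma \subsetneq S_\sigma$: the grading of $S_\sigma$ is bounded below in the $i$-th coordinate, so a nonzero element of minimal $i$-degree cannot lie in $t_i S_\sigma$. Simplicity then forces $t_i S = 0$, hence $t_i S_\sigma = 0$ for all $i \notin \sigma$. With that in hand, any nonzero homogeneous $x \in S_\sigma(\vec{d})$ gives a nonzero map $t^{\vec{d}} S(\sigma) \to S$ (sending $1$ to $x$, extended by zero; your verification that the extension by zero is compatible with the transition maps is correct and is the same check the paper makes implicitly), and this map is an isomorphism since both objects are simple. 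So your outline can be repaired, but the repair is exactly the paper's key step, which your proposal replaces by an incorrect associated-primes claim.
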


\begin{proof}
Suppose $M\in \DD{K}$ is simple. For some $\sigma$ we 
have that $M_{\sigma} \neq 0$. By taking points away from  $\sigma$ 
if necessary we can get a minimal 
missing face $\tau$ with $\tau\subset \sigma$.  Since 
$M_{\sigma}\cong M_{\tau}\otimes_{R_{\tau}} R_{\sigma}$, we 
get that $M_{\tau} \neq 0$.

Next let $x\in M_{\tau}$ be a non-zero element. 
For $i\not\in \tau$, $t_i M_{\tau}$ is a proper submodule of 
$M_{\tau}$ and so 
$t_i M$ is a proper submodule of $M$. Since $M$ is simple 
this proper submodule must be the $0$ module, and thus 
$t_i M_{\tau}=0$ and $t_i x=0$. 

Suppose $x\in M_{\tau}(\vec{d})$ (recall that means $x$ is in multidegree 
$\vec{d}$). Then consider the map
$t^{\vec{d}} R_{\sigma}/(t_i)_{i\not\in \sigma}  \rightarrow M_{\tau}$
 that sends $1$ to $x$. 
Since $x \neq 0$, this extends to a non-zero map 
$t^{\vec{d}}S(\tau)\rightarrow M$, 
which must be an isomorphism since both 
$t^{\vec{d}}S(\tau)$ and $M$ are simple. 
\end{proof}

The following definition coincides with that in \cite{Gabriel62}*{\S IV.1} for categories of finitely generated modules and the categories $\DD{K}$.

\begin{defn}
Given an abelian category $\mathcal{A}$, we let $\mathcal{A}=\mathcal{A}(-1)$ and 
$\mathcal{A}(i+1)=\mathcal{A}(i)/ S(\mathcal{A}(i))$ where $S(\mathcal{A})$ is the 
Serre subcategory generated by the simples in $\mathcal{A}$. 
The \Def{Krull dimension} of $\mathcal{A}$, $\Kdim(A)$ is the least $n$ 
such that $\mathcal{A}(n)$ is equivalent to the $0$-category 
(i.e.\ the abelian category with one object). 
\end{defn}

\begin{ex}
The trivial abelian category $0$ has Krull dimension $-1$. We have $\Kdim(\vect{\kk}) = 0$ and $\Kdim(\fgMod{\kk[t_1, \ldots, t_m]}) = m$.
\end{ex}

\begin{prop}
The category $\DD{K}$ has Krull dimension $\Kdim(\DD{K}) = m-s$ where $s$ is the smallest %
cardinality 
of a missing face of $K$. 
\end{prop}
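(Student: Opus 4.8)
I would deduce the formula from a single ``key lemma'' plus an induction. Fix $K$ with at least one missing face, let $s$ be the minimal cardinality of a missing face, and set
\[
K^{+} \dfn K \cup \{\, \sigma \mid \sigma \text{ a minimal missing face of } K \,\}.
\]
This is again a simplicial complex on $[m]$, since every proper subface of a minimal missing face lies in $K$. The \emph{key lemma} to establish is: \emph{the restriction functor $\mathrm{res}\colon \DD{K}\to\DD{K^{+}}$ (defined because $(K^{+})^{c}\subseteq K^{c}$) is a Serre quotient functor whose kernel is exactly $S(\DD{K})$}, hence induces an equivalence $\DD{K}/S(\DD{K})\simeq\DD{K^{+}}$. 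Granting it, I would iterate: with $K^{+(0)}\dfn K$ and $K^{+(\ell+1)}\dfn(K^{+(\ell)})^{+}$, an induction on $\ell$ gives $\DD{K}(\ell-1)\simeq\DD{K^{+(\ell)}}$ (taking $\DD{K}(-1)=\DD{K}$), applying the key lemma to $K^{+(\ell)}$ at each step. A short combinatorial check shows that the minimal missing-face cardinality satisfies $s(L^{+})=s(L)+1$ whenever $s(L)<m$ — the missing faces of $L^{+}$ are precisely the non-minimal missing faces of $L$, and $\sigma\cup\{j\}$ (for $\sigma$ minimal of cardinality $s(L)$ and $j\notin\sigma$) is such a face of cardinality $s(L)+1$ — while $L^{+}=\PP{[m]}$ as soon as $s(L)=m$, i.e.\ $L=\sk_{m-2}\De^{m-1}$. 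Hence $K^{+(m-s)}=\sk_{m-2}\De^{m-1}$, so $\DD{K^{+(m-s)}}\cong\vect{\kk}\not\simeq 0$ by \cref{ex:Extreme}, while $K^{+(m-s+1)}=\PP{[m]}$, so $\DD{K^{+(m-s+1)}}\simeq 0$; therefore the least $\ell$ with $\DD{K}(\ell)\simeq 0$ is $m-s$, proving $\Kdim(\DD{K})=m-s$. (When $K=\PP{[m]}$ one has $\DD{K}\simeq 0$ and $\Kdim=-1$.)

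\textbf{Proving the key lemma.} I would use the equivalence $\DD{K}\cong\fgMod{R}/\ker L_{K}$ of \cref{prop:SerreQuotient}. The functors fit into a commuting triangle $L_{K^{+}}=\mathrm{res}\circ L_{K}$, and by \cref{lem:same} we have $\ker L_{K}=\tors{I(K)}\subseteq\tors{I(K^{+})}=\ker L_{K^{+}}$, the inclusion because $I(K^{+})\subseteq I(K)$ (the complex $K^{+}$ has fewer missing faces). By the standard third isomorphism theorem for Serre quotients, $\mathrm{res}$ then realizes the quotient of $\DD{K}$ by $\overline{\ker L_{K^{+}}}$ as $\DD{K^{+}}$, where $\overline{\ker L_{K^{+}}}$ denotes the image of $\ker L_{K^{+}}$ in $\DD{K}$ (a Serre subcategory). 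So everything reduces to proving $\overline{\ker L_{K^{+}}}=S(\DD{K})$.

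\textbf{Identifying the kernel with the simples.} For a minimal missing face $\sigma$ one computes, as in the lemma just before \cref{lem:simp}, that $L_{K}\bigl(t^{\vec d}R/(t_{i})_{i\notin\sigma}\bigr)\cong t^{\vec d}S(\sigma)$ (value $t^{\vec d}R_{\sigma}/(t_{i})_{i\notin\sigma}$ at $\sigma$, zero at every other missing face). Since $\Supp^{c}\bigl(R/(t_{i})_{i\notin\sigma}\bigr)=\PP{\sigma}\subseteq K^{+}$ (proper subfaces of $\sigma$ lie in $K$, and $\sigma$ itself in $K^{+}$) and $\Supp^{c}(\ker L_{K^{+}})=K^{+}$ by \cref{lem:RiseStan,lem:closingM}, \cref{lem:supptest} puts $t^{\vec d}R/(t_{i})_{i\notin\sigma}$ in $\ker L_{K^{+}}$; as $\sigma$ ranges over minimal missing faces and $\vec d$ over $\N^{m}$ these images are all the simples of $\DD{K}$ by \cref{lem:simp}, so $S(\DD{K})\subseteq\overline{\ker L_{K^{+}}}$. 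Conversely, any $X\in\ker L_{K^{+}}$ has $\Supp^{c}(X)\subseteq K^{+}$, so $L_{K}(X)_{\tau}=R_{\tau}\ot X=0$ for every missing $\tau\notin K^{+}$; taking $\tau=\sigma\cup\{j\}$ for $\sigma$ minimal missing and $j\notin\sigma$ shows $R_{\sigma}\ot X$ is $t_{j}$-torsion for all such $j$, hence (finitely generated over the Noetherian $R$) killed by some $(t_{j}^{N})_{j\notin\sigma}$, hence of finite length over $R_{\sigma}$ by the graded analogue of \cref{rem:GradedVect}. By pointwise exactness (\cref{lem:PointwiseExact}) and \cref{lem:simp}, $L_{K}(X)$ is then a finite iterated extension of objects $t^{\vec d}S(\sigma)$, so of finite length, i.e.\ in $S(\DD{K})$; since $\overline{\ker L_{K^{+}}}$ is generated as a Serre subcategory by such $L_{K}(X)$, this yields $\overline{\ker L_{K^{+}}}\subseteq S(\DD{K})$ and finishes the key lemma.

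\textbf{The hard part.} The only non-routine ingredient is the key lemma, and within it the identity $\overline{\ker L_{K^{+}}}=S(\DD{K})$ — the recognition that quotienting $\DD{K}$ by its simple objects corresponds, on the module side, to passing from $\tors{I(K)}$ to $\tors{I(K^{+})}$. This rests on the classification of tensor-closed Serre subcategories (\cref{thm:Classif}, \cref{lem:same}, \cref{lem:RiseStan}, \cref{lem:supptest}) together with the local finiteness observation above. Once that is in place, the iteration and the bookkeeping $s(K^{+})=s(K)+1$ are elementary.
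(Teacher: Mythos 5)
Your proposal is correct and follows essentially the same route as the paper: identify the simples via \cref{lem:simp}, show that quotienting $\DD{K}$ by the Serre subcategory they generate yields $\DD{K'}$ where $K'$ (your $K^{+}$) is $K$ with its minimal missing faces adjoined, and iterate until reaching $\partial\De^{m-1}$ (giving $\vect{\kk}\not\cong 0$) and then the full simplex (giving $0$). Your ``key lemma'' is precisely the step the paper dispatches in one line via \cref{lem:same}; your more detailed verification (the third isomorphism theorem for Serre quotients together with the support/finite-length identification of the kernel with $S(\DD{K})$) uses the same ingredients the paper invokes implicitly.
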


In other words, $\Kdim(\DD{K})$ is the largest number of non-inverted variables $t_i$ appearing in the rings $R_{\si}$ for missing faces $\si \in K^c$.

\begin{proof}
Using Lemma~\ref{lem:simp}, the simples in $S(\DD{K})$ are $t^{\vec{d}} S(\sigma)$ where $\sigma$ runs over the minimal missing faces of $K$. Let $K'$ denote $K$ together with its minimal missing faces.
By Lemma~\ref{lem:same} $\DD{K}\cong \fgMod{R}/S^{\otimes}(\oplus_{p^c\in K} R/p)$, which yields
\[
\DD{K}/S(\DD{K})\cong \fgMod{R}/S^{\otimes}(\oplus_{p^c\in K'} R/p)=\DD{K'}.
\]
This implies that for $n=m-s$, %
$\DD{K}(n)\cong \DD{\Delta^{m-1}}\cong 0$ and so $\Kdim(\DD{K})\leq n$. 
Also we have 
\[
\DD{K}(n-1)\cong \DD{\partial \Delta^{m-1}} \cong \vect{\kk} \not\cong 0,
\]
which shows $\Kdim(\DD{K})\geq n$, hence $\Kdim(\DD{K})=n$.
\end{proof}

\begin{cor}\label{cor:ModOutSimples}
For the simplicial complex $K = \sk_{i} \De^{m-1}$ with $-2 \leq i \leq m-1$, the category $\DD{K}$ is obtained from $\fgMod{R}$ by iteratively quotienting out the simples %
$i+2$ times. In particular for $K_m = \sk_{m-3} \De^{m-1}$, $\DD{K_m}$ is obtained from $\fgMod{R}$ by %
quotienting out the simples %
$m-1$ times.
\end{cor}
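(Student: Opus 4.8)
The plan is to apply the preceding proposition, which computes $\Kdim(\DD{K})$, together with the observation that each step of the Krull-dimension filtration of $\fgMod{R}$ literally realizes the passage to the next skeleton. The key identity to exploit is the one already established in the proof of the Krull dimension proposition: for any simplicial complex $K$, if $K'$ denotes $K$ together with all of its minimal missing faces, then $\DD{K}/S(\DD{K}) \cong \DD{K'}$. So the whole argument is a bookkeeping exercise tracking how the skeleta $\sk_i \De^{m-1}$ evolve under $K \mapsto K'$.

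First I would record the base case: $\DD{\sk_{-2}\De^{m-1}} = \DD{\emptyset} \cong \fgMod{R}$ by \cref{ex:Extreme}. Next I would observe that if $K = \sk_i \De^{m-1}$ with $-2 \leq i \leq m-2$, then the minimal missing faces of $K$ are exactly the faces of $\De^{m-1}$ of cardinality $i+2$ (every smaller face lies in $K$, and no face of cardinality $i+2$ does), so $K' = \sk_{i+1}\De^{m-1}$. Combining this with the identity $\DD{K}/S(\DD{K}) \cong \DD{K'}$ from the proof above gives
\[
\DD{\sk_i \De^{m-1}}/S(\DD{\sk_i \De^{m-1}}) \cong \DD{\sk_{i+1}\De^{m-1}}
\]
for each such $i$. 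Iterating this from $i = -2$ upward, after $j$ steps starting from $\fgMod{R} = \DD{\sk_{-2}\De^{m-1}}$ one reaches $\DD{\sk_{j-2}\De^{m-1}}$; taking $j = i+2$ yields $\DD{\sk_i\De^{m-1}}$, which is the first claim. The special case $K_m = \sk_{m-3}\De^{m-1}$ corresponds to $i = m-3$, so $i + 2 = m-1$, giving the last sentence.

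I expect the only mildly delicate point to be verifying that the minimal missing faces of $\sk_i\De^{m-1}$ are precisely the cardinality-$(i+2)$ faces, including the edge cases $i = -2$ (where the unique minimal missing face is $\emptyset$, matching the convention stated before \cref{lem:simp}, and $K' = \sk_{-1}\De^{m-1} = \{\emptyset\}$) and $i = m-2$ (where the minimal missing face is $[m]$ itself and $K' = \sk_{m-1}\De^{m-1} = \De^{m-1}$, so $\DD{K'} \cong 0$ and the iteration correctly terminates). None of this is hard; it is just a matter of being careful about the conventions for ghost vertices and the empty face, all of which are already fixed in \cref{def:LocalModule} and the surrounding discussion. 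There is no serious obstacle here — the substantive content was already done in the Krull dimension proposition and in \cref{lem:same}.
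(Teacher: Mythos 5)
Your proposal is correct and follows exactly the route the paper intends: the corollary is an immediate iteration of the identity $\DD{K}/S(\DD{K}) \cong \DD{K'}$ established in the proof of the Krull dimension proposition, combined with the observation that the minimal missing faces of $\sk_i \De^{m-1}$ are the cardinality-$(i+2)$ faces, so that $K' = \sk_{i+1}\De^{m-1}$, starting from $\DD{\sk_{-2}\De^{m-1}} \cong \fgMod{R}$. Your handling of the edge cases ($i=-2$ with the empty face convention, and $i=m-2$ terminating at $\DD{\De^{m-1}} \cong 0$) matches the conventions fixed in the paper.
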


\begin{rem}
In the setup of \cite{BubenikSS22}, if we give the simples weight $1$, the Serre subcategory generated by the simples consists of the objects with finite distance from the $0$ object.
\end{rem}

\begin{bibdiv}
\begin{biblist}

\bib{ArnoldS05}{article}{
  author={Arnold, David M.},
  author={Simson, Daniel},
  title={Endo-wild representation type and generic representations of finite posets},
  journal={Pacific J. Math.},
  volume={219},
  date={2005},
  number={1},
  pages={1--26},
  issn={0030-8730},
  review={\MR {2174218}},
  doi={10.2140/pjm.2005.219.1},
}

\bib{AsashibaBENY22}{article}{
  author={Asashiba, Hideto},
  author={Buchet, Micka\"{e}l},
  author={Escolar, Emerson G.},
  author={Nakashima, Ken},
  author={Yoshiwaki, Michio},
  title={On interval decomposability of 2D persistence modules},
  journal={Comput. Geom.},
  volume={105},
  date={2022},
  pages={Paper No. 101879, 33},
  issn={0925-7721},
  review={\MR {4402576}},
  doi={10.1016/j.comgeo.2022.101879},
}

\bib{AssemSS06}{book}{
  author={Assem, Ibrahim},
  author={Simson, Daniel},
  author={Skowro\'{n}ski, Andrzej},
  title={Elements of the representation theory of associative algebras. Vol. 1},
  series={London Mathematical Society Student Texts},
  volume={65},
  note={Techniques of representation theory},
  publisher={Cambridge University Press, Cambridge},
  date={2006},
  pages={x+458},
  isbn={978-0-521-58423-4},
  isbn={978-0-521-58631-3},
  isbn={0-521-58631-3},
  review={\MR {2197389}},
  doi={10.1017/CBO9780511614309},
}

\bib{AtiyahM69}{book}{
  author={Atiyah, M. F.},
  author={Macdonald, I. G.},
  title={Introduction to commutative algebra},
  publisher={Addison-Wesley Publishing Co., Reading, Mass.-London-Don Mills, Ont.},
  date={1969},
  pages={ix+128},
  review={\MR {0242802}},
}

\bib{BauerBOS20}{article}{
  author={Bauer, Ulrich},
  author={Botnan, Magnus B.},
  author={Oppermann, Steffen},
  author={Steen, Johan},
  title={Cotorsion torsion triples and the representation theory of filtered hierarchical clustering},
  journal={Adv. Math.},
  volume={369},
  date={2020},
  pages={107171, 51},
  issn={0001-8708},
  review={\MR {4091895}},
  doi={10.1016/j.aim.2020.107171},
}

\bib{BlanchetteBH22}{article}{
  author={Blanchette, Benjamin},
  author={Br\"{u}stle, Thomas},
  author={Hanson, Eric J.},
  title={Homological approximations in persistence theory},
  journal={Canad. J. Math.},
  date={2022},
  pages={1--38},
  doi={10.4153/S0008414X22000657},
}

\bib{Borceux94v2}{book}{
  author={Borceux, Francis},
  title={Handbook of Categorical Algebra 2: Categories and Structures},
  series={Encyclopedia of Mathematics and its Applications},
  volume={51},
  publisher={Cambridge University Press},
  date={1994},
}

\bib{BotnanC20}{article}{
  author={Botnan, Magnus Bakke},
  author={Crawley-Boevey, William},
  title={Decomposition of persistence modules},
  journal={Proc. Amer. Math. Soc.},
  volume={148},
  date={2020},
  number={11},
  pages={4581--4596},
  issn={0002-9939},
  review={\MR {4143378}},
  doi={10.1090/proc/14790},
}

\bib{BotnanLO22}{article}{
  author={Botnan, Magnus Bakke},
  author={Lebovici, Vadim},
  author={Oudot, Steve},
  title={On rectangle-decomposable 2-parameter persistence modules},
  journal={Discrete Comput. Geom.},
  volume={68},
  date={2022},
  number={4},
  pages={1078--1101},
  issn={0179-5376},
  review={\MR {4517095}},
  doi={10.1007/s00454-022-00383-y},
}

\bib{BotnanLO23}{article}{
  author={Botnan, Magnus Bakke},
  author={Lebovici, Vadim},
  author={Oudot, Steve},
  title={Local characterizations for decomposability of 2-parameter persistence modules},
  journal={Algebr. Represent. Theory},
  date={2023},
  doi={10.1007/s10468-022-10189-4},
}

\bib{BotnanL23}{article}{
  author={Botnan, Magnus Bakke},
  author={Lesnick, Michael},
  title={An introduction to multiparameter persistence},
  conference={ title={Representations of algebras and related structures}, },
  book={ series={EMS Ser. Congr. Rep.}, publisher={EMS Press, Berlin}, },
  isbn={978-3-98547-054-9},
  date={2023},
  pages={77--150},
  review={\MR {4693638}},
}

\bib{BotnanOO22}{article}{
  author={Botnan, Magnus Bakke},
  author={Oppermann, Steffen},
  author={Oudot, Steve},
  title={Signed barcodes for multi-parameter persistence via rank decompositions},
  conference={ title={38th International Symposium on Computational Geometry}, },
  book={ series={LIPIcs. Leibniz Int. Proc. Inform.}, volume={224}, publisher={Schloss Dagstuhl. Leibniz-Zent. Inform., Wadern}, },
  date={2022},
  pages={Paper No. 19, 18},
  review={\MR {4470898}},
  doi={10.4230/lipics.socg.2022.19},
}

\bib{BubenikSS22}{article}{
  author={Bubenik, Peter},
  author={Scott, Jonathan},
  author={Stanley, Donald},
  title={Exact weights, path metrics, and algebraic Wasserstein distances},
  journal={J Appl. and Comput. Topology},
  doi={10.1007/s41468-022-00103-8},
  date={2022},
}

\bib{BuchstaberP15}{book}{
  author={Buchstaber, Victor M.},
  author={Panov, Taras E.},
  title={Toric topology},
  series={Mathematical Surveys and Monographs},
  volume={204},
  publisher={American Mathematical Society, Providence, RI},
  date={2015},
  pages={xiv+518},
  isbn={978-1-4704-2214-1},
  review={\MR {3363157}},
  doi={10.1090/surv/204},
}

\bib{Buhler10}{article}{
  author={B\"{u}hler, Theo},
  title={Exact categories},
  journal={Expo. Math.},
  volume={28},
  date={2010},
  number={1},
  pages={1--69},
  issn={0723-0869},
  review={\MR {2606234}},
  doi={10.1016/j.exmath.2009.04.004},
}

\bib{CarlssonZ05}{article}{
  author={Carlsson, Gunnar},
  author={Zomorodian, Afra},
  title={Computing persistent homology},
  journal={Discrete Comput. Geom.},
  volume={33},
  date={2005},
  number={2},
  pages={249--274},
  issn={0179-5376},
  review={\MR {2121296}},
  doi={10.1007/s00454-004-1146-y},
}

\bib{CarlssonZ09}{article}{
  author={Carlsson, Gunnar},
  author={Zomorodian, Afra},
  title={The theory of multidimensional persistence},
  journal={Discrete Comput. Geom.},
  volume={42},
  date={2009},
  number={1},
  pages={71--93},
  issn={0179-5376},
  review={\MR {2506738}},
  doi={10.1007/s00454-009-9176-0},
}

\bib{ChacholskiCS24}{article}{
  author={Chach\'olski, Wojciech},
  author={Corbet, Ren\'e},
  author={Sattelberger, Anna-Laura},
  title={The shift-dimension of multipersistence modules},
  journal={J. Appl. Comput. Topol.},
  volume={8},
  date={2024},
  number={3},
  pages={643--667},
  issn={2367-1726},
  review={\MR {4799025}},
  doi={10.1007/s41468-024-00169-6},
}

\bib{ClauseDMW23}{article}{
  author={Clause, Nate},
  author={Dey, Tamal K.},
  author={M\'emoli, Facundo},
  author={Wang, Bei},
  title={Meta-diagrams for 2-parameter persistence},
  conference={ title={39th International Symposium on Computational Geometry}, },
  book={ series={LIPIcs. Leibniz Int. Proc. Inform.}, volume={258}, publisher={Schloss Dagstuhl. Leibniz-Zent. Inform., Wadern}, },
  isbn={978-3-95977-273-0},
  date={2023},
  pages={Art. No. 25, 16},
  review={\MR {4604010}},
  doi={10.4230/lipics.socg.2023.25},
}

\bib{Cox95}{article}{
  author={Cox, David A.},
  title={The homogeneous coordinate ring of a toric variety},
  journal={J. Algebraic Geom.},
  volume={4},
  date={1995},
  number={1},
  pages={17--50},
  issn={1056-3911},
  review={\MR {1299003}},
}

\bib{CoxLS11}{book}{
  author={Cox, David A.},
  author={Little, John B.},
  author={Schenck, Henry K.},
  title={Toric varieties},
  series={Graduate Studies in Mathematics},
  volume={124},
  publisher={American Mathematical Society, Providence, RI},
  date={2011},
  pages={xxiv+841},
  isbn={978-0-8218-4819-7},
  review={\MR {2810322}},
  doi={10.1090/gsm/124},
}

\bib{DerksenW17}{book}{
  author={Derksen, Harm},
  author={Weyman, Jerzy},
  title={An introduction to quiver representations},
  series={Graduate Studies in Mathematics},
  volume={184},
  publisher={American Mathematical Society, Providence, RI},
  date={2017},
  pages={x+334},
  isbn={978-1-4704-2556-2},
  review={\MR {3727119}},
  doi={10.1090/gsm/184},
}

\bib{Dickson66}{article}{
  author={Dickson, Spencer E.},
  title={A torsion theory for Abelian categories},
  journal={Trans. Amer. Math. Soc.},
  volume={121},
  date={1966},
  pages={223--235},
  issn={0002-9947},
  review={\MR {191935}},
  doi={10.2307/1994341},
}

\bib{ErdmannH18}{book}{
  author={Erdmann, Karin},
  author={Holm, Thorsten},
  title={Algebras and representation theory},
  series={Springer Undergraduate Mathematics Series},
  publisher={Springer, Cham},
  date={2018},
  pages={ix+298},
  isbn={978-3-319-91997-3},
  isbn={978-3-319-91998-0},
  review={\MR {3837168}},
  doi={10.1007/978-3-319-91998-0},
}

\bib{Gabriel62}{article}{
  author={Gabriel, Pierre},
  title={Des cat\'{e}gories ab\'{e}liennes},
  language={French},
  journal={Bull. Soc. Math. France},
  volume={90},
  date={1962},
  pages={323--448},
  issn={0037-9484},
  review={\MR {232821}},
}

\bib{GafvertC21}{article}{
  author={G\"afvert, Oliver},
  author={Chach\'{o}lski, Wojciech},
  title={Stable Invariants for Multiparameter Persistence},
  date={2021},
  eprint={arXiv:1703.03632},
  status={Preprint},
}

\bib{GelfandM03}{book}{
  author={Gelfand, Sergei I.},
  author={Manin, Yuri I.},
  title={Methods of homological algebra},
  series={Springer Monographs in Mathematics},
  edition={2},
  publisher={Springer-Verlag, Berlin},
  date={2003},
  pages={xx+372},
  isbn={3-540-43583-2},
  review={\MR {1950475 (2003m:18001)}},
  doi={10.1007/978-3-662-12492-5},
}

\bib{HarringtonOST19}{article}{
  author={Harrington, Heather A.},
  author={Otter, Nina},
  author={Schenck, Hal},
  author={Tillmann, Ulrike},
  title={Stratifying multiparameter persistent homology},
  journal={SIAM J. Appl. Algebra Geom.},
  volume={3},
  date={2019},
  number={3},
  pages={439--471},
  review={\MR {4000203}},
  doi={10.1137/18M1224350},
}

\bib{Hartshorne77}{book}{
  author={Hartshorne, Robin},
  title={Algebraic geometry},
  note={Graduate Texts in Mathematics, No. 52},
  publisher={Springer-Verlag, New York-Heidelberg},
  date={1977},
  pages={xvi+496},
  isbn={0-387-90244-9},
  review={\MR {0463157}},
}

\bib{KashiwaraS06}{book}{
  author={Kashiwara, Masaki},
  author={Schapira, Pierre},
  title={Categories and sheaves},
  series={Grundlehren der Mathematischen Wissenschaften [Fundamental Principles of Mathematical Sciences]},
  volume={332},
  publisher={Springer-Verlag, Berlin},
  date={2006},
  pages={x+497},
  isbn={978-3-540-27949-5},
  isbn={3-540-27949-0},
  review={\MR {2182076 (2006k:18001)}},
  doi={10.1007/3-540-27950-4},
}

\bib{KimM24}{article}{
  author={Kim, Woojin},
  author={Moore, Samantha},
  title={Bigraded Betti numbers and generalized persistence diagrams},
  journal={J. Appl. Comput. Topol.},
  volume={8},
  date={2024},
  number={3},
  pages={727--760},
  issn={2367-1726},
  review={\MR {4799028}},
  doi={10.1007/s41468-024-00180-x},
}

\bib{Lesnick15}{article}{
  author={Lesnick, Michael},
  title={The theory of the interleaving distance on multidimensional persistence modules},
  journal={Found. Comput. Math.},
  volume={15},
  date={2015},
  number={3},
  pages={613--650},
  issn={1615-3375},
  review={\MR {3348168}},
  doi={10.1007/s10208-015-9255-y},
}

\bib{Nazarova73}{article}{
  author={Nazarova, L. A.},
  title={Representations of quivers of infinite type},
  language={Russian},
  journal={Izv. Akad. Nauk SSSR Ser. Mat.},
  volume={37},
  date={1973},
  pages={752--791},
  issn={0373-2436},
  review={\MR {0338018}},
}

\bib{Nazarova75}{article}{
  author={Nazarova, L. A.},
  title={Partially ordered sets of infinite type},
  language={Russian},
  journal={Izv. Akad. Nauk SSSR Ser. Mat.},
  volume={39},
  date={1975},
  number={5},
  pages={963--991, 1219},
  issn={0373-2436},
  review={\MR {0406878}},
}

\bib{Nazarova81}{article}{
  author={Nazarova, L. A.},
  title={Poset representations},
  conference={ title={Integral representations and applications}, address={Oberwolfach}, date={1980}, },
  book={ series={Lecture Notes in Math.}, volume={882}, publisher={Springer, Berlin-New York}, },
  date={1981},
  pages={345--356},
  review={\MR {646110}},
}

\bib{Oudot15}{book}{
  author={Oudot, Steve Y.},
  title={Persistence theory: from quiver representations to data analysis},
  series={Mathematical Surveys and Monographs},
  volume={209},
  publisher={American Mathematical Society, Providence, RI},
  date={2015},
  pages={viii+218},
  isbn={978-1-4704-2545-6},
  review={\MR {3408277}},
  doi={10.1090/surv/209},
}

\bib{OudotS24}{article}{
  author={Oudot, Steve},
  author={Scoccola, Luis},
  title={On the stability of multigraded Betti numbers and Hilbert functions},
  journal={SIAM J. Appl. Algebra Geom.},
  volume={8},
  date={2024},
  number={1},
  pages={54--88},
  review={\MR {4695718}},
  doi={10.1137/22M1489150},
}

\bib{Perling04}{article}{
  author={Perling, Markus},
  title={Graded rings and equivariant sheaves on toric varieties},
  journal={Math. Nachr.},
  volume={263/264},
  date={2004},
  pages={181--197},
  issn={0025-584X},
  review={\MR {2029751}},
  doi={10.1002/mana.200310130},
}

\bib{Perling13}{article}{
  author={Perling, Markus},
  title={Resolutions and cohomologies of toric sheaves: the affine case},
  journal={Internat. J. Math.},
  volume={24},
  date={2013},
  number={9},
  pages={1350069, 47},
  issn={0129-167X},
  review={\MR {3109441}},
  doi={10.1142/S0129167X13500699},
}

\bib{Popescu73}{book}{
  author={Popescu, N.},
  title={Abelian categories with applications to rings and modules},
  series={London Mathematical Society Monographs, No. 3},
  publisher={Academic Press, London-New York},
  date={1973},
  pages={xii+467},
  review={\MR {0340375}},
}

\bib{Shah23}{article}{
  author={Shah, Amit},
  title={Krull-Remak-Schmidt decompositions in Hom-finite additive categories},
  journal={Expo. Math.},
  volume={41},
  date={2023},
  number={1},
  pages={220--237},
  issn={0723-0869},
  review={\MR {4557280}},
  doi={10.1016/j.exmath.2022.12.003},
}

\bib{Simson05}{article}{
  author={Simson, Daniel},
  title={On Corner type Endo-Wild algebras},
  journal={J. Pure Appl. Algebra},
  volume={202},
  date={2005},
  number={1-3},
  pages={118--132},
  issn={0022-4049},
  review={\MR {2163404}},
  doi={10.1016/j.jpaa.2005.01.012},
}

\bib{SimsonS07v2}{book}{
  author={Simson, Daniel},
  author={Skowro\'{n}ski, Andrzej},
  title={Elements of the representation theory of associative algebras. Vol. 2},
  series={London Mathematical Society Student Texts},
  volume={71},
  note={Tubes and concealed algebras of Euclidean type},
  publisher={Cambridge University Press, Cambridge},
  date={2007},
  pages={xii+308},
  isbn={978-0-521-54420-7},
  review={\MR {2360503}},
}

\bib{SimsonS07v3}{book}{
  author={Simson, Daniel},
  author={Skowro\'{n}ski, Andrzej},
  title={Elements of the representation theory of associative algebras. Vol. 3},
  series={London Mathematical Society Student Texts},
  volume={72},
  note={Representation-infinite tilted algebras},
  publisher={Cambridge University Press, Cambridge},
  date={2007},
  pages={xii+456},
  isbn={978-0-521-70876-0},
  review={\MR {2382332}},
}

\bib{stacks-project}{misc}{
  author={The Stacks Project Authors},
  title={Stacks Project},
  url={http://stacks.math.columbia.edu},
  eprint={http://stacks.math.columbia.edu},
  date={2018},
}

\bib{StanleyW11}{article}{
  author={Stanley, Donald},
  author={Wang, Binbin},
  title={Classifying subcategories of finitely generated modules over a Noetherian ring},
  journal={J. Pure Appl. Algebra},
  volume={215},
  date={2011},
  number={11},
  pages={2684--2693},
  issn={0022-4049},
  review={\MR {2802159}},
  doi={10.1016/j.jpaa.2011.03.013},
}

\bib{Vipond20}{article}{
  author={Vipond, Oliver},
  title={Multiparameter persistence landscapes},
  journal={J. Mach. Learn. Res.},
  volume={21},
  date={2020},
  pages={Paper No. 61, 38},
  issn={1532-4435},
  review={\MR {4095340}},
}

\bib{Zhang19}{book}{
  author={Zhang, Yihui},
  title={Decomposition of Certain Representations Into A Direct Sum of Indecomposable Representations},
  note={Thesis (M.Sc.)--University of Regina},
  date={2019-04},
  pages={(no paging)},
  url={http://hdl.handle.net/10294/9033},
}

\end{biblist}
\end{bibdiv}

\vspace*{8pt}

\end{document}